\documentclass{article}
\usepackage[utf8]{inputenc}
\usepackage{multicol}
\usepackage{threeparttable} 

\PassOptionsToPackage{numbers}{natbib}
\usepackage[preprint]{ssttyy}

\usepackage[utf8]{inputenc} 
\usepackage[T1]{fontenc}   
\usepackage{hyperref}    
\hypersetup{
    colorlinks=true,
    linkcolor=blue,
    citecolor=blue,
    urlcolor=blue
}
\usepackage{url}           
\usepackage{booktabs}       
\usepackage{amsfonts}       
\usepackage{nicefrac}       
\usepackage{microtype}      
\usepackage{xcolor}         
\usepackage{setspace}      
\usepackage{microtype}
\usepackage{graphicx}
\usepackage{subfigure}
\usepackage{placeins}
\usepackage{threeparttable}
\usepackage{tablefootnote}

\usepackage{amsmath}
\usepackage{amssymb}
\usepackage{mathtools}
\usepackage{amsthm}
\usepackage{enumitem}
\usepackage{wrapfig}
\usepackage[capitalize,noabbrev]{cleveref}
\usepackage{amstext} 

\usepackage{algorithm}
\usepackage{algorithmic}  
\theoremstyle{plain}
\newtheorem{theorem}{Theorem}[section]

\newtheorem{lemma}[theorem]{Lemma}

\theoremstyle{definition}

\newtheorem{assumption}{Assumption}

\newtheorem{remark}[theorem]{Remark}
\DeclareMathOperator*{\argmax}{\arg\!\max}
\DeclareMathOperator*{\argmin}{\arg\!\min}
\newcommand{\gap}{\mathrm{Gap}}
\newcommand{\lmo}{\mathrm{lmo}}

\newcommand{\alignop}{\mathrm{align}}

\title{ Boosted Stochastic Frank-Wolfe for Constrained Nonconvex Optimization}

\author{%
  Navil Nandhan
  \thanks{National University of Singapore (NUS), Singapore.\\
  \texttt{e0727209@u.nus.edu}} \\
  \And
  Abbas Khademi
  \thanks{School of Mathematics and Computer Science, Iran University of Science and Technology, Iran. \\
  \texttt{abbaskhademi92@gmail.com}} \\
  \And
  Antonio Silveti-Falls
  \thanks{CVN, CentraleSup\'elec, Universit\'e Paris-Saclay, Inria, France.\\
  \texttt{tonys.falls@gmail.com} } 
}
\date{\today}

\begin{document}
\newgeometry{margin=0.85in,footskip=0.35in} 
\maketitle

\begin{abstract}
  The boosted Frank-Wolfe algorithm accelerates the classical Frank-Wolfe algorithm by better aligning the update direction with the negative gradient. Its analysis, however, has been limited to deterministic convex problems, with step sizes that require either line search or knowledge of the Lipschitz constant of the gradient. We develop a novel step size strategy that does not depend on the Lipschitz constant of the gradient, which allows us to extend the boosted Frank-Wolfe algorithm to the stochastic setting. We prove that boosting with this step size strategy can be combined with many modern gradient estimators, including SAGA, L-SVRG, SAG, Heavy Ball momentum, and zeroth-order estimators, among others, while retaining the worst-case convergence rates of ordinary stochastic Frank-Wolfe. Our analysis also yields the first convergence rates for boosted Frank-Wolfe on nonconvex and quasar-convex objectives, results which are new even for deterministic problems. Experiments on sparse logistic regression and quantum process tomography show that stochastic boosted Frank-Wolfe achieves faster convergence per gradient oracle call (and on wall-clock) compared to the non-boosted baseline.
\end{abstract}

\begin{wrapfigure}[18]{r}{0.5\textwidth}
\vspace{-1.25cm}
  \begin{center}
    \includegraphics[width=0.375\textwidth]{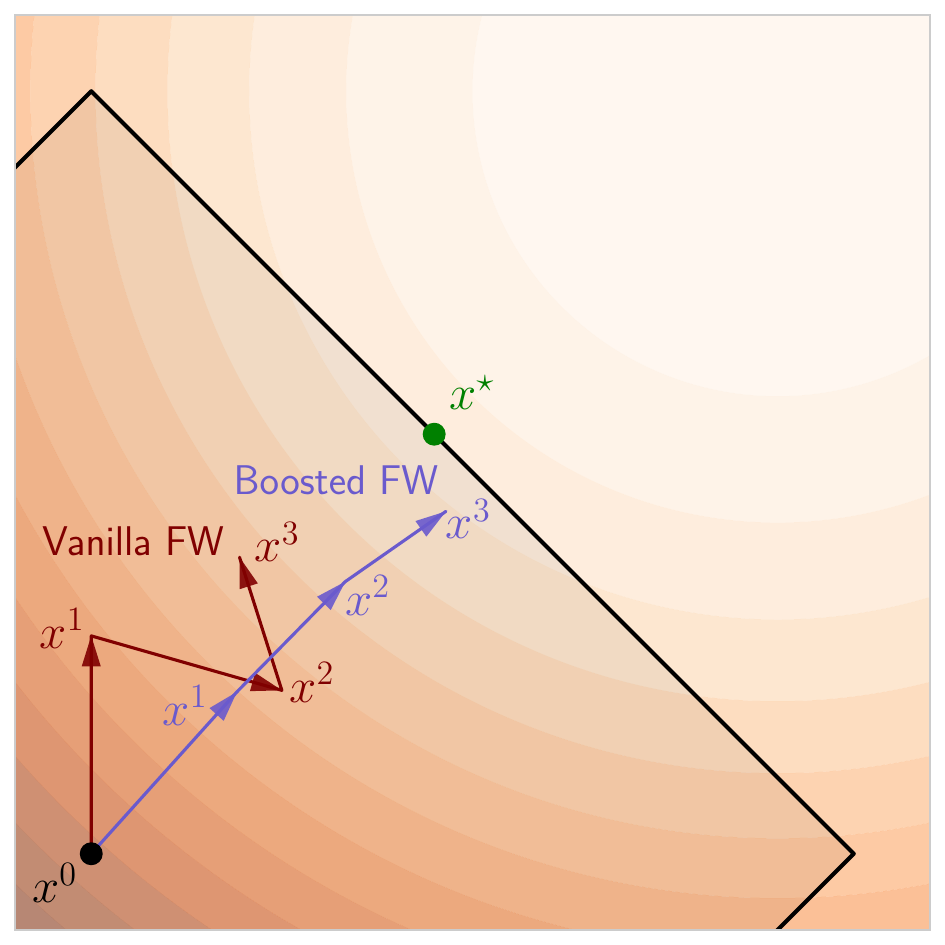}
  \end{center}
  \caption{Comparison of FW and BFW on a toy problem. The boundary of the constraint set is shown in black and $x^{\star}$ marks the minimizer in green. The FW trajectory in red shows zigzagging while the BFW trajectory in purple avoids this.}\label{fig:boosting_vs_zigzag}
\end{wrapfigure}


\section{Introduction}\label{sec: intro}
The Frank-Wolfe (FW) or Conditional Gradient algorithm is a method for solving constrained optimization problems that avoids projections onto the constraint set \cite{frank1956algorithm,levitin1966constrained}. Instead, it only requires access to the gradient and a linear minimization oracle (LMO) over the constraint set. This oracle can be much cheaper to compute than the projection in many problems of interest, e.g., low-rank inverse problems with nuclear norm regularization, traffic assignment, video co-localization and more (see \cite{combettes2021complexitylinearminimizationprojection} for a more exhaustive list of examples). With this potentially lower per-iteration cost and a worst-case convergence rate of $\mathcal{O}(1/t)$ for smooth convex problems, FW appears to be a competitive method. However, in practice, the method can suffer from a zigzagging phenomenon that slows convergence.

In practice, the LMO always returns an extreme point of the constraint set, e.g., a vertex if the set is polyhedral. If the optimum lies in between two extreme points, then the LMO outputs will alternate between these two points, which causes the zigzagging. Because zigzagging slows convergence by wasting updates on movement that is orthogonal to the optimal descent direction, many FW variations have been proposed which aim to extend beyond simply moving towards an extreme point, e.g., away-step FW changes the LMO to include moving away from extreme points. The boosted Frank-Wolfe (BFW) algorithm uses the LMO multiple times per-iteration to refine and align the update direction with the negative gradient, in order to avoid the zigzagging \cite{combettes2020boosting}. Figure~\ref{fig:boosting_vs_zigzag} shows how the zigzagging behavior is mitigated in BFW. 


While boosting is effective for avoiding zigzagging, the existing analysis of BFW in \cite{combettes2020boosting} assumes that the function to be minimized is convex, ruling out all nonconvex problems of interest. Moreover, the step sizes they suggest require either knowing the Lipschitz constant of the gradient or performing a line search, and are only guaranteed to converge for convex functions. As a result, the BFW algorithm has yet to be applied to nonconvex problems, nor has it reaped the benefits of the wide literature on inexact Frank-Wolfe, such as stochastic, zeroth-order, or distributed methods summarized in \cite{nazykov2024stochastic}.

We overcome these limitations by developing a new step size strategy for the BFW algorithm that requires neither knowledge of the Lipschitz constant of the gradient nor a line search. This enables the extension to the inexact gradient setting, allowing the algorithm to be applied to stochastic problems. We prove the convergence of the BFW algorithm with the proposed step size using several different gradient estimators, matching the convergence rate of BFW with known Lipschitz constant and exact gradients given in \cite{combettes2020boosting} and that of the stochastic FW algorithm \cite{nazykov2024stochastic} in all settings considered. Our analysis provides the first convergence rates for the BFW algorithm in the nonconvex and quasar-convex cases. Establishing these rates was previously unaddressed, even in the deterministic setting. We confirm the effectiveness of our step size in experiments on sparse logistic regression, where we observe improvements over vanilla Frank-Wolfe for every gradient estimator considered.

\paragraph{Contributions} Our contributions are three-fold:

\emph{Theory:} We demonstrate the first convergence guarantees for deterministic BFW without assuming convexity of the objective function. In the nonconvex setting we show that the so-called FW gap converges with a rate of $\mathcal{O}(1/\sqrt{t})$. We also show that the functional-value gap converges with a rate of $\mathcal{O}(1/t)$ under a relaxed assumption of quasar-convexity, which includes the convex setting and matches the rate given in \cite{combettes2020boosting}. We also introduce stochastic BFW (BSFW) and prove convergence in expectation with a rate of $\mathcal{O}(1/t)$ in the quasar-convex case and $\mathcal{O}(1/\sqrt{t})$ in the nonconvex case. Our step size is the first provably convergent strategy for BFW that does not require the Lipschitz constant of the gradient nor line search. In the quasar-convex (and hence also for convex) setting, we obtain the same worst-case convergence rate as the existing BFW method despite relaxing assumptions. 

\emph{Unification:} Through a general assumption on gradient inexactness, we simultaneously cover stochastic gradient estimators like Heavy Ball, SAGA, SARAH, L-SVRG; zeroth-order methods like JAGUAR; among others (eight in total), matching the results of \cite{nazykov2024stochastic} for stochastic FW.

\emph{Experimental Confirmation:} Our experimental results show a clear improvement over the vanilla FW version of each estimator we consider on sparse logistic regression in terms of number of stochastic gradients processed or number of coordinate gradients processed.

Finally, our treatment of quasar-convexity is primarily for completeness since there are several ML problems involving quasar-convexity, but it is not central to the novelty of our work. A discussion about this is given in Appendix~\ref{app subsec: quasar convex appns in ML}.

\paragraph{Related work}
Several works have proposed modifications of FW to avoid zigzagging, one of the earliest examples being the away-step FW \cite{guelat1986some,wolfe1970convergence}. More recently, such methods include the pairwise FW \cite{lacoste2015global}, the blended pairwise FW \cite{tsuji2022pairwise}, and the BFW \cite{combettes2020boosting}, which we directly extend to the nonconvex and stochastic settings.

For stochastic FW on nonconvex problems, many prior works exist covering convergence analysis, e.g., early work in \cite{reddi2016stochastic} followed by  \cite{nazykov2024stochastic} for several different variance-reduced estimators and \cite{pethick2025training} for the Heavy Ball momentum estimator. However, none of those analyses include boosting; the original analysis of BFW in \cite{combettes2020boosting} could not accommodate stochasticity and our work bridges this gap.

The original analysis of BFW in \cite{combettes2020boosting} relies on convexity, which we relax to quasar-convexity, or drop entirely in the nonconvex case. Quasar-convexity has been used in several recent works \cite{khademiaconvexity}, e.g., as a practical relaxation of convexity. It has also been used to analyze FW with improved rates compared to the general nonconvex case in \cite{khademi2025adaptive, martinez2025smooth,millan2025frank} and now our work. It was used in other first-order algorithms in \cite{ding2024optimizing,fu2023accelerated,lara2025delayed}, but its use in BFW has been unexplored until now.

\section{Preliminaries}\label{sec: prelims}
We consider the following optimization problem
\begin{equation}
    \tag{P}\min\limits_{x\in\mathcal{C}} f(x),
    \label{eq:P}
\end{equation}
where $\mathcal{C}\subset\mathbb{R}^n$ is a nonempty compact convex subset and $f$ is a continuously differentiable function whose gradient is Lipschitz-continuous on $\mathcal{C}$. We will assume that the projection onto the set $\mathcal{C}$ is unavailable in closed-form or otherwise computationally intractable. Instead, we will assume access to the LMO over the set $\mathcal{C}$, defined for all $v\in\mathbb{R}^n$ as
\begin{equation*}
    \lmo(v) = \argmin\limits_{s\in\mathcal{C}}\langle s,v\rangle.
\end{equation*}
For many common constraint sets $\mathcal{C}$ used in machine learning, the LMO is computable in closed form or otherwise cheaper than the corresponding projection operation; we refer to \cite{combettes2021complexitylinearminimizationprojection} for an in-depth study on this comparison.

For the analysis of first-order methods applied to unconstrained minimization of smooth nonconvex functions, the gradient norm $\|\nabla f(x)\|$ is typically employed as a surrogate measure of optimality. However, in constrained problems like \eqref{eq:P}, the gradient is not necessarily $0$ at a critical point. Instead, a critical point corresponds to
\begin{equation*}
    0 \in \nabla f(x) + \mathrm{N}_{\mathcal{C}}(x),
\end{equation*}
where $\mathrm{N}_{\mathcal{C}}(x)$ is the usual normal cone to the set $\mathcal{C}$ at $x$. We will therefore analyze convergence using the \emph{Frank–Wolfe gap} at a point $x \in \mathcal{C}$, which is defined as
\begin{equation}\label{eq:gap}
    \gap(x) := \max_{s \in \mathcal{C}} \langle \nabla f(x),\, x - s \rangle.
\end{equation}
This quantity is exactly the analog of $\|\nabla f(x)\|$ in the constrained setting, in the sense that it is nonnegative and certifies first-order optimality as
\begin{equation*}
    \gap(x) = 0 \iff 0 \in \nabla f(x) + \mathrm{N}_{\mathcal{C}}(x).
\end{equation*}
In deterministic problems, this quantity is easily computed at run-time since
\begin{equation*}
    \gap(x) =\left \langle \nabla f(x),x-\lmo(\nabla f(x)) \right\rangle,
\end{equation*}
which is the inner product of quantities already computed during FW or BFW.

\paragraph{Assumptions}
Throughout the paper, the $\|\cdot\|$ norm is defined as the standard Euclidean norm, i.e., $\|\cdot\| = \|\cdot\|_2$. We now formalize the smoothness assumption that we make on $f$ in problem \eqref{eq:P}.

\begin{assumption}[$L$-Smoothness]\label{assum: L_smooth}
    The gradient $\nabla f$ is Lipschitz-continuous on the set $\mathcal{C}$ with constant $L>0$, i.e.,
    \begin{equation*}
        \exists L > 0;~ \forall x, y \in \mathcal{C}: \quad \|\nabla f(x) - \nabla f(y) \| \leq L \|x - y\|.
    \end{equation*}
\end{assumption}

In addition to general nonconvex functions, we will also consider the class of \emph{quasar-convex} functions that satisfy the following assumption.


\begin{assumption}[Quasar-Convexity]
    \label{assum: quasar-convexity}
    The function $f$ is \emph{quasar-convex} with parameter $\rho \in {]0,1]}$, i.e., there exists $x^{\star} \in \argmin_{x \in \mathcal{C}} f(x)$ such that
    \begin{equation*}
        \forall y \in \mathcal{C}: \quad f^{\star} - f(y) \geq \tfrac{1}{\rho} \langle \nabla f(y),\, x^{\star} - y \rangle.
    \end{equation*}
\end{assumption}

Note that quasar-convexity recovers the well-known star-convexity when $\rho=1$. Furthermore, every convex function with a minimizer is quasar-convex with $\rho=1$, so our convergence analyses in this context will also encompass all convex functions.

\paragraph{Notation} 
Considering the problem \eqref{eq:P}, we define $f^{\star}=\min_{x \in \mathcal{C}} f(x)$, and the functional-value gap at iteration $t$ by $F_t = f(x^t) - f^{\star}$. For stochastic problems, we will denote $m^t$ as the stochastic estimator of the deterministic gradient $\nabla f(x^t)$ and $\Delta^t = m^t - \nabla f(x^t)$ as the difference between the stochastic estimator and the deterministic gradient. We denote the diameter of the set $\mathcal{C}$ by  $D = \max_{x,y \in \mathcal{C}} \|x - y\|$. The notation $\mathbb{E}$ is defined as the full expectation, while $\mathbb{E}_t[\cdot]$ is defined as the conditional expectation with respect to the randomness generated until iteration $t$, i.e.,  $\mathbb{E}_t[\cdot] = \mathbb{E}[\cdot\mid \sigma(x^0, x^1, \cdots, x^t) ]$ where $\sigma(x^0,x^1,\cdots, x^t)$ is the $\sigma$-algebra generated by the random variables $x^0,x^1,\cdots,x^t$. 

\begin{figure*}[!t]
    \centering
    \includegraphics[width=1\textwidth]{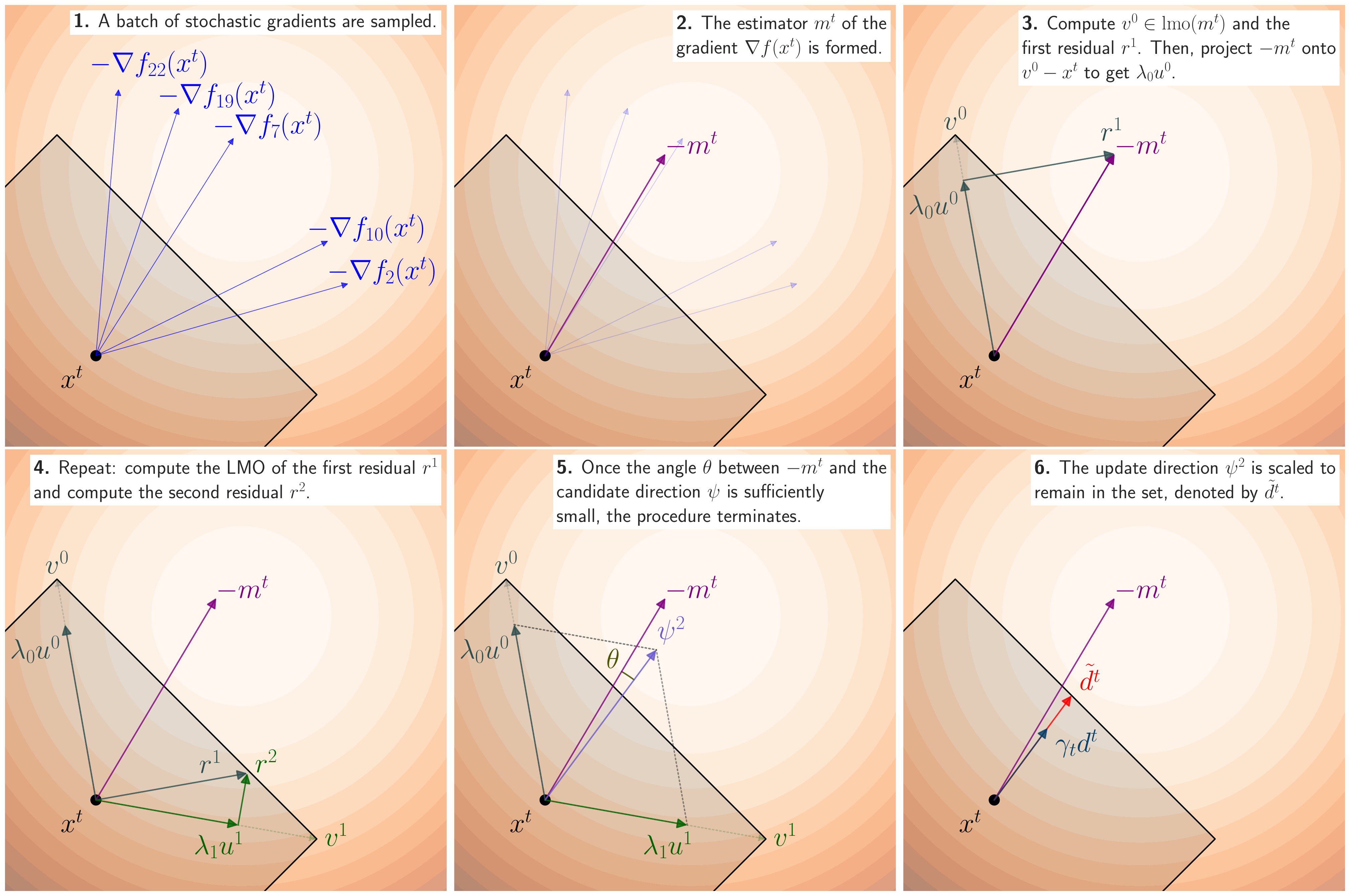}
\caption{Diagram of the stochastic boosting procedure used in Algorithm~\ref{alg:BSFW}. Panels 1 and 2: At iteration $t$, stochastic gradients are sampled and $-m^t$ is formed to estimate $-\nabla f(x^t)$ based on the estimator chosen, e.g., Heavy Ball as we show here. Panel 3: The LMO is calculated for $-m^t$ and the first residual $r^1$ is computed by projecting $-m^t$ onto $v^0-x^t$. Panel 4: The procedure then proceeds recursively, computing the LMO for the current residual $r^k$ and using that to compute the next residual $r^{k+1}$. Panel 5: A candidate direction $\psi^k$ is formed from the projections of the residuals and, if the alignment of $\psi^k$ with $-m^t$ has not improved enough, then the procedure terminates. Panel 6: The magnitude of the candidate direction $\psi^t$ is scaled $\tilde{d^t} = {\psi^2}/{(\lambda_0 + \lambda_1)}$ to remain in the set $\mathcal{C}$. Finally, this feasible direction is scaled by the step size and $x^{t+1} = x^t + \gamma_t d^t$ can be computed.}
    \label{fig:boosting_expln}
\end{figure*}

\section{Methods}\label{sec: methods}

\subsection{Boosting: Why, What, How, and When?}

To avoid zigzagging, the algorithm should update in the direction of $-\nabla f(x^t)$. However, such an update may not ensure feasibility and thus typically requires projection back onto $\mathcal{C}$, which we aim to avoid in this work. This motivates the FW algorithm, which avoids projection by constructing its update using a convex combination between the current iterate $x^t$ and an extreme point $s^t\in\lmo(\nabla f(x^t))$ of $\mathcal{C}$. The resulting direction $s^t-x^t$ may be nearly orthogonal or otherwise not aligned with $-\nabla f(x^t)$, for instance when the optimum lies between extreme points of $\mathcal{C}$ or in a face of the boundary $\partial \mathcal{C}$ as in Figure~\ref{fig:boosting_vs_zigzag}.

For many constrained problems that arise in machine learning, the LMO associated to the set $\mathcal{C}$ has low computational overhead relative to the gradient. In \cite{combettes2020boosting}, the authors leverage this by using the LMO several times per-iteration in a \emph{boosting} procedure to find a feasible direction that is better aligned with $-\nabla f(x^t)$ than $\lmo(\nabla f(x^t))$, the output used in FW.

The boosting procedure, corresponding to \textbf{Boost} in Algorithm~\ref{alg:BSFW} (expanded in Algorithm~\ref{alg:boosting}) and demonstrated in Figure~\ref{fig:boosting_expln} panels 3-5 (if we replace the stochastic estimator $-m^t$ by $-\nabla f(x^t)$), approximates the conical decomposition of $-\nabla f(x^t)$ in $\mathrm{cone}(\mathcal{C}-x^t)$, i.e., it approximately solves the problem
\begin{equation*}
    \argmin\limits_{d\in\mathrm{cone}(\mathcal{C}-x^t)}\tfrac{1}{2}\|-\nabla f(x^t)-d\|^2.
\end{equation*}
This decomposition of $-\nabla f(x^t)$ is constructed through multiple refinement steps, each of which starts by computing a residual $r^k = -\nabla f(x^t)-\psi^k$ (the part of $-\nabla f(x^t)$ which is not captured by the current approximate decomposition $\psi^k$) and then calling the LMO to find an extreme point $v^k\in\lmo(-r^k)$ most aligned with $r^k$. The candidate direction is then updated to $\psi^{k+1}$ by including $v^k-x^t$ in the decomposition. This recursive refinement continues until insufficient progress is made, as measured by the alignment between the candidate direction $\psi^k$ and $-\nabla f(x^t)$ (lines 20-26 in Algorithm~\ref{alg:boosting}). The alignment is measured through the modified cosine similarity, defined between two vectors $d$ and $\hat{d}$ as
\begin{equation*}
    \alignop(d, \hat{d}):= \begin{cases}\dfrac{\langle d, \hat{d}\rangle}{\|d\|\|\hat{d}\|}, & \hat{d} \neq 0 \\
-1. &  \hat{d}=0
\end{cases}
\end{equation*}

\vspace{-0.25cm}

This gives a decomposition of $-\nabla f(x^t)$ in terms of some set $\{v^0-x^t, v^1-x^t,\cdots,v^{K_t}-x^t\}\subset\mathrm{cone}(\mathcal{C}-x^t)$ with coefficients $\lambda_1,\cdots,\lambda_{K_t}$, e.g., Figure~\ref{fig:boosting_expln} Panel 5 shows $\psi^2$ as a conical combination $\lambda_0(v^0-x^t) + \lambda_1(v^1-x^t)$. We note that line 11 allows for the analog of an ``away-step'' in the construction of the conical decomposition and is necessary for the convergence of the procedure as noted in \cite{combettes2020boosting,locatello2017greedy}.

This approximate conical decomposition is what ensures that $x^t + \tilde{d}^t$ remains in $\mathcal{C}$, as it is used to compute the normalization in line 29 of Algorithm~\ref{alg:boosting} with $\Lambda^t$. Indeed, it is the construction of $\psi^k$ as a positive combination of extreme points in $\mathcal{C}$ that allows us to normalize $\psi^k$ into the feasible update $\tilde{d}^t$ which is then scaled by the step size $\gamma_t$ to get the final update.

Computing the exact conical decomposition of $-\nabla f(x^t)$ is not necessary, which is why the boosting procedure includes two hyperparameters: the maximum number of refinements, i.e., LMO calls, $K$ and an alignment improvement tolerance $\delta$, so that if doing another refinement of the direction does not improve the alignment by at least $\delta$, the procedure will terminate. Algorithm~\ref{alg:BSFW Full} in Appendix~\ref{subsec:complete BSFW Full} is the complete Boosted Stochastic FW algorithm. 

\renewcommand{\thealgorithm}{BSFW}
\begin{algorithm}[H]
\caption{Boosted Stochastic Frank-Wolfe (Full version stated in Appendix~\ref{subsec:complete BSFW Full})}
\label{alg:BSFW}
\textbf{Input:} initial estimator $m^{\mathrm{init}} \in \mathbb{R}^n$, gradient estimator $\{\Phi_t\}_{t=0}^{T-1}$, max no of rounds for boosting $K \in \mathbb{N} \setminus \{0\}$, alignment improvement tolerance $\delta \in ]0,1]$, and step decay $\{\eta_t\}_{t=0}^{T-1} \in ]0,1]$\\
\textbf{Output:} $x^T \in \mathcal{C}$.
\begin{algorithmic}[1]

\STATE $x^0 \gets \lmo(m^{\mathrm{init}})$
\FOR{$t = 0 \text{ to } T-1$}
    \STATE Sample $\xi_t\sim\mathcal{P}$ and compute $g^t = \nabla f(x^t,\xi_t)$
    \STATE Compute $m^t = \Phi_t(g^t)$
    \STATE $\tilde{d}^t \gets \textbf{Boost}(m^t)$\COMMENT{perform boosting procedure}
    \STATE $\gamma_t\gets \min\left\{\eta_t\tfrac{\|\lmo(m^t)-x^t\|}{\|\tilde{d}^t\|},1\right\}$ \textbf{if} $\tilde{d}^t \neq 0$ \textbf{else} $1$
    \IF{$\gamma_t < 1$}
        \STATE $d^t \gets \tilde{d}^t$ \COMMENT{use the boosted direction}
        \STATE $x^{t+1} \gets x^t + \gamma_t d^t$
    \ELSE
        \STATE $d^t \gets \lmo(m^t) - x^t$ \COMMENT{revert to vanilla FW direction}
        \STATE $x^{t+1} \gets x^t + \eta_td^t$ 
    \ENDIF
\ENDFOR
\end{algorithmic}
\end{algorithm}

\renewcommand{\thealgorithm}{Boost}
\begin{algorithm}[H]
\caption{(Boosting Procedure)}
\label{alg:boosting}
\textbf{Input:} estimator $m^t \in \mathbb{R}^n$, max no of rounds for boosting $K \in \mathbb{N} \setminus \{0\}$, and alignment improvement tolerance $\delta \in ]0,1]$\\
\textbf{Output:} boosting direction $\tilde{d}^t$
\begin{algorithmic}[1]

    \STATE $\psi^0 \gets 0$
    \STATE $\Lambda_t \gets 0$
    \STATE $k \gets 0$
    \WHILE{$k \leq K - 1$}
        \STATE $r^k \gets -m^t - \psi^k$ \COMMENT{$k$-th residual}
        \STATE $v^k \gets \lmo(-r^k)$ \COMMENT{FW oracle}
        \IF{$k = 0$}
            \STATE $s^t \gets v^k$
        \ENDIF
        \IF{$\psi^k\neq 0$}
            \STATE $u^k \gets \argmax_{u \in\left\{v^k - x^t, -\tfrac{\psi^k}{\|\psi^k\|}\right\}}\langle r^k,u\rangle$
        \ELSE
            \STATE $u^k\gets v^k - x^t$
        \ENDIF
        \IF{$u^k = 0$}
            \STATE $k\gets k+1$; \textbf{break} \COMMENT{exit $k$-loop}
        \ENDIF
        \STATE $\lambda_k \gets \tfrac{\langle r^k, u^k \rangle}{\|u^k\|^2}$
        \STATE $\phi^k \gets \psi^k + \lambda_k u^k$
        \IF{$\alignop(-m^t, \phi^k) - \alignop(-m^t, \psi^k) \geq \delta$}
            \STATE $\psi^{k+1} \gets \phi^k$
            \STATE $\Lambda_t \gets 
            \begin{cases} 
                \Lambda_t + \lambda_k, &  u^k = v^k - x^t \\
                \Lambda_t \left(1 - \tfrac{\lambda_k}{\|\psi^k\|}\right), &  u^k = -\tfrac{\psi^k}{ \|\psi^k\|}
            \end{cases}$
            \STATE $k\gets k+1$
        \ELSE
            \STATE $k\gets k+1$; \textbf{break} \COMMENT{exit $k$-loop}
        \ENDIF
    \ENDWHILE
    \STATE $K_t \gets k$
    \STATE $\tilde{d}^t \gets \dfrac{\psi^{K_t}}{\Lambda_t}$ \textbf{if} $\Lambda_t \neq 0$ \textbf{else} $0$ \COMMENT{normalize direction}

\end{algorithmic}
\end{algorithm}

\subsection{Step Size Strategy}
Prior work on boosting \cite{combettes2020boosting} has used either a line search $\gamma_t \in \argmin_{\gamma\in[0,1]}f(x^t + \gamma \tilde{d}^t)$ or $\gamma_t=\min\left\{\alignop(-\nabla f(x^t),\tilde{d}^t)\|\nabla f(x^t)\|/(L\|\tilde{d}^t\|), 1\right\}$, a strategy similar in spirit to the short-step in vanilla FW.

At every iteration $t$, we define our step size by
\begin{equation}\label{eq: step}
    \gamma_t = \min\left\{\eta_t \dfrac{\|s^t - x^t\|}{\|\tilde{d}^t\|}, 1 \right\},
\end{equation}
where $\eta_t$ is a step decay, $s^t \in \lmo(m^t)$, and $\tilde{d}^t$ is the aligned output from the boosting procedure (unless $\gamma_t=1$, in which case the update is not $\tilde{d}^t$ but rather $d^t = s^t-x^t$; this does not occur in practice as we demonstrate in Section~\ref{sec: experiments}). Lines 10-13 in Algorithm~\ref{alg:BSFW} describe the revert-to-FW step, which is the same as in \cite{combettes2020boosting}. The scaling by $\tfrac{\|s^t-x^t\|}{\|\tilde{d}^t\|}$ gives an update that is similar in magnitude to FW. This scaling, combined with appropriately chosen step decay $\{\eta_t\}$, guarantees convergence of Algorithm~\ref{alg:BSFW} similar to FW with open-loop step size strategies. Contrasting this with the strategies given in \cite{combettes2020boosting} for convex functions, our step size avoids line searches and knowledge of the Lipschitz constant $L$. A short note about the complexity of the step size is given in Appendix~\ref{app subsec: complexity of step size}.

\subsection{Gradient Estimators}
We are able to combine the boosting procedure with a slew of stochastic estimators that satisfy Assumption~\ref{assum: est}. Table~\ref{table: estimator_params} summarizes the estimators we consider and which we prove satisfy Assumption~\ref{assum: est} with Algorithm~\ref{alg:BSFW} in Appendix~\ref{appendix: estimators}. The main consideration in choosing estimators for Algorithm~\ref{alg:BSFW} is variance reduction rather than unbiasedness. This is more obvious when one considers that the LMO output $s^t\in\lmo(m^t)$ need not be an unbiased estimate for $\lmo(\nabla f(x^t))$ even if $m^t$ is unbiased for $\nabla f(x^t)$, since the LMO is typically discontinuous for the constraint sets $\mathcal{C}$ that are common in machine learning. Since the boosting procedure relies on the LMO heavily, it is thus also better adapted to estimators with variance reduction such as the ones we consider, e.g., SARAH, SAG, SAGA, and many others. It also clarifies why we do not require $C=0$ for all of our convergence results (although only one estimator, ZOJA, requires $C\neq 0$): there can be substantial bias as long as the variance of the estimator decreases appropriately.



\section{Analysis}\label{sec: analysis}
\label{sec:analysis}

We divide our results between the deterministic and stochastic settings. For the sake of presentation, we defer the proofs of the convergence analyses to the appendix. We emphasize that the rates we prove are \emph{non-asymptotic}, and that we use the big-$\mathcal{O}$ notation for convenience rather than necessity; when constants are omitted in the main text, we present them in the appendix. We will make a distinction between any-time convergence results, that do not require specifying the horizon (number of iterations) $T$ in advance, and horizon-dependent convergence results. We cannot describe all the stochastic estimators we use in the main body of the paper so we compile Table~\ref{table: estimator_params} in the Appendix, which describes them in full detail and shows that they satisfy Assumption~\ref{assum: est}.

\subsection{Deterministic Setting}
In this subsection, we will assume that $m^t=\nabla f(x^t)$, i.e., the deterministic gradient is computed exactly.
Our first result is a $\mathcal{O}(1/t)$ any-time convergence rate on the functional-value gap when $f$ satisfies both Assumption~\ref{assum: L_smooth} and Assumption~\ref{assum: quasar-convexity}.

\begin{theorem}[Convergence Rate for Deterministic Quasar-Convex Problems]
\label{thm:det_quasar}
Let $f$ be a function that satisfies Assumptions~\ref{assum: L_smooth} and~\ref{assum: quasar-convexity}. 
Consider the sequence $\{x^t\}_{t=0}^{+\infty}$ generated by Algorithm~\ref{alg:BSFW} with $m^t=\nabla f(x^t)$ and $\eta_t = \tfrac{2}{\rho(t+2)}$. 
Then, for all $t\geq 0$,  
\begin{equation*}
    F_t \leq \frac{1}{t+1} \max \left\{F_0, \dfrac{2 L D^2}{\rho^2}\right\} = \mathcal{O}\left(\frac{1}{t}\right).
\end{equation*}
\qed
\end{theorem}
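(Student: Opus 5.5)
The plan is to establish the one-step recursion
\begin{equation*}
F_{t+1} \leq (1-\rho\eta_t) F_t + \tfrac{L}{2}\eta_t^2 D^2,
\end{equation*}
and then conclude by induction with $\eta_t = \tfrac{2}{\rho(t+2)}$, as in the classical open-loop Frank-Wolfe analysis. The update is always $x^{t+1} = x^t + \gamma d$. In the boosted case we have $d=\tilde d^t$ and $\gamma_t\|\tilde d^t\|\le \eta_t\|s^t-x^t\|$. In the revert case we have $d = s^t - x^t$ and step $\eta_t$. Either way the displacement satisfies $\|x^{t+1}-x^t\|\le \eta_t\|s^t-x^t\|\le \eta_t D$. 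So the $L$-smoothness descent lemma (Assumption~\ref{assum: L_smooth}) gives
\begin{equation*}
f(x^{t+1}) \le f(x^t) + \langle \nabla f(x^t), x^{t+1}-x^t\rangle + \tfrac{L}{2}\eta_t^2 D^2.
\end{equation*}
Feasibility of $x^{t+1}$ is guaranteed by the normalization of $\psi^{K_t}$ by $\Lambda_t$, as in \cite{combettes2020boosting}, together with $\gamma_t\le 1$.

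The main obstacle is the linear term in the boosted case. We need
\begin{equation*}
\gamma_t\langle \nabla f(x^t),\tilde d^t\rangle \le \eta_t \langle \nabla f(x^t), s^t - x^t\rangle = -\eta_t \gap(x^t).
\end{equation*}
I would argue this through alignment. The boosting loop only accepts updates that increase $\alignop(-\nabla f(x^t),\cdot)$. The first accepted direction is $\lambda_0(s^t-x^t)$ with $\lambda_0>0$. Hence
\begin{equation*}
\alignop(-\nabla f(x^t),\tilde d^t)\ge \alignop(-\nabla f(x^t),s^t-x^t),
\end{equation*}
since normalizing by $\Lambda_t>0$ does not change the cosine. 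Multiplying this inequality by $\|\nabla f(x^t)\|\gamma_t\|\tilde d^t\|$ gives
\begin{equation*}
\gamma_t\langle -\nabla f(x^t),\tilde d^t\rangle \ge \gamma_t\|\tilde d^t\|\,\frac{\gap(x^t)}{\|s^t-x^t\|}.
\end{equation*}
Since $\gamma_t<1$ in this branch, we have $\gamma_t\|\tilde d^t\| = \eta_t\|s^t-x^t\|$ exactly, so the right-hand side is $\eta_t\gap(x^t)$. This is precisely why the step size is scaled by $\|s^t-x^t\|/\|\tilde d^t\|$.

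The degenerate cases need separate checks. If $\tilde d^t=0$, or the loop breaks immediately, we revert to Frank-Wolfe. If $s^t=x^t$, then $\gap(x^t)\le 0$, so $\gap(x^t)=0$ and the iterate is stationary; I would verify the recursion trivially holds there. I also need $\Lambda_t>0$ whenever $\tilde d^t\neq 0$; I would take this from the conical-decomposition construction in \cite{combettes2020boosting}.

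Next comes quasar-convexity (Assumption~\ref{assum: quasar-convexity}). Because $s^t$ minimizes the linear form over $\mathcal C$,
\begin{equation*}
\langle\nabla f(x^t),s^t-x^t\rangle \le \langle\nabla f(x^t),x^\star-x^t\rangle \le -\rho F_t.
\end{equation*}
Together with the previous steps this yields the recursion above. The requirement $\eta_t\le 1$ forces $\rho\eta_t\le 1$, but only $1-\rho\eta_t = t/(t+2)\ge 0$ is actually used.

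Finally, I would prove $F_t\le M/(t+1)$ by induction, with $M=\max\{F_0, 2LD^2/\rho^2\}$. The base case $t=0$ is immediate. For the inductive step, substitute $\rho\eta_t = 2/(t+2)$ and $\tfrac{L}{2}\eta_t^2D^2 = \tfrac{2LD^2}{\rho^2(t+2)^2}\le \tfrac{M}{(t+2)^2}$. This gives
\begin{equation*}
F_{t+1}\le \frac{t}{t+2}\frac{M}{t+1}+\frac{M}{(t+2)^2}.
\end{equation*}
It then remains to check that $\tfrac{t}{(t+1)(t+2)}+\tfrac{1}{(t+2)^2}\le \tfrac{1}{t+2}$. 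This is equivalent to $t(t+2)+(t+1)\le (t+1)(t+2)$, i.e. $t^2+3t+1\le t^2+3t+2$, which holds.
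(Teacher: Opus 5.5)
Your proposal is correct and follows essentially the same route as the paper. The paper packages your alignment argument into Lemma~\ref{lem:alignment}, citing Proposition~3.1 of \cite{combettes2020boosting} where you re-derive the monotonicity from the acceptance rule; it then uses Lemma~\ref{lem:recursion_det_base}, the linear minimization property and quasar-convexity to get the same recursion $F_{t+1}\le(1-\rho\eta_t)F_t+\tfrac{LD^2}{2}\eta_t^2$, and closes with the same induction on $F_t\le \max\{F_0,2LD^2/\rho^2\}/(t+1)$.

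One caveat applies equally to the paper's proof: for $\rho<1$ the prescribed schedule gives $\eta_0=1/\rho>1$. So $\eta_t\le 1$ is not automatic, contrary to what your remark suggests. In the revert branch, $x^{t+1}=x^t+\eta_t(s^t-x^t)$ then need not lie in $\mathcal{C}$, and Case~II of Lemma~\ref{lem:recursion_det_base} tacitly assumes $\eta_t\in[0,1]$ in the same way.
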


\begin{remark}
    In the convex setting, $\rho=1$ and the step size is parameter agnostic: it does not depend on knowledge of any of the problem-specific constants like the Lipschitz constant $L$ of $\nabla f$.
\end{remark}

The next result guarantees a $\mathcal{O}(1/\sqrt{t})$ any-time convergence rate on the Frank-Wolfe gap defined in \eqref{eq:gap} by adjusting the step decay $\{\eta_t\}$ compared to the quasar-convex case.

\begin{theorem}[Convergence for Deterministic Nonconvex Problems]
\label{thm:det_nonconvex}
Let $f$ be a function that satisfies Assumption~\ref{assum: L_smooth}. 
Consider the sequence $\{x^t\}_{t=0}^{+\infty}$ generated by Algorithm~\ref{alg:BSFW} with $m^t=\nabla f(x^t)$ and $\eta_t = \tfrac{1}{\sqrt{t+1}}$. 
Then, for all $t\geq 0$,
\begin{equation*}
    \min_{0 \leq i \leq t} \gap(x^i) 
\leq \dfrac{F_0 + L D^2}{\sqrt{t+1}} = \mathcal{O}\left(\frac{1}{\sqrt{t}}\right).
\end{equation*}
\qed
\end{theorem}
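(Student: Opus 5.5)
Our plan is to establish a one-step descent inequality of the form
\begin{equation*}
f(x^{t+1}) \le f(x^t) - \eta_t \gap(x^t) + \tfrac{L}{2}\eta_t^2 D^2,
\end{equation*}
valid in both branches of Algorithm~\ref{alg:BSFW}, and then telescope it. The second-order term comes directly from Assumption~\ref{assum: L_smooth} via the descent lemma, $f(x^{t+1}) \le f(x^t) + \langle \nabla f(x^t), x^{t+1}-x^t\rangle + \tfrac L2\|x^{t+1}-x^t\|^2$.

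In the revert-to-FW branch we have $x^{t+1}-x^t = \eta_t(s^t-x^t)$. Hence the linear term equals $-\eta_t\gap(x^t)$, since $s^t\in\lmo(\nabla f(x^t))$, and $\|x^{t+1}-x^t\|\le \eta_t D$.

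In the boosted branch, $\gamma_t<1$ gives $\gamma_t\|\tilde d^t\| = \eta_t\|s^t-x^t\|\le \eta_t D$, so the quadratic term is again bounded by $\tfrac L2\eta_t^2D^2$. The linear term is
\begin{equation*}
\gamma_t\langle \nabla f(x^t),\tilde d^t\rangle = -\eta_t\|s^t-x^t\|\,\alignop(-\nabla f(x^t),\tilde d^t)\,\|\nabla f(x^t)\|,
\end{equation*}
and we must show this is at most $-\eta_t\langle -\nabla f(x^t), s^t-x^t\rangle = -\eta_t\gap(x^t)$. Writing $\gap(x^t)=\|\nabla f(x^t)\|\,\|s^t-x^t\|\,\alignop(-\nabla f(x^t),s^t-x^t)$, it suffices that
\begin{equation*}
\alignop(-\nabla f(x^t),\tilde d^t)\ge \alignop(-\nabla f(x^t),s^t-x^t).
\end{equation*}

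This alignment monotonicity is the main obstacle. To prove it we will use the structure of \textbf{Boost}. At $k=0$ we have $\psi^0=0$, so $\alignop(-m^t,\psi^0)=-1$. The first candidate is $\phi^0=\lambda_0(s^t-x^t)$ with $\lambda_0=\langle -m^t,s^t-x^t\rangle/\|s^t-x^t\|^2=\gap(x^t)/\|s^t-x^t\|^2\ge 0$. If $\gap(x^t)>0$, then $\phi^0$ is a positive multiple of $s^t-x^t$ and its alignment is $\ge 0 > -1$. Each subsequent acceptance increases the alignment by at least $\delta$, so the final $\psi^{K_t}$ is at least as aligned as $s^t-x^t$. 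Because $\Lambda_t>0$, normalization preserves direction, and therefore $\tilde d^t$ is at least as aligned as $s^t-x^t$.

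Edge cases need separate care. If $\gap(x^t)=0$, the inequality is trivial, or the first candidate is rejected so that $\Lambda_t=0$ and $\tilde d^t=0$, giving $\gamma_t=1$ and the FW branch. We also need $\Lambda_t>0$ whenever $\psi^{K_t}\ne0$. We will argue this as in \cite{combettes2020boosting}: the away-step update keeps $\Lambda_t$ positive because $\lambda_k<\|\psi^k\|$ there, and in general we may need that fact from the original analysis. If $\lambda_0=0$, the first candidate fails the improvement test only when $\delta$ exceeds the gain, and in that case we fall back to FW.

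With the descent inequality in hand, we rearrange it to
\begin{equation*}
\eta_i\gap(x^i)\le f(x^i)-f(x^{i+1})+\tfrac L2\eta_i^2D^2
\end{equation*}
and sum over $i=0,\dots,t$. The function values telescope to $f(x^0)-f(x^{t+1})\le F_0$. Using $\eta_i=1/\sqrt{i+1}$, we bound $\sum_i\eta_i\ge (t+1)/\sqrt{t+1}=\sqrt{t+1}$ and $\sum_i\eta_i^2=\sum_i 1/(i+1)$. We then lower bound the left side by $\min_i\gap(x^i)\sum_i\eta_i$ and divide through.

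The harmonic sum gives an $O(\log t)$ factor, so getting exactly $(F_0+LD^2)/\sqrt{t+1}$ needs extra care. We would first try the weighting trick of bounding $\sum_{i\le t}\eta_i^2/\sum_{i\le t}\eta_i$ carefully. If that fails, we would restrict the sum to the last half of the indices, $i\ge \lfloor t/2\rfloor$, where $\sum\eta_i^2\le \log 2+1$ and $\sum\eta_i\gtrsim\sqrt{t+1}$, adjusting constants to land at $F_0+LD^2$. That half-window argument requires bounding $f(x^{\lfloor t/2\rfloor})-f^\star$ by $F_0$ plus the accumulated $\tfrac L2 D^2\sum\eta_i^2$ error, and we expect that accounting to be the delicate part of the constant-chasing.
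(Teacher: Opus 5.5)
Your one-step inequality $f(x^{t+1})\le f(x^t)-\eta_t\gap(x^t)+\frac{L}{2}\eta_t^2D^2$ is correct and is exactly the paper's Lemma~\ref{lem:recursion_det_base}. The paper imports alignment monotonicity from Proposition~3.1 of \cite{combettes2020boosting} via Lemma~\ref{lem:alignment}; you derive it directly from \textbf{Boost}, and that derivation works. $\Lambda_t$ stays positive because an away step has $0\le\lambda_k=\|\psi^k\|-\langle -m^t,\psi^k\rangle/\|\psi^k\|<\|\psi^k\|$ once $\psi^k$ is positively aligned. Feasibility of $x^t+\tilde d^t$ you still have to take from \cite{combettes2020boosting}, as the paper implicitly does.

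\textbf{The gap is the final step.} Weighting by $\eta_i$ and summing necessarily produces $\frac{LD^2}{2}\sum_{i\le t}\eta_i^2=\frac{LD^2}{2}\sum_{k=1}^{t+1}\frac{1}{k}$. So what you obtain is $\min_{i\le t}\gap(x^i)\le\bigl(F_0+\frac{LD^2}{2}\sum_{k=1}^{t+1}\frac{1}{k}\bigr)/\sqrt{t+1}$, and neither fix removes the logarithm. First, $\sum_i\eta_i^2/\sum_i\eta_i$ really is of order $\ln t/\sqrt t$. Second, in the half-window version, bounding $F_{\lfloor t/2\rfloor}$ by $F_0$ plus accumulated errors costs $\frac{LD^2}{2}\sum_{i<\lfloor t/2\rfloor}\frac{1}{i+1}\approx\frac{LD^2}{2}\ln(t/2)$, which is precisely the term you wanted to avoid. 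The paper instead divides the inequality by $\eta_i$ \emph{before} summing. The error term becomes $\frac{L}{2}\eta_iD^2$, whose sum is at most $LD^2\sqrt{t+1}$ (this is where the constant $LD^2$ comes from). The left side becomes $\sum_{i\le t}\gap(x^i)\ge(t+1)\min_{i\le t}\gap(x^i)$.

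That device has a cost you should not overlook. The function values now enter as $\sum_{i=0}^t\sqrt{i+1}\,(F_i-F_{i+1})$, which the paper bounds by $\sqrt{t+1}\,F_0$ by pulling out $\sqrt{i+1}\le\sqrt{t+1}$. This requires $F_{i+1}\le F_i$ for every $i$, which open-loop (boosted) FW does not guarantee; your own inequality only gives $F_{i+1}\le F_i+\frac{L}{2}\eta_i^2D^2$. For instance, take $\mathcal{C}=[-1,1]$, $f(x)=\frac{L}{2}(x-\frac{1}{2})^2$ and $x^0=1$. The first iteration has $\gamma_0=1$, reverts to FW with $\eta_0=1$, and lands at $x^1=-1$, so $F_1=\frac{9L}{8}>F_0=\frac{L}{8}$. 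Abel summation gives the correct estimate
\begin{equation*}
\sum_{i=0}^t\sqrt{i+1}\,(F_i-F_{i+1})=F_0+\sum_{i=1}^t\bigl(\sqrt{i+1}-\sqrt{i}\bigr)F_i-\sqrt{t+1}\,F_{t+1}\le\sqrt{t+1}\,\max_{0\le i\le t}F_i .
\end{equation*}
The rigorous outcome of this route is therefore $\min_{i\le t}\gap(x^i)\le\bigl(\max_{0\le i\le t}F_i+LD^2\bigr)/\sqrt{t+1}$. This is a log-free $\mathcal{O}(1/\sqrt{t})$ rate, but with $F_0$ replaced by $\max_{i\le t}F_i\le\max_{\mathcal{C}}f-f^\star$. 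Your half-window argument also closes, with a worse constant, if you bound $F_{\lfloor t/2\rfloor}$ by this maximum rather than by $F_0$ plus accumulated errors. With $F_0$ itself in the numerator, both your argument and a repaired version of the paper's yield only the bound with the extra $\ln t$ factor.
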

\begin{remark}
    The step size used in the convergence result above is parameter agnostic in addition to being an any-time guarantee. A horizon-dependent convergence rate for Algorithm~\ref{alg:BSFW} in the deterministic setting with a step size that depends on the horizon $T$ is given in Theorem~\ref{thm:det_ncv_fixed_horizon} in Appendix~\ref{app:proofs}, with an improved constant but the same order of convergence.
\end{remark}

\subsection{Stochastic Setting}
For the convergence analysis in the stochastic setting, we will make a general assumption on the second moment of the error of the stochastic estimator of the gradient, summarized in Assumption~\ref{assum: est} and inspired by Assumption 2.1 of \cite{nazykov2024stochastic}. Under this assumption, we can give convergence results for BSFW applied to both nonconvex and quasar-convex problems. Since many stochastic estimators can be shown to satisfy this assumption when used with BSFW, the resulting analysis is unified.

\begin{assumption}[Estimator Assumptions \cite{nazykov2024stochastic}]
\label{assum: est}
Let $\{x^t\}_{t=0}^T$ denote the sequence of iterates generated by Algorithm~\ref{alg:BSFW}. There exist constants $A, B, C, E \ge 0$, parameters $\rho_1, \rho_2 \in ]0, 1]$, and a (possibly random) sequence $\{\sigma_t\}_{t \ge 0}$ such that the following conditions hold $\forall t \ge 1$:
\begin{equation*}
\begin{aligned}
    \mathbb{E}_{t-1}\left[\|\Delta^t \|^2\right]
&\le (1 - \rho_1)\|\Delta^{t-1}\|^2 
+ A \sigma_{t-1}^2 + \eta_{t-1}^2 B D^2 + C,\\
    \mathbb{E}_{t-1}\!\left[\sigma_t^2\right] 
&\le (1 - \rho_2)\sigma_{t-1}^2 + \eta_{t-1}^2 E D^2.
\end{aligned}
\end{equation*}
\end{assumption}
In the appendix, we show that a slew of stochastic estimators satisfy this assumption when used with BSFW; this is then summarized in Table~\ref{table: estimator_params}.

As in the deterministic setting, we start with quasar-convex functions with an appropriately chosen step decay $\{\eta_t\}$. However, in this setting we will use horizon-dependent step decays and any-time versions.

\begin{theorem}[Convergence for Stochastic Quasar-Convex Problems]\label{thm:stoch_fixed_horizon_quasar_convex}
Let $f$ be a function that satisfies Assumptions~\ref{assum: L_smooth} and~\ref{assum: quasar-convexity}. Suppose the stochastic gradient estimator $m^t$ and auxiliary sequence $\{\sigma_t\}$ satisfy Assumption~\ref{assum: est} with parameters $\rho_1,\rho_2\in]0,1]$ and constants $A,B,C,E\geq 0$. Consider the sequence $\{x^t\}_{t=0}^T$ generated by Algorithm~\ref{alg:BSFW} by fixing $T\in\mathbb{N}$ and using the constant, horizon-dependent step decay 
\begin{equation*}
    \eta_t = \begin{cases}
        \tfrac{1}{\rho d}, &  T\leq d\\
        \tfrac{1}{\rho d}, &  T>d\text{ and }t\leq t_0\\
        \tfrac{2}{\rho(2d+t-t_0)}, & T>d\text{ and }t\geq t_0
    \end{cases}
\end{equation*}
with $d:=\tfrac{2}{\min\{\rho_1,\rho_2\}}$ and $t_0:=\lfloor T/2 \rfloor$. Then,
\begin{equation*}
    \mathbb{E}[F_T]=\mathcal{O}\left(\frac{1}{T} + \sqrt{C}\right).
\end{equation*}
If the estimator satisfies Assumption~\ref{assum: est} with $C=0$, then the last term vanishes and we obtain a $\mathcal{O}(1/T)$ rate.
\qed
\end{theorem}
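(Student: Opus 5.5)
The plan is to reduce the stochastic quasar-convex case to a one-step descent inequality with an error term, and then control that error with a Lyapunov function built from Assumption~\ref{assum: est}.

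\textbf{Step 1: one-step descent.} Fix an iteration $t$. In both branches of Algorithm~\ref{alg:BSFW} the displacement is feasible and satisfies $\|x^{t+1}-x^t\|\le \eta_t D$. In the boosted branch, $\gamma_t\|\tilde d^t\|\le \eta_t\|s^t-x^t\|$; in the revert branch the step is $\eta_t(s^t-x^t)$. Hence $L$-smoothness gives
\[
f(x^{t+1})\le f(x^t)+\langle \nabla f(x^t),x^{t+1}-x^t\rangle+\tfrac{L}{2}\eta_t^2D^2 .
\]

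\textbf{Step 2: the key linear-term inequality.} I want
\[
\langle m^t,x^{t+1}-x^t\rangle\le \eta_t\langle m^t,s^t-x^t\rangle .
\]
This is the step I expect to be the main obstacle. I would first prove that the boosting output satisfies $\langle -m^t,\tilde d^t\rangle\ge \langle -m^t,s^t-x^t\rangle\,\|\tilde d^t\|/\|s^t-x^t\|$, i.e.\ that its alignment is at least that of the FW direction. This should follow from $\psi^1$ being the projection of $-m^t$ onto the ray $v^0-x^t$ (with $v^0=s^t$), together with the fact that each accepted refinement increases the alignment by at least $\delta$. Scaling this bound by $\gamma_t=\eta_t\|s^t-x^t\|/\|\tilde d^t\|$ gives the claim.

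Combining Step 2 with $\langle m^t,s^t-x^t\rangle\le\langle m^t,x^\star-x^t\rangle$ (optimality of $s^t$), the linear term becomes
\[
\langle \nabla f(x^t),x^{t+1}-x^t\rangle\le \eta_t\langle \nabla f(x^t),x^\star-x^t\rangle+\langle \Delta^t,\eta_t(x^\star-x^t)-(x^{t+1}-x^t)\rangle .
\]
Quasar-convexity then gives $\eta_t\langle\nabla f(x^t),x^\star-x^t\rangle\le-\rho\eta_tF_t$. The error term is at most $2\eta_tD\|\Delta^t\|$ by Cauchy--Schwarz. I then apply Young's inequality in the form $2\eta_tD\|\Delta^t\|\le \eta_t^2D^2 d/\ldots+\|\Delta^t\|^2\ldots$, but more simply I will keep $2\eta_tD\,\mathbb{E}\|\Delta^t\|$ and bound $\mathbb{E}\|\Delta^t\|\le\sqrt{\mathbb{E}\|\Delta^t\|^2}$. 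The result is
\[
\mathbb{E}F_{t+1}\le(1-\rho\eta_t)\mathbb{E}F_t+2\eta_tD\sqrt{\mathbb{E}\|\Delta^t\|^2}+\tfrac{L}{2}\eta_t^2D^2 .
\]

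\textbf{Step 3: bounding the estimator error.} Unroll Assumption~\ref{assum: est} through the potential $\Phi_t=\mathbb{E}\|\Delta^t\|^2+\tfrac{2A}{\rho_2}\mathbb{E}\sigma_t^2$, which contracts by the factor $1-\min\{\rho_1,\rho_2\}/2$ and picks up the additive terms $\eta_{t-1}^2D^2(B+2AE/\rho_2)+C$. The choice $d=2/\min\{\rho_1,\rho_2\}$ ensures that $\eta_t$ varies slowly relative to this contraction, i.e.\ $\eta_{t-1}\le(1+\tfrac{1}{2d})\eta_t$-type ratios. A simple induction then gives
\[
\Phi_t\lesssim \eta_t^2D^2\,\frac{B+AE/\rho_2}{\min\{\rho_1,\rho_2\}^2}+\frac{C}{\min\{\rho_1,\rho_2\}}+(\text{a geometric term from }\Phi_0) .
\]
So $\sqrt{\mathbb{E}\|\Delta^t\|^2}\lesssim \eta_t D K+\sqrt{C}$, plus a geometrically decaying piece.

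\textbf{Step 4: the recursion $\mathbb{E}F_{t+1}\le(1-\rho\eta_t)\mathbb{E}F_t+c_1\eta_t^2+c_2\eta_t\sqrt C+c_3\eta_t q^t$.} This splits into two phases.

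\emph{Phase 1 (constant step).} For $t\le t_0$, $\rho\eta_t=1/d$, so the recursion contracts geometrically toward $O(1/d^2\cdot d)+O(\sqrt C)$. This burns off $F_0$ and the initial estimator error by a factor $(1-1/d)^{T/2}$, which is $O(1/T)$. (When $T\le d$, the bound $O(1/T)$ is trivial since $F_T\le LD^2$-type bounds are constants times $d/T$.)

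\emph{Phase 2 (decaying step).} For $t\ge t_0$, $\rho\eta_t=2/(2d+t-t_0)$, and the standard FW induction $\mathbb{E}F_t\le \frac{c}{2d+t-t_0}+O(\sqrt C/\rho)$ applies. Here I use $\sum$-free induction exactly as in Theorem~\ref{thm:det_quasar}. At $t=T$ we have $2d+T-t_0\ge T/2$, which gives $\mathbb{E}F_T=O(1/T+\sqrt C)$. The $\sqrt C$ term survives as a fixed point of $x\mapsto(1-\rho\eta)x+\eta\sqrt C$, namely $\sqrt C/\rho$.
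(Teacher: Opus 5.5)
Your proposal is correct, but it reaches the bound through a different decomposition than the paper.

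\textbf{The paper's route.} The paper never separates the estimator error from the objective. In Lemma~\ref{lem: stoch_quasar_convex_descent} it applies Young's inequality with a free parameter $\alpha$ to the cross terms, obtaining $\mathbb{E}[F_{t+1}]\le(1-\rho\eta_t)\mathbb{E}[F_t]+\frac{\alpha}{L}\mathbb{E}[\|\Delta^t\|^2]+\eta_t^2LD^2(\frac1\alpha+\frac12)$. It then absorbs $\|\Delta^t\|^2$ and $\sigma_t^2$ into a single Lyapunov function $r_t=F_t+M_1\|\Delta^t\|^2+M_2\sigma_t^2$, with $M_1=\frac{2\alpha}{\rho_1L}$ and $M_2=\frac{2M_1A}{\rho_2}$. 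The condition $\rho\eta_t\le\min\{\rho_1,\rho_2\}/2$ is where $d$ comes from: it makes the whole potential contract at rate $1-\rho\eta_t$, giving $R_{t+1}\le(1-\rho\eta_t)R_t+a\eta_t^2+b$ with $b=M_1C$. This single recursion is unrolled over the two phases. Finally $\alpha$ is optimized, with $\alpha^\star$ of order $1/T$ when $C>0$, to turn the accumulated $bT$ into the $\sqrt C$ term.

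\textbf{Your route.} You use Cauchy--Schwarz, so the error enters linearly as $2\eta_tD\,\mathbb{E}\|\Delta^t\|$. You bound $\mathbb{E}\|\Delta^t\|^2$ separately through $\Phi_t$, and then run a scalar recursion with forcing $c_1\eta_t^2+c_2\eta_t\sqrt C+c_3\eta_tq^t$. This is essentially the template the paper reserves for Heavy Ball (Theorem~\ref{thm: alt recursion quasar convex}).

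\textbf{What each buys.} Your approach handles the bias more naturally. Because $\sqrt C$ is multiplied by $\eta_t$, it appears directly as the fixed point $c_2\sqrt C/\rho$, with no horizon-dependent Lyapunov weights (in the paper, $r_0$ depends on $T$ through $\alpha^\star$). The $F$-recursion itself only needs $\rho\eta_t\le 1$. The role of $d$ shifts to keeping $\eta_t$ slowly varying relative to the $1-\frac1d$ contraction of $\Phi_t$.

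The price is extra work:
\begin{itemize}
\item The step-tracking induction for $\Phi_t$. It does close, since $(1-\frac1d)(1+\frac{1}{2d})^2=1-\frac{3}{4d^2}-\frac{1}{4d^3}<1$. This yields your $\min\{\rho_1,\rho_2\}^{-2}$ constant, which is cruder than the paper's dependence but irrelevant for the $\mathcal{O}$ statement.
\item The Jensen step.
\item A phase-2 induction, with $\tau_k=2d+k$, that must absorb $q^{t_0+k}/\tau_k$ into an $\mathcal{O}(1/\tau_k^2)$ forcing. This is harmless because $\sup_j q^j(2d+j)<\infty$.
\end{itemize}
The paper's coupled potential avoids all of this and gives explicit constants.

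\textbf{Two small fixes.} For $T\le d$, the bound $F_T\le LD^2$ is not valid in general. Use $F_T\le\max_{\mathcal C}f-f^\star<\infty$ (by compactness) instead, or simply run your Phase~1 estimate for all $T$ steps using $1/d\le 1/T$. In Step~2 you also need $\Lambda_t>0$ and $x^t+\tilde d^t\in\mathcal C$, so that normalization preserves alignment and the step is feasible. The paper imports these boosting properties from \cite{combettes2020boosting} rather than reproving them.
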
 

\begin{theorem}[Any-Time Convergence for Stochastic Quasar-Convex Problems]\label{thm:stoch_quasar_convex_param_ag}
Let $f$ be a function that satisfies $L$-smoothness Assumption~\ref{assum: L_smooth} and $\rho$-quasar-convexity Assumption~\ref{assum: quasar-convexity}. Suppose the stochastic estimator $m^t$ and auxiliary sequence $\{\sigma_t\}$ satisfy Assumption~\ref{assum: est} with parameters $\rho_1, \rho_2 \in ]0,1]$ and constants $A, B, C, E \geq 0$. Let $\{x^t\}_{t=0}^{+\infty}$ be a sequence generated by Algorithm~\ref{alg:BSFW} by choosing the step decay
\begin{equation*}
\eta_t = \frac{2}{\rho(t + \nu)}, \quad \text{where }\quad  \nu = \max\left\{2, \frac{4}{\min\{\rho_1, \rho_2\}}\right\}.
\end{equation*}
Then, the expected functional-value gap satisfies 
\begin{equation*}
\mathbb{E}[F_t] \leq\sqrt{\frac{16 D^2}{\rho^2(t + \nu)} \left( \frac{32 D^2 B}{\rho^2(t + \nu)\rho_1} + \frac{64 D^2 AE}{\rho^2(t + \nu)\rho_1 \rho_2} + \frac{2CT}{\rho_1} \right)} +\frac{4\nu^2 \mathbb{E}[r_0]}{(t + \nu)^2} + \frac{8 D^2 L}{\rho^2(t + \nu)}.
\end{equation*}
where $r_t$ is a Lyapunov function defined by
\begin{equation*}
\forall t: \quad r_t := F_t + \frac{2\alpha^{\star}}{\rho_1 L} \|\Delta^t\|^2 + \frac{4\alpha^{\star} A}{\rho_1\rho_2 L} \sigma_t^2,
\end{equation*}
with 
\begin{equation*}
    \alpha^{\star} = \sqrt{\left( \frac{16 D^2 L}{\rho^2(T + \nu)} \right) / \left( \frac{32 D^2 B}{\rho^2(T + \nu) \rho_1 L} + \frac{64 D^2 AE}{\rho^2(T + \nu) \rho_1 \rho_2 L} + \frac{2CT}{\rho_1 L} \right)}.
\end{equation*}
If $C = 0$, the last term in the square root vanishes and we obtain a $\mathcal{O}(1/t)$ rate.
\qed
\end{theorem}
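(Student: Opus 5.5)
The plan is to build a one-step descent inequality for $F_t$, add the Lyapunov terms $\|\Delta^t\|^2$ and $\sigma_t^2$ weighted as in $r_t$, and then unroll the resulting recursion with $\eta_t = 2/(\rho(t+\nu))$ by a standard induction.

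\textbf{Step 1: one-step descent.} Write $x^{t+1}=x^t+\gamma_t d^t$ with $\gamma_t d^t$ equal to $\eta_t(s^t-x^t)$ in the revert case. In the boosted case the step rule gives $\|\gamma_t\tilde d^t\|\le \eta_t\|s^t-x^t\|\le \eta_t D$. Since $x^t+\tilde d^t\in\mathcal{C}$, the update is a convex combination and the iterates stay feasible. $L$-smoothness then gives
\begin{equation*}
F_{t+1}\le F_t+\gamma_t\langle \nabla f(x^t),d^t\rangle+\tfrac{L}{2}\eta_t^2D^2 .
\end{equation*}
The key inequality I need is
\begin{equation*}
\gamma_t\langle m^t,d^t\rangle\le \eta_t\langle m^t,s^t-x^t\rangle .
\end{equation*}
For the boosted direction this should follow from the construction in \textbf{Boost}. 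The first accepted step gives $\psi^1=\lambda_0(s^t-x^t)$, and alignment only increases, so $\langle -m^t,\tilde d^t\rangle/\|\tilde d^t\|\ge \langle -m^t,s^t-x^t\rangle/\|s^t-x^t\|$. Multiplying by $\gamma_t\|\tilde d^t\|=\eta_t\|s^t-x^t\|$ gives the claim. The possibility that the first step is rejected, making $\tilde d^t=0$, is handled by the revert branch.

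\textbf{Step 2: handle the gradient error.} Write $\nabla f(x^t)=m^t-\Delta^t$ and bound the error term by Cauchy–Schwarz:
\begin{equation*}
\gamma_t\langle -\Delta^t,d^t\rangle\le \eta_tD\|\Delta^t\| .
\end{equation*}
Next, $\langle m^t,s^t-x^t\rangle\le\langle m^t,x^\star-x^t\rangle$ by the LMO property, and this is at most $\langle\nabla f(x^t),x^\star-x^t\rangle+D\|\Delta^t\|$. Quasar-convexity bounds the gradient term by $-\rho F_t$. Altogether,
\begin{equation*}
F_{t+1}\le(1-\rho\eta_t)F_t+2\eta_tD\|\Delta^t\|+\tfrac L2\eta_t^2D^2 .
\end{equation*}

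\textbf{Step 3: Young's inequality and the Lyapunov function.} Apply Young's inequality with parameter $\alpha$:
\begin{equation*}
2\eta_tD\|\Delta^t\|\le \tfrac{\alpha}{L}\|\Delta^t\|^2+\tfrac{L\eta_t^2D^2}{\alpha} .
\end{equation*}
Take conditional expectations and add the Assumption~\ref{assum: est} recursions with the weights $\tfrac{2\alpha}{\rho_1L}$ and $\tfrac{4\alpha A}{\rho_1\rho_2L}$ appearing in $r_t$. These weights are chosen so that the coefficients of $\|\Delta^t\|^2$ and $\sigma_t^2$ contract by factors $(1-\rho_1/2)$ and $(1-\rho_2/2)$. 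Because $\nu\ge 4/\min\{\rho_1,\rho_2\}$, we have $\rho\eta_t\le\min\{\rho_1,\rho_2\}/2$, so all three contraction factors are dominated by $(1-\rho\eta_t)$. This yields
\begin{equation*}
\mathbb{E}[r_{t+1}]\le(1-\rho\eta_t)\mathbb{E}[r_t]+\eta_t^2D^2\Big(cL+\tfrac{L}{\alpha}+\alpha\big(\tfrac{2B}{\rho_1L}+\tfrac{4AE}{\rho_1\rho_2L}\big)\Big)+\tfrac{2\alpha C}{\rho_1L}.
\end{equation*}

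\textbf{Step 4: unroll and optimize $\alpha$.} With $\rho\eta_t=2/(t+\nu)$, multiply by $(t+\nu)^2$ and telescope, as in the proof of Theorem~\ref{thm:det_quasar}. The initial value contributes $\nu^2\mathbb{E}[r_0]/(t+\nu)^2$. The $\eta_t^2$ terms contribute $O(1/(t+\nu))$. The constant term $C$ accumulates to at most $T\cdot C$ after division. Finally, use $F_t\le r_t$ and choose $\alpha=\alpha^\star$ to balance the $L/\alpha$ term against the $\alpha(\cdot)$ terms, which produces the square-root expression.

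\textbf{Expected obstacle.} The delicate part is Step 1. I need to check that the boosted direction really satisfies the alignment comparison with $s^t-x^t$, including the edge cases where $\Lambda_t$ shrinks because of away-type steps. I also need to match the exact constants ($16$, $32$, $64$) by careful bookkeeping of the Young parameter across the telescoped sum.
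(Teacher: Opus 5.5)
Your proposal is correct and follows the paper's proof essentially step for step. Your Cauchy--Schwarz-then-Young estimate gives exactly the paper's descent inequality $F_{t+1}\le(1-\rho\eta_t)F_t+\frac{\alpha}{L}\|\Delta^t\|^2+\eta_t^2LD^2\bigl(\frac{1}{\alpha}+\frac{1}{2}\bigr)$, and the Lyapunov weights, the contraction via $\nu\ge 4/\min\{\rho_1,\rho_2\}$, the $(t+\nu)^2$-weighted telescoping and the choice $\alpha=\alpha^\star$ all coincide with the paper's, which simply takes the alignment step from Lemma~\ref{lem:alignment} (citing Combettes--Pokutta) instead of re-deriving it.

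On your first worry: an away-type step can never be accepted. It rescales $\psi^k$ by the factor $\langle -m^t,\psi^k\rangle/\|\psi^k\|^2$, which is positive once the first step has been accepted, so the alignment is unchanged and the $\delta$-test fails; hence $\Lambda_t>0$ automatically.

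On your second worry: this bookkeeping yields twice the displayed square root (a $64$ in place of the $16$). The paper's own proof ends there as well, so do not expect the displayed constant to fall out of this route; the $\mathcal{O}(1/t)$ conclusion for $C=0$ is unaffected.
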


For nonconvex functions, we have the following convergence rate for the Frank-Wolfe gap in expectation; an any-time step decay and convergence rate can be found in Appendix~\ref{app:proofs} with an additional $\ln(t)$ factor.

\begin{theorem}[Convergence for Stochastic Nonconvex Problems]\label{thm:stoch_fixed_horizon_nonconvex}
Let $f$ be a function that satisfies Assumption~\ref{assum: L_smooth}. Suppose the stochastic gradient estimator $m^t$ and auxiliary sequence $\{\sigma_t\}$ satisfy Assumption~\ref{assum: est} with parameters $\rho_1, \rho_2 \in ]0,1]$ and constants $A, B, C, E \ge 0$. 
Consider the sequence $\{x^t\}_{t=0}^{T}$ generated by Algorithm~\ref{alg:BSFW} by fixing a $T \in \mathbb{N}$ and using the constant, horizon-dependent step decay $\eta_t = \tfrac{1}{\sqrt{T}}$. Then, 
\begin{equation*}
    \mathbb{E}\left[\min_{0 \le t \le T-1}  \operatorname{Gap}(x^t)\right]= \mathcal{O}\left(\frac{1}{\sqrt{T}} + \sqrt{C}\right).
\end{equation*}
If the estimator satisfies Assumption~\ref{assum: est} with $C=0$, then the last term vanishes and we obtain a $\mathcal{O}(1/\sqrt{T})$ rate.
\qed
\end{theorem}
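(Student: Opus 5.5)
The plan is to derive a one-step descent inequality that holds in both branches of Algorithm~\ref{alg:BSFW}, telescope it against the Lyapunov potential $r_t$, and bound the accumulated estimator error through Assumption~\ref{assum: est}. With the constant step $\eta_t=1/\sqrt{T}$ everything can be summed explicitly.

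\emph{Step 1: the boosted direction is a descent direction for $m^t$, at least as good as FW.} The output of \textbf{Boost} satisfies $x^t+\tilde d^t\in\mathcal{C}$, since $\psi^{K_t}$ is a conic combination with total weight $\Lambda_t$. By construction $\psi^{K_t}$ has alignment with $-m^t$ at least that of $\lambda_0(s^t-x^t)$, because the first accepted step is the FW direction and alignment only increases by at least $\delta$ afterwards. I would then show
\begin{equation*}
\langle m^t,\gamma_t d^t\rangle\le \eta_t\langle m^t,s^t-x^t\rangle \quad\text{and}\quad \|\gamma_t d^t\|\le \eta_t D.
\end{equation*}
In the boosted branch, $\gamma_t\|\tilde d^t\|=\eta_t\|s^t-x^t\|$. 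Combining this with $\mathrm{align}(-m^t,\tilde d^t)\ge \mathrm{align}(-m^t,s^t-x^t)$ gives
\begin{equation*}
\langle -m^t,\gamma_t\tilde d^t\rangle\ge \eta_t\|s^t-x^t\|\,\|m^t\|\,\mathrm{align}(-m^t,s^t-x^t)=\eta_t\langle -m^t,s^t-x^t\rangle.
\end{equation*}
In the fallback branch the step is literally the FW step with step size $\eta_t$. This is the key structural point; the same fact presumably underlies Theorem~\ref{thm:det_nonconvex}.

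\emph{Step 2: stochastic descent.} By $L$-smoothness,
\begin{equation*}
f(x^{t+1})\le f(x^t)+\langle m^t,\gamma_td^t\rangle-\langle\Delta^t,\gamma_td^t\rangle+\tfrac{L}{2}\eta_t^2D^2.
\end{equation*}
Using Step 1 together with $\langle m^t,s^t-x^t\rangle\le\langle m^t,\hat s-x^t\rangle$ for $\hat s\in\lmo(\nabla f(x^t))$, I get $\langle m^t,s^t-x^t\rangle\le -\gap(x^t)+\|\Delta^t\|D$. The cross term is bounded by $\eta_tD\|\Delta^t\|$ by Cauchy--Schwarz. Hence
\begin{equation*}
F_{t+1}\le F_t-\eta\,\gap(x^t)+2\eta D\|\Delta^t\|+\tfrac{L}{2}\eta^2D^2.
\end{equation*}

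\emph{Step 3: controlling $\sum_t\mathbb{E}\|\Delta^t\|$, the main obstacle.} I would use Young's inequality, $2\eta D\|\Delta^t\|\le \eta\big(\tfrac{D^2}{\alpha}+\alpha\|\Delta^t\|^2\big)$, and absorb $\alpha\eta\|\Delta^t\|^2$ into the potential $\Phi_t=F_t+c_1\|\Delta^t\|^2+c_2\sigma_t^2$. Taking $c_2=2c_1A/\rho_2$ and $c_1\rho_1/2\ge\alpha\eta$, the recursions of Assumption~\ref{assum: est} make the $\|\Delta\|^2$ and $\sigma^2$ terms contract. The residual per-step cost is then
\begin{equation*}
c_1(\eta^2BD^2+C)+c_2\eta^2ED^2+\tfrac{\eta D^2}{\alpha}+\tfrac{L}{2}\eta^2D^2.
\end{equation*}
Telescoping over $T$ steps and dividing by $\eta T=\sqrt{T}$ gives
\begin{equation*}
\tfrac1T\sum_t\mathbb{E}\,\gap(x^t)\lesssim \tfrac{\mathbb{E}\Phi_0}{\sqrt T}+\tfrac{D^2}{\alpha}+\tfrac{\alpha}{\rho_1}\Big(\tfrac{BD^2}{T}+\tfrac{AED^2}{\rho_2T}+\sqrt{T}\,\tfrac{C}{\sqrt{T}}\cdot\tfrac{1}{\eta}\Big)+\tfrac{LD^2}{\sqrt T}.
\end{equation*}
The delicate part is choosing $\alpha$ so that the $D^2/\alpha$ term is $\mathcal{O}(1/\sqrt{T})$ rather than a constant. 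A fixed $\alpha$ fails, so I would instead use the sharper recursion: $\mathbb{E}\|\Delta^t\|^2$ is itself $\mathcal{O}(\eta^2/\rho_1+\cdots+C/\rho_1)$ after a transient, so $\mathbb{E}\|\Delta^t\|=\mathcal{O}(\eta+\sqrt{C})$ via Jensen. I would unroll the two linear recursions directly, bounding $\sum_t\mathbb{E}\|\Delta^t\|^2\le \tfrac{1}{\rho_1}\big(\|\Delta^0\|^2+A\sum\sigma^2+T\eta^2BD^2+TC\big)$ and similarly for $\sigma$. Cauchy--Schwarz then gives $\sum_t\mathbb{E}\|\Delta^t\|\le\sqrt{T\sum\mathbb{E}\|\Delta^t\|^2}=\mathcal{O}(\sqrt{T}+T\eta+T\sqrt C)$. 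Multiplying by $2\eta D$ and dividing by $\eta T$ yields $\mathcal{O}(1/\sqrt T+\sqrt C)$.

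Finally, I would bound the minimum by the average, $\mathbb{E}\min_t\gap(x^t)\le\tfrac1T\sum_t\mathbb{E}\,\gap(x^t)$, and note that $F_0$ and $\mathbb{E}\|\Delta^0\|^2$ are finite constants. Together these give the stated $\mathcal{O}(1/\sqrt T+\sqrt C)$ rate, which reduces to $\mathcal{O}(1/\sqrt{T})$ when $C=0$.
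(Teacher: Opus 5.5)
Your proof is correct, and it controls the estimator error by a different route from the paper's. Your Steps 1--2 coincide with Lemma~\ref{lem:alignment} and with the Cauchy--Schwarz descent inequality the paper uses only in its Heavy Ball analysis (Theorem~\ref{thm: convergence heavy ball non convex}).

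The paper's proof of this theorem proceeds as follows:
\begin{itemize}
\item Lemma~\ref{lem: stoch_ncv_descent} applies Young's inequality with a free parameter $\alpha$ to both noise cross terms. The error then enters as $\frac{\alpha}{L}\|\Delta^t\|^2$ with no factor $\eta_t$, and the deterministic remainder costs $\eta_t^2LD^2/\alpha$.
\item It uses the Lyapunov function $r_t=F_t+\frac{\alpha}{L\rho_1}\|\Delta^t\|^2+\frac{\alpha A}{L\rho_1\rho_2}\sigma_t^2$, whose weights exactly absorb the contraction in Assumption~\ref{assum: est}, and telescopes.
\item It optimizes over a $T$-dependent $\alpha^\star$, ending with $\frac{\mathbb{E}[r_0]}{\sqrt T}+\frac{LD^2}{2\sqrt T}+D\sqrt{\frac{D^2}{\rho_1T}\bigl(B+\frac{AE}{\rho_2}\bigr)+\frac{C}{\rho_1}}$.
\end{itemize}

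You keep the descent inequality linear in $\|\Delta^t\|$ and bound $\sum_t\mathbb{E}\|\Delta^t\|$ by summing \eqref{eq: assum 1}--\eqref{eq: assum 2} without weights. With $T\eta^2=1$ this gives $\rho_1\sum_{t<T}\mathbb{E}\|\Delta^t\|^2\le\mathbb{E}\|\Delta^0\|^2+\frac{A}{\rho_2}(\mathbb{E}\sigma_0^2+ED^2)+BD^2+TC$, and Jensen plus Cauchy--Schwarz finish.

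What each approach buys:
\begin{itemize}
\item Your route is more elementary: no potential and no parameter to tune.
\item Your constant involves only $F_0$, $\mathbb{E}\|\Delta^0\|^2$ and $\mathbb{E}\sigma_0^2$. The paper's involves $\mathbb{E}[r_0]$, whose weights depend on $\alpha^\star$.
\item You lose a factor $2$ on the floor term $D\sqrt{C/\rho_1}$.
\item Unweighted summation is tailored to a constant step. The paper's Lyapunov template is the one it reuses for its decaying-step and quasar-convex theorems.
\end{itemize}

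A side remark on the Young/potential attempt you abandoned. A fixed $\alpha$ does fail, but a $T$-dependent one works, and that is precisely what the paper's optimization over $\alpha$ does. Your displayed $C$-term also contains a slip. With $c_1=2\alpha\eta/\rho_1$, the $C$ contribution after dividing by $\eta T$ is $c_1C/\eta=2\alpha C/\rho_1$, not of order $\frac{\alpha}{\rho_1}C\sqrt T$. Taking $\alpha$ of order $\sqrt T$ when $C=0$, and of order $D\sqrt{\rho_1/C}$ otherwise, gives $\mathcal{O}(1/\sqrt T+\sqrt C)$ along that route too.
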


\begin{remark}
    The Heavy Ball estimator satisfies a slightly more general version of Assumption~\ref{assum: est} which allows $\rho_1, \rho_2,$ and $B$ to depend on the horizon $T$. We prove convergence for this estimator separately in Appendix~\ref{subsec: Heavy Ball}, with a worse convergence rate of $\mathcal{O}(1/T^{1/4})$ for nonconvex problems that matches the convergence rate for this estimator when used with vanilla SFW \cite{pethick2025training}.
\end{remark}

\section{Experiments}\label{sec: experiments}
We study Algorithm~\ref{alg:BSFW} with different estimators that we have analyzed, with SFW using those same estimators as a baseline. The boosting procedure was built upon the code provided in \cite{combettes2020boosting}. We consider sparse logistic regression, which is convex, and quantum process tomography with a nonconvex objective function. We perform more experiments, for instance collaborative filtering using a nuclear norm constraint, measuring the expected value of the optimality gap over several runs, measuring the number of oracle calls as a function of the tolerance. In all plots, the darker colors represent Algorithm~\ref{alg:BSFW} with similar hues corresponding to the same estimator.

\subsection{Sparse Logistic Regression}\label{sec:sparse_logistic_regression}
We consider the following problem
\begin{equation}\label{exp:sparsity_problem} 
\min\limits_{x \in \mathcal{C}}~ \frac{1}{m} \sum_{i=1}^{m} \ln\left(1 + \exp\left(-y_i a_i^\top x\right)\right),
\end{equation}
where $\mathcal{C} = \left\{ x \in \mathbb{R}^n:~ \|x\|_1 \leq \tau \right\}$ for some radius $\tau > 0$ that we pick according to each dataset. We denote $\{a_i, y_i\}_{i=1}^m$ as samples drawn from an experiment-specific dataset, where $\forall i, a_i \in \mathbb{R}^n$ and $\forall i, y_i \in \{-1, 1\}$. The LMO of the $\ell_1$ ball with radius $\tau$ is given by \eqref{eq: lmo_l1}. The LMO of the $\ell_1$ ball has a $\mathcal{O}(n)$ complexity \cite{combettes2021complexitylinearminimizationprojection}, hence making it an appropriate constraint set for boosting, as discussed further in Appendix~\ref{app subsec: when to boost};
\begin{align}
    \label{eq: lmo_l1}
    \forall g \in \mathbb{R}^n: \quad v^{\ast} = \lmo(g) \iff  v^{\ast} = -\tau \operatorname{sign}(g_i)\, e_i 
~~\text{with}~~
i = \argmax_{j} |g_j|.
\end{align}

We use the {\ttfamily rcv1} train dataset \cite{lewis2004rcv1}, {\ttfamily mushrooms} dataset \cite{mushroom_73}, and the {\ttfamily breast cancer} \cite{breast_cancer_wisconsin} datasets from the LIBSVM library of datasets \cite{chang2011libsvm}. The parameters of the experiments used are given in Table~\ref{table: exp_params}. 
\begin{figure*}[hbpt!]
    \centering
    \includegraphics[width=0.3\textwidth]{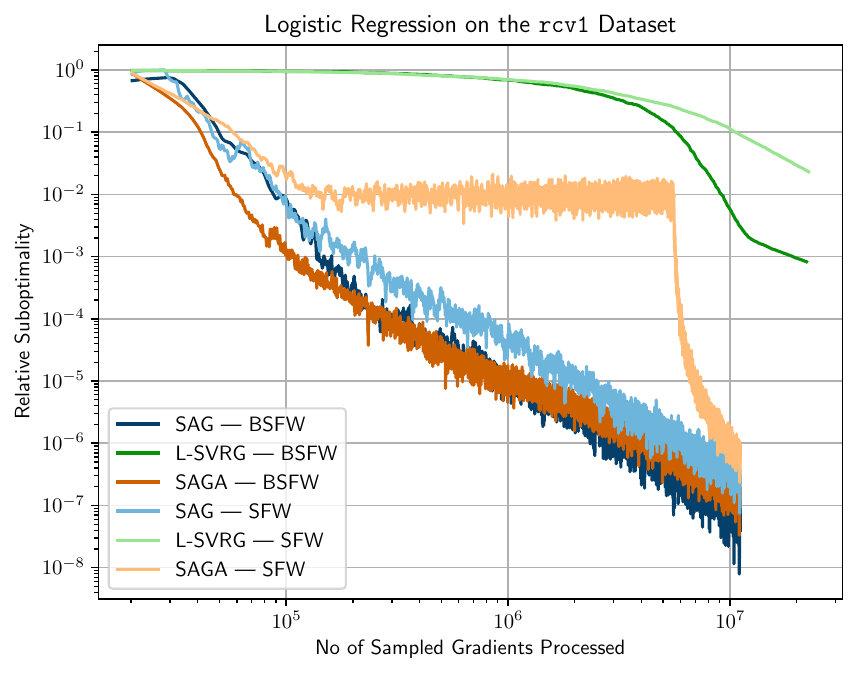}
    \includegraphics[width=0.3\textwidth]{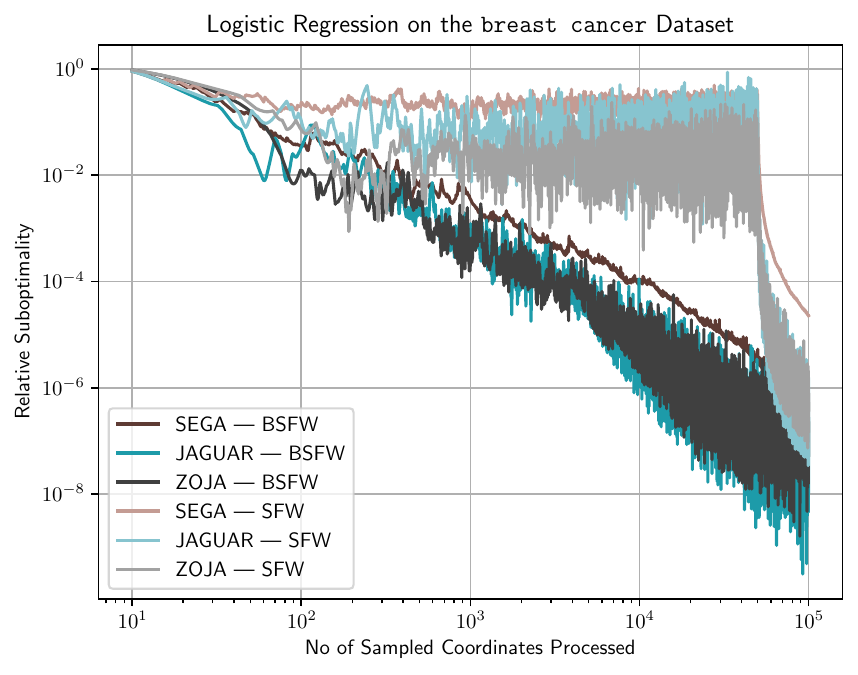}
    \includegraphics[width=0.3\textwidth]{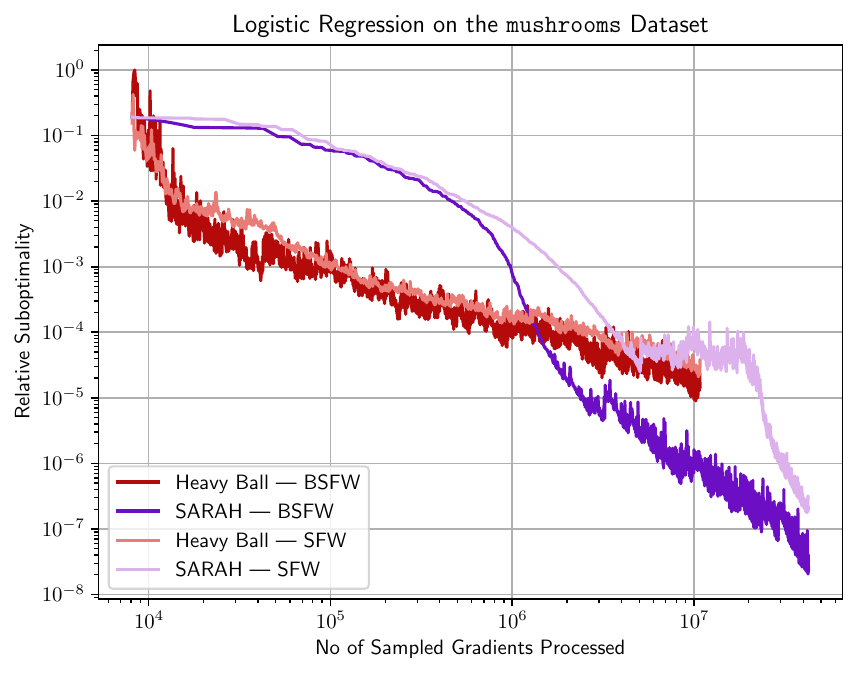}
    \caption{Suboptimality in the Logistic Regression problem is measured by $    \tfrac{f(x^t) - f_{\min}}{f_{\max} - f_{\min}},$ with $f_{\max},f_{\min}$ estimated as the $\max$ and $\min$ across all runs.}\label{fig: convergence_plots}
\end{figure*}
We run Algorithm~\ref{alg:BSFW} and SFW with every estimator listed in Table~\ref{table: estimator_params}. An explanation of stochastic and coordinate methods is given by Appendices~\ref{appendix: abt stoch} and \ref{appendix: abt coord}. For BSFW, we use the step decay given by Theorem~\ref{thm:stoch_quasar_convex_param_ag}, while for SFW, we use the step size provided by the corresponding prior work \cite{JMLR:v21:18-764,nazykov2024stochastic,pmlr-v119-negiar20a}. For all of the experiments, we pass in an alignment tolerance $\delta = 10^{-4}$, and a large max number of oracles $K = 10,000$ for the boosting procedure (effectively uncapped). The performance results of the different algorithms with the different estimators are shown in Figure~\ref{fig: convergence_plots}. Although a batch size $b_s$ is passed in as a parameter, the actual number of gradients sampled and computed per-iteration $t$ is counted for precision, since it can differ between estimators.



\begin{table}[hpbt!] 
\centering
\caption{Summary of dataset parameters.}
\label{table: exp_params}
\begin{tabular}{cccccc}
\toprule
\textbf{Name} & \textbf{$n$} & \textbf{$m$} & \textbf{$b_s$} & \textbf{$b_c$} & \textbf{$\tau$} \\ \midrule
{\ttfamily rcv1} & 47,236 &  20,242  & 742 & -- & 100 \\ 
{\ttfamily mushrooms}  & 112 &  8,124 & 404 & -- & 50\\ 
{\ttfamily breast cancer}  & 10 & 683 & -- & 1 & 5\\ 
\bottomrule
\end{tabular}
\begin{tablenotes}
{\footnotesize\item 
For each of the datasets, $n$ refers to the dimension of the features, $m$ refers to the total number of samples, $b_s$ refers to the batch size sampled, while $b_c$ refers to the coordinate batch size (number of coordinates) sampled.}
\end{tablenotes}
\end{table}


Apart from relative suboptimality, one might be interested in the progress in loss $\left(\mathbb{E}[f(x^t) - f^{\star}]\right)$ made per-iteration $t$. Figure~\ref{fig: exp_loss} shows this. To approximate the true expectation by a numerical expectation, we run every experiment $10$ times each. On a per-iteration basis, BSFW has a clear advantage over SFW for all estimators considered.

\begin{figure}[hbpt]
    \centering
    \includegraphics[width=0.32\textwidth]{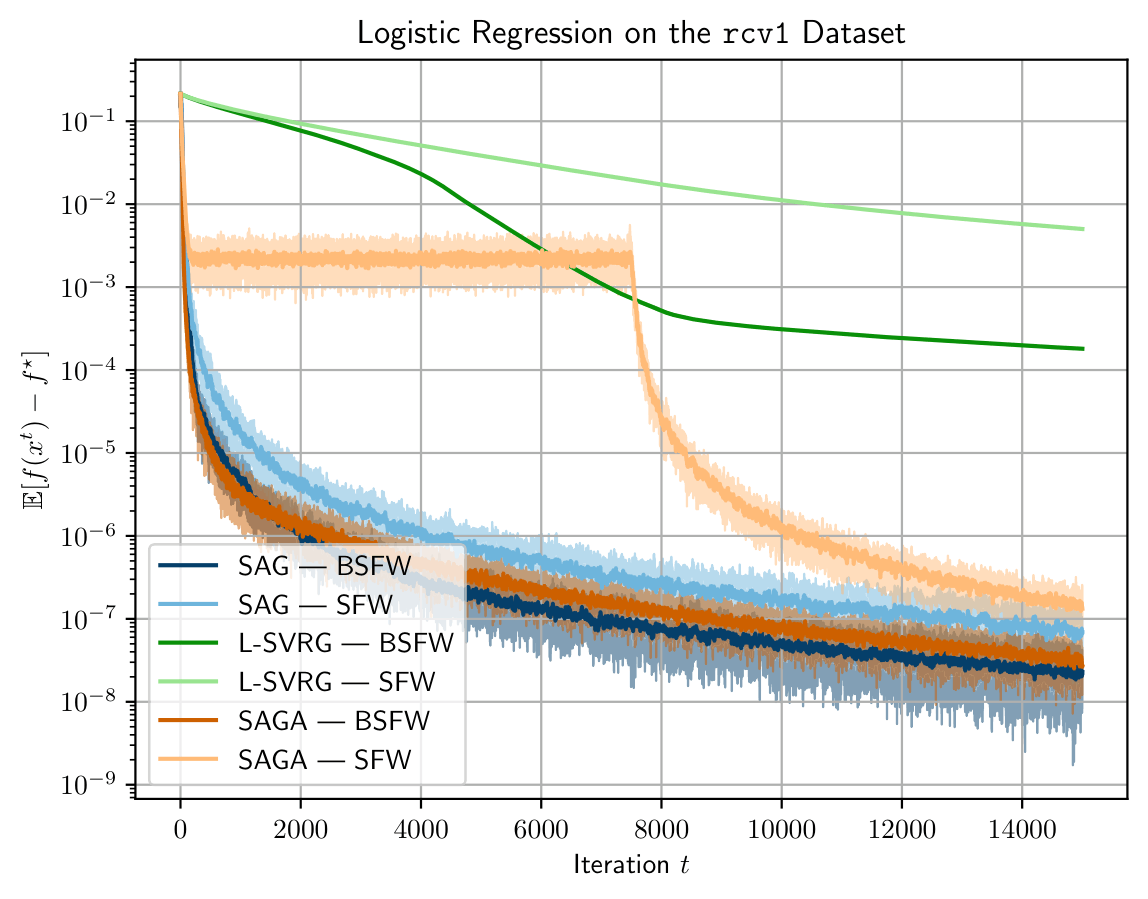}
    \includegraphics[width=0.32\textwidth]{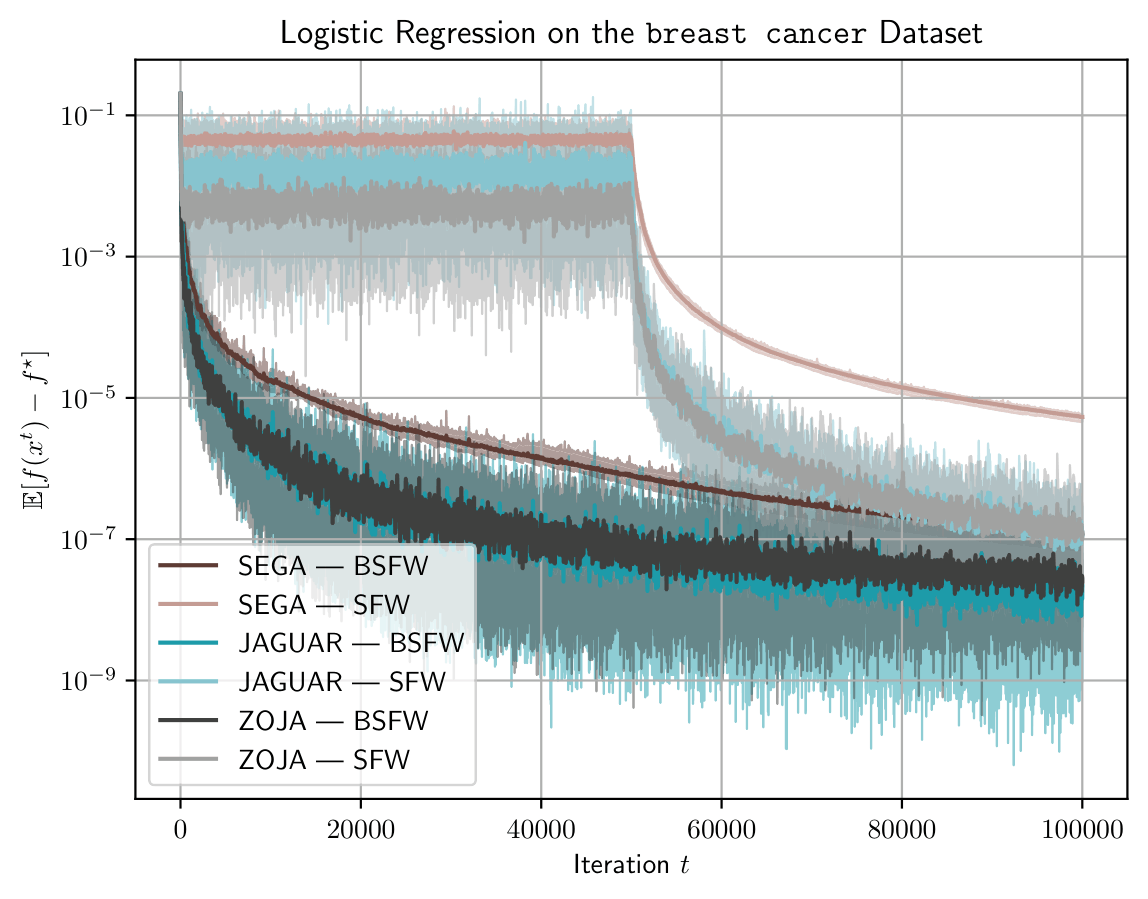}
    \includegraphics[width=0.32\textwidth]{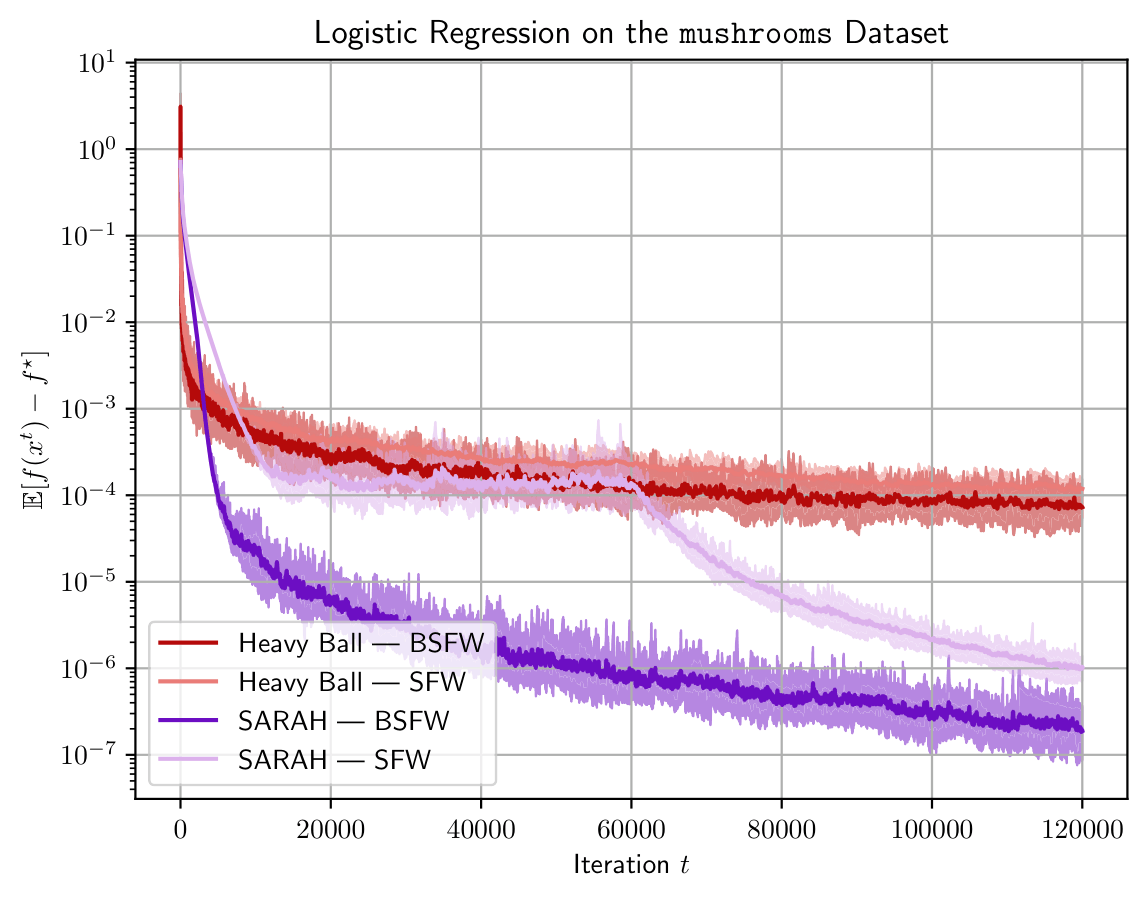}
    \caption{Numerical expected loss vs iteration $t$ on the different datasets.}\label{fig: exp_loss}
\end{figure}

The number of FW oracles called per-iteration $t$ as part of the boosting procedure depends on the alignment tolerance $\delta$ passed to Algorithm~\ref{alg:BSFW}. By definition of the boosting procedure, if the alignment tolerance $\delta$ is small, it is normal to expect that a higher number of rounds will be needed to reach a satisfactory alignment (until the while loop in Algorithm~\ref{alg:boosting} is exited). Each refinement round requires exactly $1$ Frank-Wolfe oracle, and thus it is expected to see more oracles computed per-iteration $t$ when $\delta$ is very small. The experimental results showcase this phenomenon in Figure~\ref{fig:oracle_grid}.

\begin{figure}[hbpt]
    \centering
    \includegraphics[width=1\textwidth]{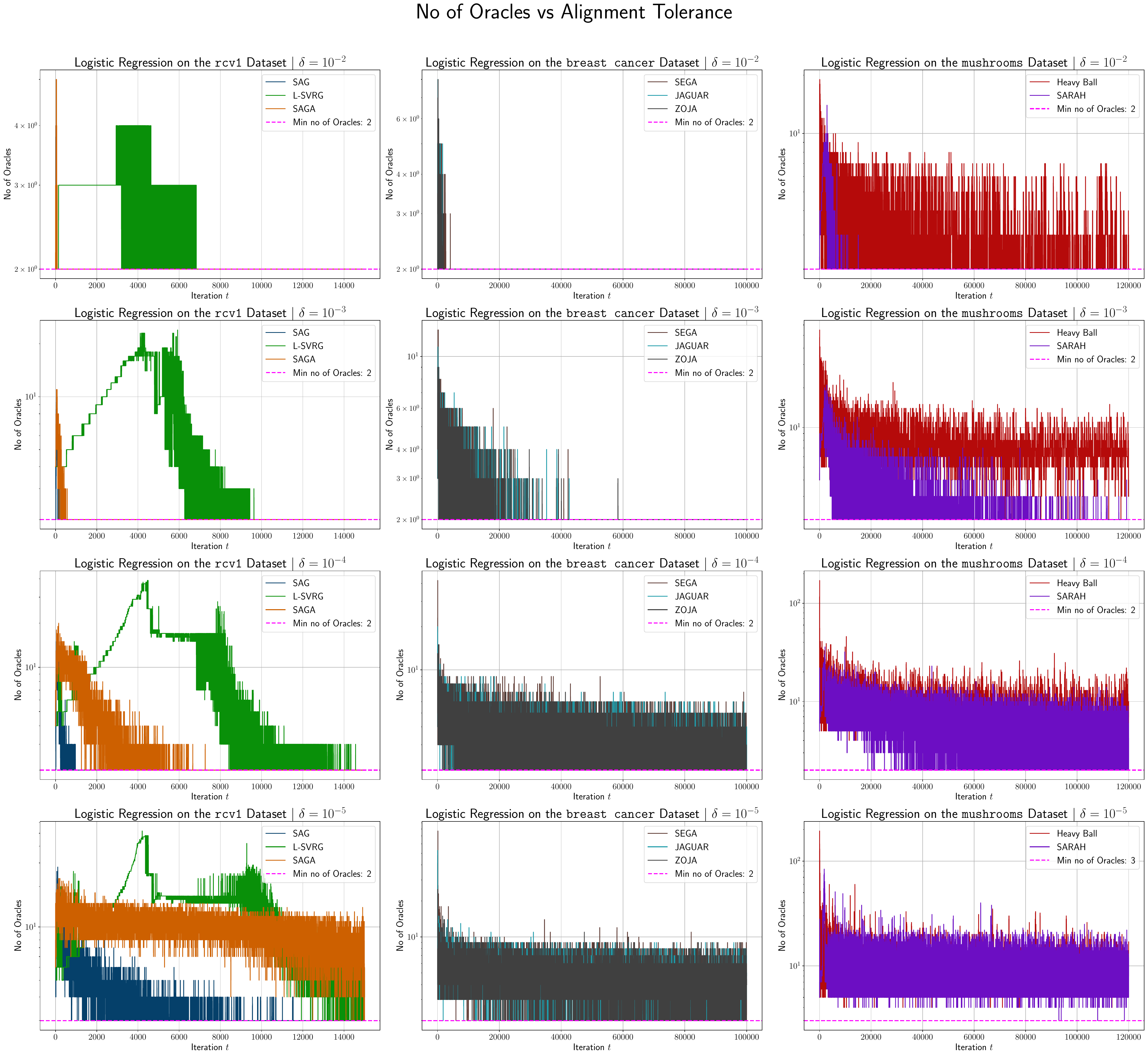}
    \caption{As the tolerance $\delta$ decreases, more oracles are used per-iteration for the estimators in general. Notably, the minimum number of oracle calls used by SARAH and Heavy Ball when $\delta = 10^{-5}$ becomes 3 instead of 2 as observed in the other experiments.}
    \label{fig:oracle_grid}
\end{figure}

Algorithm~\ref{alg:BSFW} is designed to revert back to stochastic FW if $\gamma_t = 1$. We denote the \emph{boosting percentage} by \eqref{eq: boosting_per} when the algorithm is executed for a total of $T$ iterations. 
\begin{equation}
    \label{eq: boosting_per}
    \text{Boosting Percentage} = \frac{\sum_{t=0}^{T-1} \mathbf{1}_{\{\gamma_t < 1\}}}{T} \times 100.
\end{equation}
This measure refers to the percentage of iterations where $\gamma_t < 1$, meaning it does not revert to stochastic FW. Our experimental results confirm that reversion to stochastic FW does not occur in nearly $100\%$ of the iterations for all experiments as seen in Figure~\ref{fig: gamma_plot}.

\begin{figure}[hbpt]
    \centering
    \includegraphics[width=0.32\textwidth]{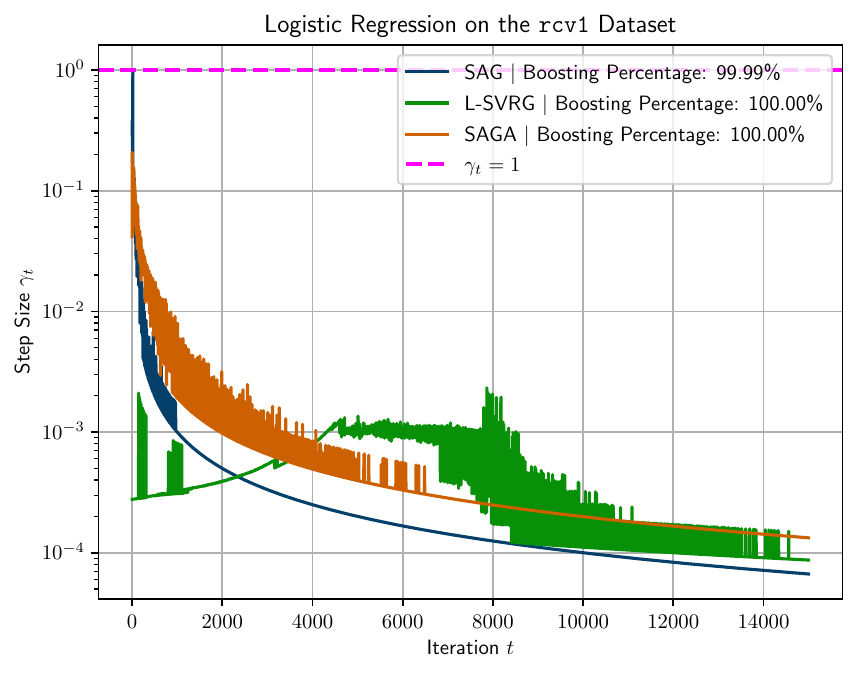}
    \includegraphics[width=0.32\textwidth]{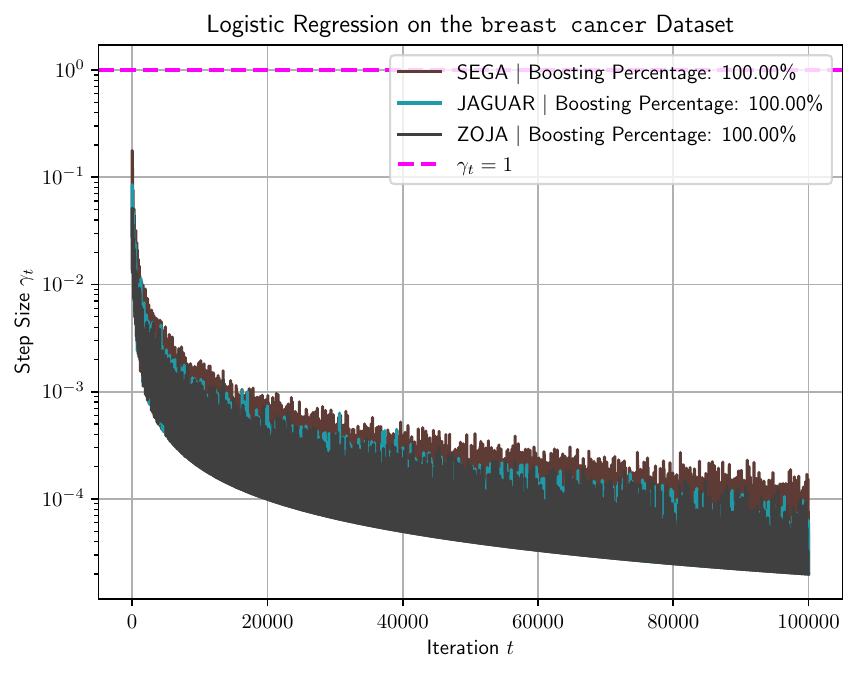}
    \includegraphics[width=0.32\textwidth]{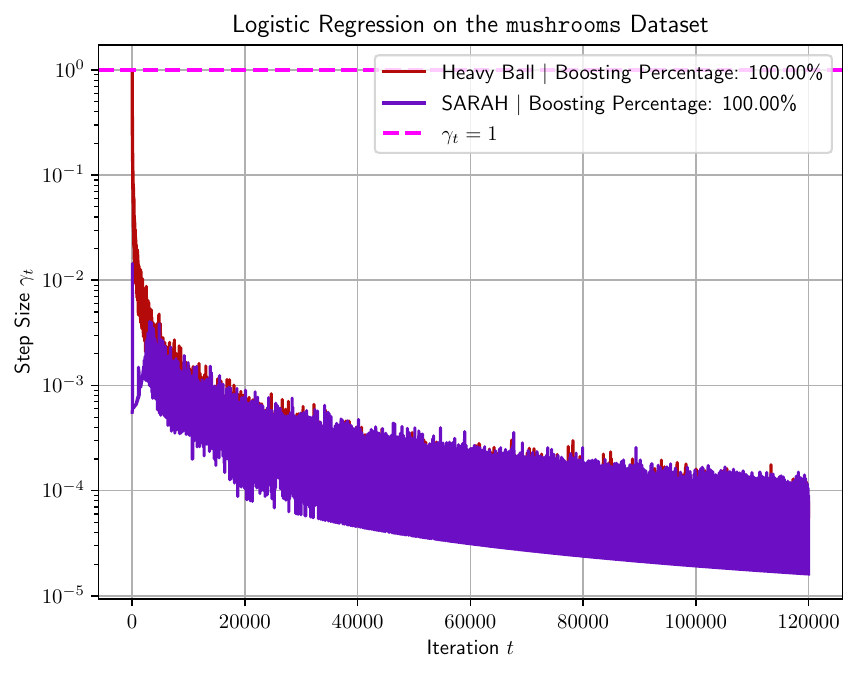}
    \caption{The step sizes $\{\gamma_t\}$ for all of the different estimators are effectively never equal to $1$ (shown as the dashed pink line), so that the reversion to the FW direction is never used. Our proposed scaling results in a step that tends to decrease.}\label{fig: gamma_plot}
\end{figure}

In Theorem~\ref{thm:stoch_fixed_horizon_quasar_convex}, we show the convergence analysis for $\rho$-quasar-convex functions using a similar piecewise step decay as provided in \cite{nazykov2024stochastic}. However, the performance results were not as strong compared to the step decay provided in Theorem~\ref{thm:stoch_quasar_convex_param_ag}. Figure \ref{fig: pw_res} shows the performance results using this piecewise step decay. We pass in the same parameters used in section~\ref{sec: experiments}, notably the parameters in Table~\ref{table: exp_params}, an alignment tolerance $\delta = 10^{-4}$, and the max number of boosting rounds $K = 10^4$.

\begin{figure}[hbpt]
    \centering
    \includegraphics[width=0.32\textwidth]{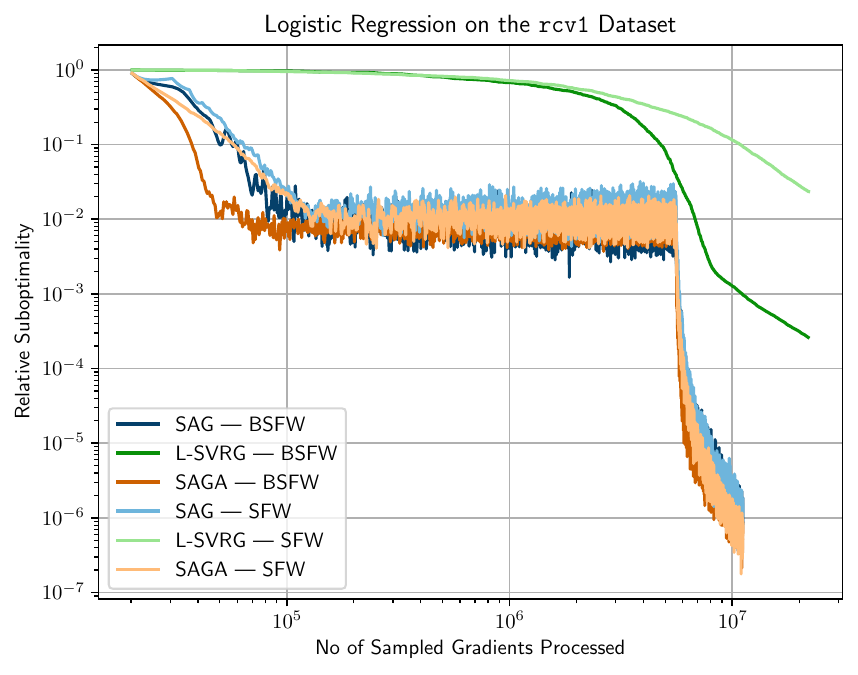}
    \includegraphics[width=0.32\textwidth]{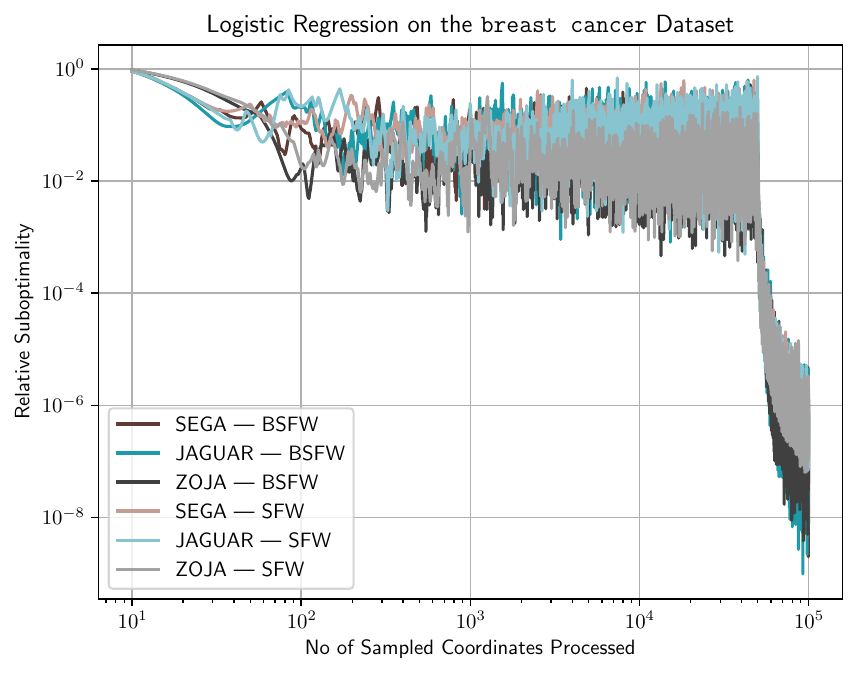}
    \includegraphics[width=0.32\textwidth]{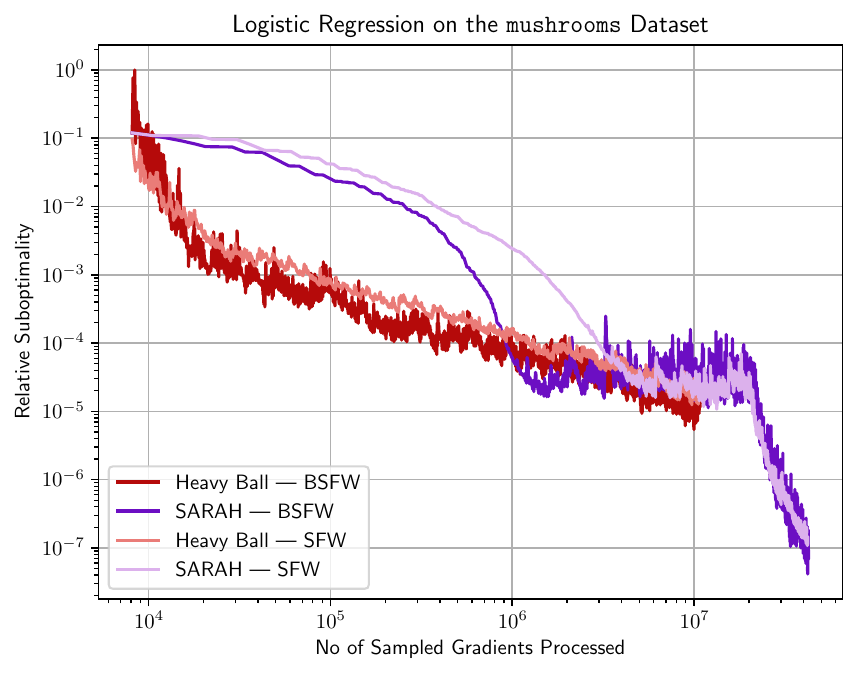}
    \caption{Relative suboptimality when using piecewise step decay on the different datasets.}\label{fig: pw_res}
\end{figure}

\subsection{Quantum Process Tomography}\label{exp subsec: QPT}

Quantum process tomography (QPT) aims to reconstruct the quantum
process $\chi$, given a set of measurements $\{f_{ijk}\}$ and a set
of measurement sensing operators $\{\mathcal{A}_{ijk}\}$. Here, $i$
refers to the input state required to generate the setup, $j$ refers
to a specific circuit setting of that input state, and $k$ refers to a
measurement outcome for that setting. In an $\tilde{n}$-qubit system,
by using Pauli bases \cite{quiroga2023using}, we have $4^{\tilde{n}}$
input states, $3^{\tilde{n}}$ circuit settings, and $2^{\tilde{n}}$
measurement outcomes possible; we have a total of $24^{\tilde{n}}$
sets of $(\mathcal{A}_s, f_s)$ combinations. $\mathcal{A}_s:\mathbb{C}^{d^2
\times d^2} \rightarrow \mathbb{R}^{2^n}$ is the sensing mechanism,
and the loss functions $F(\cdot)$ and $H(\cdot)$ are defined by
$$
    F(UU^{\dagger})
        := \frac{1}{2}\sum_{s}
            \bigl(f_{s} - \mathcal{A}_s(UU^{\dagger})\bigr)^2 ~~~\text{and}~~~
    H(UU^{\dagger})
        := \bigl\|
            \sum_{i,j}(UU^{\dagger})_{ij}\,
            \tilde{A}^{\dagger}_j \tilde{A}_i - I
            \bigr\|_F^2,
$$
where $\{\tilde{A}_k\}$ denotes the set of Pauli bases
\cite{quiroga2023using}, $F(\cdot)$ is the least-squares fidelity
function, and $H(\cdot)$ enforces the trace-preserving (TP) condition.

We consider the nonconvex problem using the Burer–Monteiro (BM) factorization,
where $\chi = UU^{\dagger}$ is the quantum process to be approximated
and $U^{\dagger}$ denotes the conjugate transpose of $U$. We
approximate $\chi$ by rank-1 matrices $U$ and implement the loss
functions faithfully as in \cite{quiroga2023using}:
\begin{equation}\tag{QPT}\label{exp eq: QPT}
    \min_{U \in \mathcal{C}}~
        F(UU^{\dagger}) + \lambda \cdot H(UU^{\dagger}),
\end{equation}
where the constraint set $\mathcal{C}$ for $\tilde{n}$-qubit systems
is
\begin{equation}\label{exp eq: QPT C def}
    \mathcal{C} =
        \bigl\{U \in \mathbb{C}^{4^{\tilde{n}} \times 1} :
        \|U\|_{\mathrm{op}} \leq \tau\bigr\}.
\end{equation}
This choice of $\mathcal{C}$ proves to be an effective one for improving the fidelity with Boosted FW. The spectral-norm LMO for a non-zero rank-1 matrix $G$ in the
constraint set $\mathcal{C}$ of radius $\tau$ is
\begin{equation}\label{app qpt eq: lmo def}
    V^* \in \mathrm{lmo}(G)
    \iff
    V^* = -\tau\,\tfrac{G}{\|G\|_2}.
\end{equation}

We run experiments for $3$-qubit systems (i.e.\ $\tilde{n}:=3$). To
set up each experiment, we first generate the ground-truth process
$\chi^{\star}$ following \cite{quiroga2023using} and then add
Gaussian noise at level $\xi$:
\begin{equation}\label{app qpt eq: noise addn}
    f_s = A_{s}(\chi^{\star}) + \xi\,\epsilon,
    \qquad \epsilon \sim \mathcal{N}(0,1).
\end{equation}
We set $\tau = 10$ after a grid search to identify the constraint
radius that yields the best attainable fidelity, and we set
$\lambda = 0.05$ in~\eqref{exp eq: QPT}. For all four experiment
settings we use the ``full-measurements'' setting, i.e.\ all
$24^{\tilde{n}}$ measurements per iteration. Wall-clock experiments
are timed on A100 GPUs.

Following \cite{quiroga2023using}, performance is measured by the
quantum process fidelity $F_p$, whose definition for two quantum
channels is \cite{javadi2024quantum}
\begin{equation}\label{app qpt eq: pro fid}
    F_p(\mathcal{E}, \mathcal{E}^{\star})
        = F_s\!\left(
            \tilde{\rho}_{\mathcal{E}},\,
            \tilde{\rho}_{\mathcal{E}^{\star}}
          \right),
\end{equation}
where $F_s$ denotes the quantum state fidelity, and
$\tilde{\rho}_{\mathcal{E}}$, $\tilde{\rho}_{\mathcal{E}^{\star}}$
are the normalized Choi-matrix representations of the channels
$\mathcal{E}$ and $\mathcal{E}^{\star}$, respectively (see
\cite{javadi2024quantum,wood2015tensornetworksgraphicalcalculus} for
details). Note that $\chi$ and $\chi^{\star}$ represent channels
$\mathcal{E}$ and $\mathcal{E}^{\star}$ through the specified basis
\cite{quiroga2023using}. The closer $F_p$ is to $1$, the better the
reconstruction. We implement this metric using the \texttt{qiskit}
library \cite{javadi2024quantum}.

From \cite{quiroga2023using}, four different levels of Gaussian noise
(parameterized by $\xi$) are added to the ground-truth measurement
process. We plot the fidelity achieved per wall-clock time in
Figure~\ref{fig: qpt fidelity vs wall clock} for different values of
the maximum number of oracles $K$, in order to compare the effect of
boosting. Note that $K=1$ corresponds to the vanilla FW method.

\begin{figure}[H]
    \centering
    \includegraphics[width=1.0\textwidth]{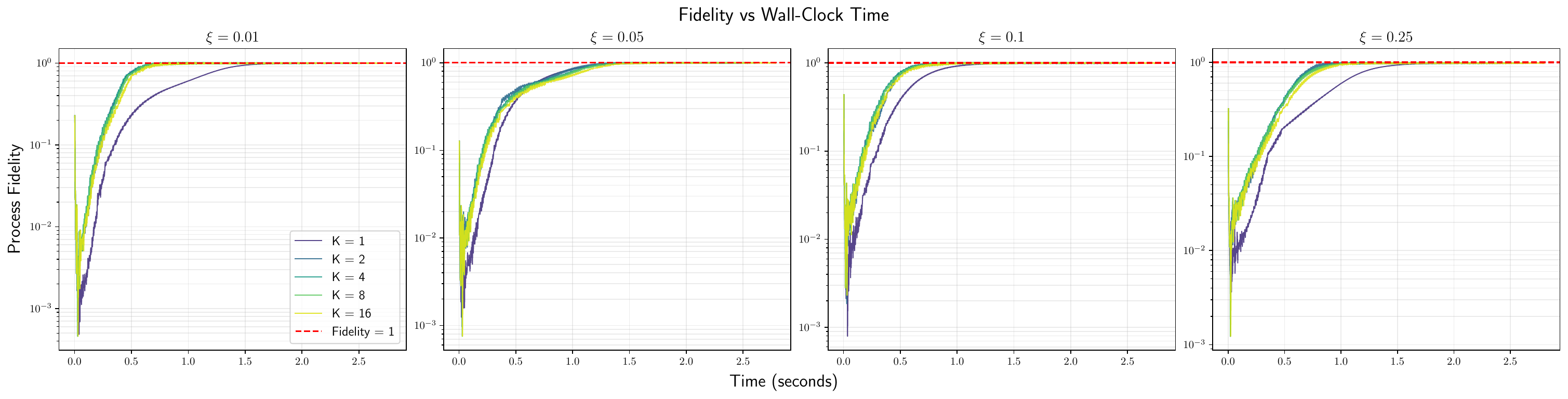}
    \caption{For all four experiments we set the radius $\tau = 10$.
             The red dotted line illustrates the target fidelity.
             Computing at least one additional LMO for boosting
             consistently yields higher fidelity than vanilla FW.}
    \label{fig: qpt fidelity vs wall clock}
\end{figure}

The per-iteration fidelity curves corresponding to
Figure~\ref{fig: qpt fidelity vs wall clock} are provided in
Figure~\ref{fig: qpt fidelity vs iter} for readers interested in
comparing per-iteration performance.

\begin{figure}[H]
    \centering
    \includegraphics[width=1.0\textwidth]{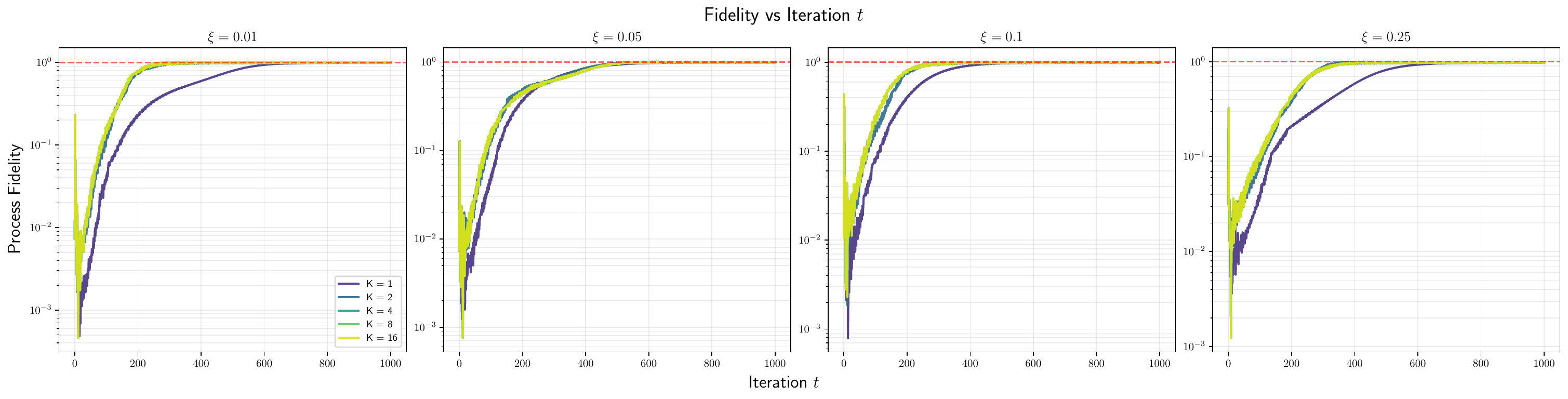}
    \caption{The fidelity results from
             Figure~\ref{fig: qpt fidelity vs wall clock} plotted as
             a function of iteration $t$.}
    \label{fig: qpt fidelity vs iter}
\end{figure}


\subsection{Collaborative Filtering}\label{app subsec: movielens}
We consider the following problem
\begin{equation*}
    \min\limits_{X\in\mathcal{C}} ~\frac{1}{|\Omega|}\sum\limits_{(i,j)\in\Omega}\ell(X_{ij},Y_{ij}),
    \end{equation*}
where $\mathcal{C} = \{X\in\mathbb{R}^{n\times m}\colon ~\|X\|_{\ast}\leq\tau\}$ is the nuclear norm ball of radius $\tau>0$, $\Omega$ is the observed entries of the matrix, and $\ell(x,y) = \tfrac{(x-y)^2}{2 + (x-y)^2}$, which is nonconvex. The LMO of the nuclear norm ball with radius $\tau$ is proportional to the outer product of the leading singular vectors
\begin{align}
    \label{eq: lmo_nuc}
    \forall G \in \mathbb{R}^{n\times m}: \quad v^{\ast} = \lmo(G) \iff v^{\ast} = -\tau u_1v_1^T 
    ~~\text{with}~~ 
    G = U\Sigma V^T.
\end{align}

We use the {\ttfamily MovieLens} dataset \cite{harper2015movielens} and run Algorithm~\ref{alg:BSFW} and SFW with the L-SVRG, Heavy Ball, and SAGA estimators. We use the horizon-dependent step size given in Theorem~\ref{thm:stoch_fixed_horizon_nonconvex} for Algorithm~\ref{alg:BSFW} and the parameters given in \cite{nazykov2024stochastic} for SFW, except for Heavy Ball for which we take the parameters given in \cite{pethick2025training}. Because the LMO in \eqref{eq: lmo_nuc} is reasonably \textbf{expensive} compared to the one used in \eqref{exp:sparsity_problem}, we set a smaller $K=5$ and $\delta = 10^{-4}$ for the boosting procedure. We set the batch size $b_s$ to be $10\%$ of the dataset (although we observed a larger gap between Algorithm~\ref{alg:BSFW} and SFW when the batches were larger). The Heavy Ball estimator outperformed the other estimators in this setting despite having a slower worst-case convergence rate guarantee. For all three estimators, boosting produced an improvement in performance.

\begin{figure*}[hbpt!]
    \centering
    \includegraphics[width=0.5\textwidth]{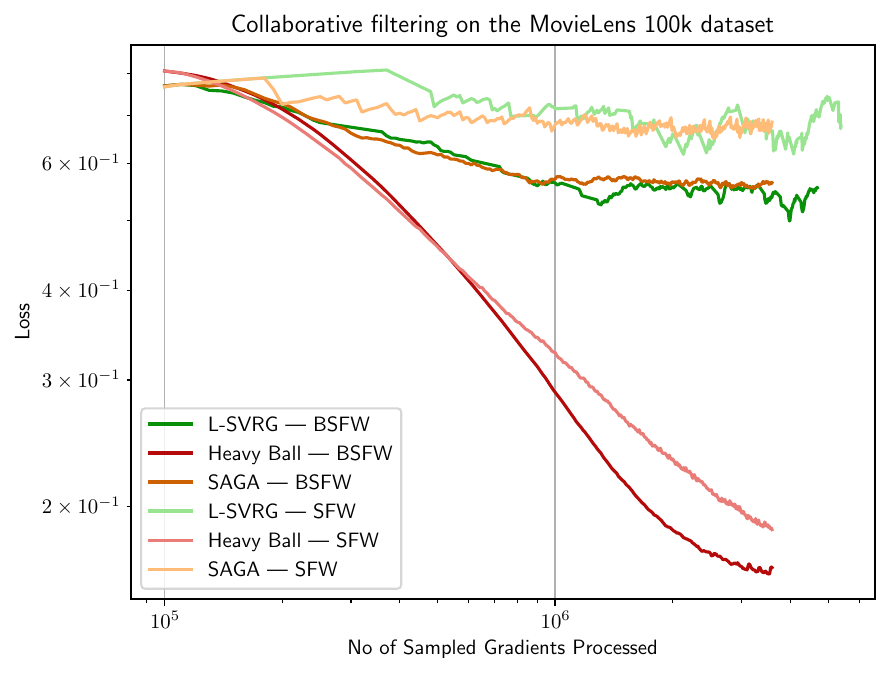}
    \caption{For this collaborative filtering problem, we plot the loss directly.}\label{fig: convergence_plots_movielens}
\end{figure*}




\newpage \appendix
 \begin{Large}
 \begin{center}
 \textbf{Appendices}
 \end{center}
 \end{Large}
The appendix of this work is divided into three sections. The first section contains more information about Algorithm~\ref{alg:BSFW} and some additional discussions. The second contains the proofs of the theoretical results presented in the main text. The third section provides detailed descriptions of the gradient estimators, along with additional theoretical results, e.g., proofs that they satisfy the assumptions made in the main text and in the subsequent proofs. 


\section{Additional Discussions}
\subsection{Complete Algorithm BSFW}\label{subsec:complete BSFW Full}
The complete Algorithm~\ref{alg:BSFW Full} is written here for reference. Note that we show in Algorithm~\ref{alg:BSFW Full} that in fact, $\mathrm{lmo}(m^t)$ at iteration $t$ is already computed during the boosting procedure and hence does not need to be computed again in lines 6 and 11 of Algorithm~\ref{alg:BSFW}.
\renewcommand{\thealgorithm}{BSFW (Full)}
\begin{algorithm}[hpt!]
\caption{Boosted Stochastic Frank-Wolfe}
\label{alg:BSFW Full}
\textbf{Input:} initial estimator $m^{\mathrm{init}} \in \mathbb{R}^n$, gradient estimator $\{\Phi_t\}_{t=0}^{T-1}$, max number of rounds for boosting $K \in \mathbb{N} \setminus \{0\}$, alignment improvement tolerance $\delta \in ]0,1]$, and step decay $\{\eta_t\}_{t=0}^{T-1} \in ]0,1]$\\
\textbf{Output:} $x^T \in \mathcal{C}$.
\begin{algorithmic}[1]

\STATE $x^0 \gets \lmo(m^{\mathrm{init}})$
\FOR{$t = 0 \text{ to } T-1$}
    \STATE Sample $\xi_t\sim\mathcal{P}$ and compute $g^t = \nabla f(x^t,\xi_t)$
    \STATE Compute $m^t = \Phi_t(g^t)$
    \STATE $\psi^0 \gets 0$
    \STATE $\Lambda_t \gets 0$
    \STATE $k \gets 0$
    \WHILE{$k \leq K - 1$}
        \STATE $r^k \gets -m^t - \psi^k$ \COMMENT{$k$-th residual}
        \STATE $v^k \gets \lmo(-r^k)$ \COMMENT{FW oracle}
        \IF{$k = 0$}
            \STATE $s^t \gets v^k$
        \ENDIF
        \IF{$\psi^k\neq 0$}
            \STATE $u^k \gets \argmax_{u \in\left\{v^k - x^t, -\tfrac{\psi^k}{\|\psi^k\|}\right\}}\langle r^k,u\rangle$
        \ELSE
            \STATE $u^k\gets v^k - x^t$
        \ENDIF
        \IF{$u^k = 0$}
            \STATE $k\gets k+1$; \textbf{break} \COMMENT{exit $k$-loop}
        \ENDIF
        \STATE $\lambda_k \gets \tfrac{\langle r^k, u^k \rangle}{\|u^k\|^2}$
        \STATE $\phi^k \gets \psi^k + \lambda_k u^k$
        \IF{$\alignop(-m^t, \phi^k) - \alignop(-m^t, \psi^k) \geq \delta$}
            \STATE $\psi^{k+1} \gets \phi^k$
            \STATE $\Lambda_t \gets 
            \begin{cases} 
                \Lambda_t + \lambda_k, &  u^k = v^k - x^t \\
                \Lambda_t \left(1 - \frac{\lambda_k}{\|\psi^k\|}\right), &  u^k = -\frac{\psi^k}{ \|\psi^k\|}
            \end{cases}$
            \STATE $k\gets k+1$
        \ELSE
            \STATE $k\gets k+1$; \textbf{break} \COMMENT{exit $k$-loop}
        \ENDIF
    \ENDWHILE
    \STATE $K_t \gets k$
    \STATE $\tilde{d}^t \gets \dfrac{\psi^{K_t}}{\Lambda_t}$ \textbf{if} $\Lambda_t \neq 0$ \textbf{else} $0$ \COMMENT{normalize direction}
    \STATE $\gamma_t\gets \min\left\{\eta_t\tfrac{\|s^t-x^t\|}{\|\tilde{d}^t\|},1\right\}$ \textbf{if} $\tilde{d}^t \neq 0$ \textbf{else} $1$
    \IF{$\gamma_t < 1$}
        \STATE $d^t \gets \tilde{d}^t$ \COMMENT{use the boosted direction}
        \STATE $x^{t+1} \gets x^t + \gamma_t d^t$
    \ELSE
        \STATE $d^t \gets s^t - x^t$ \COMMENT{revert to vanilla FW direction}
        \STATE $x^{t+1} \gets x^t + \eta_td^t$ 
    \ENDIF
\ENDFOR
\end{algorithmic}
\end{algorithm}


\subsection{Quasar-Convex Applications in Machine Learning}\label{app subsec: quasar convex appns in ML}
There are many problems in machine learning involving quasar-convex functions which have been explored prior to our work; we list a few of them here. The problem of learning linear dynamical systems (LDS) has been explored in \cite{fu2023accelerated} and \cite{hardt2018gradient}, where specifically in \cite{hardt2018gradient}, the authors prove that their objective function is quasar-convex but not necessarily convex. In \cite{ma2021local} and \cite{foster2018uniform}, the authors show that generalized linear models (GLM) with increasing link functions (among other assumptions) are quasar-convex, and study their convergence analysis. In \cite{foster2018uniform}, the authors also show that the problem of robust linear regression using Tukey’s biweight loss is quasar-convex. Furthermore, in \cite{wang2023continuized}, the authors show that GLMs (satisfying certain other conditions) belong to the family of quasar-convex functions (and develop algorithms for this problem class).

\subsection{When to Boost?}\label{app subsec: when to boost}
Boosting is recommended in problems where the cost of gradient computation is expensive relative to the cost of the LMO. It is then advantageous to compute the LMO several times per gradient computation in order to get an update that is better aligned with the gradient or its stochastic estimator. For instance, for matrices of size $100 \times 100$, the ratio of nuclear norm LMO vs gradient (for a sample least squares setting) complexity (in terms of wall-clock time) is on the order of $10^{-1}$ while that of $\ell_1$ ball LMO for vectors of size $10^4$ vs gradient (for the same sample least squares setting) complexity is in the order of $\mathcal{O}(10^{-3})$.

\subsection{Complexity of Step Size}\label{app subsec: complexity of step size}
The step size strategy we prescribe in Algorithm~\ref{alg:BSFW} depends only on $s^t \in \lmo(m^t)$ and $\tilde{d}^t$, which are already computed during the boosting procedure. Therefore, it does not impose additional computational complexity, unlike the line search used in \cite{combettes2020boosting} when the Lipschitz constant of $\nabla f$ is not known.

\section{Proofs of Main Results}\label{app:proofs}
In this section, we present the proofs for the core technical results and the main convergence theorems. We divide the analysis of Algorithm~\ref{alg:BSFW} into the deterministic setting in section~\ref{appendix: boosted_det_fw} and the stochastic setting in section~\ref{appendix: boosted_stoch_fw}. We begin with a geometric property of the alignment direction used in our variants.

\begin{lemma}[Alignment Inequality]\label{lem:alignment}
    Consider the sequence $\{x^t\}_{t=0}^{+\infty}$ generated by Algorithm~\ref{alg:BSFW}. Then for all $t \in \mathbb{N}$, the following inequality holds
    \begin{equation*}
        \langle m^t, d^t \rangle \leq \frac{\|d^t\|}{\|s^t - x^t\|} \langle m^t, s^t - x^t \rangle.
    \end{equation*}
    where $d^t$ denotes the alignment direction at iteration $t$ and $s^t \in \lmo(m^t)$.
\end{lemma}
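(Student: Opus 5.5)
The plan is to reduce the inequality to a statement about cosine similarities and then use the monotonicity of the alignment built into the boosting procedure. The first step is to dispose of the degenerate situations. If $m^t=0$, both sides vanish. If $s^t=x^t$, the right-hand side is not defined; I would adopt the convention that the claim is only asserted when $s^t\neq x^t$.

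In the reverted case ($\gamma_t=1$, line 11 of Algorithm~\ref{alg:BSFW}) we have $d^t=s^t-x^t$, so both sides are equal and there is nothing to prove.

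In the boosted case $d^t=\tilde d^t\neq 0$, and dividing by $\|d^t\|>0$ turns the claim into
\[
\alignop(-m^t,d^t)\;\ge\;\alignop(-m^t,s^t-x^t).
\]
Write $\tilde d^t=\psi^{K_t}/\Lambda_t$. Provided $\Lambda_t>0$, scale invariance of the cosine gives $\alignop(-m^t,\tilde d^t)=\alignop(-m^t,\psi^{K_t})$. It then suffices to show two things: $\Lambda_t>0$, and $\alignop(-m^t,\psi^{K_t})\ge\alignop(-m^t,s^t-x^t)$.

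For the alignment bound I would trace the boosting loop. At $k=0$ we have $\psi^0=0$, $r^0=-m^t$ and $v^0=s^t$, so $u^0=s^t-x^t$ and $\lambda_0=\langle -m^t,s^t-x^t\rangle/\|s^t-x^t\|^2$. This is nonnegative because $s^t$ minimizes $\langle m^t,\cdot\rangle$ over $\mathcal{C}$ and $x^t\in\mathcal{C}$.
\begin{itemize}
\item If $\lambda_0=0$, then $\phi^0=0$, whose alignment is $-1$. The step is rejected, so $\psi^{K_t}=0$, $\Lambda_t=0$ and $\tilde d^t=0$, which forces the reverted case already handled.
\item If $\lambda_0>0$, then $\psi^1=\lambda_0(s^t-x^t)$ has exactly the alignment of $s^t-x^t$.
\end{itemize}
Every later accepted update raises the alignment by at least $\delta>0$, and the loop only ever breaks without modifying $\psi$. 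Hence $\alignop(-m^t,\psi^k)$ is nondecreasing along accepted iterates, which gives $\alignop(-m^t,\psi^{K_t})\ge\alignop(-m^t,\psi^1)=\alignop(-m^t,s^t-x^t)$.

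The main obstacle is showing $\Lambda_t>0$; it also shows that $\psi^k\neq 0$ for all accepted $k\ge1$. I would prove this by induction over accepted steps, starting from $\Lambda=\lambda_0>0$ after the first step.
\begin{itemize}
\item For a FW-type step $u^k=v^k-x^t$, since $v^k\in\lmo(-r^k)$ we get $\langle r^k,v^k-x^t\rangle\ge\langle r^k,x^t-x^t\rangle=0$. So $\lambda_k\ge0$ and $\Lambda$ stays positive.
\item For an away-type step $u^k=-\psi^k/\|\psi^k\|$ we have $\phi^k=(1-\lambda_k/\|\psi^k\|)\psi^k$, and $\Lambda$ is multiplied by the same factor.
\end{itemize}
In the away-type case, $\lambda_k\ge0$ holds because $u^k$ was chosen as the argmax and the FW candidate already gives a nonnegative inner product. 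So the factor is at most $1$, and I need to rule out the factor being $\le 0$.
\begin{itemize}
\item If the factor equals $0$, then $\phi^k=0$ has alignment $-1$, which cannot exceed the current alignment of $\psi^k$ (nonnegative by induction), so the step is rejected.
\item If the factor is negative, then $\phi^k$ is a negative multiple of $\psi^k$, so its alignment is $-\alignop(-m^t,\psi^k)$. This cannot exceed the current alignment, which is $\ge \alignop(-m^t,s^t-x^t)\ge0$, so again the step is rejected.
\end{itemize}
Thus every accepted away step multiplies $\Lambda$ by a factor in $]0,1]$, so $\Lambda_t>0$ and $\psi^{K_t}\neq0$, which completes the argument.
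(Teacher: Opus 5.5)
Your proof is correct, and its skeleton matches the paper's. You split on $\gamma_t=1$ versus $\gamma_t<1$, observe equality in the reverted case, and in the boosted case reduce the claim to $\alignop(-m^t,d^t)\ge\alignop(-m^t,s^t-x^t)$.

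The difference is in how you obtain that alignment inequality. The paper simply invokes Proposition~3.1 of \cite{combettes2020boosting}. You re-derive it from the pseudocode:
\begin{itemize}
\item the first round reproduces the FW direction;
\item the $\delta$-acceptance test makes the alignment nondecreasing;
\item your induction shows that each accepted away step rescales $\Lambda$ by a factor in $]0,1]$, so $\Lambda_t>0$ and the normalization $\tilde d^t=\psi^{K_t}/\Lambda_t$ cannot flip the sign of the alignment.
\end{itemize}

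The citation is shorter, but it tacitly assumes that a result stated for the exact gradient, and for a slightly different loop, carries over verbatim to the estimator $m^t$ and to this pseudocode. For instance, here $k$ is incremented before each break, so $\psi^{K_t}$ must be read as the last accepted iterate, as you implicitly do. Your argument checks this transfer directly and shows that only $s^t\in\lmo(m^t)$ and $x^t\in\mathcal{C}$ are used. It also makes explicit the degenerate cases ($m^t=0$, $s^t=x^t$, $\lambda_0=0$) and the positivity of $\Lambda_t$, which the paper leaves implicit.

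Two small additions would tighten it.
\begin{itemize}
\item State why the first round is accepted when $\lambda_0>0$: the alignment rises from $-1$ to a positive value, an increase larger than $1\ge\delta$.
\item Note that the same positivity facts ($\lambda_k\ge 0$ on FW steps, $\Lambda>0$, and away steps leaving $\psi/\Lambda$ unchanged) make $x^t+\tilde d^t$ a convex combination of the $v^k\in\mathcal{C}$. By induction on $t$, this justifies the standing assumption $x^t\in\mathcal{C}$ that you rely on.
\end{itemize}
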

\begin{proof}
     By construction of Algorithm~\ref{alg:BSFW}, we have $K_t \geq 1$ for all $t \in \mathbb{N}$. \\
     \noindent \textbf{Case I}: Suppose $\gamma_t < 1$. Then we have $d^t = \tilde{d^t}$. Applying Proposition~3.1 from \cite{combettes2020boosting}, we obtain
    \begin{equation*}
        \alignop(-m^t, d^t) \geq \alignop(-m^t, s^t - x^t).
    \end{equation*}
    By the definition of the alignment operator $\alignop$, this yields
    \begin{equation*}
        \frac{\langle -m^t, d^t \rangle}{\|m^t\|\|d^t\|} 
            \geq \frac{\langle -m^t, s^t - x^t \rangle}{\|m^t\|\|s^t - x^t\|}
        \implies \frac{\langle m^t, d^t \rangle}{\|d^t\|}
            \leq \frac{\langle m^t, s^t - x^t \rangle}{\|s^t - x^t\|}
        \implies \langle m^t, d^t \rangle
            \leq \frac{\|d^t\|}{\|s^t - x^t\|} \langle m^t, s^t - x^t \rangle.
    \end{equation*}
    \noindent \textbf{Case II}: Suppose $\gamma_t = 1$. Then $d^t = s^t - x^t$ and we have 
    \begin{equation*}
        \langle m^t, s^t - x^t \rangle \leq \frac{\|s^t - x^t\|}{\|s^t - x^t\|} \langle m^t, s^t - x^t \rangle.
    \end{equation*}
\end{proof}


\subsection{Boosted Deterministic Frank-Wolfe}
\label{appendix: boosted_det_fw}
We first provide descent Lemma~\ref{lem:recursion_det_base} which will help us prove the convergence analysis for $\rho$-quasar-convex objective functions in Theorem~\ref{proof thm:det_quasar} (any-time rate) and nonconvex functions in Theorem~\ref{proof thm:det_nonconvex} (any-time rate) and Theorem~\ref{thm:det_ncv_fixed_horizon} (fixed-horizon rate).

\begin{lemma}[Descent under Smoothness in Deterministic Setting]\label{lem:recursion_det_base}
Let $\{x^t\}_{t=0}^{+\infty}$ be the sequence generated by Algorithm~\ref{alg:BSFW}, where at every iteration $t$, $m^t = \nabla f(x^t)$.
Assume that $f$ satisfies $L$-smoothness Assumption~\ref{assum: L_smooth}. Then, for all $t \geq 0$, the following descent property holds
\begin{equation*}
f(x^{t+1}) \leq f(x^t) + \eta_t \langle \nabla f(x^t), s^t - x^t \rangle + \frac{L}{2} \eta_t^2 \|s^t - x^t\|^2.
\end{equation*}
\end{lemma}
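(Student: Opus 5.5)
The plan is to start from the standard quadratic upper bound implied by Assumption~\ref{assum: L_smooth}. Since $x^t, x^{t+1}\in\mathcal{C}$ and $\mathcal{C}$ is convex, the whole segment between them lies in $\mathcal{C}$, so
\begin{equation*}
f(x^{t+1}) \leq f(x^t) + \langle \nabla f(x^t), x^{t+1}-x^t\rangle + \tfrac{L}{2}\|x^{t+1}-x^t\|^2 .
\end{equation*}
Before using this, I will confirm feasibility of $x^{t+1}$: in the boosted branch, $x^t+\tilde d^t\in\mathcal{C}$ by the normalization by $\Lambda_t$ (Proposition~3.1 of \cite{combettes2020boosting}), and $\gamma_t\in[0,1]$. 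In the reverted branch, $x^{t+1}$ is a convex combination of $x^t$ and $s^t$. I then split into the two cases of Algorithm~\ref{alg:BSFW}.

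\textbf{Case $\gamma_t=1$ (revert to FW).} Here $x^{t+1}-x^t=\eta_t(s^t-x^t)$, and substituting gives the claim verbatim.

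\textbf{Case $\gamma_t<1$.} Here $\gamma_t=\eta_t\|s^t-x^t\|/\|\tilde d^t\|$ with $\tilde d^t\neq 0$, and $x^{t+1}-x^t=\gamma_t d^t$ where $d^t=\tilde d^t$. The quadratic term becomes
\begin{equation*}
\tfrac{L}{2}\gamma_t^2\|d^t\|^2=\tfrac{L}{2}\eta_t^2\|s^t-x^t\|^2 ,
\end{equation*}
which is exactly the purpose of the step-size scaling. For the linear term, apply Lemma~\ref{lem:alignment} with $m^t=\nabla f(x^t)$:
\begin{equation*}
\gamma_t\langle \nabla f(x^t), d^t\rangle \leq \gamma_t\frac{\|d^t\|}{\|s^t-x^t\|}\langle \nabla f(x^t), s^t-x^t\rangle = \eta_t\langle \nabla f(x^t), s^t-x^t\rangle .
\end{equation*}
Combining the two bounds yields the statement.

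The only delicate point is the degenerate situation $s^t=x^t$, where Lemma~\ref{lem:alignment} divides by zero. In that case $\gamma_t=0$ whenever $\tilde d^t\neq0$, so $x^{t+1}=x^t$ and both sides of the claim equal $f(x^t)$; if instead $\tilde d^t=0$, we are in the revert case. I expect this bookkeeping, together with checking that $x^t+\tilde d^t\in\mathcal{C}$ so that smoothness applies on $\mathcal{C}$, to be the main, if minor, obstacle.
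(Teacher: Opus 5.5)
Your proposal is correct and follows the paper's proof essentially verbatim. Both use the $L$-smoothness quadratic upper bound and split on $\gamma_t<1$ versus $\gamma_t=1$. In the boosted case, the substitution $\gamma_t=\eta_t\|s^t-x^t\|/\|d^t\|$ makes the quadratic term match, and Lemma~\ref{lem:alignment} handles the linear term. Your extra checks on feasibility and on the degenerate case $s^t=x^t$ are care the paper omits. In fact $s^t=x^t$ gives $u^0=0$ in \textbf{Boost}, hence $\tilde d^t=0$, so only the revert case can occur there.
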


\begin{proof}
By $L$-smoothness of $f$, for any feasible direction $d^t$ satisfying $x^{t}+\gamma_t d^t \in \mathcal{C}$ and any $\gamma_t \in [0,1]$, we have
\begin{equation}\label{eq:smoothness}
    f(x^{t}+\gamma_t d^t) \leq f(x^t) + \gamma_t \langle \nabla f(x^t), d^t \rangle + \frac{L}{2} \gamma_t^2 \|d^t\|^2.
\end{equation}
We distinguish two cases based on the value of $\gamma_t$, following the analysis of Algorithm~\ref{alg:BSFW}.  \\ 
\noindent\textbf{Case I}: $\gamma_t<1$. In this case, $\gamma_t = \eta_t \frac{\|s^t - x^t\|}{\|d^t\|}$ since $\tilde{d}^t = d^t$. Using Lemma~\ref{lem:alignment} and substituting into \eqref{eq:smoothness} yields
\begin{equation*}
\begin{aligned}
f(x^{t+1})
&\leq f(x^t) + \left(\eta_t\frac{\|s^t-x^t\|}{\|d^t\|}\right)\langle\nabla f(x^t),d^t\rangle + \frac{L}{2}\left(\eta_t\frac{\|s^t-x^t\|}{\|d^t\|}\right)^2 \|d^t\|^2
\\
&\leq f(x^t) + \left(\eta_t\frac{\|s^t-x^t\|}{\|d^t\|}\right) \left(\frac{\|d^t\|}{\|s^t - x^t\|} \langle \nabla f(x^t), s^t - x^t \rangle\right) + \frac{L}{2}\left(\eta_t\frac{\|s^t-x^t\|}{\|d^t\|}\right)^2 \|d^t\|^2
\\
&= f(x^t) + \eta_t \langle \nabla f(x^t), s^t - x^t \rangle + \frac{L}{2} \eta_t^2 \|s^t-x^t\|^2,
\end{aligned}
\end{equation*}
where the first inequality follows from \eqref{eq:smoothness} with the substitution $\gamma_t = \eta_t \frac{\|s^t - x^t\|}{\|d^t\|}$, the second inequality follows from Lemma~\ref{lem:alignment}, and the equality follows from algebraic simplification.

\noindent\textbf{Case II}: $\gamma_t=1$. Here, the direction is $d^t = s^t - x^t$ (as vanilla Frank-Wolfe direction) and the update is
$x^{t+1} = x^t + \eta_t d^t$. In this case, similarly, from $L$-smoothness we have
\begin{equation*}
f(x^{t+1})\leq f(x^t) + \eta_t \langle \nabla f(x^t), s^t - x^t \rangle + \frac{L}{2} \eta_t^2 \|s^t-x^t\|^2.
\end{equation*}
Thus, in both cases, the decrease inequality holds.
\end{proof}
We are ready to present convergence guarantees under two complementary regimes: structured (quasar-convex) objectives and general nonconvex smooth functions, in the following subsections.
\subsubsection{Quasar Convex Case}
\begin{theorem}[Formal Statement of Theorem~\ref{thm:det_quasar}]
\label{proof thm:det_quasar}
Let $f$ be a function that satisfies $L$-smoothness Assumption~\ref{assum: L_smooth} and $\rho$-quasar-convexity Assumption~\ref{assum: quasar-convexity}. 
Consider the sequence $\{x^t\}_{t=0}^{+\infty}$ generated by Algorithm~\ref{alg:BSFW} with $m^t=\nabla f(x^t)$ and $\eta_t = \frac{2}{\rho(t+2)}$. 
Then, for all $t\geq 0$,  
\begin{equation*}
    F_t \leq \frac{1}{t+1} \max \left\{F_0,\; \frac{2 L D^2}{\rho^2}\right\} = \mathcal{O}(1/t).
\end{equation*}
\end{theorem}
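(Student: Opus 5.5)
The plan is to combine the descent inequality of Lemma~\ref{lem:recursion_det_base} with quasar-convexity and then run a standard induction on $t$. By Lemma~\ref{lem:recursion_det_base}, for every $t\ge 0$,
\[
f(x^{t+1}) \le f(x^t) + \eta_t \langle \nabla f(x^t), s^t - x^t\rangle + \tfrac{L}{2}\eta_t^2 D^2 ,
\]
where we used $\|s^t-x^t\|\le D$.

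The linear term is controlled as follows. Since $s^t\in\lmo(\nabla f(x^t))$ and $x^\star\in\mathcal{C}$, we have
\[
\langle \nabla f(x^t), s^t-x^t\rangle \le \langle \nabla f(x^t), x^\star - x^t\rangle .
\]
Assumption~\ref{assum: quasar-convexity} with $y=x^t$ then bounds the right-hand side by $\rho(f^\star - f(x^t)) = -\rho F_t$. Subtracting $f^\star$ from both sides gives the one-step recursion
\[
F_{t+1} \le (1-\rho\eta_t)F_t + \tfrac{L D^2}{2}\eta_t^2 .
\]
With $\eta_t = \tfrac{2}{\rho(t+2)}$ this becomes
\[
F_{t+1}\le \tfrac{t}{t+2}F_t + \tfrac{2LD^2}{\rho^2 (t+2)^2}.
\]
We also check that $\eta_t\le 1$ whenever the algorithm needs it. For $t=0$ and $\rho<1$ we get $\eta_0=1/\rho>1$, which is slightly outside the stated input range $]0,1]$. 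The descent lemma itself only used $\gamma_t\le 1$ and feasibility of $x^t+\eta_t(s^t-x^t)$. In Case II, feasibility needs $\eta_t\le 1$, so I would note that this is the point requiring care: either cap $\eta_0$ at $1$, or observe that the recursion still yields $F_1\le \tfrac{2LD^2}{\rho^2}\cdot\tfrac12$ directly.

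Set $M=\max\{F_0, 2LD^2/\rho^2\}$. I would prove $F_t\le M/(t+1)$ by induction. The base case $t=0$ is immediate. For the inductive step, the recursion gives
\[
F_{t+1}\le \tfrac{t}{t+2}\cdot\tfrac{M}{t+1} + \tfrac{M}{(t+2)^2},
\]
and it remains to verify
\[
\tfrac{t}{(t+1)(t+2)}+\tfrac{1}{(t+2)^2}\le \tfrac{1}{t+2},
\quad\text{i.e.}\quad
\tfrac{t}{t+1}+\tfrac{1}{t+2}\le 1 .
\]
This is equivalent to $t(t+2)+(t+1)\le (t+1)(t+2)$, i.e. $t^2+3t+1\le t^2+3t+2$, which holds. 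The $t=0$ step is the same computation: $F_1\le M/4\le M/2$.

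I expect the only real obstacle to be the coefficient $(1-\rho\eta_t)$, which must be nonnegative for the inductive step to be legitimate. This holds because $\rho\eta_t = 2/(t+2)\le 1$, so the issue above concerns only feasibility of the Case II update at $t=0$, not the recursion. Everything else is routine, and the $O(1/t)$ rate follows directly from the induction.
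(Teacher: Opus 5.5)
Your main argument is the paper's argument: Lemma~\ref{lem:recursion_det_base}, optimality of $s^t$ against $x^\star$, quasar-convexity at $y=x^t$, the recursion $F_{t+1}\le\frac{t}{t+2}F_t+\frac{2LD^2}{\rho^2(t+2)^2}$, and induction on $F_t\le M/(t+1)$ with $M=\max\{F_0,2LD^2/\rho^2\}$. The paper closes the inductive step via $(t+2)^2\ge(t+1)(t+2)$ rather than your polynomial comparison, which makes no difference.

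Your feasibility aside flags something the paper's proof also skips: it invokes Lemma~\ref{lem:recursion_det_base} without ever checking $\eta_t\le 1$. However, your treatment of the point contains two errors.

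First, the scope is wrong. $\eta_t=\frac{2}{\rho(t+2)}>1$ holds for every $t<\frac{2}{\rho}-2$, not only for $t=0$; for example, $\rho=\frac12$ gives $\eta_0=2$ and $\eta_1=\frac43$. In any such iteration that reverts to the FW direction, the update $x^{t+1}=x^t+\eta_t(s^t-x^t)$ overshoots $s^t$ and can leave $\mathcal{C}$. This happens, for instance, whenever boosting accepts only its first direction, since then $\tilde{d}^t=s^t-x^t$ and $\gamma_t=\min\{\eta_t,1\}=1$.

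Second, neither of your remedies works.
\begin{itemize}
\item Reading a bound on $F_1$ off the recursion is circular. The recursion comes from the smoothness inequality between $x^t$ and $x^{t+1}$, which Assumption~\ref{assum: L_smooth} only supplies on $\mathcal{C}$. Even granting that inequality, the next step needs quasar-convexity at $x^{t+1}$ and $\|s^{t+1}-x^{t+1}\|\le D$, and both require $x^{t+1}\in\mathcal{C}$.
\item Capping the step at $1$ changes the step decay, so you are no longer proving the stated theorem. The induction also stops delivering the stated constant: with $\eta_0=1$ the recursion gives only $F_1\le(1-\rho)F_0+\frac{LD^2}{2}\le(1-\frac{\rho}{2})^2M$, which exceeds $M/2$ when $\rho<2-\sqrt{2}$.
\end{itemize}

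So your proof, like the paper's, is complete for $\rho=1$, where $\eta_t\le 1$ for all $t$. For $\rho<1$ the early revert-to-FW iterations remain uncovered, and handling them would mean modifying the step decay and re-deriving the constant.
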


\begin{proof}
From Lemma \ref{lem:recursion_det_base} we have
\begin{equation*}
\begin{aligned}
f(x^{t+1}) - f(x^t) 
&\leq  \eta_t \langle \nabla f(x^t), s^t - x^t \rangle + \frac{L}{2} \eta_t^2 \|s^t-x^t\|^2\\ 
&\leq  \eta_t \langle \nabla f(x^t), x^\star- x^t \rangle + \eta_t^2 \frac{L D^2}{2}\\ 
&\leq \eta_t \rho (f(x^\star)- f(x^t)) + \eta_t^2 \frac{L D^2}{2},
\end{aligned}
\end{equation*}
where the second inequality uses $s^t \in \lmo(\nabla f(x^t))$ and the last follows from $\rho$-quasar-convexity of $f$.
Recall that $F_t = f(x^t) - f(x^\star) \geq 0$. Rearranging the terms and subtracting $f(x^\star)$ from both sides:
\begin{equation}\label{eq:recursion}
F_{t+1} \leq  F_t - \rho \eta_t F_t + \frac{L D^2}{2} \eta_t^2 = (1 - \rho \eta_t) F_t + \frac{L D^2}{2} \eta_t^2.
\end{equation}
Now we use $\eta_t= \frac{2}{\rho(t+2)}$, substituting it into 
\begin{equation}\label{eq:inequalty-short-step}
    F_{t+1} \leq \frac{t}{t+2} F_t + \frac{2 L D^2}{\rho^2 (t+2)^2}.
\end{equation}
Set  $B:= \max \left\{F_0, \frac{2L D^2}{\rho^2} \right\}$. We prove by induction that for all $t \geq 0$,
\begin{equation}\label{eq:induction}
 F_t \leq \frac{B}{t+1}.   
\end{equation}
Base case ($t = 0$). By definition of $B$, we have $F_0 \leq B$, and thus
\begin{equation*}
    F_0 \leq \frac{B}{1} = \frac{B}{0 + 1},
\end{equation*}
which establishes the claim for $t = 0$.
\newline
Inductive step. Assume \eqref{eq:induction} holds for some $t \geq 0$. Using \eqref{eq:inequalty-short-step} and the induction hypothesis,
\begin{align*}
F_{t+1}
&\leq \frac{t}{t+2} \cdot \frac{B}{t+1} + \frac{2 L D^2}{\rho^2 (t+2)^2} \\
&\leq \frac{t B}{(t+2)(t+1)} + \frac{B}{(t+2)^2}
\\
&\leq \frac{t B}{(t+2)(t+1)} + \frac{B}{(t+2)(t+1)}\\
&= \frac{(t+1) B}{(t+2)(t+1)}\\
&= \frac{B}{t+2}.
\end{align*}
This establishes \eqref{eq:induction} for $t+1$, completing the induction.
\end{proof}

\begin{remark}
This result immediately applies to classical convex and star-convex settings, as they both satisfy $\rho$-quasar-convexity with $\rho = 1$. Hence, Theorem~\ref{thm:det_quasar} ensures a $\mathcal{O}(1/t)$ convergence rate in these cases.
\qed
\end{remark}

\subsubsection{Nonconvex Case}
In the absence of any convexity-like assumptions, we can still guarantee convergence in terms of the Frank–Wolfe gap—a standard stationarity measure for constrained nonconvex optimization.

\begin{theorem}[Formal Statement of Theorem~\ref{thm:det_nonconvex}] 
\label{proof thm:det_nonconvex}
Let $f$ be a function that satisfies $L$-smoothness Assumption~\ref{assum: L_smooth}. 
Consider the sequence $\{x^t\}_{t=0}^{+\infty}$ generated by Algorithm~\ref{alg:BSFW} with $m^t=\nabla f(x^t)$ and $\eta_t = \frac{1}{\sqrt{t+1}}$. 
Then, for all $t\geq 0$,
\begin{equation*}
    \min_{0 \leq i \leq t} \gap(x^i) 
\leq \frac{F_0 + L D^2}{\sqrt{t+1}} = \mathcal{O}(1/\sqrt{t}).
\end{equation*}
\end{theorem}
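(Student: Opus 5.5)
The plan is to specialize Lemma~\ref{lem:recursion_det_base} to the Frank--Wolfe gap, telescope with weights $\eta_i = 1/\sqrt{i+1}$, and then control the accumulated quadratic error without giving away a logarithm. With $m^i=\nabla f(x^i)$ and $s^i\in\lmo(\nabla f(x^i))$ we have $\langle \nabla f(x^i), s^i-x^i\rangle = -\gap(x^i)$, and $\|s^i-x^i\|\le D$. Lemma~\ref{lem:recursion_det_base} then gives, for every $i\ge 0$,
\begin{equation*}
F_{i+1}\le F_i-\eta_i\gap(x^i)+\frac{LD^2}{2}\eta_i^2 .
\end{equation*}
Let $G:=\min_{0\le i\le t}\gap(x^i)$. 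Substituting $\gap(x^i)\ge G$ yields the per-step bound $F_{i+1}\le F_i-\eta_i\bigl(G-\tfrac{LD^2}{2}\eta_i\bigr)$, which is the only inequality the rest of the argument uses.

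\textbf{Step 1: case split on $G$.} If $G\le LD^2/\sqrt{t+1}$, the claim is immediate because $LD^2\le F_0+LD^2$. So assume $G> LD^2/\sqrt{t+1}$ and aim for $G\sqrt{t+1}\le F_0+LD^2$.

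\textbf{Step 2: split the time range.} Let $i_0$ be the first index with $\tfrac{LD^2}{2}\eta_i\le G/2$, i.e.\ $\sqrt{i+1}\ge LD^2/G$. The assumption of Step~1 gives $LD^2/G<\sqrt{t+1}$, so $i_0+1\le \lceil (LD^2/G)^2\rceil$ and $i_0\le t$.
\begin{itemize}
\item For $i\ge i_0$, each step decreases $F$ by at least $\tfrac{G}{2}\eta_i$.
\item For $i<i_0$, each step increases $F$ by at most $\tfrac{LD^2}{2}\eta_i^2-G\eta_i$. Maximizing this over $\eta$ shows that each early step raises $F$ by at most $G^2/(2LD^2)$.
\end{itemize}
Telescoping from $0$ to $t$ and using $F_{t+1}\ge 0$ then gives
\begin{equation*}
\frac{G}{2}\sum_{i=i_0}^{t}\frac{1}{\sqrt{i+1}}\;\le\; F_0+\sum_{i<i_0}\Bigl(\frac{LD^2}{2(i+1)}-\frac{G}{\sqrt{i+1}}\Bigr)_+ .
\end{equation*}
The goal is to show the right-hand side is at most $F_0+LD^2$ up to constants. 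The left-hand side is at least $G\bigl(\sqrt{t+2}-\sqrt{i_0+1}\bigr)$.

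\textbf{Main obstacle.} The difficulty is the early-phase sum $\sum_{i<i_0}$ and the subtraction of $G\sqrt{i_0+1}\approx LD^2$ on the left. Summing the two pieces of the early-phase term naively gives $\tfrac{LD^2}{2}H_{i_0}-2G\sqrt{i_0}$. Since $G\sqrt{i_0}\approx LD^2$, this is $\tfrac{LD^2}{2}\ln i_0$ minus a constant, and that still carries a logarithm.

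To remove it, I would first try to pair each early step with the decrease guaranteed by the exact gaps $\gap(x^i)$, not by their minimum $G$. The idea is to use $F_i\ge 0$ at the restart time $i_0$. Concretely, apply the telescoped inequality only on $[i_0,t]$, with $F_{i_0}$ in place of $F_0$, and bound $F_{i_0}$ separately by an induction showing $F_i\le F_0+LD^2$ for all $i$. The hope is that the quadratic error in this induction is absorbed by the gap term whenever $\gap(x^i)\ge LD^2\eta_i/2$, and only the at most $i_0$ remaining steps contribute.

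If that induction closes, Step~2 becomes $G\bigl(\sqrt{t+2}-\sqrt{i_0+1}\bigr)/1\le 2(F_0+LD^2)$. Combined with $\sqrt{i_0+1}\lesssim LD^2/G$, this rearranges to the stated $G\le (F_0+LD^2)/\sqrt{t+1}$, after checking the constants. I expect this bound on $F_{i_0}$ to be the delicate point. If it fails, the fallback is the standard bound with $H_{t+1}$, which is weaker than what is claimed.
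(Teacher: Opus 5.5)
\textbf{The gap.} Your proposal does not reach the statement. Everything hinges on the induction $F_i\le F_0+LD^2$, and that induction cannot be derived from what you allow yourself, namely $F_{i+1}\le F_i-\eta_i\gap(x^i)+\frac{LD^2}{2}\eta_i^2$ together with $F_i\ge 0$.

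To see this, take $\gap(x^i)\equiv G$ with equality in the recursion. The increment at step $i$ is $\frac{1}{\sqrt{i+1}}\bigl(\frac{LD^2}{2\sqrt{i+1}}-G\bigr)$. It is positive until $\sqrt{i+1}=LD^2/(2G)$ and negative afterwards, so nonnegativity of $F$ only has to be checked at $k=t+1$. Choose $LD^2=1$, $F_0=0$, $t=999$ and $G=0.032>1/\sqrt{1000}$. Then $F_{1000}\approx 1.77>0$, while $F$ peaks near $2.08>F_0+LD^2$.

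So the one-step inequality is compatible with violating both your induction and the theorem itself. The at most $i_0$ early steps that you expect to contribute a bounded amount contribute exactly the $\frac{LD^2}{2}H_{i_0}$ from your own ``main obstacle'' paragraph. The induction only relocates the logarithm.

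Two smaller slips:
\begin{itemize}
\item In your second bullet, $\eta\mapsto\frac{LD^2}{2}\eta^2-G\eta$ is convex, so $-G^2/(2LD^2)$ is its minimum, not a bound on the increase. Step $0$ can raise $F$ by up to $\frac{LD^2}{2}-G$. Had that bullet been true, the logarithm would indeed have disappeared.
\item Even granting $F_{i_0}\le F_0+LD^2$, combining it with $G\sqrt{i_0+1}\le LD^2+G$ gives $G(\sqrt{t+2}-1)\le F_0+2LD^2$. That is not the stated constant.
\end{itemize}

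\textbf{What the paper does instead.} The paper avoids any time split by dividing the descent inequality by $\eta_t$ before summing:
\[
\gap(x^t)\le\sqrt{t+1}\,(F_t-F_{t+1})+\frac{LD^2}{2\sqrt{t+1}}.
\]
The error terms then sum to at most $LD^2\sqrt{T+1}$ and are compared with the count $T+1$, rather than with $\sum_t\eta_t$.

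Be aware, however, that the paper then bounds $\sum_{t\le T}\sqrt{t+1}\,(F_t-F_{t+1})$ by $\sqrt{T+1}\,F_0$, pulling $\sqrt{T+1}$ out termwise. That is legitimate only if $f(x^{t+1})\le f(x^t)$ for every $t$, and the method is not monotone. For example, take $f(x)=\frac L2(x-0.9)^2$ on $[-1,1]$ with $x^0=1$. Step $0$ reverts to FW with $\eta_0=1$ and lands at $-1$, so $F_1=1.805L>F_0=0.005L$.

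Abel summation gives the correct bound instead:
\[
\sum_{t=0}^{T}\sqrt{t+1}\,(F_t-F_{t+1})\le F_0+\sum_{t=1}^{T}\bigl(\sqrt{t+1}-\sqrt{t}\bigr)F_t\le\sqrt{T+1}\,\max_{0\le t\le T}F_t .
\]
So that route rigorously proves the $\mathcal{O}(1/\sqrt{t})$ rate with $F_0$ replaced by $\max_{i\le t}F_i\le\max_{\mathcal{C}}f-f^\star$.

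Your Step~2 yields a comparable log-free estimate if you close it with this trivial bound on $F_{i_0}$ instead of the induction: $G\le(\max_{i\le t}F_i+LD^2)/(\sqrt{t+2}-1)$. Obtaining $F_0$ itself would require information about the iterates beyond the one-step inequality, and neither your plan nor the paper's argument supplies it.
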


\begin{proof}
From Lemma~\ref{lem:recursion_det_base}, we have
\begin{align*}
&f(x^{t+1})\leq f(x^t) + \eta_t \langle \nabla f(x^t), s^t - x^t \rangle + \frac{L}{2} \eta_t^2 \|s^t-x^t\|^2.
\end{align*}
Rearranging the inequality yields 
\begin{align*}
\eta_t \langle \nabla f(x^t), x^t-s^t \rangle\leq f(x^t) -f(x^{t+1}) + \frac{L}{2} \eta_t^2 \|s^t-x^t\|^2 \leq f(x^t) -f(x^{t+1}) + \frac{L}{2} \eta_t^2 D^2.
\end{align*}
Thus, 
\begin{align*}
\langle \nabla f(x^t), x^t-s^t \rangle \leq \frac{1}{\eta_t}(f(x^t) -f(x^{t+1})) + \frac{L}{2} \eta_t D^2,
\end{align*}
Now substitute $\eta_t = 1/\sqrt{t+1}$. Then, summing the inequality over $t = 0$ to $T$ yields
\begin{align}\label{eq:sum}
\sum_{t=0}^{T} \langle \nabla f(x^t), x^t - s^t \rangle 
&\leq \sum_{t=0}^{T} \sqrt{t+1} \, \left( f(x^t) - f(x^{t+1}) \right) 
   + \frac{L D^2}{2} \sum_{t=0}^{T} \frac{1}{\sqrt{t+1}},
\end{align}
For the first sum on the right-hand side of~\eqref{eq:sum}, note that $\sqrt{t+1} \leq \sqrt{T+1}$ for all $t \leq T$, so
\begin{align*}
\sum_{t=0}^{T} \sqrt{t+1} \, \left( f(x^t) - f(x^{t+1}) \right)
\leq &\sqrt{T+1} \sum_{t=0}^{T} \left( f(x^t) - f(x^{t+1}) \right)
\\ = &\sqrt{T+1} \, \left( f(x^0) - f(x^{T+1}) \right)
\\ \leq &\sqrt{T+1} \, \left( f(x^0) - f(x^\star) \right)
\\ =&\sqrt{T+1}~ F_0.
\end{align*}
For the second sum on the right-hand side of~\eqref{eq:sum}, we use the standard bound
$$
\sum_{t=0}^{T} \frac{1}{\sqrt{t+1}} = \sum_{k=1}^{T+1} \frac{1}{\sqrt{k}} 
\leq \int_{0}^{T+1} \frac{1}{\sqrt{x}} \, dx = 2\sqrt{T+1}.
$$
Combining both estimates, we obtain
$$
\sum_{t=0}^{T} \langle \nabla f(x^t), x^t - s^t \rangle 
\leq \sqrt{T+1} \, \left( F_0 + L D^2 \right).
$$
Since $\langle \nabla f(x^t), x^t - s^t \rangle \geq 0$, it follows that
$$
\left( \min_{0 \leq t \leq T} \langle \nabla f(x^t), x^t - s^t \rangle \right) (T+1) 
\leq \sum_{t=0}^{T} \langle \nabla f(x^t), x^t - s^t \rangle 
\leq \sqrt{T+1} \, \left( F_0 + L D^2 \right),
$$
and therefore
$$
\min_{0 \leq t \leq T} \langle \nabla f(x^t), x^t - s^t \rangle 
\leq \frac{F_0 + L D^2}{\sqrt{T+1}}.
$$
\end{proof}
\begin{theorem}[Fixed Horizon Convergence under nonconvexity in Deterministic Setting]
\label{thm:det_ncv_fixed_horizon}
Let $f$ be an objective function that satisfies $L$-smoothness Assumption~\ref{assum: L_smooth}. Let $T\in\mathbb{N}$ and $\{x^t\}_{t=0}^T$ be a sequence generated by Algorithm~\ref{alg:BSFW} where $m^t = \nabla f(x^t)$, and $\eta_t = \frac{1}{\sqrt{T+1}}$. Then we have
\begin{equation*}
    \min_{0 \leq t \leq T} \langle \nabla f(x^t), x^t - s^t \rangle \leq \frac{F_0 + \frac{L D^2}{2}}{\sqrt{T+1}}.
\end{equation*}
\end{theorem}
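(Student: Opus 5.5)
The plan follows the proof of Theorem~\ref{proof thm:det_nonconvex} almost verbatim; only the step decay changes, now the constant $\eta_t = \eta := 1/\sqrt{T+1}$. We start from the descent inequality of Lemma~\ref{lem:recursion_det_base}. It holds for any choice of $\eta_t\in]0,1]$, and it covers both the boosted case and the revert-to-FW case. Using $\|s^t-x^t\|\le D$ and rearranging gives, for each $0\le t\le T$,
\begin{equation*}
\eta\,\langle \nabla f(x^t), x^t-s^t\rangle \le f(x^t)-f(x^{t+1}) + \frac{L}{2}\eta^2 D^2 .
\end{equation*}

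Next we sum this inequality over $t=0,\dots,T$. Because $\eta$ is constant, the function-value differences telescope exactly, with no need for the monotonicity bound $\sqrt{t+1}\le\sqrt{T+1}$ used in the any-time proof. The sum becomes $f(x^0)-f(x^{T+1})\le F_0$, using $f(x^{T+1})\ge f^\star$. Feasibility of $x^{T+1}$ holds because each update is a feasible step, as guaranteed in the algorithm's construction. The quadratic terms contribute exactly $(T+1)\frac{L}{2}\eta^2D^2$, which equals $\frac{LD^2}{2}$ for our $\eta$. Dividing by $\eta$ yields
\begin{equation*}
\sum_{t=0}^{T}\langle \nabla f(x^t), x^t-s^t\rangle \le \sqrt{T+1}\,F_0 + \sqrt{T+1}\,\frac{LD^2}{2}.
\end{equation*}

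Finally, each summand is nonnegative, since $s^t\in\lmo(\nabla f(x^t))$ and $x^t\in\mathcal{C}$. So the minimum over $t$ is at most the average, and dividing by $T+1$ gives $\bigl(F_0+\tfrac{LD^2}{2}\bigr)/\sqrt{T+1}$, which is the claimed bound.

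The constant improves from $LD^2$ to $LD^2/2$ because we avoid the integral bound $\sum_{t} 1/\sqrt{t+1}\le 2\sqrt{T+1}$, which cost a factor of $2$ in the any-time proof. There is no real obstacle here. The only point to check is that Lemma~\ref{lem:recursion_det_base} does not depend on the specific decay schedule, and its proof confirms this, since in both cases only $\gamma_t$ as defined by \eqref{eq: step} and the alignment inequality of Lemma~\ref{lem:alignment} are used.
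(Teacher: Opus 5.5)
Your proposal is correct and follows the paper's proof essentially step for step. Like the paper, you apply Lemma~\ref{lem:recursion_det_base} with $\|s^t-x^t\|\le D$, telescope over $t=0,\dots,T$ to bound the function-value differences by $F_0$, and use nonnegativity of the gap terms together with $\sum_t\eta_t=\sqrt{T+1}$ and $\sum_t\eta_t^2=1$; dividing by the constant $\eta$ before summing rather than after is only a cosmetic difference.
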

\begin{proof}
From Lemma~\ref{lem:recursion_det_base}, we have
\begin{align*}
&f(x^{t+1})\leq f(x^t) + \eta_t \langle \nabla f(x^t), s^t - x^t \rangle + \frac{L}{2} \eta_t^2 \|s^t-x^t\|^2.
\end{align*}
Rearranging the inequality yields 
\begin{align*}
\eta_t \langle \nabla f(x^t), x^t-s^t \rangle\leq f(x^t) -f(x^{t+1}) + \frac{L}{2} \eta_t^2 \|s^t-x^t\|^2 \leq f(x^t) -f(x^{t+1}) + \frac{L}{2} \eta_t^2 D^2,
\end{align*}
where we used $\|s^t - x^t\| \leq D$. Summing both sides from $t = 0$ to $T$ gives 
$$
\sum_{t=0}^{T} \eta_t \langle \nabla f(x^t), x^t - s^t \rangle \leq f(x^0) - f(x^{T+1}) + \frac{L D^2}{2} \sum_{t=0}^{T} \eta_t^2
 \leq f(x^0) - f(x^\star) + \frac{L D^2}{2} \sum_{t=0}^{T} \eta_t^2,
$$
Since $g_t \geq 0$, we have
$$
\left(\min_{0\leq t \leq T} \langle \nabla f(x^t), x^t - s^t \rangle \right) \sum_{t=0}^{T} \eta_t  \leq F_0 + \frac{L D^2}{2} \sum_{t=0}^{T} \eta_t^2.
$$
Now substitute $\eta_t = 1/\sqrt{T+1}$ for all $t = 0, \cdots, T$, Then
$$
\sum_{t=0}^{T} \eta_t = (T+1) \cdot \frac{1}{\sqrt{T+1}} = \sqrt{T+1}
\qquad \text{and}\qquad
\sum_{t=0}^{T} \eta_t^2 = (T+1) \cdot \frac{1}{T+1} = 1.
$$
Thus,
$$
\left( \min_{0 \leq t \leq T} \langle \nabla f(x^t), x^t - s^t \rangle \right) \sqrt{T+1} \leq F_0 + \frac{L D^2}{2},
$$
which implies
$$\min_{0 \leq t \leq T} \langle \nabla f(x^t), x^t - s^t \rangle \leq \frac{F_0 + \frac{L D^2}{2}}{\sqrt{T+1}}.$$
This concludes the proof.
\end{proof}
\subsection{Boosted Stochastic Frank–Wolfe}
\label{appendix: boosted_stoch_fw}

Here, we make Remark~\ref{rem: full grad bounds} on the estimators, which we use in the convergence analysis that follows. We show the analysis for both $\rho$-quasar-convex functions in subsection~\ref{apppendix_A_bsfw: quasar_convex} and nonconvex functions in subsection~\ref{appendix_A_bsfw: non_convex}.

\begin{remark}\label{rem: full grad bounds}
The recursive bounds on the gradient estimation error $\|\Delta^t\|^2$ (recall that $\Delta^t = m^t - \nabla f(x^t)$) and the auxiliary variance term $\sigma_t^2$ follow directly from Assumption~\ref{assum: est}. Applying the law of total expectation to both sides yields the unconditional recursions:
\begin{equation}\label{eq: assum 1}
    \mathbb{E}[\|\Delta^t\|^2] \leq (1 - \rho_1)\mathbb{E}[\|\Delta^{t-1}\|^2] + A\mathbb{E}[\sigma_{t-1}^2] + \eta_{t-1}^2 B D^2 + C,
\end{equation}
\begin{equation}\label{eq: assum 2}
\mathbb{E}[\sigma_t^2] \leq (1 - \rho_2)\mathbb{E}[\sigma_{t-1}^2] + \eta_{t-1}^2 E D^2.
\end{equation}
\end{remark}

\subsubsection{Quasar Convex Case}\label{apppendix_A_bsfw: quasar_convex}
In the $\rho$-quasar-convex setting, first we prove descent Lemma~\ref{lem: stoch_quasar_convex_descent}. This will help us prove both a fixed-horizon convergent rate in Theorem~\ref{thm:stoch_fixed_horizon_quasar_convex} using a step decay that is a modified version of the step size that is offered in \cite{nazykov2024stochastic}, and an any-time convergent rate in Theorem~\ref{thm:stoch_quasar_convex_param_ag} using an alternative step decay.

\begin{lemma}[Descent under Smoothness in Stochastic Setting for Quasar-Convex Functions]
    \label{lem: stoch_quasar_convex_descent}
    Let $f$ be an objective function that satisfies $L$-smoothness Assumption~\ref{assum: L_smooth} and $\rho$-quasar-convexity Assumption~\ref{assum: quasar-convexity}. Consider the sequence $\{x^t\}_{t=0}^{+\infty}$ generated by Algorithm~\ref{alg:BSFW}. Then for any $\alpha > 0$, we have
    \begin{equation*}
        \mathbb{E}[F_{t+1}] \leq (1 - \rho \eta_t) \mathbb{E}[F_t] + \frac{\alpha}{L} \mathbb{E}[\|\Delta^t\|^2] + \eta_t^2LD^2 \left( \frac{1}{\alpha}  + \frac{1}{2} \right).
    \end{equation*}
\end{lemma}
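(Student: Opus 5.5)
The argument mirrors the deterministic one: I first derive a one-step descent inequality in terms of the estimator $m^t$, then convert to $\nabla f(x^t)$ at the cost of an error term controlled by Young's inequality.

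\textbf{Step 1 (smoothness plus alignment with $m^t$).} As in Lemma~\ref{lem:recursion_det_base}, split into the cases $\gamma_t<1$ and $\gamma_t=1$. In both cases the actual displacement is $x^{t+1}-x^t = \gamma_t d^t$. This displacement has norm exactly $\eta_t\|s^t-x^t\|$, and the effective step is $\eta_t\|s^t-x^t\|/\|d^t\|$. By $L$-smoothness,
\[
f(x^{t+1}) \le f(x^t) + \tfrac{\eta_t\|s^t-x^t\|}{\|d^t\|}\langle \nabla f(x^t), d^t\rangle + \tfrac{L}{2}\eta_t^2\|s^t-x^t\|^2 .
\]
Write $\nabla f(x^t) = m^t - \Delta^t$. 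Apply Lemma~\ref{lem:alignment}, which is stated for $m^t$, to the $m^t$ part. This gives
\[
f(x^{t+1}) \le f(x^t) + \eta_t\langle m^t, s^t-x^t\rangle - \tfrac{\eta_t\|s^t-x^t\|}{\|d^t\|}\langle \Delta^t, d^t\rangle + \tfrac{L}{2}\eta_t^2D^2 .
\]

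\textbf{Step 2 (use the LMO and quasar-convexity).} Since $s^t\in\lmo(m^t)$, we have $\langle m^t, s^t-x^t\rangle \le \langle m^t, x^\star-x^t\rangle$. Writing this as $\langle \nabla f(x^t), x^\star-x^t\rangle + \langle \Delta^t, x^\star-x^t\rangle$, quasar-convexity bounds the first term by $-\rho F_t$. The two error terms together are at most
\[
\eta_t\|\Delta^t\|\,\|x^\star-x^t\| + \eta_t\|s^t-x^t\|\,\|\Delta^t\| \le 2\eta_t D\|\Delta^t\|,
\]
by Cauchy--Schwarz and the diameter bound.

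\textbf{Step 3 (Young's inequality).} Bound $2\eta_t D\|\Delta^t\| \le \tfrac{\alpha}{L}\|\Delta^t\|^2 + \tfrac{L\eta_t^2D^2}{\alpha}$. Subtracting $f^\star$ gives
\[
F_{t+1}\le (1-\rho\eta_t)F_t + \tfrac{\alpha}{L}\|\Delta^t\|^2 + \eta_t^2LD^2\left(\tfrac1\alpha+\tfrac12\right).
\]
Because $\eta_t$ is deterministic, taking full expectations finishes the proof.

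\textbf{Main obstacle.} The delicate point is Step 1's handling of the error inner product along the boosted direction, $\langle \Delta^t, d^t\rangle$. Since $\Delta^t$ has no sign, it cannot be pushed through the alignment inequality. It must instead be bounded crudely, and the scaling $\gamma_t\|d^t\| = \eta_t\|s^t-x^t\|\le \eta_t D$ is exactly what makes that bound $O(\eta_t D\|\Delta^t\|)$. The constants then combine to $1/\alpha$ via Young's inequality.
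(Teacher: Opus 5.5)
Your proof is correct and is essentially the paper's argument: the same chain of $L$-smoothness, Lemma~\ref{lem:alignment} applied to $m^t$, LMO optimality against $x^\star$, quasar-convexity, and Young's inequality via $\gamma_t\|d^t\| = \eta_t\|s^t-x^t\|$. The differences are cosmetic: you merge the paper's two cases through the step coefficient $\eta_t\|s^t-x^t\|/\|d^t\|$, and you apply Cauchy--Schwarz before a single Young's inequality rather than applying Young to each error inner product separately, which gives the same constant $\tfrac{1}{\alpha}+\tfrac{1}{2}$. One small wording fix: when $\gamma_t=1$ the displacement is $\eta_t d^t$ rather than $\gamma_t d^t$, but your effective-step formula already handles that case correctly.
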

\begin{proof}
    From Assumption~\ref{assum: L_smooth}, we have
    \begin{equation*}
        f(x^{t+1}) \leq f(x^t) + \langle \nabla f(x^t), x^{t+1} - x^t \rangle + \frac{L}{2} \|x^{t+1} - x^t\|^2.
    \end{equation*}
    We distinguish two cases based on the value of $\gamma_t$, following the analysis of Algorithm~\ref{alg:BSFW}. \\
    \noindent\textbf{Case I}: $\gamma_t<1$. Then we have $x^{t+1} = x^t + \gamma_t d^t$. By using Lemma~\ref{lem:alignment},
    \begin{align*}
        f(x^{t+1}) &\leq f(x^t) + \gamma_t \langle \nabla f(x^t), d^t \rangle + \frac{L}{2} \gamma_t^2 \|d^t\|^2 \\
        &\leq f(x^t) + \gamma_t \langle \nabla f(x^t) - m^t, d^t \rangle + \gamma_t \langle m^t, d^t \rangle + \frac{L}{2} \gamma_t^2 \|d^t\|^2 \\
        &\leq f(x^t) + \gamma_t \langle \nabla f(x^t) - m^t, d^t \rangle \\
        &~~~~~~+ \eta_t \left( \frac{\|s^t - x^t\|}{\|d^t\|} \right) \left( \frac{\|d^t\|}{\|s^t - x^t\|} \right)  \langle m^t, s^t - x^t \rangle + \frac{L}{2} \eta_t^2 \frac{\|s^t - x^t\|^2}{\|d^t\|^2} \|d^t\|^2 \\ 
        &\leq f(x^t) + \gamma_t \langle \nabla f(x^t) - m^t, d^t \rangle + \eta_t \langle m^t, s^t - x^t \rangle + \frac{L}{2} \eta_t^2 \|s^t - x^t\|^2.
    \end{align*}
    Since $s^t \in \lmo(m^{t})$, we get
    \begin{align*}
        f(x^{t+1}) &\leq f(x^t) + \gamma_t \langle \nabla f(x^t) - m^t, d^t \rangle + \eta_t \langle m^t, x^{\star} - x^t \rangle + \frac{L}{2} \eta_t^2 \|s^t - x^t\|^2 \\
        &\leq f(x^t) + \gamma_t \langle \nabla f(x^t) - m^t, d^t \rangle + \eta_t \langle m^t - \nabla f(x^t), x^{\star} - x^t \rangle \\
        &~~~~~+ \eta_t \langle \nabla f(x^t), x^{\star} - x^t \rangle + \frac{L}{2} \eta_t^2 \|s^t - x^t\|^2.
    \end{align*}
    By using the Young's inequality on both $\langle \nabla f(x^t) - m^t, \gamma_t d^t \rangle $ and $ \langle m^t - \nabla f(x^t), \eta_t( x^{\star} - x^t ) \rangle$ with a parameter $\beta = \frac{\alpha}{L}$ for an arbitrary $\alpha > 0$, we have
    \begin{align*}
        \langle \nabla f(x^t) - m^t, \gamma_t d^t \rangle &\leq \frac{\alpha}{2L} \|\nabla f(x^t) - m^t \|^2 + \gamma_t^2 \frac{L}{2\alpha} \|d^t\|^2, \\
        \langle m^t - \nabla f(x^t), \eta_t( x^{\star} - x^t ) \rangle &\leq \frac{\alpha}{2L} \|\nabla f(x^t) - m^t\|^2 + \eta_t^2 \frac{L}{2\alpha} \|x^{\star} - x^t\|^2.
    \end{align*}
    Thus by using these inequalities, we get
    \begin{align*}
        f(x^{t+1}) &\leq f(x^t) + \frac{\alpha}{2L} \|\nabla f(x^t) - m^t \|^2 + \gamma_t^2 \frac{L}{2\alpha} \|d^t\|^2 + \frac{\alpha}{2L} \|\nabla f(x^t) - m^t \|^2 \\
        &~~~~~+ \eta_t^2 \frac{L}{2\alpha} \|x^{\star} - x^t\|^2 + \frac{L}{2} \eta_t^2 \|s^t - x^t\|^2 + \eta_t \langle \nabla f(x^t), x^{\star} - x^t \rangle \\
        &\leq f(x^t) + \frac{\alpha}{L} \|\Delta^t\|^2 + \eta_t^2 \frac{L}{2\alpha} \|s^t - x^t\|^2 + \eta_t^2 \frac{L}{2\alpha} \|x^{\star} - x^t\|^2 + \eta_t \langle \nabla f(x^t), x^{\star} - x^t \rangle + \frac{L}{2} \eta_t^2 D^2 \\
        &\leq f(x^t) +  \frac{\alpha}{L} \|\Delta^t\|^2 + \eta_t^2 \frac{L}{\alpha} D^2 + \eta_t  \langle \nabla f(x^t), x^{\star} - x^t \rangle + \frac{L}{2} \eta_t^2 D^2 \\
        &\leq f(x^t) + \frac{\alpha}{L} \|\Delta^t\|^2 + \eta_t^2 L D^2 \left( \frac{1}{\alpha} + \frac{1}{2} \right) + \eta_t \langle \nabla f(x^t), x^{\star} - x^t \rangle. 
    \end{align*}
    By using $\rho$-quasar convexity of $f$, and subtracting $f(x^{\star})$ on both sides, we get
    \begin{align*}
        F_{t+1} &\leq F_t + \frac{\alpha}{L} \|\Delta^t\|^2 + \eta_t^2 L D^2 \left( \frac{1}{\alpha} + \frac{1}{2} \right) - \rho \eta_t (f(x^t) - f(x^{\star})) \\
        & \leq (1 - \rho \eta_t ) F_t + \frac{\alpha}{L} \|\Delta^t\|^2 + \eta_t^2 L D^2 \left( \frac{1}{\alpha} + \frac{1}{2} \right).
    \end{align*}
    Taking expectation on both sides gives us
    \begin{equation*}
        \mathbb{E}[F_{t+1}] \leq (1-\rho\eta_t)\mathbb{E}[F_t] + \frac{\alpha}{L} \mathbb{E}[\|\Delta^t\|^2] + \eta_t^2 L D^2\left( \frac{1}{\alpha} + \frac{1}{2} \right).
    \end{equation*}
    \noindent\textbf{Case II}: $\gamma_t=1$. Then we have $x^{t+1} = x^{t} + \eta_t (s^t - x^t)$. Hence we get,
    \begin{align*}
        f(x^{t+1}) &\leq f(x^t) + \eta_t \langle \nabla f(x^t), s^t - x^t \rangle + \frac{L}{2} \eta_t^2 \|s^t - x^t\|^2 \\
        &\leq f(x^t) + \eta_t \langle \nabla f(x^t) - m^t, s^t - x^t \rangle + \eta_t \langle m^t, s^t - x^t \rangle +  \frac{L}{2} \eta_t^2 \|s^t - x^t\|^2 \\
        &\leq f(x^t) + \eta_t \langle \nabla f(x^t) - m^t, s^t - x^t \rangle + \eta_t \langle m^t, x^{\star} - x^t \rangle +  \frac{L}{2} \eta_t^2 \|s^t - x^t\|^2 \\
        &\leq f(x^t) + \eta_t \langle \nabla f(x^t) - m^t, s^t - x^t \rangle + \eta_t \langle m^t - \nabla f(x^t), x^{\star} - x^t \rangle \\
        &~~~~~+ \eta_t \langle \nabla f(x^t), x^{\star} - x^t \rangle + \frac{L}{2} \eta_t^2 \|s^t - x^t\|^2 \\
        &\leq f(x^t) + \eta_t \langle \nabla f(x^t) - m^t, s^t - x^{\star} \rangle + \eta_t \langle \nabla f(x^t), x^{\star} - x^t \rangle + \frac{L}{2} \eta_t^2 \|s^t - x^t\|^2.
    \end{align*}
    Using Young's inequality on $ \langle \nabla f(x^t) - m^t, \eta_t(s^t - x^{\star}) \rangle $ with $\beta = \frac{2 \alpha}{L}$ with an arbitrary $\alpha > 0$, we have the inequality
    \begin{equation*}
        \langle \nabla f(x^t) - m^t, \eta_t (s^t - x^{\star}) \rangle \leq \frac{\alpha}{L} \| \nabla f(x^t) - m^t \|^2 + \frac{L}{4 \alpha} \eta_t^2 \|s^t - x^{\star}\|^2.
    \end{equation*}
    By using this inequality, and $\rho$-quasar-convexity of $f$, we get
    \begin{align*}
        f(x^{t+1}) &\leq f(x^t) + \frac{\alpha}{L} \|\Delta^t\|^2 + \eta_t^2 \frac{L}{4\alpha} \|s^t - x^{\star}\|^2 - \rho \eta_t (f(x^t) - f(x^{\star})) + \frac{L}{2} \eta_t^2 D^2 \\
        &\leq f(x^t) + \frac{\alpha}{L} \|\Delta^t\|^2  - \rho \eta_t F_t + \eta_t^2 L D^2 \left( \frac{1}{4\alpha} + \frac{1}{2} \right).
    \end{align*}
    By subtracting $f(x^{\star})$ and taking expectation on both sides, we get
    \begin{align*}
        \mathbb{E}[F_{t+1}] &\leq \mathbb{E}[F_t] + \frac{\alpha}{L} \mathbb{E}[\|\Delta^t\|^2]  - \rho \eta_t \mathbb{E}[F_t] + \eta_t^2 L D^2 \left( \frac{1}{4\alpha} + \frac{1}{2} \right) \\
        &\leq \mathbb{E}[F_t] + \frac{\alpha}{L} \mathbb{E}[\|\Delta^t\|^2]  - \rho \eta_t \mathbb{E}[F_t] + \eta_t^2 L D^2 \left( \frac{1}{\alpha} + \frac{1}{2} \right) \\
        &\leq (1 - \rho \eta_t ) \mathbb{E}[F_t] + \frac{\alpha}{L} \mathbb{E}[\|\Delta^t\|^2] +  \eta_t^2 L D^2 \left( \frac{1}{\alpha} + \frac{1}{2} \right),
    \end{align*}
    which gives us the desired expression in both cases
\end{proof}

\begin{theorem}[Formal Statement of Theorem~\ref{thm:stoch_fixed_horizon_quasar_convex}]
\label{proof thm: stoch_fixed_horizon_quasar_convex}
Let $f$ be an objective function that satisfies $L$-smoothness Assumption~\ref{assum: L_smooth} and $\rho$-quasar-convexity Assumption~\ref{assum: quasar-convexity}. Let $T \in \mathbb{N}$, and $\{x^t\}_{t=0}^{T}$ be a sequence generated by Algorithm~\ref{alg:BSFW} where the stochastic estimator $m^t$ and auxiliary sequence $\{\sigma_t\}$ satisfy Assumption~\ref{assum: est} with parameters $\rho_1, \rho_2 \in ]0,1]$ and constants $A, B, C, E \geq 0$.
Let the step decay be chosen piecewise as
$$
\eta_t = 
\begin{cases}
\dfrac{1}{\rho d}, &  T \leq d, \\[0.5em]
\dfrac{1}{\rho d}, &  T > d \text{ and } t < t_0, \\[0.5em]
\dfrac{2}{\rho (2d + t - t_0)}, &  T > d \text{ and } t \geq t_0,
\end{cases}
$$
with $d := \dfrac{2}{\min\{\rho_1,\rho_2\}}$ and $t_0 := \lfloor T/2 \rfloor$. Then, the functional gap satisfies
{\small\begin{equation*}
    \mathbb{E}[F_T]  \leq 
 e\cdot\exp\left(-\frac{T}{2d}\right)\mathbb{E}[r_0] 
+ \frac{16 D^2 L}{\rho^2(T+d)}
+ \sqrt{\frac{32 D^2}{\rho^2(T+d)} \left(\frac{64 D^2 B}{\rho^2(T+d) \rho_1}+\frac{128 D^2 A E}{\rho^2(T+d) \rho_1 \rho_2}+\frac{2 C T}{\rho_1}\right)  },
\end{equation*}}
where $r_t$ is a Lyapunov function defined by
\begin{equation*}
    \forall t: \quad r_t = F_t + \frac{2\alpha^{\star}}{\rho_1 L} \|\Delta^t\|^2 + \frac{4\alpha^{\star}A}{\rho_1 \rho_2  L} \sigma_t^2,
\end{equation*}
with 
\begin{equation*}
    \alpha^{\star} = \sqrt{\left( \frac{32 D^2 L}{\rho^2(T+d)} \right) / \left( \frac{64 D^2 B}{\rho^2(T+d) \rho_1 L}+\frac{128 D^2 A E}{\rho^2(T+d) \rho_1 \rho_2 L }+\frac{2 C T}{\rho_1 L} \right)}.
\end{equation*}
\end{theorem}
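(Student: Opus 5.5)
The proof builds a Lyapunov recursion from Lemma~\ref{lem: stoch_quasar_convex_descent} and the unconditional estimator recursions \eqref{eq: assum 1}--\eqref{eq: assum 2} of Remark~\ref{rem: full grad bounds}, then unrolls it with the two-phase step decay, in the spirit of the stochastic FW analysis of \cite{nazykov2024stochastic}. Fix $\alpha>0$ (to be set to $\alpha^\star$ at the end). Consider $r_t = F_t + \frac{2\alpha}{\rho_1 L}\|\Delta^t\|^2 + \frac{4\alpha A}{\rho_1\rho_2 L}\sigma_t^2$.

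First I will combine the three inequalities. The descent lemma contributes $+\frac{\alpha}{L}\mathbb{E}\|\Delta^t\|^2$. The $\Delta$-recursion, multiplied by $\frac{2\alpha}{\rho_1 L}$, has contraction factor $(1-\rho_1)$. Its $A\sigma_t^2$ term, multiplied by the same coefficient, is absorbed by the $\sigma$-weight $\frac{4\alpha A}{\rho_1\rho_2L}$ under the $(1-\rho_2)$ contraction. This gives
\[
\mathbb{E}[r_{t+1}] \le \max\{1-\rho\eta_t,\,1-\tfrac{\rho_1}{2},\,1-\tfrac{\rho_2}{2}\}\,\mathbb{E}[r_t] + \eta_t^2\Big(LD^2(\tfrac1\alpha+\tfrac12) + \tfrac{2\alpha BD^2}{\rho_1 L} + \tfrac{4\alpha AED^2}{\rho_1\rho_2L}\Big) + \tfrac{2\alpha C}{\rho_1 L}.
\]
This step requires checking that the coefficients of $\|\Delta^t\|^2$ and $\sigma_t^2$ telescope as claimed, e.g. $\frac{\alpha}{L}+\frac{2\alpha(1-\rho_1)}{\rho_1 L}\le (1-\frac{\rho_1}{2})\frac{2\alpha}{\rho_1L}$, which holds with equality. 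Because $\rho\eta_t\le \frac1d=\frac{\min\{\rho_1,\rho_2\}}{2}$ for every $t$, the contraction factor is exactly $1-\rho\eta_t$. So I obtain $\mathbb{E}[r_{t+1}]\le(1-\rho\eta_t)\mathbb{E}[r_t]+\eta_t^2 G(\alpha)+\frac{2\alpha C}{\rho_1L}$.

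Next I will unroll in two phases. For $t<t_0$ (or all $t$ if $T\le d$), $\eta_t=\frac1{\rho d}$. This gives geometric decay $(1-\frac1d)^{t_0}\le \exp(-\frac{t_0}{d})\le e\exp(-\frac{T}{2d})$, using $t_0\ge T/2-1$. It also leaves an accumulated error of at most $d\cdot\frac{G}{\rho^2d^2}+\frac{2\alpha C d}{\rho_1L}$. For $t\ge t_0$, $\eta_t=\frac{2}{\rho(2d+t-t_0)}$, and the standard argument multiplies by weights $(2d+t-t_0)(2d+t-t_0+1)$ and telescopes, as in the induction of Theorem~\ref{proof thm:det_quasar}. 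This gives $\mathbb{E}[r_T]\lesssim \frac{d^2}{(2d+T-t_0)^2}\mathbb{E}[r_{t_0}] + \frac{4G}{\rho^2(2d+T-t_0)} + \frac{2\alpha C T}{\rho_1L}$. Since $2d+T-t_0\ge (T+d)/2$, every denominator becomes $T+d$ with the factors $16$, $32$, etc. in the statement. The $C$-term is crudely summed over at most $T$ steps.

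Finally I will split $G(\alpha)=\frac{LD^2}{2}+\frac{LD^2}{\alpha}+\alpha(\cdots)$. The constant part yields $\frac{16D^2L}{\rho^2(T+d)}$. The remaining part has the form $\frac{a}{\alpha}+\alpha b$, where $a=\frac{32D^2L}{\rho^2(T+d)}$ and $b$ collects the $B$, $AE$, and $CT$ terms. Minimizing over $\alpha$ gives $\alpha^\star=\sqrt{a/b}$ and the value $2\sqrt{ab}$. This is exactly the square-root term, after the $L$'s cancel. Since $F_T\le r_T$, the bound follows.

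The main obstacle is the bookkeeping in the second phase. The geometric factor from phase one must combine cleanly with the polynomial telescoping so that the constant is exactly $e\exp(-T/(2d))\mathbb{E}[r_0]$. I would first try the bound $\frac{d^2}{(2d+T-t_0)^2}\le 1$ and absorb it, accepting looser constants if needed.
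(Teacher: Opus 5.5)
Your route is the paper's: the same Lyapunov weights $M_1=\frac{2\alpha}{\rho_1L}$ and $M_2=\frac{2M_1A}{\rho_2}$, and the same recursion $\mathbb{E}[r_{t+1}]\le(1-\rho\eta_t)\mathbb{E}[r_t]+\eta_t^2G(\alpha)+\frac{2\alpha C}{\rho_1L}$, justified by $\rho\eta_t\le 1/d$. You also follow the same unrolling: a geometric phase, then quadratic-weight telescoping, then $\alpha=\alpha^\star$. Using weights $(2d+k)(2d+k+1)$ instead of the paper's $(2d+k)^2$ together with $(2d+k)(2d+k-2)\le(2d+k-1)^2$ is a cosmetic difference. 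Your worry about the phase-one factor is unfounded. With $W_k=(2d+k-1)(2d+k)$ and $K=T-t_0$, the coefficient of $\mathbb{E}[r_{t_0}]$ is $W_0/W_K\le 1$, so $e^{-t_0/d}\le e\,e^{-T/(2d)}$ gives the first term without any loss of constants.

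The real problem is your last step. With your $a=\frac{32D^2L}{\rho^2(T+d)}$ and $b$, you have $ab=\frac{32D^2}{\rho^2(T+d)}\bigl(\frac{64D^2B}{\rho^2(T+d)\rho_1}+\frac{128D^2AE}{\rho^2(T+d)\rho_1\rho_2}+\frac{2CT}{\rho_1}\bigr)$. So the square-root term in the statement is $\sqrt{ab}$, while $\min_\alpha(a/\alpha+\alpha b)=2\sqrt{ab}$. With coefficient $32$ on $G$, your argument therefore proves the bound with the last term doubled, not the stated one. The paper's own proof ends with the same $2\sqrt{uv}$ and declares itself finished, so you have inherited the mismatch, but it is still a mismatch.

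To reach the statement you have to spend the slack in your bookkeeping. With your weights, the coefficient of $G(\alpha)$ is at most $\frac{14}{\rho^2(T+d)}$, assembled as follows:
phase two contributes $\frac{4}{\rho^2}\cdot\frac54\cdot\frac{2}{T+d}$, using $\frac{2d+k+1}{2d+k}\le\frac54$ and $2d+K-1\ge\frac{T+d}{2}$;
the carried-over phase-one error contributes $\frac{W_0}{W_K}\cdot\frac{1}{\rho^2 d}\le\frac{4}{\rho^2(T+d)}$;
the case $T\le d$ gives only $\frac{2}{\rho^2(T+d)}$.
Take any coefficient $\frac{c}{\rho^2(T+d)}$ with $c\le 16$ and evaluate at the prescribed $\alpha^\star$. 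This yields $\frac{cD^2L}{2\rho^2(T+d)}\le\frac{16D^2L}{\rho^2(T+d)}$, and, when $C=0$, $\frac{c}{16}\sqrt{ab}\le\sqrt{ab}$, which is exactly the claim.

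When $C>0$ this is not enough. The $CT$ part of $b$ comes from the crude sum $\frac{2\alpha CT}{\rho_1L}$ and is not scaled by $\frac{c}{32}$, so at $\alpha^\star$ you only get up to $(1+\frac{c}{32})\sqrt{ab}$. For $C>0$ the argument as designed therefore establishes the bound only with a larger constant on the last term.
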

\begin{proof}
Define a Lyapunov function $r_t = F_t + M_1 \|\Delta^t\|^2 + M_2 \sigma_t^2$, with $M_1, M_2 > 0$ as arbitrary constants. We analyze the full expectation of the Lyapunov function $r_{t+1}$
\begin{equation}\label{eq: E_Lyapunov}
R_{t+1}:=\mathbb{E}[r_{t+1}] 
= \mathbb{E}[F_{t+1}] 
+ M_1 \mathbb{E}[\|\Delta^{t+1}\|^2] 
+ M_2 \mathbb{E}[\sigma_{t+1}^2].
\end{equation}
By Lemma~\ref{lem: stoch_quasar_convex_descent}, by setting an arbitrary $\alpha > 0$, we have
\begin{equation}\label{eq:obj}
\mathbb{E}[F_{t+1}] 
\leq (1 - \rho \eta_t)\mathbb{E}[F_t] 
+ \frac{\alpha}{L} \mathbb{E}[\|\Delta^t\|^2] 
+ \eta_t^2 L D^2 \left( \frac{1}{\alpha} + \frac{1}{2} \right).
\end{equation}
By using \eqref{eq:obj}, Remark~\ref{rem: full grad bounds}, and substituting the values into \eqref{eq: E_Lyapunov}
\begin{equation}\label{eq:lyap_prelim}
\begin{aligned}
\mathbb{E}[r_{t+1}] 
&\leq (1 - \rho \eta_t)\mathbb{E}[F_t]  + \left( \frac{\alpha}{L} + M_1(1 - \rho_1) \right) \mathbb{E}[\|\Delta^t\|^2] \\
&\quad + \left( M_1 A + M_2(1 - \rho_2) \right) \mathbb{E}[\sigma_t^2] + \left( L \left( \frac{1}{\alpha} + \frac{1}{2} \right) + M_1 B + M_2 E \right) \eta_t^2 D^2 
+ M_1 C.
\end{aligned}
\end{equation}
We now choose $M_1 = \frac{2\alpha}{\rho_1 L}$ and $M_2 = \frac{2 M_1 A}{\rho_2}$. With this choice, we can verify
\begin{equation}\label{eq: identity1}
\frac{\alpha}{L} + M_1(1 - \rho_1) 
= \frac{\alpha}{L} + \frac{2\alpha}{\rho_1 L}(1 - \rho_1)
= \frac{\alpha}{L} \left( 1 + \frac{2(1 - \rho_1)}{\rho_1} \right)
= \frac{\alpha}{L} \cdot \frac{2 - \rho_1}{\rho_1}
= M_1\left(1 - \frac{\rho_1}{2}\right),
\end{equation}
and similarly,
{\small\begin{equation}\label{eq: identity2}
M_1 A + M_2(1 - \rho_2) 
= M_1 A + \frac{2M_1 A}{\rho_2}(1 - \rho_2)
= M_1 A \left( 1 + \frac{2(1 - \rho_2)}{\rho_2} \right)
= M_1 A \cdot \frac{2 - \rho_2}{\rho_2}
= M_2\left(1 - \frac{\rho_2}{2}\right).
\end{equation}}
Define
$$a := D^2 \left( L \left( \frac{1}{\alpha} + \frac{1}{2} \right) + M_1 B + M_2 E \right) \quad \text{and} \quad b := M_1 C.$$
Substituting \eqref{eq: identity1}--\eqref{eq: identity2} into \eqref{eq:lyap_prelim}
\begin{equation}\label{eq: dec1}
\mathbb{E}[r_{t+1}] \leq (1 - \rho \eta_t) \mathbb{E}[F_t] 
+ M_1\left(1 - \frac{\rho_1}{2}\right) \mathbb{E}[\|\Delta^t\|^2] 
+ M_2\left(1 - \frac{\rho_2}{2}\right) \mathbb{E}[\sigma_t^2] + a \eta_t^2 + b.
\end{equation}
Recall $R_t=\mathbb{E}[r_t] = \mathbb{E}[F_t] + M_1 \mathbb{E}[\|\Delta^t\|^2] + M_2 \mathbb{E}[\sigma_t^2]$. To obtain a contraction in terms of $R_t$, we show that the coefficients in \eqref{eq: dec1} satisfy
$$M_1\left(1 - \frac{\rho_1}{2}\right) \leq M_1(1 - \rho \eta_t) \quad \text{and} \quad M_2\left(1 - \frac{\rho_2}{2}\right) \leq M_2(1 - \rho \eta_t).$$
By construction of $d = \frac{2}{\min\{\rho_1,\rho_2\}}$, the step decay satisfies $\rho \eta_t \leq \frac{\min\{\rho_1, \rho_2\}}{2}$, so we have
$$\rho \eta_t  \leq \frac{\rho_1}{2} \quad \text{and} \quad \rho \eta_t  \leq \frac{\rho_2}{2},$$
which implies that for all $\rho \eta_t \in ]0,1]$ that
$$1 - \frac{\rho_1}{2} \leq 1 - \rho \eta_t \quad \text{and} \quad 1 - \frac{\rho_2}{2} \leq 1 - \rho \eta_t.$$
Since $M_1, M_2 > 0$, we obtain:
$$M_1\left(1 - \frac{\rho_1}{2}\right) \leq M_1 (1 - \rho \eta_t) \quad \text{and} \quad M_2\left(1 - \frac{\rho_2}{2}\right) \leq M_2 (1 - \rho \eta_t).$$
Applying these to \eqref{eq: dec1}
\begin{align*}
\mathbb{E}[r_{t+1}] 
&\leq (1 - \rho \eta_t) \mathbb{E}[F_t] + M_1(1 - \rho \eta_t) \mathbb{E}[\|\Delta^t\|^2]+ M_2(1 - \rho \eta_t) \mathbb{E}[\sigma_t^2] + a \eta_t^2 + b\\
&= (1 - \rho \eta_t) \left(\mathbb{E}[F_t] + M_1 \mathbb{E}[\|\Delta^t\|^2] + M_2 \mathbb{E}[\sigma_t^2]\right) + a \eta_t^2 + b\\
&= (1 - \rho \eta_t) \mathbb{E}[r_t] + a \eta_t^2 + b.
\end{align*}
Thus
\begin{equation}\label{eq:lyap_recursion}
    R_{t+1} \leq (1 - \rho \eta_t)R_{t} + a \eta_t^2 + b, \quad \text{where } R_{t}=\mathbb{E}[r_t].
\end{equation}

Now we claim that for constant step decay $\eta_t = \eta$:
\begin{equation}\label{eq:unrolled}
R_T \leq (1 - \rho \eta)^T R_0 + a\eta^2 \sum_{k=0}^{T-1} (1 - \rho \eta)^k + bT.
\end{equation}

The claim is proved by mathematical induction on $T$.

\noindent\textbf{Base Case} ($T=1$): From the recursion with $t=0$:
$$R_1 \leq (1 - \rho \eta) R_0 + a\eta^2 + b = (1 - \rho \eta)^1 R_0 + a\eta^2 \sum_{k=0}^{0} (1 - \rho \eta)^k + b \cdot 1.$$

\noindent\textbf{Inductive Hypothesis}: Assume for $T = n$:
$$R_n \leq (1 - \rho \eta)^n R_0 + a\eta^2 \sum_{k=0}^{n-1} (1 - \rho \eta)^k + bn.$$

\noindent\textbf{Inductive Step}: For $T = n+1$, starting from $R_{n+1} \leq (1-\rho\eta)R_n + a\eta^2 + b$:
\begin{align*}
R_{n+1} &\leq (1 - \rho \eta) \left[ (1 - \rho \eta)^n R_0 + a\eta^2 \sum_{k=0}^{n-1} (1 - \rho \eta)^k + bn \right] + a\eta^2 + b \\
&= (1 - \rho \eta)^{n+1} R_0 + a\eta^2 \left[ (1 - \rho \eta) \sum_{k=0}^{n-1} (1 - \rho \eta)^k + 1 \right] + b[n(1 - \rho \eta) + 1].
\end{align*}

Since $(1 - \rho \eta) \sum_{k=0}^{n-1} (1 - \rho \eta)^k + 1 = \sum_{k=0}^{n} (1 - \rho \eta)^k$ and $\rho\eta \in ]0,1]$ gives $n(1 - \rho \eta) + 1 \leq n + 1$
$$R_{n+1} \leq (1 - \rho \eta)^{n+1} R_0 + a\eta^2 \sum_{k=0}^{n} (1 - \rho \eta)^k + b(n+1).$$
By induction:
$$R_T \leq (1 - \rho \eta)^T R_0 + a\eta^2 \sum_{k=0}^{T-1} (1 - \rho \eta)^k + bT.$$
We now analyze the convergence behavior under different step decay values. The following cases arise.

\noindent\textbf{Case 1}: $T \leq d$ with $\eta_t=\eta=\frac{1}{\rho d}$. 
Using the geometric series $\sum_{k=0}^{T-1} (1 - \frac{1}{d})^k \leq d$ and $(1-x)^n \leq \exp(-nx)$
$$
R_T \leq \left(1 - \frac{1}{d}\right)^T R_0 + \frac{a}{\rho^2 d} + bT \leq \exp\left(-\frac{T}{d}\right) R_0 + \frac{a}{\rho^2 d} + bT.
$$
Since $T \leq d$ gives $\frac{1}{d} 
\leq \frac{2}{T+d}$ and $\exp(-T/d) \leq \exp(-T/(2d))$
\begin{align*}
    R_T \leq \exp \left(-\frac{T}{2d}\right) R_0 + \frac{2a}{\rho^2(T+d)} + bT \leq e \cdot \exp\left(-\frac{T}{2d}\right) R_0 + \frac{32a}{\rho^2(T+d)} + bT.
\end{align*}
\noindent\textbf{Case 2}: $T > d$, $t < t_0$, with $\eta_t=\frac{1}{\rho d}$ and $t_0 := \lfloor T/2 \rfloor$. 
Similarly
$$
R_{t_0} \leq \exp\left(-\frac{t_0}{d}\right) R_0 + \frac{a}{\rho^2 d} + bt_0.
$$
\noindent\textbf{Case 3}: $T > d$, $t \geq t_0$, with $\eta_t=\frac{2}{\rho(2d + t - t_0)}$. Let $k = t - t_0$, $K := T - t_0 = \lceil T/2 \rceil$, $\bar{R}_k := R_{t_0+k}$, $\bar{\eta}_k := \frac{2}{\rho(2d+k)}$. The recursion becomes
$$
\bar{R}_{k+1} \leq \frac{2d+k-2}{2d+k}\bar{R}_k + \frac{4a}{\rho^2(2d+k)^2} + b.
$$
Multiplying by $(2d+k)^2$ and using $(2d+k)(2d+k-2) \leq (2d+k-1)^2$
$$
(2d+k)^2\bar{R}_{k+1} \leq (2d+k-1)^2\bar{R}_k + \frac{4a}{\rho^2} + b(2d+k)^2.
$$
Summing for $k=0,\cdots,K-1$
$$
(2d+K-1)^2\bar{R}_K \leq (2d)^2\bar{R}_0 + K\frac{4a}{\rho^2} + bK(2d+K-1)^2.
$$
For $K = \lceil T/2 \rceil \geq T/2$, we have $2d+K-1 \geq (T+d)/2$, giving
$$
\frac{4d^2}{(2d+K-1)^2} \leq \frac{16d^2}{(T+d)^2}, \quad \frac{K}{(2d+K-1)^2} \leq \frac{4}{T+d}.
$$
Since $t_0 = \lfloor T/2 \rfloor \geq T/2 - 1$
$$
\exp\left(-\frac{t_0}{d}\right) \leq e\cdot\exp\left(-\frac{T}{2d}\right).
$$
Combining with Case 2 and using $t_0 + K = T$, $\frac{d}{T+d} \leq 1$
$$
R_T \leq e\cdot\exp\left(-\frac{T}{2d}\right)R_0 + \frac{32a}{\rho^2(T+d)} + bT.
$$
Hence by either Case I or Case II and Case III, we have the resulting expression
\begin{equation*}
    R_T \leq e\cdot\exp\left(-\frac{T}{2d}\right)R_0 + \frac{32a}{\rho^2(T+d)} + bT.
\end{equation*}
Substituting the expressions for $a$ and $b$ 
$$a = D^2 \left( L \left( \frac{1}{\alpha} + \frac{1}{2} \right) + \frac{2\alpha B}{\rho_1 L} + \frac{4\alpha AE}{\rho_1\rho_2L} \right) \quad \text{and} \quad b = \frac{2\alpha C}{\rho_1 L},$$
{\small\begin{equation*}
\begin{aligned}
R_T &\leq e\cdot\exp\left(-\frac{T}{2d}\right)R_0 + \frac{32 D^2 \left( L \left( \frac{1}{\alpha} + \frac{1}{2} \right) + \frac{2\alpha B}{\rho_1 L} + \frac{4\alpha AE}{\rho_1\rho_2L} \right)}{\rho^2(T+d)} + \frac{2\alpha C T}{\rho_1 L}\\&
= e\cdot\exp\left(-\frac{T}{2d}\right)R_0 
+ \frac{16 D^2 L}{\rho^2(T+d)} +\alpha \left[\frac{64 D^2 B}{\rho^2(T+d) \rho_1 L}+\frac{128 D^2 A E}{\rho^2(T+d) \rho_1 \rho_2 L}+\frac{2 C T}{\rho_1 L}\right] +\frac{32 D^2 L}{\rho^2(T+d) \alpha}.
\end{aligned}    
\end{equation*}}
Since the inequality holds for all $\alpha>0$, we take the infimum over 
$\alpha$ to get the tightest bound.
The $\alpha$-dependent terms in the bound are of the form
$$g(\alpha):=\alpha \left[\frac{64 D^2 B}{\rho^2(T+d) \rho_1 L}+\frac{128 D^2 A E}{\rho^2(T+d) \rho_1 \rho_2 L}+\frac{2 C T}{\rho_1 L}\right] +\frac{32 D^2 L}{\rho^2(T+d) \alpha}.$$
This has the form $g(\alpha) = \alpha u+\frac{v}{\alpha}$, which is minimized at $\alpha^\star = \sqrt{v/u}$ with minimum value $g(\alpha^\star) = 2\sqrt{uv}$.
So, 
$$
g(\alpha^\star) = 2\sqrt{\frac{32 D^2}{\rho^2(T+d)} \left(\frac{64 D^2 B}{\rho^2(T+d) \rho_1}+\frac{128 D^2 A E}{\rho^2(T+d) \rho_1 \rho_2}+\frac{2 C T}{\rho_1}\right)  },
$$
which completes the proof since $\mathbb{E}[F_t] \leq R_T$. Consequentially when $C = 0$, it yields a $\mathcal{O}(1/T)$ convergence rate.
\end{proof}
\begin{theorem}[Formal Statement of Theorem~\ref{thm:stoch_quasar_convex_param_ag}]\label{proof thm:stoch_quasar_convex_param_ag}
Let $f$ be a function that satisfies $L$-smoothness Assumption~\ref{assum: L_smooth} and $\rho$-quasar-convexity Assumption~\ref{assum: quasar-convexity}. Suppose the stochastic estimator $m^t$ and auxiliary sequence $\{\sigma_t\}$ satisfy Assumption~\ref{assum: est} with parameters $\rho_1, \rho_2 \in ]0,1]$ and constants $A, B, C, E \geq 0$. Let $\{x^t\}_{t=0}^{+\infty}$ be a sequence generated by Algorithm~\ref{alg:BSFW} by choosing the step decay
\begin{equation*}
\eta_t = \frac{2}{\rho(t + \nu)}, \quad \text{where }\quad  \nu = \max\left\{2, \frac{4}{\min\{\rho_1, \rho_2\}}\right\}.
\end{equation*}
Then, the expected functional-value gap satisfies 
\begin{equation*}
\mathbb{E}[F_t] \leq\sqrt{\frac{16 D^2}{\rho^2(t + \nu)} \left( \frac{32 D^2 B}{\rho^2(t + \nu)\rho_1} + \frac{64 D^2 AE}{\rho^2(t + \nu)\rho_1 \rho_2} + \frac{2CT}{\rho_1} \right)} +\frac{4\nu^2 \mathbb{E}[r_0]}{(t + \nu)^2} + \frac{8 D^2 L}{\rho^2(t + \nu)}.
\end{equation*}
where $r_t$ is a Lyapunov function defined by
\begin{equation*}
\forall t, \quad r_t = F_t + \tfrac{2\alpha^{\star}}{\rho_1 L} \|\Delta^t\|^2 + \tfrac{4\alpha^{\star} A}{\rho_1\rho_2 L} \sigma_t^2,
\end{equation*}
with 
\begin{equation*}
    \alpha^{\star} = \sqrt{\left( \frac{16 D^2 L}{\rho^2(T + \nu)} \right) / \left( \frac{32 D^2 B}{\rho^2(T + \nu) \rho_1 L} + \frac{64 D^2 AE}{\rho^2(T + \nu) \rho_1 \rho_2 L} + \frac{2CT}{\rho_1 L} \right)}.
\end{equation*}
If $C = 0$, the last term in the square root vanishes and we obtain a $\mathcal{O}(1/t)$ rate.
\end{theorem}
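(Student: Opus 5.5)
We reuse the Lyapunov machinery from the proof of Theorem~\ref{proof thm: stoch_fixed_horizon_quasar_convex}; only the unrolling of the recursion changes. Fix $\alpha>0$ and set $r_t = F_t + M_1\|\Delta^t\|^2 + M_2\sigma_t^2$ with $M_1 = \tfrac{2\alpha}{\rho_1 L}$ and $M_2 = \tfrac{2M_1 A}{\rho_2}$. Lemma~\ref{lem: stoch_quasar_convex_descent} and Remark~\ref{rem: full grad bounds} give \eqref{eq: dec1} exactly as before. To absorb the $\Delta$ and $\sigma$ coefficients into a factor $(1-\rho\eta_t)$, we need $\rho\eta_t \le \min\{\rho_1,\rho_2\}/2$. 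With $\eta_t = \tfrac{2}{\rho(t+\nu)}$ we have $\rho\eta_t = \tfrac{2}{t+\nu} \le \tfrac{2}{\nu} \le \tfrac{\min\{\rho_1,\rho_2\}}{2}$, because $\nu \ge 4/\min\{\rho_1,\rho_2\}$. Also $\rho\eta_t \le 1$ since $\nu \ge 2$. The same argument as before then yields
\begin{equation*}
R_{t+1} \le \Bigl(1-\tfrac{2}{t+\nu}\Bigr)R_t + \tfrac{4a}{\rho^2(t+\nu)^2} + b,
\end{equation*}
with $a = D^2\bigl(L(\tfrac1\alpha+\tfrac12) + M_1B + M_2E\bigr)$ and $b = M_1C$.

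Next we unroll this recursion with the standard weighted-sum trick, as in Case 3 of the fixed-horizon proof. Multiply both sides by $(t+\nu)(t+\nu-1)$ and use
\begin{equation*}
(t+\nu)(t+\nu-1)\Bigl(1-\tfrac{2}{t+\nu}\Bigr) = (t+\nu-1)(t+\nu-2).
\end{equation*}
The quantity $W_t := (t+\nu-1)(t+\nu-2)R_t$ then satisfies
\begin{equation*}
W_{t+1} \le W_t + \tfrac{4a}{\rho^2} + b\,(t+\nu)^2.
\end{equation*}
Summing from $0$ to $t-1$ gives
\begin{equation*}
(t+\nu-1)(t+\nu-2)R_t \le (\nu-1)(\nu-2)R_0 + \tfrac{4at}{\rho^2} + b\,t(t+\nu)^2.
\end{equation*}
To divide through cleanly, use $(t+\nu-1)(t+\nu-2) \ge (t+\nu)^2/4$ for $t+\nu \ge 4$; if this fails, use the bound $(t+\nu)^2$ in place of $(t+\nu)^2/4$ on the right-hand side in a direct check. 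This yields
\begin{equation*}
R_t \le \frac{4\nu^2 R_0}{(t+\nu)^2} + \frac{16a}{\rho^2(t+\nu)} + 4bt.
\end{equation*}
Since $t \le T$ for the horizon appearing in $\alpha^\star$, the last term is at most $4bT$. The constants here are approximate, and I would tune the factor in front of $b$ to match the stated $2CT/\rho_1$, possibly by a sharper comparison such as $t(t+\nu)^2/(t+\nu-1)(t+\nu-2) \lesssim t$.

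Finally, expand $a$ and $b$ in terms of $\alpha$. The $\alpha$-independent part, $\tfrac{16 D^2 L}{2\rho^2(t+\nu)} = \tfrac{8D^2L}{\rho^2(t+\nu)}$, gives the last term of the statement. The remaining $\alpha$-dependent part has the form $\alpha u + v/\alpha$, with
\begin{equation*}
v = \tfrac{16D^2L}{\rho^2(t+\nu)}, \qquad u = \tfrac{32D^2B}{\rho^2(t+\nu)\rho_1 L} + \tfrac{64D^2AE}{\rho^2(t+\nu)\rho_1\rho_2L} + \tfrac{2CT}{\rho_1 L}.
\end{equation*}
Choosing $\alpha = \alpha^\star = \sqrt{v/u}$ gives $2\sqrt{uv}$; the factors of $L$ cancel and this produces the square-root term. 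Then $\mathbb{E}[F_t] \le R_t$, because the other terms of $r_t$ are nonnegative.

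The main obstacle is twofold. First, the constants in dividing out $(t+\nu-1)(t+\nu-2)$ must be controlled uniformly in $t$. Second, the Lyapunov weights must not depend on $t$: $\alpha^\star$ is defined with $T$, so $r_t$ has fixed weights, and the optimization step evaluates $u,v$ at $t=T$, or else bounds $t$-versions by $T$-versions. I would handle both by fixing the horizon $T$ and proving the bound at $t \le T$. If a genuine any-time bound is required and $C=0$, then $\alpha^\star$ is $T$-independent up to scaling, and the bound becomes $\mathcal{O}(1/t)$ directly.
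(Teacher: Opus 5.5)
Your argument is the paper's: the same Lyapunov function with $M_1=\tfrac{2\alpha}{\rho_1L}$ and $M_2=\tfrac{2M_1A}{\rho_2}$, the same check $\rho\eta_t=\tfrac{2}{t+\nu}\le\tfrac{\min\{\rho_1,\rho_2\}}{2}$, a weighted telescoping of $R_{t+1}\le\bigl(1-\tfrac{2}{t+\nu}\bigr)R_t+\tfrac{4a}{\rho^2(t+\nu)^2}+b$, and the same optimization over $\alpha$.

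The one difference is the weight. You telescope with the exact weight $(t+\nu-1)(t+\nu-2)$. The paper instead multiplies by $(t+\nu)^2$, drops the $-1$ in $(t+\nu)(t+\nu-2)=(t+\nu-1)^2-1$, and uses $W_t=(t+\nu-1)^2R_t$. Both work, but as written yours loses a factor $4$ on the $b$-term:
\begin{itemize}
\item You coarsen the increment $b(s+\nu)(s+\nu-1)$ to $b(s+\nu)^2$ and then divide by the lower bound $(t+\nu)^2/4$. This gives $4bt$, i.e.\ $8CT/\rho_1$ rather than $2CT/\rho_1$ inside $u$.
\item The repair you suggest cannot recover the constant, because $t(t+\nu)^2/\bigl((t+\nu-1)(t+\nu-2)\bigr)$ is strictly larger than $t$ for every $t\ge1$.
\item The fix is simply not to coarsen. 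For $s\le t-1$ one has $b(s+\nu)(s+\nu-1)\le b(t+\nu-1)(t+\nu-2)$. Dividing the telescoped inequality by the exact weight of $W_t$ then leaves precisely $bt\le bT$. This is the counterpart of the paper's bound $\sum_{t=0}^{T-1}(t+\nu)^2\le T(T+\nu-1)^2$, which is matched to $W_T$.
\item The lower bound $(t+\nu-1)(t+\nu-2)\ge(t+\nu)^2/4$ is then needed only for the $R_0$ and $a$ terms. It always holds here, since $\rho_1,\rho_2\le1$ forces $\nu\ge4$, so your fallback case is vacuous.
\end{itemize}

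Two smaller points. First, the optimized value $2\sqrt{uv}$ equals $\sqrt{\tfrac{64D^2}{\rho^2(T+\nu)}(\cdots)}$, which is twice the square-root term printed in the statement. The paper's own proof also ends with $64$ (see \eqref{eq:final_result}), so the $16$ in the statement is a slip. Neither argument proves the bound as printed, and you should not claim the optimization reproduces that term.

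Second, your worry about $t$ versus $T$ is justified. Like the paper, you obtain the stated form only at $t=T$, with $\alpha$ held fixed through the recursion and $\alpha^\star$ substituted at the end. When $C=0$ the ratio $v/u$ is exactly independent of $T$, not just up to scaling. That is what makes this case genuinely any-time.
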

\begin{proof}
Define a Lyapunov function $r_t = F_t + M_1 \|\Delta^t\|^2 + M_2 \sigma_t^2$, with $M_1, M_2 > 0$ as arbitrary constants. \\ \\
\textbf{Recursion.}
Following the same derivation as in Theorem~\ref{proof thm: stoch_fixed_horizon_quasar_convex}, we analyze the full expectation of the Lyapunov function. Define $R_t := \mathbb{E}[r_t]$. From Lemma~\ref{lem: stoch_quasar_convex_descent} and Assumption~\ref{assum: est}, we have established that for any $\alpha > 0$, i.e., \eqref{eq:lyap_recursion}:
\begin{equation}\label{eq:lyap_recursion_open}
R_{t+1} \leq (1 - \rho \eta_t) R_t + a \eta_t^2 + b,
\end{equation}
where
$$
a = D^2 \left( L \left( \frac{1}{\alpha} + \frac{1}{2} \right) + M_1 B + M_2 E \right), \quad b = M_1 C,
$$
with $M_1 = \frac{2\alpha}{\rho_1 L}$ and $M_2 = \frac{2M_1 A}{\rho_2}$.
\\

The derivation of \eqref{eq:lyap_recursion_open} requires that
$
\rho \eta_t \leq \frac{\min\{\rho_1, \rho_2\}}{2}
$
for all $t \geq 0$. With the proposed step size $\eta_t = \frac{2}{\rho(t + \nu)}$, we have
$
\rho \eta_t = \frac{2}{t + \nu}.
$
Since $t \geq 0$, the maximum value of $\rho \eta_t$ occurs at $t = 0$:
$$
\max_{t \geq 0} \rho \eta_t = \frac{2}{\nu}.
$$
By the definition $\nu \geq \frac{4}{\min\{\rho_1, \rho_2\}}$, we have
$$
\frac{2}{\nu} \leq \frac{2}{\frac{4}{\min\{\rho_1, \rho_2\}}} = \frac{\min\{\rho_1, \rho_2\}}{2}.
$$
Therefore, for all $t \geq 0$:
$$
\rho \eta_t = \frac{2}{t + \nu} \leq \frac{2}{\nu} \leq \frac{\min\{\rho_1, \rho_2\}}{2},
$$
which implies both $\rho \eta_t \leq \frac{\rho_1}{2}$ and $\rho \eta_t \leq \frac{\rho_2}{2}$, validating the contraction conditions used in deriving \eqref{eq:lyap_recursion_open}.
Additionally, the condition $\nu \geq 2$ ensures that $t + \nu - 2 \geq 0$ for all $t \geq 0$, which will be needed in the subsequent analysis.
\\
\textbf{Computing the Recursion Coefficients.}
With the step size $\eta_t = \frac{2}{\rho(t + \nu)}$, we compute:
\begin{align}
\rho \eta_t &= \frac{2}{t + \nu}, \label{eq:rho_eta}\\[0.5em]
1 - \rho \eta_t &= 1 - \frac{2}{t + \nu} = \frac{t + \nu - 2}{t + \nu}, \label{eq:one_minus_rho_eta}\\[0.5em]
\eta_t^2 &= \frac{4}{\rho^2(t + \nu)^2}. \label{eq:eta_squared}
\end{align}
Substituting \eqref{eq:one_minus_rho_eta} and \eqref{eq:eta_squared} into \eqref{eq:lyap_recursion_open}:
\begin{equation}\label{eq:recursion_explicit}
R_{t+1} \leq \frac{t + \nu - 2}{t + \nu} R_t + \frac{4a}{\rho^2(t + \nu)^2} + b.
\end{equation}
\\
\textbf{Weighted Lyapunov Analysis.}
Multiply both sides of \eqref{eq:recursion_explicit} by $(t + \nu)^2$:
\begin{equation}\label{eq:weighted_recursion}
(t + \nu)^2 R_{t+1} \leq (t + \nu)(t + \nu - 2) R_t + \frac{4a}{\rho^2} + b(t + \nu)^2.
\end{equation}
We now use the algebraic identity:
$$
(t + \nu)(t + \nu - 2) = (t + \nu - 1)^2 - 1.
$$
To verify this, expand both sides:
\begin{align*}
(t + \nu)(t + \nu - 2) &= (t + \nu)^2 - 2(t + \nu), \\
(t + \nu - 1)^2 - 1 &= (t + \nu)^2 - 2(t + \nu) + 1 - 1 = (t + \nu)^2 - 2(t + \nu).
\end{align*}
Since $(t + \nu - 1)^2 - 1 \leq (t + \nu - 1)^2$, inequality \eqref{eq:weighted_recursion} becomes:
\begin{equation}\label{eq:weighted_recursion_simplified}
(t + \nu)^2 R_{t+1} \leq (t + \nu - 1)^2 R_t + \frac{4a}{\rho^2} + b(t + \nu)^2.
\end{equation}
\\
\textbf{Defining the Weighted Sequence.}
Define the weighted sequence:
$$
W_t := (t + \nu - 1)^2 R_t.
$$
Note that with this setting we have for all $0 \leq t \leq T$,
$$
W_0 = (\nu - 1)^2 R_0,~~W_T = (T + \nu - 1)^2 R_T,~~\text{and}~~
W_{t+1} = (t + \nu)^2 R_{t+1}. 
$$
Inequality \eqref{eq:weighted_recursion_simplified} can be rewritten as:
\begin{equation}\label{eq:W_recursion}
W_{t+1} \leq W_t + \frac{4a}{\rho^2} + b(t + \nu)^2.
\end{equation}
\\
Summing inequality \eqref{eq:W_recursion} from $t = 0$ to $t = T - 1$:
\begin{align}
W_T - W_0 &\leq \sum_{t=0}^{T-1} \left( \frac{4a}{\rho^2} + b(t + \nu)^2 \right) = \frac{4aT}{\rho^2} + b \sum_{t=0}^{T-1} (t + \nu)^2. \label{eq:telescoped}
\end{align}
Rearranging:
\begin{equation}\label{eq:W_T_bound}
W_T \leq W_0 + \frac{4aT}{\rho^2} + b \sum_{t=0}^{T-1} (t + \nu)^2.
\end{equation}
\\
\textbf{Bounding the Summation.}
We bound the sum $\sum_{t=0}^{T-1} (t + \nu)^2$ as follows. Since $(t + \nu)$ is increasing in $t$, the largest term in the sum is $(T - 1 + \nu)^2$. Thus:
\begin{equation}\label{eq:sum_bound}
\sum_{t=0}^{T-1} (t + \nu)^2 \leq T \cdot (T - 1 + \nu)^2 = T(T + \nu - 1)^2.
\end{equation}
Substituting \eqref{eq:sum_bound} into \eqref{eq:W_T_bound}:
\begin{equation}\label{eq:W_T_bound_final}
W_T \leq W_0 + \frac{4aT}{\rho^2} + bT(T + \nu - 1)^2.
\end{equation}
\\
\textbf{Converting Back to $R_T$.}
Substituting $W_T = (T + \nu - 1)^2 R_T$ and $W_0 = (\nu - 1)^2 R_0$:
\begin{equation}\label{eq:R_T_intermediate}
(T + \nu - 1)^2 R_T \leq (\nu - 1)^2 R_0 + \frac{4aT}{\rho^2} + bT(T + \nu - 1)^2.
\end{equation}
Dividing both sides by $(T + \nu - 1)^2$:
\begin{equation}\label{eq:R_T_bound_raw}
R_T \leq \frac{(\nu - 1)^2}{(T + \nu - 1)^2} R_0 + \frac{4aT}{\rho^2(T + \nu - 1)^2} + bT.
\end{equation}
\\
\textbf{Simplifying Using Bounds on $\nu$.}
Since $\nu \geq 2$, we have $\nu - 1 \geq 1$. This implies:
$$
T + \nu - 1 \geq \frac{T}{2} + \nu - 1 =\frac{T + \nu}{2} + \frac{\nu - 2}{2} \geq \frac{T + \nu}{2}\geq 0,
$$
where we used $\nu \geq 2$. Therefore:
\begin{equation}\label{eq:T_nu_bound}
(T + \nu - 1)^2 \geq \frac{(T + \nu)^2}{4}.
\end{equation}
Using \eqref{eq:T_nu_bound}, we obtain the following bounds:

\textit{Bound 1:} For the initial condition term:
$$
\frac{(\nu - 1)^2}{(T + \nu - 1)^2} \leq \frac{4(\nu - 1)^2}{(T + \nu)^2} \leq \frac{4\nu^2}{(T + \nu)^2},
$$
where the last inequality uses $\nu - 1 \leq \nu$.

\textit{Bound 2:} For the $a$-dependent term:
$$
\frac{T}{(T + \nu - 1)^2} \leq \frac{4T}{(T + \nu)^2}.
$$
We further simplify using $T \leq T + \nu$:
$$
\frac{4T}{(T + \nu)^2} = \frac{4}{T + \nu} \cdot \frac{T}{T + \nu} \leq \frac{4}{T + \nu}.
$$
\textbf{Final Bound in Terms of $\alpha$.}
Substituting the bounds from Step 9 into \eqref{eq:R_T_bound_raw}:
\begin{equation}\label{eq:final_bound_alpha}
R_T \leq \frac{4\nu^2 R_0}{(T + \nu)^2} + \frac{16a}{\rho^2(T + \nu)} + bT.
\end{equation}
\textbf{Explicit Form of the Bound.}
Substituting the definitions of $a$, $b$, $M_1$, and $M_2$:
\begin{align*}
a &= D^2 \left( L \left( \frac{1}{\alpha} + \frac{1}{2} \right) + M_1 B + M_2 E \right) = D^2 \left( L \left( \frac{1}{\alpha} + \frac{1}{2} \right) + \frac{2\alpha B}{\rho_1 L} + \frac{4\alpha AE}{\rho_1 \rho_2 L} \right),\\
b &= M_1 C = \frac{2\alpha C}{\rho_1 L}.
\end{align*}
Therefore, \eqref{eq:final_bound_alpha} becomes:
{\small\begin{align}
R_T &\leq \frac{4\nu^2 R_0}{(T + \nu)^2} + \frac{16 D^2}{\rho^2(T + \nu)} \left( L \left( \frac{1}{\alpha} + \frac{1}{2} \right) + \frac{2\alpha B}{\rho_1 L} + \frac{4\alpha AE}{\rho_1 \rho_2 L} \right) + \frac{2\alpha C T}{\rho_1 L} \nonumber\\
&= \frac{4\nu^2 R_0}{(T + \nu)^2} + \frac{8 D^2 L}{\rho^2(T + \nu)} + \frac{16 D^2 L}{\rho^2(T + \nu) \alpha} + \alpha \left( \frac{32 D^2 B}{\rho^2(T + \nu) \rho_1 L} + \frac{64 D^2 AE}{\rho^2(T + \nu) \rho_1 \rho_2 L} + \frac{2CT}{\rho_1 L} \right). \label{eq:explicit_alpha}
\end{align}}
\textbf{Optimizing Over $\alpha$.}
The $\alpha$-dependent terms in \eqref{eq:explicit_alpha} have the form $g(\alpha) := \frac{v}{\alpha} + \alpha u$, where
\begin{align*}
v &:= \frac{16 D^2 L}{\rho^2(T + \nu)}, \\
u &:= \frac{32 D^2 B}{\rho^2(T + \nu) \rho_1 L} + \frac{64 D^2 AE}{\rho^2(T + \nu) \rho_1 \rho_2 L} + \frac{2CT}{\rho_1 L}.
\end{align*}
Now, $g(\alpha)$ is minimized at $\alpha^\star = \sqrt{v/u}$ with minimum value $g(\alpha^\star) = 2\sqrt{uv}$. Thus, substituting $g(\alpha^\star) = 2\sqrt{uv}$ into \eqref{eq:explicit_alpha}:
\begin{align}
R_T &\leq \frac{4\nu^2 R_0}{(T + \nu)^2} + \frac{8 D^2 L}{\rho^2(T + \nu)} + 2\sqrt{uv} \nonumber\\
&= \frac{4\nu^2 R_0}{(T + \nu)^2} + \frac{8 D^2 L}{\rho^2(T + \nu)} + 2\sqrt{\frac{16 D^2}{\rho^2(T + \nu)} \left( \frac{32 D^2 B}{\rho^2(T + \nu) \rho_1} + \frac{64 D^2 AE}{\rho^2(T + \nu) \rho_1 \rho_2} + \frac{2CT}{\rho_1} \right)}. \label{eq:final_optimized}
\end{align}
Simplifying the constant under the square root:
\begin{equation}\label{eq:final_result}
\mathbb{E}[r_T] \leq \frac{4\nu^2 \mathbb{E}[r_0]}{(T + \nu)^2} + \frac{8 D^2 L}{\rho^2(T + \nu)} + \sqrt{\frac{64 D^2}{\rho^2(T + \nu)} \left( \frac{32 D^2 B}{\rho^2(T + \nu) \rho_1} + \frac{64 D^2 AE}{\rho^2(T + \nu) \rho_1 \rho_2} + \frac{2CT}{\rho_1} \right)}.
\end{equation}
This completes the proof since $\mathbb{E}[F_T] \leq \mathbb{E}[r_T]$.
\end{proof}

\subsubsection{Nonconvex Case}\label{appendix_A_bsfw: non_convex}
Under the nonconvex setting, we first build descent Lemma~\ref{lem: stoch_ncv_descent} which is used in the convergence analysis that follows. We offer convergence rates using a fixed horizon step decay in Theorem~\ref{proof thm:stoch_fixed_horizon_nonconvex} and an any-time convergent step decay in Theorem~\ref{thm: stoch_anytime_non_convex}.
\begin{lemma}[Descent under Smoothness in Stochastic Setting for nonconvex Functions]\label{lem: stoch_ncv_descent} 
Let $f$ be an objective function that satisfies $L$-smoothness Assumption~\ref{assum: L_smooth}. Consider the sequence $\{x^t\}_{t=0}^{+\infty}$ generated by Algorithm~\ref{alg:BSFW}. Then for any $\alpha > 0$, the following inequality holds
\begin{equation*}
\eta_t \mathbb{E}\left[\max_{u\in\mathcal{C}} \langle \nabla f(x^t),  x^t-u \rangle \right]
\leq 
 \mathbb{E}\left[F_t \right] - \mathbb{E}\left[F_{t+1} \right]  + \frac{\alpha}{L}  \mathbb{E}\left[\|\Delta^t\|^2 \right] + \eta_t^2 LD^2  \left(\frac{1}{\alpha}+\frac{1}{2}\right).
\end{equation*}
\end{lemma}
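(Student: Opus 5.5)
The plan mirrors the proof of Lemma~\ref{lem: stoch_quasar_convex_descent}. The only change is that, instead of comparing against $x^\star$ and invoking quasar-convexity, we compare against a maximizer $u^t \in \argmax_{u\in\mathcal{C}}\langle \nabla f(x^t), x^t-u\rangle$, i.e.\ $u^t\in\lmo(\nabla f(x^t))$. We start from the $L$-smoothness upper bound $f(x^{t+1})\le f(x^t)+\langle \nabla f(x^t),x^{t+1}-x^t\rangle+\tfrac L2\|x^{t+1}-x^t\|^2$ and split into the two cases $\gamma_t<1$ and $\gamma_t=1$ of Algorithm~\ref{alg:BSFW}.

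\textbf{Case $\gamma_t<1$.} Here $x^{t+1}-x^t=\gamma_t d^t$ with $\gamma_t=\eta_t\|s^t-x^t\|/\|d^t\|$. We write $\langle\nabla f(x^t),\gamma_t d^t\rangle=\gamma_t\langle \nabla f(x^t)-m^t,d^t\rangle+\gamma_t\langle m^t,d^t\rangle$. Lemma~\ref{lem:alignment} bounds the second term by $\eta_t\langle m^t,s^t-x^t\rangle$. Since $s^t\in\lmo(m^t)$, this is at most $\eta_t\langle m^t,u^t-x^t\rangle$. Splitting once more gives $\eta_t\langle m^t-\nabla f(x^t),u^t-x^t\rangle+\eta_t\langle\nabla f(x^t),u^t-x^t\rangle$, and the last term equals $-\eta_t\gap(x^t)$.

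We then control the two error inner products by Young's inequality with parameter $\alpha/L$:
\[
\gamma_t\langle -\Delta^t,d^t\rangle\le \tfrac{\alpha}{2L}\|\Delta^t\|^2+\tfrac{L}{2\alpha}\eta_t^2\|s^t-x^t\|^2,
\qquad
\eta_t\langle\Delta^t,u^t-x^t\rangle\le \tfrac{\alpha}{2L}\|\Delta^t\|^2+\tfrac{L}{2\alpha}\eta_t^2\|u^t-x^t\|^2 .
\]
In the first bound we used $\gamma_t\|d^t\|=\eta_t\|s^t-x^t\|$. The smoothness quadratic term likewise becomes $\tfrac L2\eta_t^2\|s^t-x^t\|^2$. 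Bounding every distance by $D$ yields
\[
f(x^{t+1})\le f(x^t)-\eta_t\gap(x^t)+\tfrac{\alpha}{L}\|\Delta^t\|^2+\eta_t^2LD^2\left(\tfrac1\alpha+\tfrac12\right).
\]

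\textbf{Case $\gamma_t=1$.} Here $x^{t+1}=x^t+\eta_t(s^t-x^t)$. We decompose exactly as in Case II of Lemma~\ref{lem: stoch_quasar_convex_descent}, with $x^\star$ replaced by $u^t$. This produces the single cross term $\eta_t\langle-\Delta^t,s^t-u^t\rangle$, which Young's inequality with parameter $2\alpha/L$ bounds by $\tfrac{\alpha}{L}\|\Delta^t\|^2+\tfrac{L}{4\alpha}\eta_t^2D^2$. Since $\tfrac{1}{4\alpha}\le\tfrac1\alpha$, the same inequality as in the first case follows.

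Finally, we rearrange to isolate $\eta_t\gap(x^t)$, subtract $f^\star$ from both sides so that $f(x^t)-f(x^{t+1})=F_t-F_{t+1}$, and take full expectations; $\eta_t$ is deterministic. The only delicate point is that $u^t$ depends on $x^t$, so it is random. This causes no difficulty, because every bound above holds pointwise for each realization before the expectation is taken. I expect no real obstacle beyond the bookkeeping of the constants, which reproduces the factor $\tfrac1\alpha+\tfrac12$.
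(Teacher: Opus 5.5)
Your proposal is correct and follows the paper's proof essentially step for step. It uses the same two-case split on $\gamma_t$, Lemma~\ref{lem:alignment} combined with the LMO property of $s^t$ to pass to an arbitrary comparison point $u$, Young's inequality with parameter $\alpha/L$ (resp.\ $2\alpha/L$ in the reversion case), and the final bound $\tfrac{1}{4\alpha}\le\tfrac{1}{\alpha}$. Your choice of $u^t\in\lmo(\nabla f(x^t))$ pointwise, before taking expectations, is the cleaner way to justify placing the maximum inside the expectation; the paper's write-up instead takes expectations for each fixed $u$ first and leaves that step implicit.
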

\begin{proof}
    From Assumption~\ref{assum: L_smooth}, we have
    \begin{equation*}
        f(x^{t+1}) \leq f(x^t) + \langle \nabla f(x^t), x^{t+1} - x^t \rangle + \frac{L}{2} \|x^{t+1} - x^t\|^2.
    \end{equation*}
    We distinguish two cases based on the value of $\gamma_t$, following the analysis of Algorithm~\ref{alg:BSFW}. \\
    \noindent\textbf{Case I}: $\gamma_t<1$. Then we have $x^{t+1} = x^t + \gamma_t d^t$. By using Lemma~\ref{lem:alignment},
    {\small\begin{align*}
        f(x^{t+1}) &\leq f(x^t) + \gamma_t \langle \nabla f(x^t), d^t \rangle + \frac{L}{2} \gamma_t^2 \|d^t\|^2 \\
        &\leq f(x^t) + \gamma_t \langle \nabla f(x^t) - m^t, d^t \rangle + \gamma_t \langle m^t, d^t \rangle + \frac{L}{2} \gamma_t^2 \|d^t\|^2 \\
        &\leq f(x^t) + \gamma_t \langle \nabla f(x^t) - m^t, d^t \rangle + \eta_t \left( \frac{\|s^t - x^t\|}{\|d^t\|} \right) \left( \frac{\|d^t\|}{\|s^t - x^t\|} \right)  \langle m^t, s^t - x^t \rangle + \frac{L}{2} \eta_t^2 \frac{\|s^t - x^t\|^2}{\|d^t\|^2} \|d^t\|^2 \\ 
        &\leq f(x^t) + \gamma_t \langle \nabla f(x^t) - m^t, d^t \rangle + \eta_t \langle m^t, s^t - x^t \rangle + \frac{L}{2} \eta_t^2 \|s^t - x^t\|^2.
    \end{align*}}
    Since $s^t \in \mathrm{lmo}(m^t)$, $\forall u \in \mathcal{C}$, we get
    {\footnotesize\begin{align*}
        f(x^{t+1}) &\leq f(x^t) + \gamma_t \langle \nabla f(x^t) - m^t, d^t \rangle + \eta_t \langle m^t, u - x^t \rangle + \frac{L}{2} \eta_t^2 \|s^t - x^t\|^2 \\
        &\leq f(x^t) + \gamma_t \langle \nabla f(x^t) - m^t, d^t \rangle + \eta_t \langle m^t - \nabla f(x^t), u - x^t \rangle + \langle \nabla f(x^t), u - x^t \rangle + \frac{L}{2} \eta_t^2 \|s^t - x^t\|^2.
    \end{align*}}
    By using the Young's inequality on both $ \langle \nabla f(x^t) - m^t, \gamma_t d^t \rangle $ and $\langle m^t - \nabla f(x^t), \eta_t( u - x^t ) \rangle$ with a parameter $\beta = \frac{\alpha}{L}$ for an arbitrary $\alpha > 0$, we have
    \begin{align*}
        \langle \nabla f(x^t) - m^t, \gamma_t d^t \rangle &\leq \frac{\alpha}{2L} \|\nabla f(x^t) - m^t \|^2 + \gamma_t^2 \frac{\alpha}{2L} \|d^t\|^2 \\
        \langle m^t - \nabla f(x^t), \eta_t( u - x^t ) \rangle &\leq \frac{\alpha}{2L} \|m^t - \nabla f(x^t)\|^2 + \eta_t^2 \frac{L}{2\alpha} \|u - x^t\|^2
    \end{align*}
    By using these inequalities, we get
    \begin{align*}
        f(x^{t+1}) &\leq f(x^t) + \frac{\alpha}{2L} \|\nabla f(x^t) - m^t \|^2 + \gamma_t^2 \frac{L}{2\alpha} \|d^t\|^2  \\
        &~~~~+ \frac{\alpha}{2L} \|\nabla f(x^t) - m^t \|^2 + \eta_t^2 \frac{L}{2\alpha} \|u - x^t\|^2 + \frac{L}{2} \eta_t^2 \|s^t - x^t\|^2 + \eta_t \langle \nabla f(x^t), u - x^t \rangle \\
        &\leq f(x^t) + \frac{\alpha}{L} \|\Delta^t\|^2 + \eta_t^2 \frac{L}{2\alpha} \|s^t - x^t\|^2 + \eta_t^2 \frac{L}{2\alpha} \|u - x^t\|^2 + \eta_t \langle \nabla f(x^t), u - x^t \rangle + \frac{L}{2} \eta_t^2 D^2 \\
        &\leq f(x^t) +  \frac{\alpha}{L} \|\Delta^t\|^2 + \eta_t^2 \frac{L}{\alpha} D^2 + \eta_t  \langle \nabla f(x^t), u - x^t \rangle + \frac{L}{2} \eta_t^2 D^2 \\
        &\leq f(x^t) + \frac{\alpha}{L} \|\Delta^t\|^2 + \eta_t^2 L D^2 \left( \frac{1}{\alpha} + \frac{1}{2} \right) + \eta_t \langle \nabla f(x^t), u - x^t \rangle. 
    \end{align*}
    Subtracting $f^{\star}$ and taking expectation on both sides gives us
    \begin{align*}
        &\mathbb{E}[F_{t+1}] \leq \mathbb{E}[F_t] + \frac{\alpha}{L} \mathbb{E}[ \|\Delta^t\|^2] + \eta_t^2 L D^2 \left( \frac{1}{\alpha} + \frac{1}{2} \right) + \eta_t \mathbb{E}[\langle \nabla f(x^t), x^t - u \rangle] \\
        &\implies \eta_t \mathbb{E}[\langle \nabla f(x^t), x^t - u \rangle] \leq \mathbb{E}[F_t] - \mathbb{E}[F_{t+1}]+ \frac{\alpha}{L} \mathbb{E}[ \|\Delta^t\|^2] + \eta_t^2 L D^2 \left( \frac{1}{\alpha} + \frac{1}{2} \right) \\
        &\implies \eta_t \mathbb{E}\left[ \max_{u \in \mathcal{C}}\langle \nabla f(x^t), x^t - u \rangle \right] \leq \mathbb{E}[F_t] - \mathbb{E}[F_{t+1}]+ \frac{\alpha}{L} \mathbb{E}[ \|\Delta^t\|^2] + \eta_t^2 L D^2 \left( \frac{1}{\alpha} + \frac{1}{2} \right).
    \end{align*}
    \noindent\textbf{Case II}: $\gamma_t = 1$. Then we have $x^{t+1} = x^{t} + \eta_t (s^t - x^t)$. Hence $\forall u \in \mathcal{C}$,  we get
    \begin{align*}
        f(x^{t+1}) &\leq f(x^t) + \eta_t \langle \nabla f(x^t), s^t - x^t \rangle + \frac{L}{2} \eta_t^2 \|s^t - x^t\|^2 \\
        &\leq f(x^t) + \eta_t \langle \nabla f(x^t) - m^t, s^t - x^t \rangle + \eta_t \langle m^t, s^t - x^t \rangle +  \frac{L}{2} \eta_t^2 \|s^t - x^t\|^2 \\
        &\leq f(x^t) + \eta_t \langle \nabla f(x^t) - m^t, s^t - x^t \rangle + \eta_t \langle m^t, u - x^t \rangle +  \frac{L}{2} \eta_t^2 \|s^t - x^t\|^2 \\
        &\leq f(x^t) + \eta_t \langle \nabla f(x^t) - m^t, u - x^t \rangle + \eta_t \langle m^t - \nabla f(x^t), u - x^t \rangle \\
        &~~~~~+ \eta_t \langle \nabla f(x^t), u - x^t \rangle + \frac{L}{2} \eta_t^2 \|s^t - x^t\|^2 \\
        &\leq f(x^t) + \eta_t \langle \nabla f(x^t) - m^t, s^t - u \rangle + \eta_t \langle \nabla f(x^t), u - x^t \rangle + \frac{L}{2} \eta_t^2 \|s^t - x^t\|^2.
    \end{align*}
    Using Young's inequality on $ \langle \nabla f(x^t) - m^t, \eta_t (s^t - u) \rangle $ with $\beta = \frac{2 \alpha}{L}$ with an arbitrary $\alpha > 0$, we have
    \begin{equation*}
        \langle \nabla f(x^t) - m^t, \eta_t(s^t - u) \rangle \leq \frac{\alpha}{L} \| \nabla f(x^t) - m^t \|^2 + \frac{L}{4\alpha} \eta_t^2 \|s^t - u\|^2.
    \end{equation*}
    By using this inequality,
    \begin{align*}
        f(x^{t+1}) &\leq f(x^t) + \frac{\alpha}{L} \|\Delta^t\|^2 + \eta_t^2 \frac{L}{4\alpha} \|s^t - u\|^2 + \eta_t \langle \nabla f(x^t), u - x^t \rangle + \frac{L}{2} \eta_t^2 D^2 \\
        &\leq f(x^t) + \frac{\alpha}{L} \|\Delta^t\|^2  + \eta_t \langle \nabla f(x^t), u - x^t \rangle + \eta_t^2 L D^2 \left( \frac{1}{4\alpha} + \frac{1}{2} \right).
    \end{align*}
    By subtracting $f^{\star}$ and taking expectation on both sides, we get
    \begin{align*}
        \mathbb{E}[F_{t+1}] &\leq \mathbb{E}[F_t] + \frac{\alpha}{L} \mathbb{E}[\|\Delta^t\|^2]  + \eta_t \mathbb{E}[\langle \nabla f(x^t), u - x^t \rangle] + \eta_t^2 L D^2 \left( \frac{1}{4\alpha} + \frac{1}{2} \right) \\
        &\leq \mathbb{E}[F_t] + \frac{\alpha}{L} \mathbb{E}[\|\Delta^t\|^2]  + \eta_t \mathbb{E}[\langle \nabla f(x^t), u - x^t \rangle] + \eta_t^2 L D^2 \left( \frac{1}{\alpha} + \frac{1}{2} \right), 
        \end{align*}
        \begin{align*}
        &\implies\eta_t \mathbb{E}[\langle \nabla f(x^t), x^t - u \rangle] \leq \mathbb{E}[F_t] - \mathbb{E}[F_{t+1}] + \frac{\alpha}{L} \mathbb{E}[ \|\Delta^t\|^2] +  \eta_t^2 L D^2 \left( \frac{1}{\alpha} + \frac{1}{2} \right), \\
        &\implies \eta_t \mathbb{E}\left[ \max_{u \in \mathcal{C}}\langle \nabla f(x^t), x^t - u \rangle \right] \leq \mathbb{E}[F_t] - \mathbb{E}[F_{t+1}]+ \frac{\alpha}{L} \mathbb{E}[ \|\Delta^t\|^2] + \eta_t^2 L D^2 \left( \frac{1}{\alpha} + \frac{1}{2} \right).
    \end{align*}
    Which gives us the desired expression in both cases
\end{proof}

\begin{theorem}[Formal Statement of Theorem~\ref{thm:stoch_fixed_horizon_nonconvex}]\label{proof thm:stoch_fixed_horizon_nonconvex}
Let $f$ be an objective function that satisfies $L$-smoothness Assumption~\ref{assum: L_smooth}. Let $T \in \mathbb{N}$, and $\{x^t\}_{t=0}^{T}$ be a sequence generated by Algorithm~\ref{alg:BSFW} where the stochastic estimator $m^t$ and auxiliary sequence $\{\sigma_t\}$ satisfy Assumption~\ref{assum: est} with parameters $\rho_1, \rho_2 \in ]0,1]$ and constants $A, B, C, E \geq 0$. Let the step decay be chosen as a constant for all $t$, $\eta_t = \frac{1}{\sqrt{T}}$. Then we have
\begin{equation}
\begin{aligned}
\mathbb{E}\!\left[\,\min_{0 \le t \le T-1} \operatorname{Gap}(x^t)\,\right]   \leq  \frac{\mathbb{E}[r_0]}{\sqrt{T}} + \frac{D^2L}{2\sqrt{T}} + D \sqrt{ 
\frac{D^2}{\rho_1 T} \left( B + \frac{A E}{\rho_2} \right) 
+ \frac{C}{\rho_1} }.
\end{aligned}
\end{equation}
where $r_t$ is a Lyapunov function defined by 
\begin{equation*}
    \forall t: \quad r_t = F_t + \frac{\alpha^{\star}}{L \rho_1} \|\Delta^t\|^2 + \frac{\alpha^{\star}A}{L\rho_1 \rho_2} \sigma_t^2,
\end{equation*}
with
\begin{equation*}
    \alpha^{\star} = \sqrt{\left( \frac{D^2L}{\sqrt{T}} \right) / \left( \frac{D^2 B}{L \rho_1 \sqrt{T}} + \frac{D^2 A E}{L \rho_1 \rho_2 \sqrt{T}} + \frac{C \sqrt{T}}{L \rho_1}  \right)}.
\end{equation*}
If $C = 0$, the last term vanishes and we recover the standard $\mathcal{O}(1/\sqrt{T})$ rate.
\end{theorem}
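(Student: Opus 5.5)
We combine the nonconvex descent inequality of Lemma~\ref{lem: stoch_ncv_descent} with a Lyapunov function that absorbs the estimator error, mirroring the quasar-convex proof of Theorem~\ref{proof thm: stoch_fixed_horizon_quasar_convex}. Set $r_t = F_t + M_1\|\Delta^t\|^2 + M_2\sigma_t^2$ and $R_t=\mathbb{E}[r_t]$. Adding $M_1$ times \eqref{eq: assum 1} and $M_2$ times \eqref{eq: assum 2} (Remark~\ref{rem: full grad bounds}) to Lemma~\ref{lem: stoch_ncv_descent} with constant $\eta_t=\eta=1/\sqrt{T}$ gives
\begin{equation*}
\eta\,\mathbb{E}[\gap(x^t)] \le R_t - R_{t+1} + \Big(\tfrac{\alpha}{L} - M_1\rho_1\Big)\mathbb{E}\|\Delta^t\|^2 + \big(M_1A - M_2\rho_2\big)\mathbb{E}[\sigma_t^2] + \eta^2 D^2\Big(L\big(\tfrac1\alpha+\tfrac12\big) + M_1B + M_2E\Big) + M_1 C .
\end{equation*}
Choosing $M_1=\tfrac{\alpha}{L\rho_1}$ and $M_2=\tfrac{M_1A}{\rho_2}=\tfrac{\alpha A}{L\rho_1\rho_2}$ (these match the stated weights in $r_t$) makes both bracketed coefficients vanish. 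Unlike the quasar-convex case, no contraction $(1-\rho\eta)$ is needed, so no restriction on $\eta$ versus $\rho_1,\rho_2$ arises.

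Next, sum this inequality for $t=0,\dots,T-1$. The $R_t$ terms telescope to $R_0-R_T\le R_0$, using $R_T\ge 0$ since $F_T\ge0$ and the other terms are nonnegative. Bound the left side below by $T\eta\,\mathbb{E}[\min_{t}\gap(x^t)]$, using $\min\le$ average inside the expectation. Dividing by $T\eta=\sqrt{T}$ and using $T\eta^2=1$ yields
\begin{equation*}
\mathbb{E}\Big[\min_{0\le t\le T-1}\gap(x^t)\Big]\le \frac{R_0}{\sqrt T} + \frac{D^2L}{2\sqrt T} + \frac{D^2L}{\alpha\sqrt T} + \alpha\Big(\frac{D^2B}{L\rho_1\sqrt T}+\frac{D^2AE}{L\rho_1\rho_2\sqrt T}+\frac{C\sqrt T}{L\rho_1}\Big).
\end{equation*}
Here the term $M_1C\cdot T/\sqrt{T}$ is what produces $C\sqrt T$.

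Finally, minimize $v/\alpha+\alpha u$ over $\alpha>0$. This gives $\alpha^\star=\sqrt{v/u}$, matching the stated $\alpha^\star$, and the value $2\sqrt{uv}$. Here
\begin{equation*}
uv=\frac{D^2}{T}\Big(\frac{D^2}{\rho_1}\big(B+\tfrac{AE}{\rho_2}\big)+C\,\frac{T}{\rho_1}\Big)\cdot\frac{1}{1},
\end{equation*}
after cancelling $L$. This gives $2D\sqrt{\tfrac{D^2}{\rho_1T}(B+\tfrac{AE}{\rho_2})+\tfrac{C}{\rho_1}}$.

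The main obstacle is bookkeeping of constants. Matching the stated bound, which has coefficient $D$ rather than $2D$ in front of the square root, requires checking the Young's inequality constants in Lemma~\ref{lem: stoch_ncv_descent}. It may be necessary to accept a factor-$2$ discrepancy or rescale $\alpha$. A second subtlety is that $r_0$ depends on $\alpha^\star$, which is acceptable because $\alpha^\star$ is deterministic given $T$. The case $C=0$ then immediately gives the $\mathcal{O}(1/\sqrt T)$ rate.
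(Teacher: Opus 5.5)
Your argument is essentially the paper's proof: the same Lyapunov weights $M_1=\alpha/(L\rho_1)$ and $M_2=M_1A/\rho_2$ chosen to cancel the $\mathbb{E}\|\Delta^t\|^2$ and $\mathbb{E}[\sigma_t^2]$ coefficients, telescoping with $R_T\ge 0$, bounding the minimum by the average, and plugging in $\alpha^\star=\sqrt{v/u}$. Your constant $2D$ in front of the square root is what this argument actually gives. The paper's last line writes $2\sqrt{uv}$ as $D\sqrt{\cdot}$ and drops the factor $2$. Rescaling $\alpha$ cannot repair this, since $\inf_{\alpha>0}(v/\alpha+u\alpha)=2\sqrt{uv}$ regardless of parametrization. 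Retuning Young's inequality cannot repair it either, because in Case~I of Lemma~\ref{lem: stoch_ncv_descent} the two error inner products together contribute up to $2\eta_t D\|\Delta^t\|$. So state the bound with $2D$.
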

\begin{proof}
Multiply inequality \eqref{eq: assum 1}  by $M_1$ (at iteration $t+1$), inequality \eqref{eq: assum 2} by $M_2$, and add the two to obtain
{\footnotesize\begin{equation}\label{eq:4.6 M1_M2}
\begin{aligned}
 M_1\left\|m^{t+1}-\nabla f\left(x^{t+1}\right)\right\|^2+M_2 \mathbb{E}\left[\sigma_{t+1}^2\right]
& \leq M_1\left(1-\rho_1\right)\left\|m^t-\nabla f\left(x^t\right)\right\|^2+M_1 A \mathbb{E}\left[\sigma_{t}^2\right] \\
&~~~~~ + M_1 B \eta_t^2 D^2+M_1 C +M_2\left(1-\rho_2\right) \mathbb{E}\left[\sigma_{t}^2\right]+M_2 E \eta_t^2 D^2\\
& = M_1\left(1-\rho_1\right)\left\|m^t-\nabla f\left(x^t\right)\right\|^2 \\
&~~~~~ + M_2 \left(1-\rho_2+\frac{M_1A}{M_2}\right) \mathbb{E}\left[\sigma_{t}^2\right] + \eta_t^2 D^2(M_1B+M_2E)+M_1C.
\end{aligned}
\end{equation}}
Define the Lyapunov function $r_t:=f(x^t) - f^{\star} + M_1 \|m^t - \nabla f(x^t)\|^2 + M_2 \sigma_t^2$. From its definition, the expected Lyapunov difference satisfies
\begin{equation}\label{eq:4.6 (5)}
\begin{aligned}
\mathbb{E}\left[r_t-r_{t+1}\right]&=  \mathbb{E}\left[f\left(x^t\right)-f\left(x^{t+1}\right)\right]   \\
&~~~~ + M_1 \mathbb{E}\left[\left\|m^t-\nabla f\left(x^t\right)\right\|^2-\left\|m^{t+1}-\nabla f\left(x^{t+1}\right)\right\|^2\right]  +M_2 \mathbb{E}\left[\sigma_t^2-\sigma_{t+1}^2\right].
\end{aligned}
\end{equation}
Rearranging \eqref{eq:4.6 (5)} gives
\begin{equation}\label{eq:4.6 (6)}
\begin{aligned}
\mathbb{E}\left[f\left(x^t\right)-f\left(x^{t+1}\right)\right]= & \mathbb{E}\left[r_t-r_{t+1}\right]  -M_1 \mathbb{E}\left[\left\|m^t-\nabla f\left(x^t\right)\right\|^2-\left\|m^{t+1}-\nabla f\left(x^{t+1}\right)\right\|^2\right] \\
&~~~ -M_2 \mathbb{E}\left[\sigma_t^2-\sigma_{t+1}^2\right].
\end{aligned}
\end{equation}
Now, using \eqref{eq: assum 1}-\eqref{eq: assum 2}, we obtain lower bounds on the error decrements
{\small\begin{align}
\mathbb{E}\!\left[\|m^t - \nabla f(x^t)\|^2 - \|m^{t+1} - \nabla f(x^{t+1})\|^2\right] 
&\ge \rho_1 \,\mathbb{E}\!\left[\|m^t - \nabla f(x^t)\|^2\right] - A\,\mathbb{E}[\sigma_t^2] - B \eta_t^2 D^2 - C, \label{eq:4.6 (7)}\\
\mathbb{E}\!\left[\sigma_t^2 - \sigma_{t+1}^2\right] 
&\ge \rho_2 \,\mathbb{E}[\sigma_t^2] - E \eta_t^2 D^2. \label{eq:4.6 (8)}
\end{align}}
Substituting~\eqref{eq:4.6 (7)}--\eqref{eq:4.6 (8)} into~\eqref{eq:4.6 (6)} yields
\begin{equation}\label{eq:4.6 (9)}
 \begin{aligned}
\mathbb{E}\!\left[f(x^t) - f(x^{t+1})\right]
\le&\; \mathbb{E}[r_t - r_{t+1}]
- \rho_1 M_1 \,\mathbb{E}\!\left[\|m^t - \nabla f(x^t)\|^2\right] 
- (\rho_2 M_2 - M_1 A)\,\mathbb{E}[\sigma_t^2]  \\
&+ \eta_t^2 D^2 (M_1 B + M_2 E) + M_1 C.
\end{aligned}  
\end{equation}
Plugging~\eqref{eq:4.6 (9)} into Lemma~\ref{lem: stoch_ncv_descent}, and grouping terms, we obtain
{\footnotesize\begin{equation}\label{eq:4.6 (10)}
\begin{aligned}
\eta_t \, \mathbb{E}\!\left[\operatorname{Gap}(x^t) \right]
\le&\; \mathbb{E}\!\left[
f(x^t) - f(x^\star) 
+ \left(1 - \rho_1 + \frac{\alpha}{M_1 L}\right) M_1 \|m^t - \nabla f(x^t)\|^2  + \left(1 - \rho_2 + \frac{M_1 A}{M_2} \right) M_2 \sigma_t^2
\right] \\
&- \mathbb{E}\!\left[
f(x^{t+1}) - f(x^\star) 
+ M_1 \|m^{t+1} - \nabla f(x^{t+1})\|^2 
+ M_2 \sigma_{t+1}^2
\right] \nonumber \\
&+ D^2 \eta_t^2 \left( \frac{L}{2} + \frac{L}{\alpha} + M_1 B + M_2 E \right) + M_1 C.
\end{aligned}
\end{equation}}
Now choose
$M_1 := \frac{\alpha}{L \rho_1}$ and $M_2 := \frac{M_1 A}{\rho_2} = \frac{\alpha A}{L \rho_1 \rho_2}$, for $\alpha>0$.
So that $1 - \rho_1 + \frac{\alpha}{M_1 L} = 1$ and $1 - \rho_2 + \frac{M_1 A}{M_2} = 1$. Then the first expectation in~\eqref{eq:4.6 (10)} equals $\mathbb{E}[r_t]$, and the second equals $\mathbb{E}[r_{t+1}]$, giving
\begin{equation}\label{eq:4.6 (11)}
\eta_t \, \mathbb{E}[\operatorname{Gap}(x^t)] 
\le \mathbb{E}[r_t - r_{t+1}] 
+ D^2 \eta_t^2 \left( \frac{L}{2} + \frac{L}{\alpha} + M_1 B + M_2 E \right) + M_1 C.
\end{equation}
Summing~\eqref{eq:4.6 (11)} over $t = 0, \cdots, T-1$ telescopes
\begin{equation}
\sum_{t=0}^{T-1} \eta_t \, \mathbb{E}[\operatorname{Gap}(x^t)] 
\le \mathbb{E}[r_0 - r_T] 
+ D^2 \sum_{t=0}^{T-1} \eta_t^2 \left( \frac{L}{2} + \frac{L}{\alpha} + M_1 B + M_2 E \right) + T M_1 C.
\end{equation}
Since $r_T \geq 0$, we have
\begin{equation}
\sum_{t=0}^{T-1} (\eta_t \, \mathbb{E}[\operatorname{Gap}(x^t)] )
\le \mathbb{E}[r_0] 
+ D^2\left( \frac{L}{2} + \frac{L}{\alpha} + M_1 B + M_2 E \right) \sum_{t=0}^{T-1} \eta_t^2  + T M_1 C.
\end{equation}
With constant step decay $\eta_t \equiv \eta = \frac{1}{\sqrt{T}}$ and Since $\min_{0 \le t \le T-1} \operatorname{Gap}(x^t) \le \operatorname{Gap}(x^t)$ for all $t$, taking expectations we have
\begin{equation}
   \mathbb{E}\!\left[\,\min_{0 \le t \le T-1} \operatorname{Gap}(x^t)\,\right] \sqrt{T} \leq  \mathbb{E}[r_0] 
+ D^2  \left( \frac{L}{2} + \frac{L}{\alpha} + M_1 B + M_2 E \right) + T M_1 C.
\end{equation}
where $M_1 = \frac{\alpha}{L \rho_1}$ and $M_2 = \frac{\alpha A}{L \rho_1 \rho_2}$ which means that 
\begin{equation}
\begin{aligned}
\mathbb{E}\!\left[\,\min_{0 \le t \le T-1} \operatorname{Gap}(x^t)\,\right]  & \leq  \frac{\mathbb{E}[r_0]}{\sqrt{T}} 
+ \frac{D^2  \left( \frac{L}{2} + \frac{L}{\alpha} + \frac{\alpha}{L \rho_1} B + \frac{\alpha A}{L \rho_1 \rho_2} E \right)}{\sqrt{T}} + \sqrt{T} \frac{\alpha}{L \rho_1} C\\
& 
= \frac{\mathbb{E}[r_0]}{\sqrt{T}} + \frac{D^2L}{2\sqrt{T}} + \frac{D^2 L}{\alpha \sqrt{T}} + \alpha \left[ \frac{D^2 B}{L \rho_1 \sqrt{T}} + \frac{D^2 A E}{L \rho_1 \rho_2 \sqrt{T}} + \frac{C \sqrt{T}}{L \rho_1} \right].
\end{aligned}
\end{equation}
Since the inequality holds for all $\alpha>0$, so take the infimum over 
$\alpha$ to get the tightest bound.
The $\alpha$-dependent terms in the bound are of the form
\begin{equation}
g(\alpha) = \frac{D^2 L}{\alpha \sqrt{T}} + \alpha \left[ \frac{D^2 B}{L \rho_1 \sqrt{T}} + \frac{D^2 A E}{L \rho_1 \rho_2 \sqrt{T}} + \frac{C \sqrt{T}}{L \rho_1} \right].
\end{equation}
This has the form $g(\alpha) = \frac{v}{\alpha} + u\alpha$, which is minimized at $\alpha^\star = \sqrt{v/u}$ with minimum value $g(\alpha^\star) = 2\sqrt{uv}$. where
\begin{align*}
    u: &= \frac{D^2 B}{L \rho_1 \sqrt{T}} + \frac{D^2 A E}{L \rho_1 \rho_2 \sqrt{T}} + \frac{C \sqrt{T}}{L \rho_1},  \\
    v: &= \frac{D^2 L}{\sqrt{T}}.
\end{align*}
Hence we have,
$$g(\alpha^{*}) =2\sqrt{\frac{D^{2}L}{\sqrt{T}} \left( \frac{D^{2}B}{L\rho_{1}\sqrt{T}} +\frac{D^{2}AE}{L\rho_{1}\rho_{2}\sqrt{T}} +\frac{C\sqrt{T}}{L\rho_{1}} \right)} =  D \sqrt{ 
\frac{D^2}{\rho_1 T} \left( B + \frac{A E}{\rho_2} \right) 
+ \frac{C}{\rho_1} },
$$
which completes the proof.
\end{proof}

\begin{theorem}[Any-time Convergence under nonconvexity and Stochastic Setting]\label{thm: stoch_anytime_non_convex}
Let $f$ be an objective function that satisfies $L$-smoothness Assumption~\ref{assum: L_smooth}. Let $\{x^t\}_{t=0}^{+\infty}$ be a sequence generated by Algorithm~\ref{alg:BSFW} where the stochastic estimator $m^t$ and auxiliary sequence $\{\sigma_t\}$ satisfy Assumption~\ref{assum: est} with parameters $\rho_1, \rho_2 \in ]0,1]$ and constants $A, B, C, E \geq 0$. Let the step decay be chosen as for all $t$,  $ \eta_t = \tfrac{1}{\sqrt{t + 1}} $. Then we have
\begin{align*}
    \mathbb{E}\left[\min_{0 \leq t \leq T - 1} \operatorname{Gap}(x^t)\right] \leq &\tfrac{\mathbb{E}[r_0]}{2(\sqrt{T + 1} - 1)}  + \tfrac{LD^2(1 + \ln(T))}{4(\sqrt{T + 1} - 1)} \\
    &~~~~
    + D\sqrt{\left( \tfrac{2BD^2(1 + \ln(T))^2}{\rho_1(\sqrt{T + 1} - 1)^2} + \tfrac{D^2 A E(1 + \ln(T))^2}{\rho_1 \rho_2 (\sqrt{T + 1} - 1)^2} + \tfrac{C (\sqrt{T + 1} + 1)(1 + \ln(T))}{\rho_1 (\sqrt{T + 1} - 1)} \right)}.
\end{align*}
If $C = 0$, the last term vanishes and we recover the standard $\mathcal{O}(\ln(t)/\sqrt{t})$ rate.
\end{theorem}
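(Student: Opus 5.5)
We reuse the Lyapunov telescoping argument from the proof of Theorem~\ref{proof thm:stoch_fixed_horizon_nonconvex}. The only change is that $\eta_t=1/\sqrt{t+1}$ is now time-varying, so the sums of $\eta_t$ and $\eta_t^2$ must be bounded separately.

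\textbf{Step 1: one-step inequality.} Combining Lemma~\ref{lem: stoch_ncv_descent} with Remark~\ref{rem: full grad bounds} exactly as in \eqref{eq:4.6 (9)}--\eqref{eq:4.6 (11)} gives, for any $\alpha>0$,
\begin{equation*}
\eta_t\,\mathbb{E}[\gap(x^t)] \le \mathbb{E}[r_t-r_{t+1}] + D^2\eta_t^2\Big(\tfrac{L}{2}+\tfrac{L}{\alpha}+M_1B+M_2E\Big)+M_1C .
\end{equation*}
Here $M_1=\alpha/(L\rho_1)$, $M_2=\alpha A/(L\rho_1\rho_2)$, and $r_t=F_t+M_1\|\Delta^t\|^2+M_2\sigma_t^2$. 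This derivation never used constancy of $\eta_t$; it only needs $\eta_t\le 1$, which holds here.

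\textbf{Step 2: telescope.} Summing over $t=0,\dots,T-1$ and dropping $\mathbb{E}[r_T]\ge 0$ yields
\begin{equation*}
\Big(\sum_{t=0}^{T-1}\eta_t\Big)\,\mathbb{E}\Big[\min_{0\le t\le T-1}\gap(x^t)\Big] \le \mathbb{E}[r_0] + D^2\Big(\tfrac{L}{2}+\tfrac{L}{\alpha}+M_1B+M_2E\Big)\sum_{t=0}^{T-1}\eta_t^2 + TM_1C .
\end{equation*}

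\textbf{Step 3: bound the step-size sums.} By integral comparison, $\sum_{t=0}^{T-1}(t+1)^{-1/2}\ge\int_1^{T+1}x^{-1/2}\,dx = 2(\sqrt{T+1}-1)$, and $\sum_{t=0}^{T-1}(t+1)^{-1}\le 1+\ln T$. Dividing through by $2(\sqrt{T+1}-1)$ produces the first two terms of the claim: $\mathbb{E}[r_0]/(2(\sqrt{T+1}-1))$ and $LD^2(1+\ln T)/(4(\sqrt{T+1}-1))$.

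\textbf{Step 4: optimize over $\alpha$.} What remains is $v/\alpha+u\alpha$, with
\begin{equation*}
v=\frac{LD^2(1+\ln T)}{2(\sqrt{T+1}-1)}, \qquad u=\frac{D^2(1+\ln T)\big(B+AE/\rho_2\big)}{2L\rho_1(\sqrt{T+1}-1)}+\frac{TC}{2L\rho_1(\sqrt{T+1}-1)} .
\end{equation*}
Choosing $\alpha^\star=\sqrt{v/u}$ gives the value $2\sqrt{uv}$.

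\textbf{Step 5: match the stated form.} In the $C$-term, rewrite $T=(\sqrt{T+1}-1)(\sqrt{T+1}+1)$. Then $\tfrac{T}{\sqrt{T+1}-1}=\sqrt{T+1}+1$, and multiplying by $v$ gives the $C(\sqrt{T+1}+1)(1+\ln T)/(\rho_1(\sqrt{T+1}-1))$ term under the square root. Pulling $D$ outside the root and using the crude inequality $B\le 2B$ reproduces the displayed constants.

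\textbf{Main obstacle.} The core argument is routine; the work is in the bookkeeping of constants. Two points need care. First, the Lyapunov weights must be $\alpha$-dependent but $t$-independent, so that telescoping remains valid even though $\eta_t$ varies. Second, the final constants in the stated bound, the factor $2B$ versus $AE$ in particular, must be checked exactly, possibly by slackening one term. For $C=0$ the bound is of order $\ln T/\sqrt{T}$.
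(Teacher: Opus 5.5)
Your proposal is correct and follows the paper's proof essentially step for step. It uses the same $\alpha$-dependent but $t$-independent Lyapunov weights $M_1=\alpha/(L\rho_1)$ and $M_2=\alpha A/(L\rho_1\rho_2)$, the same telescoping, the integral bounds $\sum_{t<T}\eta_t\ge 2(\sqrt{T+1}-1)$ and $\sum_{t<T}\eta_t^2\le 1+\ln T$, the identity $T/(\sqrt{T+1}-1)=\sqrt{T+1}+1$, and the $v/\alpha+u\alpha$ optimization.

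The only difference is cosmetic. The paper drops the factor $\tfrac12$ on the $B$-term before choosing $\alpha^\star$, which is where its $2B$ comes from, whereas you slacken after optimizing. Because $\mathbb{E}[r_0]$ depends on $\alpha$ through $M_1$ and $M_2$, slackening first is the cleaner way to land exactly on the stated bound with the paper's $\alpha^\star$.
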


\begin{proof}
Multiply inequality \eqref{eq: assum 1}  by $M_1$ (at iteration $t+1$), inequality \eqref{eq: assum 2} by $M_2$, and add the two to obtain
{\small\begin{equation}\label{eq:A.13 M1_M2}
\begin{aligned}
 M_1\left\|m^{t+1}-\nabla f\left(x^{t+1}\right)\right\|^2+M_2 \mathbb{E}\left[\sigma_{t+1}^2\right]
& \leq M_1\left(1-\rho_1\right)\left\|m^t-\nabla f\left(x^t\right)\right\|^2+M_1 A \mathbb{E}\left[\sigma_{t}^2\right] \\
& \quad +M_1 B \eta_t^2 D^2+M_1 C +M_2\left(1-\rho_2\right) \mathbb{E}\left[\sigma_{t}^2\right]+M_2 E \eta_t^2 D^2\\
& = M_1\left(1-\rho_1\right)\left\|m^t-\nabla f\left(x^t\right)\right\|^2 + M_2 \left(1-\rho_2+\frac{M_1A}{M_2}\right) \mathbb{E}\left[\sigma_{t}^2\right]\\
& \quad + \eta_t^2 D^2(M_1B+M_2E)+M_1C.
\end{aligned}
\end{equation}}
From the definition of $r_t$, i.e.,$ r_t = f(x^t) - f^\star + M_1 \|m^t - \nabla f(x^t)\|^2 + M_2 \sigma_t^2$, the expected Lyapunov difference satisfies
\begin{equation}\label{eq:A.13 (5)}
\begin{aligned}
\mathbb{E}\left[r_t-r_{t+1}\right]= & \mathbb{E}\left[f\left(x^t\right)-f\left(x^{t+1}\right)\right] \\
& +M_1 \mathbb{E}\left[\left\|m^t-\nabla f\left(x^t\right)\right\|^2-\left\|m^{t+1}-\nabla f\left(x^{t+1}\right)\right\|^2\right] \\
& +M_2 \mathbb{E}\left[\sigma_t^2-\sigma_{t+1}^2\right].
\end{aligned}
\end{equation}
Rearranging \eqref{eq:A.13 (5)} gives
\begin{equation}\label{eq:A.13 (6)}
\begin{aligned}
\mathbb{E}\left[f\left(x^t\right)-f\left(x^{t+1}\right)\right]= & \mathbb{E}\left[r_t-r_{t+1}\right]  -M_1 \mathbb{E}\left[\left\|m^t-\nabla f\left(x^t\right)\right\|^2-\left\|m^{t+1}-\nabla f\left(x^{t+1}\right)\right\|^2\right] \\
&~~~ -M_2 \mathbb{E}\left[\sigma_t^2-\sigma_{t+1}^2\right].
\end{aligned}
\end{equation}
Now, using \eqref{eq: assum 1}-\eqref{eq: assum 2}, we obtain lower bounds on the error decrements
{\small\begin{align}
\mathbb{E}\!\left[\|m^t - \nabla f(x^t)\|^2 - \|m^{t+1} - \nabla f(x^{t+1})\|^2\right] 
&\ge \rho_1 \,\mathbb{E}\!\left[\|m^t - \nabla f(x^t)\|^2\right] - A\,\mathbb{E}[\sigma_t^2] - B \eta_t^2 D^2 - C, \label{eq:A.13 (7)}\\
\mathbb{E}\!\left[\sigma_t^2 - \sigma_{t+1}^2\right] 
&\ge \rho_2 \,\mathbb{E}[\sigma_t^2] - E \eta_t^2 D^2. \label{eq:A.13 (8)}
\end{align}}
Substituting~\eqref{eq:A.13 (7)}--\eqref{eq:A.13 (8)} into~\eqref{eq:A.13 (6)} yields
\begin{equation}\label{eq:A.13 (9)}
\begin{aligned}
\mathbb{E}\!\left[f(x^t) - f(x^{t+1})\right]
\le&\; \mathbb{E}[r_t - r_{t+1}]
- \rho_1 M_1 \,\mathbb{E}\!\left[\|m^t - \nabla f(x^t)\|^2\right] 
- (\rho_2 M_2 - M_1 A)\,\mathbb{E}[\sigma_t^2]  \\
&+ \eta_t^2 D^2 (M_1 B + M_2 E) + M_1 C.
\end{aligned}    
\end{equation}
Plugging~\eqref{eq:A.13 (9)} into Lemma~\ref{lem: stoch_ncv_descent}, and grouping terms, we obtain
{\small\begin{equation}\label{eq:A.13 (10)}
\begin{aligned}
\eta_t \, \mathbb{E}\!\left[\operatorname{Gap}(x^t) \right]
\le&\; \mathbb{E}\!\left[
f(x^t) - f(x^\star) 
+ \left(1 - \rho_1 + \frac{\alpha}{M_1 L}\right) M_1 \|m^t - \nabla f(x^t)\|^2  + \left(1 - \rho_2 + \frac{M_1 A}{M_2} \right) M_2 \sigma_t^2
\right] \\
&~~~- \mathbb{E}\!\left[
f(x^{t+1}) - f(x^\star) 
+ M_1 \|m^{t+1} - \nabla f(x^{t+1})\|^2 
+ M_2 \sigma_{t+1}^2
\right] \nonumber \\
&~~~+ D^2 \eta_t^2 \left( \frac{L}{2} + \frac{L}{\alpha} + M_1 B + M_2 E \right) + M_1 C.
\end{aligned}
\end{equation}}
Now choose
$M_1 := \frac{\alpha}{L \rho_1}$ and $M_2 := \frac{M_1 A}{\rho_2} = \frac{\alpha A}{L \rho_1 \rho_2}$, for $\alpha>0$.
So that $1 - \rho_1 + \frac{\alpha}{M_1 L} = 1$ and $1 - \rho_2 + \frac{M_1 A}{M_2} = 1$. Then the first expectation in~\eqref{eq:A.13 (10)} equals $\mathbb{E}[r_t]$, and the second equals $\mathbb{E}[r_{t+1}]$, giving
\begin{equation}\label{eq:A.13 (11)}
\eta_t \, \mathbb{E}[\operatorname{Gap}(x^t)] 
\le \mathbb{E}[r_t - r_{t+1}] 
+ D^2 \eta_t^2 \left( \frac{L}{2} + \frac{L}{\alpha} + M_1 B + M_2 E \right) + M_1 C.
\end{equation}
Summing~\eqref{eq:A.13 (11)} over $t = 0, \cdots, T-1$ telescopes
\begin{equation}
\sum_{t=0}^{T-1} \eta_t \, \mathbb{E}[\operatorname{Gap}(x^t)] 
\le \mathbb{E}[r_0 - r_T] 
+ D^2 \sum_{t=0}^{T-1} \eta_t^2 \left( \frac{L}{2} + \frac{L}{\alpha} + M_1 B + M_2 E \right) + T M_1 C.
\end{equation}
Since $r_T \geq 0$, we have
\begin{equation}
\sum_{t=0}^{T-1} (\eta_t \, \mathbb{E}[\operatorname{Gap}(x^t)] )
\le \mathbb{E}[r_0] 
+ D^2\left( \frac{L}{2} + \frac{L}{\alpha} + M_1 B + M_2 E \right) \sum_{t=0}^{T-1} \eta_t^2  + T M_1 C.
\end{equation}
\begin{equation}
\begin{aligned}
\mathbb{E}\left[\min_{0 \leq t \leq T - 1} \operatorname{Gap}(x^t)\right] \left(\sum_{t=0}^{T-1} \eta_t \right) &\leq \sum_{t=0}^{T-1} (\eta_t \, \mathbb{E}[\operatorname{Gap}(x^t)] )
\\ &\leq \mathbb{E}[r_0] 
+ D^2\left( \frac{L}{2} + \frac{L}{\alpha} + M_1 B + M_2 E \right) \sum_{t=0}^{T-1} \eta_t^2  + T M_1 C. 
\end{aligned}
\end{equation}
\noindent
Using the step decay $\eta_t = \tfrac{1}{\sqrt{t + 1}}$, we have by the integration test,
\begin{equation*}
    \sum_{t=0}^{T-1} \tfrac{1}{\sqrt{t + 1}} \geq 2(\sqrt{T+1} - 1) \quad \text{and} \quad \sum_{t=0}^{T-1} (\tfrac{1}{\sqrt{t + 1}})^2 \leq 1 + \ln(T).
\end{equation*}
Using these results in the expression, we get
{\small\begin{align*}
    &\mathbb{E}\left[\min_{0 \leq t \leq T - 1} \operatorname{Gap}(x^t)\right] (2\sqrt{T + 1} - 1) \leq \mathbb{E}[r_0] 
+ D^2\left( \frac{L}{2} + \frac{L}{\alpha} + M_1 B + M_2 E \right) (1 + \ln(T))  + T M_1 C, \\
 &\implies\mathbb{E}\left[\min_{0 \leq t \leq T - 1} \operatorname{Gap}(x^t)\right]\leq \tfrac{\mathbb{E}[r_0]}{2(\sqrt{T + 1} - 1)} + \tfrac{D^2(1 + \ln(T))}{2(\sqrt{T + 1} - 1)}\left( \frac{L}{2} + \frac{L}{\alpha} + M_1 B + M_2 E \right) + \tfrac{T M_1 C}{2(\sqrt{T + 1} - 1)}.
\end{align*}}
Using the constants $M_1 = \tfrac{\alpha}{L\rho_1}$ and $M_2 = \tfrac{\alpha A}{L \rho_1 \rho_2}$, we get
\begin{equation*}
     \mathbb{E}\left[\min_{0 \leq t \leq T - 1} \operatorname{Gap}(x^t)\right] \leq \tfrac{\mathbb{E}[r_0]}{2(\sqrt{T + 1} - 1)} + \tfrac{D^2(1 + \ln(T))}{2(\sqrt{T + 1} - 1)}\left( \tfrac{L}{2} + \tfrac{L}{\alpha} + \tfrac{\alpha B}{L\rho_1} + \tfrac{\alpha A E}{L \rho_1 \rho_2} \right) + \tfrac{T M_1 C}{2(\sqrt{T + 1} - 1)}.
\end{equation*}
Multiplying $\sqrt{T + 1} + 1$ in the numerator and denominator for the term containing $C$, we get
{\small\begin{align*}
    &\mathbb{E}\left[\min_{0 \leq t \leq T - 1} \operatorname{Gap}(x^t)\right] \leq \tfrac{\mathbb{E}[r_0]}{2(\sqrt{T + 1} - 1)} + \tfrac{LD^2(1 + \ln(T))}{4(\sqrt{T + 1} - 1)} + \tfrac{LD^2(1 + \ln(T))}{2\alpha(\sqrt{T + 1} - 1)} \\
    &~~~~+ \tfrac{BD^2 \alpha(1 + \ln(T))}{L\rho_1(\sqrt{T + 1} - 1)}+ \tfrac{\alpha D^2 A E(1 + \ln(T))}{2L \rho_1 \rho_2 (\sqrt{T + 1} - 1)} + \tfrac{\alpha C (\sqrt{T + 1} + 1)}{2 L \rho_1} \\
     &\leq \tfrac{\mathbb{E}[r_0]}{2(\sqrt{T + 1} - 1)}  + \tfrac{LD^2(1 + \ln(T))}{4(\sqrt{T + 1} - 1)} + \tfrac{1}{\alpha}\left(\tfrac{LD^2(1 + \ln(T))}{2(\sqrt{T + 1} - 1)}\right) + \alpha \left( \tfrac{BD^2(1 + \ln(T))}{L\rho_1(\sqrt{T + 1} - 1)} + \tfrac{D^2 A E (1 + \ln(T)))}{2L \rho_1 \rho_2 (\sqrt{T + 1} - 1)} + \tfrac{C (\sqrt{T + 1} + 1)}{2 L \rho_1} \right).
\end{align*}}
This has the form $g(\alpha) = u\alpha + \tfrac{v}{\alpha}$, with
\begin{equation*}
    u = \left( \tfrac{BD^2(1 + \ln(T))}{L\rho_1(\sqrt{T + 1} - 1)} + \tfrac{D^2 A E(1 + \ln(T))}{2L \rho_1 \rho_2 (\sqrt{T + 1} - 1)} + \tfrac{C (\sqrt{T + 1} + 1)}{2 L \rho_1} \right), \quad v = \left(\tfrac{LD^2(1 + \ln(T))}{2(\sqrt{T + 1} - 1)}\right).
\end{equation*}
Thus it reaches its minimum at $\alpha^* = \sqrt{\tfrac{v}{u}}$, with $g(\alpha^*) = 2\sqrt{uv}$. Substituting these values in the equation gives us
\begin{align*}
    g(\alpha^*) &= 2\sqrt{\left(\tfrac{LD^2(1 + \ln(T))}{2(\sqrt{T + 1} - 1)}\right) \left( \tfrac{BD^2(1 + \ln(T))}{L\rho_1(\sqrt{T + 1} - 1)} + \tfrac{D^2 A E(1 + \ln(T))}{2L \rho_1 \rho_2 (\sqrt{T + 1} - 1)} + \tfrac{C (\sqrt{T + 1} + 1)}{2 L \rho_1} \right)} \\
    &= D\sqrt{\left(\tfrac{2(1 + \ln(T))}{(\sqrt{T + 1} - 1)}\right) \left( \tfrac{BD^2(1 + \ln(T))}{\rho_1(\sqrt{T + 1} - 1)} + \tfrac{D^2 A E(1 + \ln(T))}{2 \rho_1 \rho_2 (\sqrt{T + 1} - 1)} + \tfrac{C (\sqrt{T + 1} + 1)}{2 \rho_1} \right)} \\
    &= D\sqrt{\left( \tfrac{2BD^2(1 + \ln(T))^2}{\rho_1(\sqrt{T + 1} - 1)^2} + \tfrac{D^2 A E(1 + \ln(T))^2}{\rho_1 \rho_2 (\sqrt{T + 1} - 1)^2} + \tfrac{C (\sqrt{T + 1} + 1)(1 + \ln(T))}{\rho_1 (\sqrt{T + 1} - 1)} \right)}. 
\end{align*}
Substituting this back into the main expression gives us 
\begin{align*}
    \mathbb{E}\left[\min_{0 \leq t \leq T - 1} \operatorname{Gap}(x^t)\right] &\leq \tfrac{\mathbb{E}[r_0]}{2(\sqrt{T + 1} - 1)}  + \tfrac{LD^2(1 + \ln(T))}{4(\sqrt{T + 1} - 1)} \\
    &~~~+ D\sqrt{\left( \tfrac{2BD^2(1 + \ln(T))^2}{\rho_1(\sqrt{T + 1} - 1)^2} + \tfrac{D^2 A E(1 + \ln(T))^2}{\rho_1 \rho_2 (\sqrt{T + 1} - 1)^2} + \tfrac{C (\sqrt{T + 1} + 1)(1 + \ln(T))}{\rho_1 (\sqrt{T + 1} - 1)} \right)}.
\end{align*}
Hence from the expression, when $C = 0$, we recover the rate $\mathcal{O}(\ln(t)/\sqrt{t})$.
\end{proof}

\section{Estimators}
\label{appendix: estimators}
The estimators used in the experiments section~\ref{sec: experiments} are taken from \cite{nazykov2024stochastic}, \cite{pmlr-v119-negiar20a} and \cite{JMLR:v21:18-764}. We use a set of stochastic and coordinate methods as gradient estimators $m^t$ of the gradient $\nabla f(x^t)$ at each iteration $t$. In Algorithm~\ref{alg:BSFW}, note the usage of $m^{\mathrm{init}}$, whose purpose is only to assign $x^0$, while that of $\{\Phi_t\}_{t=0}^{T-1}$ which is to denote the gradient estimator used. For these estimators, the proofs for the parameters satisfying Assumption~\ref{assum: est} are provided in the subsections \ref{subsec: SAG} to \ref{subsec: Heavy Ball}. A summary of the parameters is provided in Table~\ref{table: estimator_params}.

\begin{table}[ht]
\centering
\small
\caption{Summary of parameter values for various gradient estimators satisfying Assumption~\ref{assum: est}.}
\label{table: estimator_params}
\begin{tabular}{lcccccc}
\toprule
\textbf{Gradient Estimator} & $\rho_1$ & $\rho_2$ & $A$ & $B$ & $C$ & $E$ \\ \midrule
\hyperref[subsec: SAG]{SAG} {\cite{pmlr-v119-negiar20a}} & $\tfrac{b_s}{2m}$ & $1$ & $0$ & $(1 - \tfrac{b_s}{m} )(1 + \tfrac{2m}{b_s} )L^2$ & $0$ & $0$ \\  \addlinespace[3pt]
\hyperref[subsec: L-SVRG]{L-SVRG} {\cite{pmlr-v117-kovalev20a}} & $1$ & $\tfrac{p}{2}$ & $\tfrac{L^2}{b_s} - \tfrac{pL^2}{2b_s}$ & $\tfrac{8L^2}{pb_s}$ & $0$ & $\tfrac{8}{p}$ \\ \addlinespace[3pt]
\hyperref[subsec: SAGA]{SAGA}  {\cite{reddi2016stochastic}} & $1$ & $\tfrac{b_s}{2m}$ & $ \tfrac{1}{b_s} + \tfrac{1}{2m}$ & $\tfrac{L^2}{b_s m}(1 + \tfrac{2m}{b_s} )$ & $0$ & $\tfrac{2m}{b_s}L^2$ \\  \addlinespace[3pt]
\hyperref[subsec: SEGA]{SEGA} {\cite{NEURIPS2018_fc2c7c47}} & $1$ & $\tfrac{1}{2n}$ & $n$ & $n^2L^2$ & $0$ & $3L^2n$ \\  \addlinespace[3pt]
\hyperref[subsec: JAGUAR]{JAGUAR} {\cite{nazykov2024stochastic}} & $\tfrac{1}{2n}$ & $1$ & $0$ & $3nL^2$ & $0$ & $0$ \\  \addlinespace[3pt]
\hyperref[subsec: ZOJA]{ZOJA}  {\cite{nazykov2024stochastic}} & $\tfrac{1}{4n}$ & $1$ & $0$ & $3nL^2$ & {$2nL^2\tau^2$} & 0 \\ \addlinespace[3pt]
\hyperref[subsec: SARAH]{SARAH} {\cite{beznosikov2024sarahfrankwolfemethodsconstrained}} & $p$ & $1$ & $0$ & $\tfrac{1-p}{b_s}L^2$ & $0$ & $0$ \\ 
\hyperref[subsec: Heavy Ball]{Heavy Ball}  {\cite{JMLR:v21:18-764}} & $\tfrac{\tilde{\rho}_T}{2}$ & $1 - (\tfrac{T+7}{T+8} )^{\tfrac{4}{3}}$ & $1$ & $\tfrac{2L^2}{\tilde{\rho}_T}$ & $0$ & $0$ \\
\bottomrule
\end{tabular} 
\begin{tablenotes}
{\footnotesize\item 
The name of each estimator links to its description and proof in the appendix. For the stochastic estimators, the constant $b_s$ refers to the stochastic batch size (number of indices) sampled per-iteration. The parameter $p$ is the probability of computing a deterministic gradient used in the algorithms L-SVRG and SARAH, as explained in Appendices \ref{subsec: L-SVRG} and \ref{subsec: SARAH}. The parameter $\tau$ is the zeroth-order approximation parameter in ZOJA, explained in  Appendix \ref{subsec: ZOJA}. The parameter $\tilde{\rho_t}$ in Heavy Ball is the momentum, explained in Appendix \ref{subsec: Heavy Ball}. It is worth noting that, \textbf{for all the estimators except ZOJA, $C = 0$ which is significant for the convergence analysis}. For ZOJA, $C$ diminishes with respect to $\tau$. The values for the sequence $\{\sigma_t\}$ for each estimator are given under each estimator's corresponding section. 
The parameter $B$ of Heavy Ball is dependent on the horizon $T$, which affects the convergence analysis. Hence, additional analysis using alternate step decays for $\rho$-quasar-convex and nonconvex functions are explained in subsection~\ref{subsec: Heavy Ball}. }
\end{tablenotes}
\end{table}

\subsection{Stochastic Methods}\label{appendix: abt stoch}
The stochastic estimators randomly sample a batch of size $b_s$ from the dataset $\{a_1,a_2, \cdots, a_m\}$ at each iteration $t$ based on $\xi_t \sim \mathcal{P}$, and the corresponding gradient oracle output is denoted by $\nabla f(x^t, \xi_t)$. 

\subsection{Coordinate Methods}\label{appendix: abt coord}
The coordinate estimators randomly sample an index $j$ from the set $\{1,2,\cdots n \}$ based on $\xi_t \sim \mathcal{P}$. The estimators then use the partial derivative of the function $f$, denoted by $\nabla f_j(x^t)$ as the estimate of the gradient $\nabla f(x^t)$. We refer to $e_i$ as the $i$th standard basis vector in subsections \ref{subsec: SEGA}, \ref{subsec: JAGUAR} and \ref{subsec: ZOJA} describing the coordinate estimators SEGA, JAGUAR, and ZOJA respectively.

\subsection{Properties of Estimators}\label{appendix: est props}

In this section, we discuss the convergence properties of the estimators used in the experiments in section~\ref{sec: experiments}. We prove that if the estimators provide parameters that satisfy Assumption~\ref{assum: est}, we have fixed-horizon convergence rates in Theorems~\ref{proof thm: stoch_fixed_horizon_quasar_convex} and~\ref{proof thm:stoch_fixed_horizon_nonconvex}, and any-time convergence rates in Theorems~\ref{proof thm:stoch_quasar_convex_param_ag} and~\ref{thm: stoch_anytime_non_convex}. To discuss the analysis, we first provide Lemma~\ref{lem: BSFW_iter_bound} which will permit us to prove the parameters necessary to satisfy Assumption~\ref{assum: est}.

\begin{lemma}[BSFW Iteration Bound] Let $\{x^t\}_{t=0}^{+\infty}$ be a sequence generated by Algorithm~\ref{alg:BSFW}, using a step decay $\{\eta_t\}_{t=0}^{+\infty}$. Then we have that for all $t \geq 1$,
\label{lem: BSFW_iter_bound}
\begin{equation*}
    \|x^t - x^{t-1}\| \leq \eta_{t-1}D.
\end{equation*}
\end{lemma}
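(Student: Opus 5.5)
We need $\|x^t-x^{t-1}\|\le \eta_{t-1}D$. The plan is to split on the two branches of Algorithm~\ref{alg:BSFW} at iteration $t-1$ and bound the length of the step directly from the update rule.

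\textbf{Case $\gamma_{t-1}=1$ (revert-to-FW branch).} Here $x^t=x^{t-1}+\eta_{t-1}(s^{t-1}-x^{t-1})$. We first need $x^{t-1}\in\mathcal{C}$, which we handle below. Granting that, and since $s^{t-1}\in\mathcal{C}$, we get
\[
\|x^t-x^{t-1}\|=\eta_{t-1}\|s^{t-1}-x^{t-1}\|\le \eta_{t-1}D .
\]

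\textbf{Case $\gamma_{t-1}<1$ (boosted branch).} This branch requires $\tilde d^{t-1}\neq 0$, and then
\[
\gamma_{t-1}=\eta_{t-1}\,\frac{\|s^{t-1}-x^{t-1}\|}{\|\tilde d^{t-1}\|}.
\]
Since $x^t=x^{t-1}+\gamma_{t-1}\tilde d^{t-1}$,
\[
\|x^t-x^{t-1}\|=\gamma_{t-1}\|\tilde d^{t-1}\|=\eta_{t-1}\|s^{t-1}-x^{t-1}\|\le\eta_{t-1}D .
\]
The normalization by $\|\tilde d^{t-1}\|$ in the step size cancels exactly, so no bound on $\|\tilde d^{t-1}\|$ is needed.

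\textbf{Main obstacle: feasibility of the iterates.} Both cases rely on $x^{t-1}\in\mathcal{C}$, which we prove by induction on $t$.
\begin{itemize}
\item[] \emph{Base case:} $x^0=\lmo(m^{\mathrm{init}})\in\mathcal{C}$.
\item[] \emph{FW branch:} $x^{t}$ is a convex combination of $x^{t-1}$ and $s^{t-1}$, since $\eta_{t-1}\in\left]0,1\right]$, so it lies in the convex set $\mathcal{C}$.
\item[] \emph{Boosted branch:} we use the property of the boosting procedure from \cite{combettes2020boosting}: $\psi^{K_t}/\Lambda_t$ is a convex combination of the $v^k-x^t$. Hence $x^t+\tilde d^t\in\mathcal{C}$. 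Then $x^{t}=x^{t-1}+\gamma_{t-1}\tilde d^{t-1}$ with $\gamma_{t-1}\in\left]0,1\right[$ is a convex combination of $x^{t-1}$ and $x^{t-1}+\tilde d^{t-1}$, so it is feasible.
\end{itemize}
This step is the one most dependent on the internals of Algorithm~\ref{alg:boosting}, in particular the $\Lambda_t$ bookkeeping for away-type updates. I would cite it rather than reprove it, as the paper already does when stating that the normalization keeps $x^t+\tilde d^t$ in $\mathcal{C}$.
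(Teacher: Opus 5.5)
Your proof is correct and follows the paper's argument: the same split on $\gamma_{t-1}=1$ versus $\gamma_{t-1}<1$, with the factor $\|s^{t-1}-x^{t-1}\|/\|\tilde d^{t-1}\|$ in the step size cancelling exactly in the boosted branch. The one difference is that you prove feasibility $x^{t-1}\in\mathcal{C}$ (needed for $\|s^{t-1}-x^{t-1}\|\le D$) by induction, using the convex-combination property from \cite{combettes2020boosting} and $\eta_{t-1}\le 1$, whereas the paper takes this for granted; as a minor point, $\gamma_{t-1}$ can be $0$, so the interval in your boosted-branch step should be $[0,1[$, which changes nothing.
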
 
\begin{proof}
    From Algorithm~\ref{alg:BSFW}, we have for any $t \geq 1$, if $\gamma_{t-1} = 1$, then 
    \begin{equation*}
        \|x^t - x^{t-1}\| = \eta_{t-1}\|d^{t-1}\| = \eta_{t-1}\|s^{t-1} - x^{t-1}\|  \leq \eta_{t-1}D.
    \end{equation*}
    Suppose instead, $\gamma_{t-1} < 1$, then we have
    \begin{equation*}
        \|x^t - x^{t-1}\| = \gamma_{t-1}\|d^{t-1}\| = \eta_{t-1} \frac{\|s^{t-1} - x^{t-1}\|}{\|d^{t-1}\|} \| d^{t-1} \| =\eta_{t-1}\|s^{t-1} - x^{t-1}\|  \leq \eta_{t-1}D.
    \end{equation*}
    Hence proven.
\end{proof}

\subsubsection{L-SVRG \cite{pmlr-v117-kovalev20a}}
\label{subsec: L-SVRG}
\paragraph{Description} For the L-SVRG estimator \cite{pmlr-v117-kovalev20a}, we use an additional variable $w^t$. We initiate it by
\begin{equation*}
    w^0 = x^0, \quad  m^0 = \nabla f(x^0).
\end{equation*}
For every iteration $t > 0$, we sample a batch $S_t \subset \{1, 2, \cdots, m\}$ of size $b_s$ uniformly at random. $b_s$ is a pre-defined constant per-iteration sample size parameter. Then, we make the update
\begin{align*}
    w^t &= \begin{cases}
        x^{t-1}, &\text{ with probability } p \\
        w^{t-1}, &\text{ with probability } 1 - p 
    \end{cases}\\
    m^t &= \frac{1}{b_s} \sum_{i \in S_t} \left( \nabla f_i(x^t) - \nabla f_i(w^t) \right) + \nabla f(w^t).
\end{align*}
where $p$ is a defined probability parameter. The parameters satisfying Assumption~\ref{assum: est} are provided in Lemma~\ref{params:L-SVRG}.

\paragraph{Verification of Assumption~\ref{assum: est}}
\begin{lemma}
    \label{params:L-SVRG}
    (Parameters for L-SVRG) Let $\{x^t\}_{t=0}^T$ be a sequence generated by Algorithm~\ref{alg:BSFW} using a step decay $\{\eta_t\}_{t=0}^{T-1}$, where the gradient estimator $\{\Phi_t\}_{t=0}^{T-1}$ is L-SVRG defined by \cite{pmlr-v117-kovalev20a}. Then we have the following parameters used in Assumption~\ref{assum: est}.
    \begin{equation*}
        \rho_1 = 1, \quad \rho_2 = \frac{p}{2}, \quad A = \frac{L^2}{b_s} - \frac{L^2p}{2b_s}, \quad B = \frac{8L^2}{pb_s}, \quad C = 0, \quad E = \frac{8}{p}, \quad \sigma^2_t = \|x^t - w^t\|^2.
    \end{equation*}
    where $b_s$ is the stochastic batch size sampled per-iteration, and $p$ is the probability parameter.
\end{lemma}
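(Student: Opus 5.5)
The plan is to verify the two recursions of Assumption~\ref{assum: est} directly. I take $\sigma_t^2=\|x^t-w^t\|^2$ and assume, as is standard for L-SVRG, that each component $f_i$ is $L$-smooth.

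The key structural fact is that the new randomness at step $t$ is independent of the past. This randomness is the coin deciding $w^t$ and the batch $S_t$. The quantities $x^{t-1}$, $w^{t-1}$ and $x^t$ are already fixed before these draws are made. Moreover, the only information about $x^t$ that I need is the deterministic bound $\|x^t-x^{t-1}\|\le \eta_{t-1}D$ from Lemma~\ref{lem: BSFW_iter_bound}. This bound holds pathwise, so conditioning subtleties do not affect it. I will therefore work conditionally on everything up to the moment $x^t$ is formed, and then take $\mathbb{E}_{t-1}$ via the tower property.

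\textbf{Step 1 (the $\sigma$-recursion).} Condition on the coin. With probability $p$ we have $w^t=x^{t-1}$, so $\sigma_t^2=\|x^t-x^{t-1}\|^2\le \eta_{t-1}^2D^2$. With probability $1-p$ we have $w^t=w^{t-1}$. Young's inequality with parameter $\beta=p/2$ then gives
\[
\sigma_t^2\le (1+\tfrac p2)\sigma_{t-1}^2+(1+\tfrac 2p)\eta_{t-1}^2D^2 .
\]
Averaging the two cases, the coefficient of $\sigma_{t-1}^2$ is $(1-p)(1+\tfrac p2)\le 1-\tfrac p2$. The coefficient of $\eta_{t-1}^2D^2$ is $p+(1-p)(1+\tfrac2p)\le 1+\tfrac2p\le \tfrac{8}{p}$. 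This yields $\rho_2=p/2$ and $E=8/p$.

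\textbf{Step 2 (the $\Delta$-recursion).} By definition,
\[
\Delta^t=\tfrac1{b_s}\textstyle\sum_{i\in S_t}\bigl(\nabla f_i(x^t)-\nabla f_i(w^t)\bigr)-\bigl(\nabla f(x^t)-\nabla f(w^t)\bigr).
\]
Conditionally on $w^t$, this is a centered minibatch average. Its second moment is therefore at most $\tfrac1{b_s}\,\mathbb{E}_i\|\nabla f_i(x^t)-\nabla f_i(w^t)\|^2$. Bounding the variance by the second moment and applying smoothness of each $f_i$, this is at most $\tfrac{L^2}{b_s}\|x^t-w^t\|^2=\tfrac{L^2}{b_s}\sigma_t^2$.

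\textbf{Step 3 (combining).} Take the expectation over the coin and insert Step 1. This gives
\[
\mathbb{E}_{t-1}\|\Delta^t\|^2\le \tfrac{L^2}{b_s}(1-\tfrac p2)\sigma_{t-1}^2+\tfrac{8L^2}{pb_s}\eta_{t-1}^2D^2 .
\]
There is no $\|\Delta^{t-1}\|^2$ term, so we may take $\rho_1=1$. The remaining constants are $A=\tfrac{L^2}{b_s}-\tfrac{pL^2}{2b_s}$, $B=\tfrac{8L^2}{pb_s}$ and $C=0$, as claimed.

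The main obstacle I expect is bookkeeping the conditioning correctly. Specifically, I must make sure the batch $S_t$ is sampled independently of the coin for $w^t$ and of $x^t$, so that the unbiasedness and variance bound in Step 2 are legitimate. I also need the Young-inequality constants in Step 1 to fit under the stated $E=8/p$. They do, with slack, which is why the loose constant $8/p$ is harmless.
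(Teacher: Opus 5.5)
Your proposal is correct and follows essentially the paper's route. You bound $\mathbb{E}_{t-1}\|\Delta^t\|^2$ by $\frac{L^2}{b_s}\mathbb{E}_{t-1}\sigma_t^2$ via the minibatch variance bound (the paper cites an external lemma for this), then derive the $\sigma$-recursion by averaging over the coin for $w^t$ and applying Young's inequality together with the pathwise bound $\|x^t-x^{t-1}\|\le\eta_{t-1}D$. The differences are only cosmetic: you skip the paper's unnecessary case split on $\gamma_{t-1}$, and you use the $(1+\beta)\|a\|^2+(1+1/\beta)\|b\|^2$ form of Young with $\beta=p/2$ rather than bounding the cross term with $\beta=p/4$. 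You also rightly make explicit the componentwise $L$-smoothness of each $f_i$, which the paper uses implicitly.
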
 
\begin{proof}
    This proof follows exactly from \cite{nazykov2024stochastic}. According to Lemma 3 from \cite{li2020unifiedanalysisstochasticgradient}, for any $t$ such that $1 \leq t \leq T - 1$, we get an estimation: 
\begin{equation*}
    \mathbb{E}_{t-1}[\|m^t\|^2]\leq\frac{{L}^2}{b_s}\mathbb{E}_{t-1}[\|x^t - w^t\|^2] + \mathbb{E}_{t-1}[\|\nabla f(x^t)\|^2].
\end{equation*}
Since $m^t$ is an unbiased gradient estimator, the previous inequality turns to:
\begin{equation*}
\mathbb{E}_{t-1}[\|\nabla f(x^t) - m^t\|^2] \leq \frac{L^2}{b_s}\mathbb{E}_{t-1}[\|x^t - w^t\|^2]
\end{equation*}
Suppose at iteration $t$, we have $\gamma_{t-1} < 1$ (case I). Then we have $x^t = x^{t-1} + \gamma_{t-1} d^{t-1}$. Hence,
\begin{align*}
    \mathbb{E}_{t-1}[\|x^{t} - w^{t}\|^2] 
    &= p\mathbb{E}_{t-1}\left[\|x^{t} - x^{t-1}\|^2] + (1-p)\mathbb{E}_{t-1}[\|x^{t} - w^{t-1}\|^2\right] \\
    &= p\gamma_{t-1}^2\mathbb{E}_{t-1} \left[\|d^{t-1}\|^2] + (1-p)\mathbb{E}_{t-1}[\|x^{t-1} + \gamma_{t-1}d^{t-1} - w^{t-1}\|^2\right] \\
    &= \gamma_{t-1}^2\mathbb{E}_{t-1}[\|d^{t-1}\|^2] + (1-p)\|x^{t-1}-w^{t-1}\|^2 \\
    &~~~+ 2\gamma_{t-1}(1-p)\mathbb{E}_{t-1}[\langle x^{t-1} - w^{t-1}, d^{t-1}\rangle] \\
    &=\gamma_{t-1}^2\|d^{t-1}\|^2 + (1-p)\|x^{t-1}-w^{t-1}\|^2 \\
    &~~~~+ 2(1-p)\langle x^{t-1} - w^{t-1},\gamma_{t-1}d^{t-1}\rangle.
\end{align*}
According to Young's inequality for a $\beta > 0$, we have
\begin{equation*}
    \langle x^{t-1} - w^{t-1},\gamma_{t-1}d^{t-1}\rangle \leq \beta\|x^{t-1} - w^{t-1}\|^2 + \frac{1}{\beta}\gamma_{t-1}^2\|d^{t-1}\|^2.
\end{equation*}
Hence,
\begin{align*}
    \mathbb{E}_{t-1}[\|x^{t} - w^{t}\|^2] 
    &\leq \gamma_{t-1}^2\|d^{t-1}\|^2 + (1-p)\|x^{t-1} - w^{t-1}\|^2 + 2(1-p)\beta\|x^{t-1} - w^{t-1}\|^2 \\
    &~~~+ 2\frac{1-p}{\beta}\gamma_{t-1}^2\|d^{t-1}\|^2.
\end{align*}
From Lemma~\ref{lem: BSFW_iter_bound}, $\gamma_{t-1}\|d^{t-1}\| \leq \eta_{t-1}D$, and so we have
\begin{equation*}
    \mathbb{E}_{t-1}[\|x^{t} - w^{t}\|^2]  \leq \left(1 + \frac{2(1-p)}{\beta}\right)\eta_{t-1}^2D^2 + (1-p)(1 + 2\beta)\|x^{t-1} - w^{t-1}\|^2.
\end{equation*}
Suppose instead $\gamma_{t-1} = 1$ (case II). Then we instead have $x^{t} = x^{t-1} + \eta_{t-1}(s^{t-1} - x^{t-1})$. Hence, we have
\begin{align*}
    \mathbb{E}_{t-1}[\|x^{t} - w^{t}\|^2] 
    &= p\mathbb{E}_{t-1}\left[\|x^{t} - x^{t-1}\|^2] + (1-p)\mathbb{E}_{t-1}[\|x^{t} - w^{t-1}\|^2\right] \\
    &= p\eta_{t-1}^2\mathbb{E}_{t-1} \left[\|s^{t-1} - x^{t-1}\|^2] + (1-p)\mathbb{E}_{t-1}[\|x^{t-1} + \eta_{t-1}(s^{t-1} - x^{t-1}) - w^{t-1}\|^2\right] \\
    &= \eta_{t-1}^2\mathbb{E}_{t-1}[\|s^{t-1} - x^{t-1}\|^2] + (1-p)\|x^{t-1}-w^{t-1}\|^2 \\
    &~~~+ 2\eta_{t-1}(1-p)\mathbb{E}_{t-1}[\langle x^{t-1} - w^{t-1}, s^{t-1} - x^{t-1}\rangle] \\
    &=\eta_{t-1}^2\|s^{t-1} - x^{t-1}\|^2 + (1-p)\|x^{t-1}-w^{t-1}\|^2 \\
    &~~~+ 2(1-p)\langle x^{t-1} - w^{t-1},\eta_{t-1}(s^{t-1} - x^{t-1})\rangle.
\end{align*}
Again by using Young's inequality for a $\beta > 0$,
\begin{align*}
    \mathbb{E}_{t-1}[\|x^{t} - w^{t}\|^2] 
    &\leq \eta_{t-1}^2\|s^{t-1} - x^{t-1}\|^2 + (1-p)\|x^{t-1} - w^{t-1}\|^2 + 2(1-p)\beta\|x^{t-1} - w^{t-1}\|^2 \\
    &~~~~+ 2\frac{1-p}{\beta}\eta_{t-1}^2\|s^{t-1} - x^{t-1}\|^2 \\
    &\leq \left(1 + \frac{2(1-p)}{\beta}\right)\eta_{t-1}^2D^2 + (1-p)(1 + 2\beta)\|x^{t-1} - w^{t-1}\|^2.
\end{align*}
Giving us the same result in both cases I and II. Finally, choosing $\beta=\frac{p}{4}$, we get $(1-p)(1 +2\beta) \leq \left(1 - \frac{p}{2}\right).$ and 
\begin{align*}
    \mathbb{E}_{t-1}[\|x^{t} - w^{t}\|^2] &\leq \frac{8}{p}\eta_{t-1}^2D^2 + \left(1-\frac{p}{2}\right)\|x^{t-1} - w^{t-1}\|^2, \\
    \mathbb{E}_{t-1}[\|\nabla f(x^{t}) - m^t\|^2] &\leq\frac{8L^2}{pb_s}\eta_{t-1}^2D^2 + \frac{L^2}{b_s}\left(1-\frac{p}{2}\right)\|x^{t-1} - w^{t-1}\|^2.
\end{align*}
\end{proof}

\subsubsection{SARAH \cite{beznosikov2024sarahfrankwolfemethodsconstrained}}
\label{subsec: SARAH}
\paragraph{Description} For the estimator SARAH \cite{beznosikov2024sarahfrankwolfemethodsconstrained}, we need an additional assumption that the objective function $f$ in \eqref{eq:P} can be represented as a finite sum, i.e., $f(x) = \sum_{i=1}^m f_i(x)$. We start by setting $m^0 = \nabla f(x^0)$. Then, for each iteration $t > 0$, we sample a batch $S_t \subset \{1, 2, \cdots, m\}$ of size $b_s$ uniformly at random. Specifically we have
\begin{align*}
    \label{m_t: SARAH}
    m^0 &= \nabla f(x^0),  \\
    m^{t} &= \begin{cases}
        \nabla f(x^t), & \text{ with probability } p \\
        m^{t-1} + \frac{1}{b_s} \left( \sum_{i \in S_t} \nabla f_i(x^t) - \nabla f_i(x^{t-1}) \right), & \text{ with probability } 1 - p
    \end{cases}
\end{align*}
where $p$ is a defined probability parameter, and $b_s$ is a defined per-iteration batch size. The parameters satisfying Assumption~\ref{assum: est} are provided in Lemma~\ref{params:SARAH}.

\paragraph{Verification of Assumption~\ref{assum: est}}
\begin{lemma}
    \label{params:SARAH}
    (Parameters for SARAH) Let $\{x^t\}_{t=0}^T$ be a sequence generated by Algorithm~\ref{alg:BSFW} using a step decay $\{\eta_t\}_{t=0}^{T-1}$, where the gradient estimator $\{\Phi_t\}_{t=0}^{T-1}$ is SARAH defined by \cite{beznosikov2024sarahfrankwolfemethodsconstrained}. Then we have the following parameters used in Assumption~\ref{assum: est}.
    \begin{equation*}
        \rho_1 = p, \quad \rho_2 = 1, \quad A = 0, \quad B = \frac{1 - p}{b_s}L^2, \quad C = 0, \quad E = 0, \quad \sigma^2_t = 0.
    \end{equation*}
    where $b_s$ refers to the stochastic batch size sampled per-iteration, and $p$ is the probability parameter.
\end{lemma}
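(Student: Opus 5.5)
Recall $\Delta^t = m^t - \nabla f(x^t)$, and set $\Delta_i^t := \nabla f_i(x^t) - \nabla f_i(x^{t-1})$. The plan is to condition on the randomness up to iteration $t-1$ and split according to the coin flip of SARAH. We bound the two branches separately and then average them with weights $p$ and $1-p$.

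\emph{Full-gradient branch (probability $p$).} Here $m^t = \nabla f(x^t)$, so $\Delta^t = 0$ and this branch contributes nothing.

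\emph{Recursive branch (probability $1-p$).} Here
\[
\Delta^t = \Delta^{t-1} + \tfrac{1}{b_s}\sum_{i\in S_t}\Delta_i^t - \big(\nabla f(x^t)-\nabla f(x^{t-1})\big).
\]
Since $x^t$ is determined by $x^{t-1}$ and $m^{t-1}$, the only randomness left is the batch $S_t$. The correction term
\[
\tfrac{1}{b_s}\sum_{i\in S_t}\Delta_i^t - \big(\nabla f(x^t)-\nabla f(x^{t-1})\big)
\]
has zero conditional mean given $x^t, x^{t-1}$, with $f$ normalized so that $\nabla f$ is the average of the $\nabla f_i$. The cross term with $\Delta^{t-1}$ therefore vanishes in expectation. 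For the variance of a minibatch average, I would use the standard bound: it is at most $\tfrac{1}{b_s}$ times the variance of a single sample. That single-sample variance is at most $\mathbb{E}_i\|\Delta_i^t\|^2$, which is at most $L^2\|x^t-x^{t-1}\|^2$ if each $f_i$ is $L$-smooth. This is the assumption implicit in the table, as in \cite{beznosikov2024sarahfrankwolfemethodsconstrained}.

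\emph{Combining.} Lemma~\ref{lem: BSFW_iter_bound} gives $\|x^t-x^{t-1}\|\le \eta_{t-1}D$. This holds in both the boosted case and the revert-to-FW case, and it is the only place the boosted step enters. Averaging the two branches then gives
\[
\mathbb{E}_{t-1}\|\Delta^t\|^2 \le (1-p)\|\Delta^{t-1}\|^2 + \tfrac{(1-p)L^2}{b_s}\eta_{t-1}^2D^2.
\]
This matches the claimed parameters $\rho_1 = p$, $A = 0$, $B = \tfrac{(1-p)L^2}{b_s}$ and $C = 0$. With $\sigma_t \equiv 0$, $\rho_2 = 1$ and $E = 0$, the second inequality of Assumption~\ref{assum: est} holds trivially as $0 \le 0$.

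\emph{Main obstacle.} The only delicate point is the measurability and conditioning. The batch $S_t$ and the coin must be independent of $x^t$ given the past, so that the cross term vanishes. Because $x^t$ is computed from $m^{t-1}$ before $S_t$ is sampled, this holds. The nonlinear boosting step affects the argument only through the displacement bound of Lemma~\ref{lem: BSFW_iter_bound}.
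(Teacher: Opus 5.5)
Your proposal is correct and is essentially the paper's proof. The paper obtains the one-step bound $\mathbb{E}_{t-1}\|\Delta^t\|^2 \le (1-p)\|\Delta^{t-1}\|^2 + \tfrac{(1-p)L^2}{b_s}\|x^t-x^{t-1}\|^2$ by citing Lemma~3 of \cite{pmlr-v139-li21a} and then applies Lemma~\ref{lem: BSFW_iter_bound}; you derive that cited bound directly from the coin split and the minibatch variance estimate. The assumptions you make explicit are exactly what the paper's one-line argument leaves implicit: each $f_i$ being $L$-smooth, $f$ normalized as the average rather than the sum of the $f_i$, and conditioning that makes $m^{t-1}$ and hence $x^t$ measurable.
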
 
\begin{proof}
    This proof follows exactly from \cite{nazykov2024stochastic}. According to Lemma 3 from \cite{pmlr-v139-li21a}, for a $L$-Lipschitz smooth function $f$, using Lemma~\ref{lem: BSFW_iter_bound}, we bound the difference by
    \begin{align*}
        \mathbb{E}_{t-1}\left[\|\nabla f(x^{t}) - m^{t}\|^2\right] = \mathbb{E}_{t-1}[\|\Delta^t\|^2] &\leq (1 - p)\|\Delta^{t-1}\|^2 + \frac{1-p}{b}L^2\|x^{t}-x^{t-1}\|^2 \\
& \leq (1 - p)\|\Delta^{t-1}\|^2 + \frac{1-p}{b_s}L^2\eta_{t-1}^2D^2.
    \end{align*}
Hence giving us the required coefficients.
\end{proof}

\subsubsection{SAGA \cite{reddi2016stochastic}}
\label{subsec: SAGA}
\paragraph{Description}
To use the SAGA gradient estimator \cite{reddi2016stochastic}, we need an additional assumption that the objective function $f$ in \eqref{eq:P} can be represented as a finite sum, i.e., $f(x) = \sum_{i=1}^m f_i(x)$. For SAGA, we use an additional variable $y^t$ as implemented by \cite{nazykov2024stochastic}. We initiate SAGA by setting
\begin{align*}
    y^0_i &= \nabla f_i (x^0) \text{ for all } i \in \{1, 2, \cdots, m\}, \\
    m^0 &= \nabla f(x^0).
\end{align*}
 For every iteration $t > 0$, we sample a batch $S_t \subset \{1, 2, \cdots, m\}$ of size $b_s$ uniformly at random. $b_s$ is a pre-defined per-iteration sample size parameter. After sampling $S_t$, we make the gradient estimate $m^t$ by setting
 \begin{equation*}
     m^t = \frac{1}{b_s} \sum_{i \in S_t} \left(\nabla f_i(x^{t}) - y^{t-1}_i \right) + \frac{1}{m} \sum_{i=1}^m y^{t-1}_i.
 \end{equation*}
 Then we have for every $i \in \{ 1, 2, \cdots, m\}$, 
 \begin{align*}
     y^t_i &= \begin{cases}
         \nabla f_i(x^t), & i \in S_t \\
     y^{t-1}_i. & i \notin S_t
     \end{cases}
 \end{align*}
The parameters satisfying Assumption~\ref{assum: est} are provided in Lemma~\ref{params:SAGA}.
\paragraph{Verification of Assumption~\ref{assum: est}}
\begin{lemma}
     \label{params:SAGA}
    (Parameters for SAGA) Let $\{x^t\}_{t=0}^T$ be a sequence generated by Algorithm~\ref{alg:BSFW}, using a step decay $\{\eta_t\}_{t=0}^{T-1}$, where the gradient estimator $\{\Phi_t\}_{t=0}^{T-1}$ is SAGA defined by \cite{reddi2016stochastic}. Suppose the objective function $f$ can be represented as $f(x) = \sum_{i=1}^m f_i(x)$. Then we have the following parameters used in Assumption~\ref{assum: est}.
    {\footnotesize\begin{align*}
        \rho_1 = 1, ~~ \rho_2 = \frac{b_s}{2m}, ~~ A = \frac{1}{b_s} + \frac{1}{2m}, ~~ B = \frac{L^2}{b_s m}\left(1 + \frac{2m}{b_s} \right), ~~ C = 0, ~~ E = \frac{2m}{b_s}L^2, ~~\sigma_{t}^2 = \frac{1}{m}\sum\limits_{j=1}^m\|\nabla f_j(x^{t}) - y_j^{t}\|^2.
    \end{align*}}
    where $b_s$ is defined as the stochastic batch size sampled per-iteration.
\end{lemma}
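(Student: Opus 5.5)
We need to verify the two recursions of Assumption~\ref{assum: est} for the SAGA estimator with $\sigma_t^2 = \frac{1}{m}\sum_j \|\nabla f_j(x^t)-y_j^t\|^2$, following the template of \cite{nazykov2024stochastic}. The only place where the algorithm enters is through the displacement $\|x^t-x^{t-1}\|$, which Lemma~\ref{lem: BSFW_iter_bound} bounds by $\eta_{t-1}D$ regardless of whether the boosted direction or the reverted FW direction was used. So the SFW argument transfers verbatim once every occurrence of the step is replaced by this bound. Throughout, we treat $f$ as the average $\frac1m\sum_i f_i$ (the normalization under which $m^t$ is unbiased), with each $\nabla f_i$ $L$-Lipschitz.

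\textbf{First recursion (for $\Delta^t$).} Conditioned on the past, $m^t$ is unbiased, since $\mathbb{E}_{t-1}\frac{1}{b_s}\sum_{i\in S_t}(\nabla f_i(x^t)-y_i^{t-1}) = \nabla f(x^t) - \frac1m\sum_i y_i^{t-1}$. Hence $\mathbb{E}_{t-1}\|\Delta^t\|^2$ is the variance of a sample mean drawn without replacement, and is bounded by $\frac{1}{b_s}\cdot\frac1m\sum_i \|\nabla f_i(x^t)-y_i^{t-1}\|^2$. We then split
\[
\nabla f_i(x^t)-y_i^{t-1} = \bigl(\nabla f_i(x^t)-\nabla f_i(x^{t-1})\bigr) + \bigl(\nabla f_i(x^{t-1})-y_i^{t-1}\bigr)
\]
and apply Young's inequality with weight $\beta$ (to be chosen) together with $L$-smoothness. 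This gives a bound of the form $A\sigma_{t-1}^2 + BD^2\eta_{t-1}^2$ with $\rho_1=1$, since no $\|\Delta^{t-1}\|^2$ term is needed. The stated constants $A=\frac1{b_s}+\frac1{2m}$ and $B=\frac{L^2}{b_s m}(1+\frac{2m}{b_s})$ indicate that $\beta$ is tuned to $b_s/(2m)$ and that the extra constants are absorbed into a slightly loose upper bound. I will match those constants at the end, loosening where necessary.

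\textbf{Second recursion (for $\sigma_t$).} Each index $j$ is refreshed with probability $b_s/m$, in which case $y_j^t=\nabla f_j(x^t)$ and the term vanishes. Otherwise $y_j^t=y_j^{t-1}$, so
\[
\mathbb{E}_{t-1}\sigma_t^2 = \Bigl(1-\tfrac{b_s}{m}\Bigr)\frac1m\sum_j\|\nabla f_j(x^t)-y_j^{t-1}\|^2 .
\]
Applying the same split with Young's parameter $\beta = \frac{b_s}{2m}$ gives
\[
\Bigl(1-\tfrac{b_s}{m}\Bigr)(1+\beta)\le 1-\tfrac{b_s}{2m},
\]
which yields $\rho_2=\frac{b_s}{2m}$. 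The drift term is at most $(1+\frac1\beta)L^2\eta_{t-1}^2D^2$, and bounding it crudely gives $E$ of order $\frac{2m}{b_s}L^2$.

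The main obstacle is the bookkeeping of constants: making the Young parameters and the without-replacement variance factor land exactly on the tabulated values $A,B,E$, particularly the mixed term $A$. If exact matching fails, I would verify the inequalities with the obtained constants and show that they are dominated by the stated ones. That suffices, since Assumption~\ref{assum: est} only requires upper bounds.
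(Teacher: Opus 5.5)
Your route is the paper's. You use the without-replacement variance bound $\mathbb{E}_{t-1}\|\Delta^t\|^2\le\frac{1}{b_s m}\sum_j\|\nabla f_j(x^t)-y_j^{t-1}\|^2$, then a Young split through $\nabla f_j(x^{t-1})$ with weight $\frac{b_s}{2m}$ on the $\sigma_{t-1}^2$ part (the paper writes this as $\alpha=\frac{2m}{b_s}$ on the drift part). For $\sigma_t$ you use the refresh probability $\frac{b_s}{m}$ followed by Young with $\beta=\frac{b_s}{2m}$. Your averaged normalization of $f$ is also what the paper's computation silently uses.

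Two bookkeeping remarks. First, $A$ is not the delicate constant: it comes out exactly as $\frac{1}{b_s}\bigl(1+\frac{b_s}{2m}\bigr)$. Second, to land on $E=\frac{2m}{b_s}L^2$ you must keep the factor $1-\frac{b_s}{m}$ on the drift term, since $\bigl(1-\frac{b_s}{m}\bigr)\bigl(1+\frac{2m}{b_s}\bigr)\le\frac{2m}{b_s}$ whereas $1+\frac{2m}{b_s}$ alone is not.

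The genuine gap is $B$. Under your hypothesis that each $\nabla f_i$ is $L$-Lipschitz, the drift term in the first recursion is $\frac{1}{b_s}\bigl(1+\frac{2m}{b_s}\bigr)\frac{1}{m}\sum_j\|\nabla f_j(x^t)-\nabla f_j(x^{t-1})\|^2\le\frac{L^2}{b_s}\bigl(1+\frac{2m}{b_s}\bigr)\eta_{t-1}^2D^2$. That is $m$ times the stated $B=\frac{L^2}{b_s m}\bigl(1+\frac{2m}{b_s}\bigr)$.

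Your fallback of showing the obtained constants are dominated by the stated ones cannot repair this. Loosening only increases constants. Fitting $A$ forces the weight on $\sigma_{t-1}^2$ to be at most $\frac{b_s}{2m}$, so no Young weight fits both $A$ and $B$ once $m>1$.

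Nor is this mere slack in the argument. Take SAGA's initialization, so $\Delta^0=0$ and $\sigma_0=0$, and components $\nabla f_j(x)=\pm Lx+c$ in equal halves. Then $\mathbb{E}_0\|\Delta^1\|^2=\frac{1}{b_s}\frac{m-b_s}{m-1}L^2\|x^1-x^0\|^2$. This exceeds $B\eta_0^2D^2$ for, e.g., $b_s=10$ and large $m$ whenever $\|x^1-x^0\|=\eta_0D$. That happens, for instance, with $m^{\mathrm{init}}=-c$ on a Euclidean ball, where $x^0$ and $s^0$ are antipodal.

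The paper obtains the stated $B$ by bounding the \emph{sum} $\sum_{j=1}^m\|\nabla f_j(x^t)-\nabla f_j(x^{t-1})\|^2$ by $L^2\|x^t-x^{t-1}\|^2$, credited to ``$L$-smoothness of $f$''. That is really the summed condition $\sum_j\|\nabla f_j(x)-\nabla f_j(y)\|^2\le L^2\|x-y\|^2$, which neither Assumption~\ref{assum: L_smooth} nor per-component smoothness implies. To prove the statement as written you must assume and use that condition in the first recursion; it also gives the averaged bound behind $E$. With per-component smoothness alone, your argument proves the lemma with $B$ replaced by $\frac{L^2}{b_s}\bigl(1+\frac{2m}{b_s}\bigr)$. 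That still satisfies Assumption~\ref{assum: est}, but it is not the stated constant.
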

\begin{proof}
     This proof follows exactly from \cite{nazykov2024stochastic}. The difference between estimator and exact gradient is bounded by:
    {\small\begin{align*}
        \mathbb{E}_{t-1}\left[\|m^t - \nabla f(x^t)\|^2\right] &=\mathbb{E}_{t-1}\left[\Biggl\|\frac{1}{b_s}\sum\limits_{i \in S_t}\left[\nabla f_i(x^{t}) - y_i^{t-1}\right] + \frac{1}{m}\sum\limits_{j = 1}^m y_j^{t-1} - \nabla f(x^t) \Biggr\|^2\right] \\
        &= \mathbb{E}_{t-1}\Biggl[\Biggl\|\frac{1}{b_s}\left(\sum\limits_{i \in S_t}\left[\nabla f_i(x^{t}) - y_i^{t-1}\right] - \left(\frac{1}{m}\sum\limits_{j = 1}^m\left[\nabla f_j(x^t)-y_j^{t-1}\right]\right)\right) \Biggr\|^2\Biggr]. \\
    \end{align*}}
    By using Lemma B.2 from \cite{nazykov2024stochastic}
    \begin{equation*}
        \mathbb{E}_{t-1}\left[\|m^t - \nabla f(x^t)\|^2\right]\leq \frac{1}{b_s m}\sum\limits_{j=1}^m\left\|\nabla f_j(x^t) - y_j^{t-1} - \left(\frac{1}{m}\sum\limits_{i=1}^m\left[\nabla f_i(x^t)-y_i^{t-1}\right]\right)\right\|^2.
    \end{equation*}
    Now, $\frac{1}{m}\sum\limits_{i=1}^m$ can be described as an expected value and $\mathbb{E}[\|x - \mathbb{E}[x]\|^2 ]\leq \mathbb{E}[\|x\|^2]$. Furthermore, using Young's inequality with $\alpha > 0$,
    {\footnotesize\begin{align*}
        \mathbb{E}_{t-1}\left[\|m^t - \nabla f(x^t)\|^2\right]&\leq\frac{1}{b_s m}\sum\limits_{j=1}^m\left\|\nabla f_j(x^t) - y_j^{t-1}\right\|^2 \\
        &\leq \frac{1}{b_s m}\left(1+\alpha\right)\sum\limits_{j=1}^m\|\nabla f_j(x^t) - \nabla f_j(x^{t-1})\|^2 + \frac{1}{b_s m}\left(1+\frac{1}{\alpha}\right)\sum\limits_{j=1}^m\|\nabla f_j(x^{t-1}) - y_j^{t-1}\|^2. 
    \end{align*}}
    Using $L$-smoothness of $f$, Lemma~\ref{lem: BSFW_iter_bound}, and by setting $\sigma_{t-1}^2 = \frac{1}{m}\sum\limits_{j=1}^m\|\nabla f_j(x^{t-1}) - y_j^{t-1}\|^2$,
    \begin{equation*}
        \mathbb{E}_{t-1}\left[\|m^t - \nabla f(x^t)\|^2\right] \leq \frac{1}{b_s m}\left(1+\alpha\right)L^2 \eta_{t-1}^2 D^2 + \frac{1}{b_s}\left(1+\frac{1}{\alpha}\right)\sigma_{t-1}^2.
    \end{equation*}
We can put $\alpha = \frac{2m}{b_s}$ to obtain the needed estimates, i.e. 
\begin{equation*}
    \mathbb{E}_{t-1}\left[\|m^t - \nabla f(x^t)\|^2\right] \leq \frac{L^2}{b_s m}\left(1+ \frac{2m}{b_s} \right) \eta_{t-1}^2 D^2 + \frac{1}{b_s}\left(1+\frac{b_s}{2m}\right)\sigma_{t-1}^2.
\end{equation*}
To bound the term $\sigma_{t-1}^2$,
\begin{align*}
    \mathbb{E}_{t-1}[\sigma_{t}^2]&=\mathbb{E}_{t-1}\left[\frac{1}{m}\sum\limits_{j = 1}^m\|\nabla f_j(x^{t}) - y_j^{t}\|^2 \right] = \left(1 - \frac{b_s}{m}\right)\frac{1}{m}\sum\limits_{j = 1}^m\|\nabla f_j(x^{t}) - y_j^{t-1}\|^2 \\ 
    & = \left(1 - \frac{b_s}{m}\right)\frac{1}{m}\sum\limits_{j = 1}^m\|\nabla f_j(x^{t}) - \nabla f_j(x^{t-1}) + \nabla f_j(x^{t-1})-  y_j^{t-1}\|^2.
\end{align*}
By using Young's Inequality again with $\beta > 0$, 
\begin{equation*}
    \mathbb{E}_{t-1}[\sigma_{t}^2] \leq \left(1 - \frac{b_s}{m}\right)(1 + \beta)\frac{1}{m}\sum\limits_{j = 1}^m\|\nabla f_j(x^{t-1}) - y_j^{t-1}\|^2 + \left(1 - \frac{b_s}{m}\right)\left(1 + \frac{1}{\beta}\right)L^2\|x^{t} - x^{t-1}\|^2.
\end{equation*}
With $\beta = \frac{b_s}{2m}$, and by using Lemma~\ref{lem: BSFW_iter_bound}, we have:
\begin{equation*}
    \mathbb{E}_{t-1}[\sigma_{t}^2] \leq \left(1 - \frac{b_s}{2m}\right)\sigma_{t-1}^2 + \frac{2m}{b_s}L^2\eta_{t-1}^2D^2.
\end{equation*}
\end{proof}

\subsubsection{SEGA \cite{NEURIPS2018_fc2c7c47}}
\label{subsec: SEGA}

\paragraph{Description} For SEGA \cite{NEURIPS2018_fc2c7c47}, we require an additional variable $h^t$ as implemented by \cite{nazykov2024stochastic}. We initialize by setting both $h^0$ and $m^0$ by the full gradient at $t = 0$, specifically, $m^0 = \nabla f(x^0)$ and $h^0 = \nabla f(x^0)$.  Then at every iteration $t > 0$, we randomly sample $i_t \in \{1, 2, \cdots, n\}$ and approximate the gradient by
\begin{equation*}
    m^t = ne_{i_t} \left( \nabla_{i_t} f(x^t) - h^{t-1}_{i_t} \right) + h^{t-1}.
\end{equation*}
We also update $h^t$ by setting $h^t = h^{t-1} + e_{i_t}\left( \nabla_{i_t} f(x^t) - h^{t-1} \right)$. The parameters satisfying Assumption~\ref{assum: est} are provided in Lemma~\ref{params:SEGA}.

\paragraph{Verification of Assumption~\ref{assum: est}}
\begin{lemma}
    \label{params:SEGA}
    (Parameters for SEGA) Let $\{x^t\}_{t=0}^T$ be a sequence generated by Algorithm~\ref{alg:BSFW} using a step decay $\{\eta_t\}_{t=0}^{T-1}$, where the gradient estimator $\{\Phi_t\}_{t=0}^{T-1}$ is SEGA defined by \cite{NEURIPS2018_fc2c7c47}. Then we have the following parameters used in Assumption~\ref{assum: est}.
    \begin{equation*}
        \rho_1 = 1, \quad \rho_2 = \frac{1}{2n}, \quad A = n, \quad B = n^2L^2, \quad C = 0, \quad E = 3L^2n, \quad \sigma_t^2 = \|h^{t} - \nabla f(x^t)\|^2.
    \end{equation*}
\end{lemma}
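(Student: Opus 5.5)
We verify the two recursions of Assumption~\ref{assum: est} for SEGA with $\sigma_t^2=\|h^t-\nabla f(x^t)\|^2$, conditioning on the past up to $t-1$. The only randomness at step $t$ is $i_t$, uniform on $\{1,\dots,n\}$; $x^t$ is measurable with respect to the past. Throughout we use the correct SEGA update $h^t=h^{t-1}+e_{i_t}(\nabla_{i_t}f(x^t)-h^{t-1}_{i_t})$, which replaces only coordinate $i_t$.

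\textbf{First recursion.} Write $g:=\nabla f(x^t)-h^{t-1}$. Then $\Delta^t=m^t-\nabla f(x^t)=n e_{i_t}g_{i_t}-g$. Since $\mathbb{E}_{t-1}[n e_{i_t}g_{i_t}]=g$, the estimator is unbiased, and
\begin{equation*}
\mathbb{E}_{t-1}\|\Delta^t\|^2=\mathbb{E}_{t-1}\|n e_{i_t}g_{i_t}\|^2-\|g\|^2=(n-1)\|g\|^2\le n\|g\|^2 .
\end{equation*}
Next we split $g=(\nabla f(x^{t-1})-h^{t-1})+(\nabla f(x^t)-\nabla f(x^{t-1}))$ and apply Young's inequality with weight $1/n$:
\begin{equation*}
\|g\|^2\le\left(1+\tfrac1n\right)\sigma_{t-1}^2+(1+n)L^2\|x^t-x^{t-1}\|^2 .
\end{equation*}
This would give $A=n+1$. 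To get the stated $A=n$, we use the crude form $\|g\|^2\le 2\sigma_{t-1}^2+2L^2\|x^t-x^{t-1}\|^2$ and track constants. This is the delicate point: matching $A=n$ and $B=n^2L^2$ exactly requires choosing the Young weight carefully. Splitting $(n-1)\|g\|^2$ with weight $\beta=1/(n-1)$ gives
\begin{equation*}
(n-1)\|g\|^2\le n\,\sigma_{t-1}^2+n(n-1)L^2\|x^t-x^{t-1}\|^2 .
\end{equation*}
We then bound $\|x^t-x^{t-1}\|\le\eta_{t-1}D$ by Lemma~\ref{lem: BSFW_iter_bound}. This yields $\rho_1=1$ (no $\|\Delta^{t-1}\|^2$ term), $A=n$, $B=n(n-1)L^2\le n^2L^2$, and $C=0$. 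The case $n=1$ is trivial, since then $\Delta^t=0$.

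\textbf{Second recursion.} For each coordinate $j$, with probability $1/n$ we have $h^t_j=\nabla_j f(x^t)$, and otherwise $h^t_j=h^{t-1}_j$. Hence
\begin{equation*}
\mathbb{E}_{t-1}\sigma_t^2=\left(1-\tfrac1n\right)\|h^{t-1}-\nabla f(x^t)\|^2 .
\end{equation*}
We expand $h^{t-1}-\nabla f(x^t)=(h^{t-1}-\nabla f(x^{t-1}))+(\nabla f(x^{t-1})-\nabla f(x^t))$ and apply Young's inequality with $\beta=\frac{1}{2n}$. Using $(1-\frac1n)(1+\frac1{2n})\le 1-\frac1{2n}$ and $(1-\frac1n)(1+2n)\le 2n+1\le 3n$, together with $L$-smoothness and Lemma~\ref{lem: BSFW_iter_bound}, we get
\begin{equation*}
\mathbb{E}_{t-1}\sigma_t^2\le\left(1-\tfrac1{2n}\right)\sigma_{t-1}^2+3nL^2\eta_{t-1}^2D^2 .
\end{equation*}
This gives $\rho_2=\frac1{2n}$ and $E=3L^2n$.

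Both boosting cases ($\gamma_{t-1}<1$ or $\gamma_{t-1}=1$) enter only through Lemma~\ref{lem: BSFW_iter_bound}, so no separate case analysis is needed. The only step needing care is the Young-weight bookkeeping in the first recursion to land exactly on $A=n$.
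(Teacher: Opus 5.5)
Your proof is correct and follows the paper's route: the exact conditional variance $(n-1)\|h^{t-1}-\nabla f(x^t)\|^2$, a Young split combined with $L$-smoothness and Lemma~\ref{lem: BSFW_iter_bound}, and the identical $\beta=\frac{1}{2n}$ argument for $\sigma_t^2$. The only difference is the Young weight in the first recursion: the paper keeps the factor $n-1$ and uses weight $\frac{1}{n}$, which gives $(n-1)(1+\frac{1}{n})\le n$ and $(n-1)(1+n)\le n^2$. So the ``delicate point'' you flag is a non-issue, the abandoned factor-$2$ detour is unnecessary (it would not yield $A=n$), and your weight $\frac{1}{n-1}$ works equally well.
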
 
\begin{proof}
    This proof follows exactly from \cite{nazykov2024stochastic}. $I$ refers to the identity matrix of dimensions $n \times n$. We first bound the difference between estimator and exact gradient:
{\small\begin{align*}
    \mathbb{E}_{t-1}\left[\|m^{t} - \nabla f(x^{t})\|^2\right] &= \mathbb{E}_{t-1}\left[\|n e_{i_{t}} e_{i_{t}}^{\top}(\nabla f(x^{t}) - h^{t - 1}) + h^{t- 1 } - \nabla f(x^{t})\|^2\right] \\
    &= \mathbb{E}_{t-1}\left[\|(I - ne_{i_{t}} e_{i_{t}}^{\top})(h^{t-1} - \nabla f(x^{t}))\|^2\right]\\
    &= \mathbb{E}_{t-1}\left[(h^{t-1} - \nabla f(x^{t}))^{\top}(I - n e_{i_{t}} e_{i_{t}}^{\top})^{\top}(I - n e_{i_{t}} e_{i_{t}}^{\top})(h^{t-1} - \nabla f(x^{t}))\right] \\
    &= (h^{t-1} - \nabla f(x^{t}))^{\top}\mathbb{E}_{t-1}\left[I - 2n e_{i_{t}} e_{i_{t}}^{\top} + n^2e_{i_{t}} e_{i_{t}}^{\top}\right](h^{t-1} - \nabla f(x^{t})) \\
    &= (h^{t} - \nabla f(x^{t}))^{\top}\left[I - 2\cdot I + n\cdot I\right](h^{t-1} - \nabla f(x^{t})) \\
    &= (n-1)\|h^{t-1} - \nabla f(x^{t})\|^2 \\
    &= (n-1)\|h^{t-1} - \nabla f(x^{t-1}) + \nabla f(x^{t-1}) - \nabla f(x^{t})\|^2  \\
     &= (n-1)\bigl(\|h^{t-1} - \nabla f(x^{t-1}) \|^2 + \|\nabla f(x^{t-1}) - \nabla f(x^t) \|^2 \\ 
    &~~~~~+ 2\langle h^{t-1} - \nabla f(x^{t-1}), \nabla f(x^{t-1}) - \nabla f(x^t) \rangle \bigr).
\end{align*}}
By using Young's inequality with a parameter $\alpha > 0$ on the inner product ${2\langle h^{t-1} - \nabla f(x^{t-1}), \nabla f(x^{t-1}) - \nabla f(x^t) \rangle}$ and the $L$-Lipschitz smoothness of $f$, and Lemma~\ref{lem: BSFW_iter_bound}, we get the bound
{\small\begin{align*}
    \mathbb{E}_{t-1}\left[\|m^{t} - \nabla f(x^{t})\|^2\right] &\leq (n-1)(1+\alpha)\|h^{t-1} - \nabla f(x^{t-1})\|^2 + (n-1)\left(1+\frac{1}{\alpha}\right)L^2 \|x^t - x^{t-1}\|^2 \\
     &\leq (n-1)(1+\alpha)\|h^{t-1} - \nabla f(x^{t-1})\|^2 + (n-1)\left(1+\frac{1}{\alpha}\right)\eta_{t-1}^2L^2D^2.
\end{align*}}
Then by using similar arguments as above ($L$-Lipschitz smoothness of $f$, Young's Inequality with $\beta > 0$ and Lemma~\ref{lem: BSFW_iter_bound}),
{\small\begin{align*}
    \mathbb{E}_{t-1}\left[\|h^{t} - \nabla f(x^{t})\|^2\right] &= \mathbb{E}_{t-1}\left[\|h^{t-1} + e_{i_{t}} e_{i_{t}}^{\top}(\nabla f(x^t) - h^{t-1}) - \nabla f(x^{t})\|^2\right]\\
    &=\mathbb{E}_{t}\left[\|(I- e_{i_{t}} e_{i_{t}}^{\top})(h^{t-1} - \nabla f(x^{t}))\|^2\right]\\
    &=\left(1 - \frac{1}{n}\right)\|h^{t-1} - \nabla f(x^t)\|^2 \\
    &=\left(1 - \frac{1}{n}\right)\|h^{t-1} - \nabla f(x^{t-1}) + \nabla f(x^{t-1}) - \nabla f(x^t)\|^2 \\
    &\leq \left(1-\frac{1}{n}\right)(1+\beta)\|h^{t-1} - \nabla f(x^{t-1})\|^2 + \left(1-\frac{1}{n}\right)\left(1 + \frac{1}{\beta}\right)\eta_{t-1}^2L^2D^2.
\end{align*}}
If $\beta = \frac{1}{2n}$ then $(1 - \frac{1}{n})(1 + \frac{1}{2n}) \leq 1 - \frac{1}{2n}$ and $(1-\frac{1}{n})(1 + 2n) \leq 2n$, then as $n \geq 1$:
\begin{equation*}
     \mathbb{E}_{t}\left[\|h^{t} - \nabla f(x^{t})\|^2\right] \leq \left(1-\frac{1}{2n}\right)\|h^{t-1} - \nabla f (x^{t-1})\|^2 + 3nL^2\eta_{t-1}^2D^2.
\end{equation*}
Setting $\alpha = \frac{1}{n}$, we have $(n - 1)(1 + \frac{1}{n}) \leq n$ and $(n-1)(n + 1) \leq n^2$ and thus
\begin{equation*}
      \mathbb{E}_{t-1}[\|\Delta^t\|^2] = \mathbb{E}_{t-1}\left[\|m^{t} - \nabla f(x^{t})\|^2\right]  \leq n \|h^{t-1} - \nabla f(x^{t-1})\|^2 + n^2\eta_{t-1}^2L^2D^2.
\end{equation*}
\end{proof}

\subsubsection{JAGUAR \cite{nazykov2024stochastic}}
\label{subsec: JAGUAR}
\paragraph{Description} For the JAGUAR estimator \cite{nazykov2024stochastic}, we initiate $m^0$ by the full gradient at $x^0$, i.e., $m^0 = \nabla f(x^0)$. Then at every iteration $t > 0$, we randomly sample $i_t \in \{1, 2, \cdots, n\}$ and approximate the gradient by
\begin{align*}
    m^t &= e_{i_t}\left(\nabla_{i_t} f(x^{t-1}) - m^{t-1}_{i_t} \right) + m^{t-1}.
\end{align*}
The parameters satisfying Assumption~\ref{assum: est} are provided in  Lemma~\ref{params:JAGUAR}.
\paragraph{Verification of Assumption~\ref{assum: est}}
\begin{lemma}
     \label{params:JAGUAR}
    (Parameters for JAGUAR) Let $\{x^t\}_{t=0}^T$ be a sequence generated by Algorithm~\ref{alg:BSFW} using a step decay $\{\eta_t\}_{t=0}^{T-1}$, where the gradient estimator $\{\Phi_t\}_{t=0}^{T-1}$ is JAGUAR defined by \cite{nazykov2024stochastic}.  Then we have the following parameters used in Assumption~\ref{assum: est}.
    \begin{equation*}
        \rho_1 = \frac{1}{2n}, \quad \rho_2 = 1, \quad A = 0, \quad B = 3nL^2, \quad C = 0, \quad E = 0, \quad \sigma_t^2 = 0.
    \end{equation*}
\end{lemma}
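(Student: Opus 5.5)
We need to prove the JAGUAR parameters: $\rho_1=\tfrac{1}{2n}$, $B=3nL^2$, with $A=C=E=0$ and $\sigma_t=0$. The approach mirrors the SEGA proof: condition on the past, compute the expected squared error exactly using the uniform coordinate sampling, then apply Young's inequality, $L$-smoothness, and Lemma~\ref{lem: BSFW_iter_bound}.

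\textbf{Exact contraction factor.} I will write the update as
\[
m^t = (I - e_{i_t}e_{i_t}^\top)m^{t-1} + e_{i_t}e_{i_t}^\top g, \qquad g := \nabla f(x^{t-1}),
\]
so that
\[
m^t - \nabla f(x^t) = (I - e_{i_t}e_{i_t}^\top)(m^{t-1} - g) + (g - \nabla f(x^t)).
\]
Taking $\mathbb{E}_{t-1}$ (note $x^t$ is determined before $i_t$ is sampled) and using $\mathbb{E}[e_{i_t}e_{i_t}^\top] = \tfrac1n I$ gives $\mathbb{E}_{t-1}\|(I-e_{i_t}e_{i_t}^\top)(m^{t-1}-g)\|^2 = (1-\tfrac1n)\|\Delta^{t-1}\|^2$. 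Rather than tracking the cross term exactly, I will apply Young's inequality with parameter $\beta$ directly to the sum:
\[
\mathbb{E}_{t-1}\|\Delta^t\|^2 \le (1+\beta)\Bigl(1-\tfrac1n\Bigr)\|\Delta^{t-1}\|^2 + \Bigl(1+\tfrac1\beta\Bigr)\|\nabla f(x^{t-1}) - \nabla f(x^t)\|^2 .
\]

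\textbf{Choice of parameter.} Take $\beta = \tfrac{1}{2n}$. Then $(1+\tfrac1{2n})(1-\tfrac1n)\le 1-\tfrac{1}{2n}$, which gives $\rho_1 = \tfrac{1}{2n}$. The other coefficient is $1+2n \le 3n$ since $n\ge1$. Finally, $L$-smoothness together with Lemma~\ref{lem: BSFW_iter_bound} gives $\|\nabla f(x^{t-1})-\nabla f(x^t)\|^2 \le L^2\eta_{t-1}^2D^2$. Hence $B = 3nL^2$, and with $\sigma_t=0$ all remaining constants vanish.

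\textbf{Main obstacle.} The step to get right is the conditioning. $\Delta^{t-1}$ must be defined relative to $\nabla f(x^{t-1})$, and this matches the estimator, because JAGUAR refreshes coordinate $i_t$ with $\nabla_{i_t} f(x^{t-1})$, not with $\nabla_{i_t} f(x^t)$. The mismatch between $\nabla f(x^{t-1})$ and $\nabla f(x^t)$ is therefore the only drift term. It is controlled uniformly over both branches of Algorithm~\ref{alg:BSFW} by Lemma~\ref{lem: BSFW_iter_bound}, so no case split on $\gamma_{t-1}$ is needed.
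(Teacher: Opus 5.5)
Your proposal is correct and follows essentially the same route as the paper. Like the paper, you:
\begin{itemize}
\item rewrite the error as $\Delta^t = (I - e_{i_t}e_{i_t}^\top)(m^{t-1}-\nabla f(x^{t-1})) + (\nabla f(x^{t-1}) - \nabla f(x^t))$;
\item apply Young's inequality with $\beta = \tfrac{1}{2n}$ and use $\mathbb{E}_{t-1}[e_{i_t}e_{i_t}^\top] = \tfrac{1}{n}I$ to obtain the factor $(1-\tfrac{1}{n})$;
\item bound the drift term through $L$-smoothness and Lemma~\ref{lem: BSFW_iter_bound}, which yields $\rho_1 = \tfrac{1}{2n}$ and $B = 3nL^2$ with no case split on $\gamma_{t-1}$.
\end{itemize}
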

\begin{proof}
    This proof follows exactly from \cite{nazykov2024stochastic}. By first bounding the difference between estimator and exact gradient:
{\footnotesize\begin{align*}
    \mathbb{E}_{t-1}\left[\|m^t - \nabla f(x^{t})\|^2\right] &= \mathbb{E}_{t-1}\left[\| e_{i_t} e_{i_t}^{\top}(\nabla f(x^{t-1}) - m^{t-1}) + m^{t-1} - \nabla f(x^{t})\|^2\right] \\
    &= \mathbb{E}_{t-1}\left[\| e_{i_t} e_{i_t}^{\top}(\nabla f(x^{t-1}) - m^{t-1}) + m^{t-1} - \nabla f(x^{t}) + \nabla f(x^{t-1}) - \nabla f(x^{t-1})\|^2\right]\\
    &= \mathbb{E}_{t-1}\left[\|(I - e_{i_t} e_{i_t}^{\top})(\nabla f(x^{t-1}) - m^{t-1})  + \nabla f(x^{t-1}) - \nabla f(x^{t})\|^2\right].
\end{align*}}
Using Young's Inequality with a parameter $\beta > 0$, $L$-Lipschitz smoothness property of $f$ and Lemma~\ref{lem: BSFW_iter_bound}, we have
{\small\begin{align*}
    \mathbb{E}_{t-1}\left[\|m^t - \nabla f(x^{t})\|^2\right] &\leq (1+\beta)\mathbb{E}_{t-1}\left[\|(I - e_{i_t} e_{i_t}^{\top})(m^{t-1} - \nabla f(x^{t-1}))\|^2\right] + \left(1 + \frac{1}{\beta}\right)\eta_{t-1}^2L^2D^2 \\
    &\leq (1+\beta)\left(1 - \frac{1}{n}\right)\|m^{t-1} - \nabla f(x^{t-1})\|^2+ \left(1 + \frac{1}{\beta}\right)\eta_{t-1}^2L^2D^2.
\end{align*}}
By setting $\beta = \frac{1}{2n}$, we have $(1 - \frac{1}{n})(1 + \frac{1}{2n}) \leq 1 - \frac{1}{2n}$ and as $n \geq 1$, we get
\begin{equation*}
    \mathbb{E}_{t-1}\left[\|\Delta^t\|^2\right] \leq \left(1 - \frac{1}{2n}\right)\| \Delta^{t-1} \|^2 + 3\eta_{t-1}^2nL^2D^2.
\end{equation*}
\end{proof}

\subsubsection{ZOJA \cite{nazykov2024stochastic}}
\label{subsec: ZOJA}
\paragraph{Description} For the ZOJA estimator introduced by \cite{nazykov2024stochastic},  we initiate $m^0$ by the zero order approximation using a defined parameter $\tau$ across every coordinate. Specifically,
\begin{equation*}
    m^0 = \sum_{i = 1}^n \left( \frac{f(x^0 + \tau e_i) - f(x^0)}{\tau} \right) e_{i}.
\end{equation*}
Next at every iteration $t > 0$, we randomly sample $i_t \in \{1, 2, \cdots, n\}$ and approximate the gradient by
\begin{align*}
    \widetilde{\nabla}_{i_t} f(x^{t-1}) &= \frac{f(x^{t-1} + \tau e_{i_t}) - f(x^{t-1})}{\tau}, \\
    m^t &= e_{i_t}\left(\widetilde{\nabla}_{i_t} f(x^{t-1}) - m^{t-1}_{i_t} \right) + m^{t-1}.
\end{align*}
The parameters satisfying Assumption~\ref{assum: est} are given in Lemma~\ref{params:ZOJA}.

\paragraph{Verification of Assumption~\ref{assum: est}}
\begin{lemma}
\label{params:ZOJA}
    (Parameters for ZOJA) Let $\{x^t\}_{t=0}^T$ be a sequence generated by Algorithm~\ref{alg:BSFW} using a step decay $\{\eta_t\}_{t=0}^{T-1}$, where the gradient estimator $\{\Phi_t\}_{t=0}^{T-1}$ is ZOJA defined by \cite{nazykov2024stochastic}.  Then we have the following parameters used in Assumption~\ref{assum: est}.
    \begin{equation*}
        \rho_1 = \frac{1}{4n}, \quad \rho_2 = 1, \quad A = 0, \quad B = 3nL^2, \quad C = 2nL^2 \tau^2, \quad E = 0, \quad \sigma_t^2 = 0.
    \end{equation*}
    where $\tau > 0$ is the zero-order approximation parameter.
\end{lemma}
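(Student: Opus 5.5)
The plan mirrors the JAGUAR argument in Lemma~\ref{params:JAGUAR}, with an extra bias term for the finite-difference error. Write $\widetilde{\nabla} f(x)$ for the vector with coordinates $\widetilde{\nabla}_i f(x) = (f(x+\tau e_i)-f(x))/\tau$ and set $b^{t-1} := \widetilde{\nabla} f(x^{t-1}) - \nabla f(x^{t-1})$. The update can be rewritten as
\begin{equation*}
m^t = e_{i_t}e_{i_t}^\top\bigl(\widetilde{\nabla} f(x^{t-1}) - m^{t-1}\bigr) + m^{t-1}.
\end{equation*}
Subtracting $\nabla f(x^t)$ and adding and subtracting $\nabla f(x^{t-1})$ gives
\begin{equation*}
\Delta^t = (I - e_{i_t}e_{i_t}^\top)\Delta^{t-1} + e_{i_t}e_{i_t}^\top b^{t-1} + \bigl(\nabla f(x^{t-1}) - \nabla f(x^t)\bigr).
\end{equation*}

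\textbf{Step 1: bound the bias.} By $L$-smoothness of $f$ along the segment $[x, x+\tau e_i]$, $|\widetilde{\nabla}_i f(x) - \nabla_i f(x)| \le L\tau/2$. Hence $\|e_{i_t}e_{i_t}^\top b^{t-1}\|^2 \le L^2\tau^2/4$ deterministically, and $\|b^{t-1}\|^2 \le nL^2\tau^2/4$.

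\textbf{Step 2: split with Young's inequality.} I would apply Young twice with a three-way split. The natural first try is
\begin{equation*}
\|a+b+c\|^2 \le (1+\beta)\|a\|^2 + 2\bigl(1+\tfrac{1}{\beta}\bigr)\bigl(\|b\|^2 + \|c\|^2\bigr),
\end{equation*}
with $a = (I - e_{i_t}e_{i_t}^\top)\Delta^{t-1}$, $b$ the bias term and $c$ the gradient difference. Take $\mathbb{E}_{t-1}$. The key identity $\mathbb{E}_{t-1}\|(I - e_{i_t}e_{i_t}^\top)v\|^2 = (1-\tfrac{1}{n})\|v\|^2$, already used in Lemma~\ref{params:JAGUAR}, handles $a$. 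For $c$, use $\|\nabla f(x^{t-1}) - \nabla f(x^t)\| \le L\|x^t - x^{t-1}\| \le L\eta_{t-1}D$ from Lemma~\ref{lem: BSFW_iter_bound}. For $b$, use Step 1.

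\textbf{Step 3: tune $\beta$ to hit the table's constants.} This step is the main obstacle. The contraction needs $(1+\beta)(1-\tfrac1n) \le 1 - \tfrac{1}{4n}$, which holds for $\beta = \tfrac{1}{2n}$ or any smaller $\beta$ of order $1/n$. The remaining coefficients $2(1+1/\beta)$ are then of order $n$, giving terms of order $nL^2\eta_{t-1}^2D^2$ and $nL^2\tau^2$. The naive split gives $B \approx 2(1+2n)L^2$, which can exceed $3nL^2$, so the weights must be chosen unequally. Concretely, I would use the weighted inequality
\begin{equation*}
\|a+b+c\|^2 \le (1+\beta)\|a\|^2 + \bigl(1+\tfrac{1}{\beta}\bigr)(1+\epsilon)\|c\|^2 + \bigl(1+\tfrac{1}{\beta}\bigr)\bigl(1+\tfrac{1}{\epsilon}\bigr)\|b\|^2.
\end{equation*}
With $\beta = \tfrac{1}{2n}$, take $\epsilon$ small enough that $(1+2n)(1+\epsilon) \le 3n$ for all $n \ge 1$. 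Then check that $(1+2n)(1+\tfrac{1}{\epsilon})\tfrac{L^2\tau^2}{4} \le 2nL^2\tau^2$, using the coordinate-wise bias bound (of size $L^2\tau^2/4$, not $nL^2\tau^2/4$) since only the sampled coordinate enters.

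If the constants do not close exactly, I would shrink $\beta$ so that the contraction is exactly $1-\tfrac{1}{4n}$. This slack, compared with JAGUAR's $\rho_1 = \tfrac{1}{2n}$, is evidently what the authors spent to absorb the bias, and it frees room in the other coefficients. The result is $\mathbb{E}_{t-1}\|\Delta^t\|^2 \le (1-\tfrac{1}{4n})\|\Delta^{t-1}\|^2 + 3nL^2\eta_{t-1}^2D^2 + 2nL^2\tau^2$. With $\sigma_t \equiv 0$, $A = E = 0$ and $\rho_2 = 1$, the second inequality of Assumption~\ref{assum: est} holds trivially.
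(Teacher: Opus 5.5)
Your ingredients are the same as the paper's: the three-term splitting $\Delta^t=a+b+c$, the coordinate bias bound $\tfrac{L^2\tau^2}{4}$, and the drift bound via $\|x^t-x^{t-1}\|\le\eta_{t-1}D$. The gap is in the constant-matching step, which is the actual content of the lemma: as you set it up, it does not close.

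With $\beta=\tfrac{1}{2n}$ your drift coefficient is $(1+2n)(1+\epsilon)$. Since $1+2n=3n$ at $n=1$, the requirement $(1+2n)(1+\epsilon)\le 3n$ forces $\epsilon\le 0$ there. More generally, write $K=1+1/\beta$. The two conditions $K(1+\epsilon)\le 3n$ and $K(1+1/\epsilon)\le 8n$ have a common solution $\epsilon>0$ if and only if $K\le\tfrac{24n}{11}$. With $K=2n+1$ this fails whenever $n\le 5$.

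Your fallback also moves in the wrong direction. Shrinking $\beta$ improves the contraction but increases $1+1/\beta$, and that is the binding quantity. The slack between $1-\tfrac{1}{2n}$ and $1-\tfrac{1}{4n}$ has to be spent by \emph{enlarging} $\beta$. For instance, $\beta=\tfrac{1}{2n-1}$ and $\epsilon=\tfrac12$ give:
contraction $(1+\beta)(1-\tfrac{1}{n})=\tfrac{2(n-1)}{2n-1}\le 1-\tfrac{1}{4n}$;
drift coefficient $2n\cdot\tfrac32=3n$;
bias coefficient $2n\cdot 3\cdot\tfrac{L^2\tau^2}{4}\le 2nL^2\tau^2$.
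This choice repairs your argument for every $n\ge 1$.

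The paper avoids the issue by nesting the two Young inequalities the other way. It first separates the drift $c$ from $a+b$ with $\beta=\tfrac{1}{2n}$, so the drift pays only $1+2n\le 3n$. It then separates $a$ from $b$ with parameter $\tfrac{1}{4n}$, and that inner split is where the contraction slack goes: $(1+\tfrac{1}{2n})(1+\tfrac{1}{4n})(1-\tfrac{1}{n})\le 1-\tfrac{1}{4n}$. The bias then pays $(1+\tfrac{1}{2n})(1+4n)\le 8n$, i.e.\ $2nL^2\tau^2$.

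In fact the inner split is unnecessary, because $a$ and $b$ are orthogonal. The $i_t$-th coordinate of $(I-e_{i_t}e_{i_t}^\top)\Delta^{t-1}$ vanishes, while $b$ is supported on exactly that coordinate. So with the paper's ordering you get $(1-\tfrac{1}{2n})\|\Delta^{t-1}\|^2+3nL^2\eta_{t-1}^2D^2+(1+\tfrac{1}{2n})\tfrac{L^2\tau^2}{4}$, which is stronger than, and hence implies, the stated constants.
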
 
\begin{proof}
    This proof follows exactly from \cite{nazykov2024stochastic}. We bound the difference between estimator and exact gradient:
\begin{align*}
    \mathbb{E}_{t-1}\left[\|m^{t} - \nabla f(x^{t})\|^2\right] = \mathbb{E}_{t-1}\left[\|\Delta^t\|^2 \right] &= \mathbb{E}_{t-1}[\| e_{i_t}(\widetilde{\nabla}_{i_{t}} f(x^{t-1}) - m_{i_{t}}^{k-1}) + m^{t-1} - \nabla f(x^{t})\|^2] \\
    &= \mathbb{E}_{t-1}[\| e_{i_t}(\widetilde{\nabla}_{i_{t}} f(x^{t-1}) - m_{i_{t}}^{t-1}) + m^{t-1} - \nabla f(x^{t}) \\
    &+ \nabla f(x^{t-1}) - \nabla f(x^{t-1})\|^2]. 
\end{align*}
Using Young's inequality with $\beta > 0$, $L$-Lipschitz smoothness of $f$ and Lemma~\ref{lem: BSFW_iter_bound}, we get
{\footnotesize\begin{align*}
    \mathbb{E}_{t-1}\left[\|\Delta^t\|^2\right]  &\leq  (1+\beta)\mathbb{E}_{t-1}\left[\| e_{i_t}(\widetilde{\nabla}_{i_{t}} f(x^{t-1}) - m_{i_{t}}^{t-1}) + m^{t-1} - \nabla f(x^{t-1})\|^2\right]  + \left(1 + \frac{1}{\beta}\right) \|\nabla f(x^{t}) - \nabla f(x^{t-1}) \|^2 \\
    &\leq  (1+\beta)\mathbb{E}_{t-1}\left[\| e_{i_t}(\widetilde{\nabla}_{i_{t}} f(x^{t-1}) - m_{i_{t}}^{t-1}) + m^{t-1} - \nabla f(x^{t-1})\|^2\right]  + \left(1 + \frac{1}{\beta}\right) L^2 \| x^t - x^{t-1} \|^2 \\
    &\leq  (1+\beta)\mathbb{E}_{t-1}\left[\| e_{i_t}(\widetilde{\nabla}_{i_{t}} f(x^{t-1}) - m_{i_{t}}^{t-1}) + m^{t-1} - \nabla f(x^{t-1})\|^2\right]  + \left(1 + \frac{1}{\beta}\right)\eta_{t-1}^2L^2D^2\\
    &\leq (1+\beta)\mathbb{E}_{t-1}\left[\|(I - e_{i_t}e_{i_t}^{\top})(m^{t-1} - \nabla f(x^{t-1})) + e_{i_t}(\widetilde{\nabla}_{i_t}f(x^{t-1}) - \nabla_{i_t} f(x^{t-1}))\|^2\right]\\
    &~~~~~+ \left(1 + \frac{1}{\beta}\right)\eta_{t-1}^2L^2D^2.
\end{align*}}
Using Lemma B.1 from \cite{nazykov2024stochastic}, for any index $j$ such that $1 \leq j \leq n$,
\begin{align*}
    \|e_j(\widetilde{\nabla}_{j}f(x^{t-1}) - \nabla_j f(x^{t-1})\|^2 &= \left(\widetilde{\nabla}_j f(x^{t-1}) - \nabla_j f(x^{t-1}) \right)^2 \\
    &=\left(\frac{f(x^{t-1} + \tau e_j) - f(x^{t-1})}{\tau} - \nabla_j f(x^{t-1}) \right)^2 \\
    &= \frac{1}{\tau^2} \left(f(x^{t-1} + \tau e_j) - f(x^{t-1}) - \tau \nabla_j f(x^{t-1}) \right)^2 \\
    &= \frac{1}{\tau^2} \left(f(x^{t-1} + \tau e_j) - f(x^{t-1}) - \langle \tau e_j, \nabla f(x^{t-1}) \rangle \right)^2 \\
    &\leq \frac{L}{4\tau^2} \|\tau e_j \|^4 \\
    &\leq \frac{L \tau^2}{4}.
\end{align*}
Hence, we have the expression
\begin{equation*}
    \mathbb{E}_{t-1}\left[ \|e_{i_t}(\widetilde{\nabla}_{i_t}f(x^{t-1}) - \nabla f(x^{t-1})\|^2 \right] \leq \frac{L \tau^2}{4}.
\end{equation*}
Reusing Young's inequality with a parameter $\alpha > 0$, we get
\begin{align*}
    \mathbb{E}_{t-1}[\|\Delta^t\|^2]     &\leq (1+\beta)(1+\alpha)\left(1 - \frac{1}{n}\right)\|m^{t-1} - \nabla f(x^{t-1})\|^2 \\
    &~~~+ (1+\beta)\left(1+\frac{1}{\alpha}\right) \mathbb{E}_{t-1}\left[ \|e_{i_t}(\widetilde{\nabla}_{i_t}f(x^{t-1}) - \nabla f(x^{t-1})\|^2 \right]\\
    &~~~+ \left(1 + \frac{1}{\beta}\right)\eta_{t-1}^2L^2D^2,\\
    \mathbb{E}_{t-1}[\|\Delta^t\|^2] &\leq (1+\beta)(1+\alpha)\left(1 - \frac{1}{n}\right)\| \Delta^{t-1} \|^2 + (1+\beta)\left(1+\frac{1}{\alpha}\right)\frac{L^2\tau^2}{4} \\
    &~~~+ \left(1 + \frac{1}{\beta}\right)\eta_{t-1}^2L^2D^2.
\end{align*}

If $\beta = \frac{1}{2n}$, then $(1 - \frac{1}{n})(1 + \frac{1}{2n}) \leq 1 - \frac{1}{2n}$. And with $\alpha = \frac{1}{4n}$, we get the upper bounds $(1 -\frac{1}{2n})(1 + \frac{1}{4n}) \leq (1 - \frac{1}{4n})$ and $(1 + \frac{1}{2n})(1 + 4n) = 4n + 3 + \frac{1}{2n} \leq 4n + 4n = 8n$. Hence, 
\begin{equation*}
 \mathbb{E}_{t-1}\left[\|\Delta^t\|^2\right] \leq \left(1 - \frac{1}{4n}\right)\|\Delta^{t-1}\|^2 + 3\eta_{t-1}^2nL^2D^2 + 2nL^2\tau^2.
\end{equation*}
\end{proof}

\subsubsection{Heavy Ball \cite{JMLR:v21:18-764}}
\label{subsec: Heavy Ball}
\paragraph{Description} For the Heavy Ball estimator as proposed by \cite{JMLR:v21:18-764}, $\{m^t\}_{t=0}^{T-1}$ in Algorithm~\ref{alg:BSFW} is defined by the following equations
\begin{align*}
    m^0 &= 0 ,\\
    m^t &= (1 - \tilde{\rho_t})m^{t-1} + \tilde{\rho_t} \tilde{\nabla}f(x^t, \xi_t),
\end{align*}
where $\tilde{\rho_t}$ is a defined momentum function. For this estimator, we also assume that there exists a constant bound of the variance of unbiased stochastic gradients \cite{JMLR:v21:18-764}, as described by Assumption~\ref{assump: heavy_ball constant bound}. This assumption follows directly from Assumption 3 stated in \cite{JMLR:v21:18-764}. \\ \\
We first provide a recursion Lemma~\ref{lem: heavy ball error recursion} when Algorithm~\ref{alg:BSFW} uses the Heavy Ball estimator. Then, Lemma~\ref{params:Heavy Ball*} uses Lemma~\ref{lem: heavy ball error recursion} to find the constants necessary for Assumption~\ref{assum: est}. However, the constants are a function of the horizon $T$, which causes problems when attempting to find desired convergence bounds and complexities. Due to this, we propose alternate any-time convergence rates for $\rho$-quasar-convex and nonconvex objective functions. \\ \\
For $\rho$-quasar-convex functions, we bound the error term through Lemma~\ref{lem: heavy ball error bounds quasar convex}, provide an alternate recursion Theorem~\ref{thm: alt recursion quasar convex} for $\rho$-quasar-convex functions, and show a convergence rate of $\mathcal{O}\left( \frac{1}{t^{1/3}} \right)$ in Theorem~\ref{thm: convergence heavy ball quasar convex}. For nonconvex functions, we bound the error term by Lemma~\ref{lem: heavy ball error bounds non convex} and show a convergence rate of $\mathcal{O}\left( \frac{\ln(t)}{t^{1/4}} \right)$ in Theorem~\ref{thm: convergence heavy ball non convex}.

\paragraph{Convergence Analysis}
\begin{assumption}[Heavy Ball Variance Bound]
    The variance of unbiased stochastic gradients $\nabla \tilde{f}(x, \xi)$ is bounded above by $\sigma^2$, i.e., for all random variables $\xi$, 
    \begin{equation*}
        \mathbb{E}\left[ \| \nabla \tilde{f}(x, \xi) - \nabla f(x) \|^2 \right] \leq \sigma^2
    .\end{equation*}
    \label{assump: heavy_ball constant bound}
\end{assumption}

\begin{lemma}
    \label{lem: heavy ball error recursion}
    (Heavy Ball Error Recursion) Let $\{x^t\}_{t=0}^{T}$ be a sequence generated by Algorithm~\ref{alg:BSFW}. If the objective function $f$ satisfies Assumptions~\ref{assum: L_smooth} and \ref{assump: heavy_ball constant bound}, we have the bound for the sequence of squared errors
    \begin{equation*}
        \mathbb{E}_{t-1}\left[ \|\Delta^t\|^2\right] \leq \left(1 - \frac{\tilde{\rho_t}}{2} \right) \|\Delta^{t-1}\|^2 + \tilde{\rho_t}^2 \sigma^2 + \frac{2L^2D^2\eta_{t-1}^2}{\tilde{\rho_t}}.
    \end{equation*}
\end{lemma}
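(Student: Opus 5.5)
We decompose the error using the momentum recursion. Writing $\tilde\nabla f(x^t,\xi_t)$ for the sampled gradient and subtracting $\nabla f(x^t)$ from both sides of $m^t=(1-\tilde\rho_t)m^{t-1}+\tilde\rho_t\tilde\nabla f(x^t,\xi_t)$ gives
\begin{equation*}
\Delta^t=(1-\tilde\rho_t)\Delta^{t-1}+(1-\tilde\rho_t)\bigl(\nabla f(x^{t-1})-\nabla f(x^t)\bigr)+\tilde\rho_t\bigl(\tilde\nabla f(x^t,\xi_t)-\nabla f(x^t)\bigr).
\end{equation*}
The first two terms are measurable with respect to the history up to iteration $t-1$. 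This holds because $x^t$ is determined by $x^{t-1}$ and $m^{t-1}$ through the boosting step. The third term has zero conditional mean by unbiasedness.

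Expanding the square under $\mathbb{E}_{t-1}$, the cross term with the noise vanishes. This leaves
\begin{equation*}
\mathbb{E}_{t-1}\|\Delta^t\|^2=(1-\tilde\rho_t)^2\bigl\|\Delta^{t-1}+\nabla f(x^{t-1})-\nabla f(x^t)\bigr\|^2+\tilde\rho_t^2\,\mathbb{E}_{t-1}\bigl\|\tilde\nabla f(x^t,\xi_t)-\nabla f(x^t)\bigr\|^2 .
\end{equation*}
The noise term is bounded by $\tilde\rho_t^2\sigma^2$ using Assumption~\ref{assump: heavy_ball constant bound}. For the deterministic part, we apply Young's inequality $\|a+b\|^2\le(1+\beta)\|a\|^2+(1+1/\beta)\|b\|^2$ with $\beta=\tilde\rho_t/2$.

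Next, we bound $\|\nabla f(x^{t-1})-\nabla f(x^t)\|\le L\|x^t-x^{t-1}\|\le L\eta_{t-1}D$. This uses Assumption~\ref{assum: L_smooth} together with Lemma~\ref{lem: BSFW_iter_bound}, and it is the only place the boosted direction enters; the step-size normalization makes it behave exactly like vanilla FW.

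It remains to verify the scalar inequalities for $\tilde\rho_t\in]0,1]$. First, $(1-\tilde\rho_t)^2(1+\tilde\rho_t/2)\le 1-\tilde\rho_t/2$. This holds since $(1-\tilde\rho_t)(1+\tilde\rho_t/2)\le 1-\tilde\rho_t/2$ and $1-\tilde\rho_t\le1$. Second, $(1-\tilde\rho_t)^2(1+2/\tilde\rho_t)\le 2/\tilde\rho_t$. Expanding gives $(1-\tilde\rho_t)^2(\tilde\rho_t+2)\le 2$, which is true on $[0,1]$ since the left side is at most $2$ at $\tilde\rho_t=0$ and decreasing.

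Combining these yields the claimed bound with coefficient $2L^2D^2\eta_{t-1}^2/\tilde\rho_t$. The main care point is the measurability claim that justifies killing the cross term. The elementary inequality $(1-\rho)^2(2+\rho)\le2$ is routine.
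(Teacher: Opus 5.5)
Your proposal is correct and is essentially the paper's argument: the same three-term decomposition of $\Delta^t$, cancellation of the noise cross terms by conditional unbiasedness (with $x^t$ and $m^{t-1}$ treated as known under $\mathbb{E}_{t-1}$, which the paper uses implicitly and you state explicitly), the variance bound, Lipschitz continuity combined with Lemma~\ref{lem: BSFW_iter_bound}, and Young's inequality with $\beta_t=\tilde\rho_t/2$. The only difference is cosmetic: you group $\Delta^{t-1}$ with the gradient difference and apply $\|a+b\|^2\le(1+\beta)\|a\|^2+(1+1/\beta)\|b\|^2$, whereas the paper expands all three terms, applies Young to the single surviving deterministic cross term, and then uses $(1-\tilde\rho_t)^2\le 1-\tilde\rho_t$; the two computations are equivalent, and yours is slightly tidier on constants.
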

\begin{proof}
    The proof follows exactly from Lemma 1 in \cite{JMLR:v21:18-764}. By definition of $m^t$, we have $m^t = (1 - \rho_t) m^{t-1} + \rho_t \tilde{\nabla} f(x^t, \xi_t)$. Hence,
    \begin{equation*}
        \mathbb{E}_{t-1}\left[\|\nabla f(x^t) - m^t\|^2\right] = \mathbb{E}_{t-1}\left[\|\nabla f(x^t) - (1 - \tilde{\rho_t})m^{t-1} - \tilde{\rho_t} \tilde{\nabla} f(x^t, \xi_i)\|^2 \right].
    \end{equation*}
    Adding and subtracting $(1 - \tilde{\rho_t}) \nabla f(x^{t-1})$,
    {\small\begin{align*}
        &\mathbb{E}_{t-1}\left[\|\nabla f(x^t) - m^t\|^2\right] \\
        &= \mathbb{E}_{t-1}\left[\|\nabla f(x^t) - (1 - \tilde{\rho_t})\nabla f(x^{t-1}) + (1 - \tilde{\rho_t})\nabla f(x^{t-1}) - (1 - \tilde{\rho_t})m^{t-1} - \tilde{\rho_t} \tilde{\nabla} f(x^t, \xi_t)\|^2 \right] \\
        &= \mathbb{E}_{t-1}\left[\| \tilde{\rho_t}(\nabla f(x^t) - \tilde{\nabla}f(x^t, \xi_t)) + (1 - \tilde{\rho_t})(\nabla f(x^t) - \nabla f(x^{t-1})) + (1 - \tilde{\rho_t})(\nabla f(x^{t-1}) - m^{t-1}) \|^2\right] \\
        &= \tilde{\rho_t}^2\mathbb{E}_{t-1}\left[\|\nabla f(x^t) - \tilde{\nabla}f(x^t, \xi_t)\|^2 \right] + (1 - \tilde{\rho_t})^2\|\nabla f(x^t) - \nabla f(x^{t-1}) \|^2 \\
        &~~~+ (1 - \tilde{\rho_t})^2\|\nabla f(x^{t-1}) - m^{t-1} \|^2  + 2\tilde{\rho_t}(1 - \tilde{\rho_t})\mathbb{E}_{t-1} \left[\langle \nabla f(x^t) - \tilde{\nabla}f(x^t, \xi_t),\nabla f(x^t) - \nabla f(x^{t-1}) \rangle \right] \\
        &~~~+ 2\tilde{\rho_t}(1 - \tilde{\rho_t})\langle \nabla f(x^t) - \nabla f(x^{t-1}), \nabla f(x^{t-1}) - m^{t-1} \rangle \\
        &~~~+ 2(1 - \tilde{\rho_t})^2\mathbb{E}_{t-1}\left[\langle \nabla f(x^{t-1}) - m^{t-1}, \nabla f(x^t) - \tilde{\nabla}f(x^t, \xi_t)\rangle \right].
    \end{align*}}
    Recall that $\Delta^t = \nabla f(x^t) - m^t$. Using the fact that $\tilde{\nabla} f(x^t, \xi_t)$ is unbiased, i.e. $\mathbb{E}_{t-1}\left[\tilde{\nabla}f(x^t, \xi_t)\right] = \nabla f(x^t)$, $L$-smoothness of $f$, Assumption~\ref{assump: heavy_ball constant bound} and Lemma~\ref{lem: BSFW_iter_bound}, we get
    {\footnotesize\begin{align*}
        \mathbb{E}_{t-1}\left[\|\Delta^t\|^2\right] &\leq \tilde{\rho_t}^2\sigma^2 + (1 - \tilde{\rho_t})^2L^2 \|x^t - x^{t-1}\|^2 + (1 - \tilde{\rho_t})^2 \|\Delta^{t-1} \|^2 + 2(1 - \tilde{\rho_t})^2\langle \nabla f(x^t) - \nabla f(x^{t-1}), \Delta^{t-1} \rangle \\
        &\leq \tilde{\rho_t}^2\sigma^2 + (1 - \tilde{\rho_t})^2L^2\eta_{t-1}^2D^2 + (1 - \tilde{\rho_t})^2 \|\Delta^{t-1} \|^2 + 2(1 - \tilde{\rho_t})^2\langle \nabla f(x^t) - \nabla f(x^{t-1}), \Delta^{t-1} \rangle.
    \end{align*}}
    Using Young's inequality with a parameter $\beta_t > 0$, we get
    {\footnotesize\begin{align*}
        2(1 - \tilde{\rho_t})^2\langle \nabla f(x^t) - \nabla f(x^{t-1}), \Delta^{t-1} \rangle &\leq 2(1 - \tilde{\rho_t})^2 \beta_t \|\Delta^{t-1}\|^2 + 2(1 - \tilde{\rho_t})^2\frac{1}{\beta_t}\|\nabla f(x^t) - \nabla f(x^{t-1})\|^2 \\
        &\leq 2(1 - \tilde{\rho_t})^2\beta_t\|\Delta^{t-1}\|^2 + 2(1 - \tilde{\rho_t})^2\frac{1}{\beta_t}L^2\eta_{t-1}^2D^2.
    \end{align*}}
    This gives us
    \begin{align*}
        \mathbb{E}_{t-1}\left[\|\Delta^t\|^2\right] &\leq \tilde{\rho_t}^2\sigma^2 + (1 - \tilde{\rho_t})^2\left(1 + \frac{1}{\beta_t} \right) L^2 \eta_{t-1}^2D^2 + (1 - \tilde{\rho_t})^2 (1 + \beta_t)\|\Delta^{t-1} \|^2.
    \end{align*}
    Since $\tilde{\rho_t} \leq 1$, $(1 - \tilde{\rho_t})^2 \leq (1 - \tilde{\rho_t})$. By setting $\beta_t = \frac{\tilde{\rho_t}}{2}$ we have $(1 - \tilde{\rho_t})(1 + (2/\tilde{\rho_t})) \leq (2/\tilde{\rho_t})$ and $(1 - \tilde{\rho_t})(1 + (\rho_t/2)) \leq (1 - (\tilde{\rho_t}/2))$ and thus, 
    \begin{align*}
        \mathbb{E}_{t-1}\left[\|\Delta^t\|^2\right] &\leq \tilde{\rho_t}^2\sigma^2 + (1 - \tilde{\rho_t})\left(1 + \frac{1}{\beta_t} \right) L^2 \eta_{t-1}^2D^2 + (1 - \tilde{\rho_t}) (1 + \beta_t)\|\Delta^{t-1} \|^2 \\
        &\leq \tilde{\rho_t}^2\sigma^2 + \frac{2L^2D^2\eta_{t-1}^2}{\tilde{\rho_t}} + \left(1 - \frac{\tilde{\rho_t}}{2}\right)\|\Delta^{t-1}\|^2 .
    \end{align*}
    Hence proven.
\end{proof}

\begin{lemma}
    \label{params:Heavy Ball*}
    (Parameters for Heavy Ball) Let $\{x^t\}_{t=0}^T$ be a sequence generated by Algorithm~\ref{alg:BSFW} using a step decay $\{\eta_t\}_{t=0}^{T-1}$, where the gradient estimator $\{\Phi_t\}_{t=0}^{T-1}$ is the Heavy Ball estimator defined by \cite{JMLR:v21:18-764}. Then we have the following parameters used in Assumption~\ref{assum: est}.
    \begin{equation*}
        \rho_1 = \frac{\tilde{\rho}_T}{2}, \quad \rho_2 = 1 - \left(1 - \frac{1}{T + 8} \right)^{\frac{4}{3}}, \quad A = 1, \quad B = \frac{2L^2}{\tilde{\rho}_T}, \quad C = 0, \quad E = 0, \quad \sigma^2_t = \tilde{\rho_t}^2 \sigma^2.
    \end{equation*}
    where $\tilde{\rho}_t = 4/(t + 8)^{\frac{2}{3}}$ is the decay used in the estimator \cite{JMLR:v21:18-764} and $\sigma^2$ is the variance bound by Assumption~\ref{assump: heavy_ball constant bound}.
\end{lemma}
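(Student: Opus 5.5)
The plan is to read off all six constants directly from Lemma~\ref{lem: heavy ball error recursion}. For every $t\ge 1$ that lemma gives
\begin{equation*}
\mathbb{E}_{t-1}\left[\|\Delta^t\|^2\right] \leq \left(1-\tfrac{\tilde{\rho}_t}{2}\right)\|\Delta^{t-1}\|^2 + \tilde{\rho}_t^2\sigma^2 + \tfrac{2L^2D^2\eta_{t-1}^2}{\tilde{\rho}_t}.
\end{equation*}
Its coefficients depend on $t$, whereas Assumption~\ref{assum: est} requires constants. I would replace each $t$-dependent coefficient by its worst case over the horizon $1\le t\le T$. This works because $\tilde{\rho}_t = 4/(t+8)^{2/3}$ is positive and strictly decreasing in $t$. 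Before relying on that, I would check that $\tilde{\rho}_t\le 1$, which the recursion lemma needs. It holds for all $t\ge 0$ since $8^{2/3}=4$.

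The first inequality of Assumption~\ref{assum: est} is handled term by term. Since $\tilde{\rho}_t \ge \tilde{\rho}_T$ for $t\le T$, we get $1-\tilde{\rho}_t/2 \le 1-\tilde{\rho}_T/2$, which gives $\rho_1=\tilde{\rho}_T/2\in]0,1]$. The same monotonicity gives $2L^2/\tilde{\rho}_t \le 2L^2/\tilde{\rho}_T$, which gives $B = 2L^2/\tilde{\rho}_T$. For the noise term I set $\sigma_t^2 := \tilde{\rho}_t^2\sigma^2$, a deterministic sequence. Monotonicity then yields $\tilde{\rho}_t^2\sigma^2 \le \tilde{\rho}_{t-1}^2\sigma^2 = \sigma_{t-1}^2$, which gives $A=1$. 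No additive constant is left over, so $C=0$.

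For the second inequality, $\sigma_t^2$ is deterministic, so the conditional expectation is trivial. It suffices to bound the ratio
\begin{equation*}
\frac{\sigma_t^2}{\sigma_{t-1}^2} = \left(\frac{t+7}{t+8}\right)^{4/3} = \left(1-\frac{1}{t+8}\right)^{4/3}.
\end{equation*}
This ratio is increasing in $t$, so over $1\le t\le T$ it is at most $\left(1-\frac{1}{T+8}\right)^{4/3}$. Hence
\begin{equation*}
\sigma_t^2 \le (1-\rho_2)\sigma_{t-1}^2
\quad\text{with}\quad
\rho_2 = 1-\left(1-\tfrac{1}{T+8}\right)^{4/3}\in]0,1],
\end{equation*}
and no $\eta_{t-1}^2$ term is needed, so $E=0$.

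The main obstacle is bookkeeping rather than analysis, and I would check two points carefully. The first is index alignment: the recursion lemma produces $\tilde{\rho}_t$ and $\eta_{t-1}$, while the assumption requires $\sigma_{t-1}$ on the right-hand side. This is exactly why the comparison $\tilde{\rho}_t\le\tilde{\rho}_{t-1}$ is needed for $A=1$. The second is the direction of the monotonicity: the decay factors $1-\tilde{\rho}_t/2$ and $(1-1/(t+8))^{4/3}$ both increase in $t$, so their worst case is at $t=T$. I would also state explicitly that restricting to $t\le T$ makes $\rho_1,\rho_2,B$ depend on the horizon $T$. This is the caveat flagged in the table note and the reason the later Heavy Ball convergence results are proved separately.
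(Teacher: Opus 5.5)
Your proposal is correct and follows the paper's proof essentially step for step. You bound the $t$-dependent coefficients of Lemma~\ref{lem: heavy ball error recursion} by their worst case at $t=T$ using the monotonicity of $\tilde{\rho}_t$, set $\sigma_t^2=\tilde{\rho}_t^2\sigma^2$, and take $\rho_2$ as the minimum over $1\le t\le T$ of $1-\left(1-\frac{1}{t+8}\right)^{4/3}$, exactly as the paper does. Your explicit step $\tilde{\rho}_t^2\sigma^2\le\tilde{\rho}_{t-1}^2\sigma^2=\sigma_{t-1}^2$ fixes an index slip in the paper, which writes $+\sigma_t^2$ and reads off $A=1$ without matching it to the $A\sigma_{t-1}^2$ term required by Assumption~\ref{assum: est}.
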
 
\begin{proof}
    From Lemma~\ref{lem: heavy ball error recursion}, taking expectation on both sides, we have
    \begin{align*}
        \mathbb{E}[\|\Delta\|^2] \leq \left(1 - \frac{\tilde{\rho}_t}{2} \right)\mathbb{E}[\|\Delta^{t-1}\|^2] + \tilde{\rho_t}^2 \sigma^2 + \frac{2L^2D^2}{\tilde{\rho}_t}\eta_{t-1}^2.
    \end{align*}
    Since $\forall t \in \{0, 1 \cdots T \}, \tilde{\rho_t} \geq \tilde{\rho_T}$, we have,
    \begin{equation*}
        \left(1 - \frac{\tilde{\rho}_t}{2} \right) \leq \left(1 - \frac{\tilde{\rho}_T}{2} \right) \quad \& \quad \frac{2L^2D^2}{\tilde{\rho}_t} \leq \frac{2L^2D^2}{\tilde{\rho}_T}.
    \end{equation*}
    Also, setting $\sigma_t^2 = \tilde{\rho_t}^2 \sigma^2$, we thus have the equation which gives the parameters $\rho_1, A, B$:
    \begin{equation*}
        \mathbb{E}[\| \Delta^t \|^2] \leq \left(1 - \frac{\tilde{\rho}_T}{2} \right)\mathbb{E}[\| \Delta^{t-1}\|^2] + \sigma_t^2 + \frac{2L^2D^2}{\tilde{\rho}_T}\eta_{t-1}^2.
    \end{equation*}
    We now need to solve the recurrent inequality given below, to fit Assumption~\ref{assum: est}.
    \begin{equation*}
        \forall t > 0: \quad \sigma_t^2 \leq (1 - \rho_2) \sigma_{t-1}^2.
    \end{equation*}
    Using the expression of $\sigma_t^2$, we have
    \begin{align*}
        \tilde{\rho}_t^2 \sigma^2 &\leq (1 - \rho_2) \tilde{\rho}_{t-1}^2 \sigma^2, \\
        \tilde{\rho}_t^2 & \leq (1 - \rho_2) \tilde{\rho}_{t-1}^2 \\
        \frac{4^2}{(t + 8)^{4/3}} &\leq (1 - \rho_2)  \frac{4^2}{(t + 7)^{4/3}}, \\
        \left( \frac{t + 7}{t + 8} \right)^{4/3} &\leq 1 - \rho_2, \\
        \left(1 - \frac{1}{t + 8} \right)^{4/3}&\leq 1 - \rho_2, \\
        \forall t \in \{1, 2, \cdots T \}: \quad \rho_2 &\leq 1 - \left(1 - \frac{1}{t + 8} \right)^{4/3}.
    \end{align*}
    Thus to solve $\rho_2$, 
    \begin{equation*}
        \rho_2 = \displaystyle\argmin_{t \in \{1, 2, \cdots T \}} \quad 1 - \left(1 - \frac{1}{t + 8} \right)^{4/3} = 1 - \left(1 - \frac{1}{T + 8} \right)^{4/3}.
    \end{equation*}
    Hence giving us the required parameters
\end{proof}

\begin{lemma} 
    (Heavy Ball Error Bounds for Quasar-Convex Functions) 
    \label{lem: heavy ball error bounds quasar convex}
    \cite{JMLR:v21:18-764}
    Let $\{x^t\}_{t=0}^T$ be a sequence generated by Algorithm~\ref{alg:BSFW} using the Heavy Ball estimator \cite{JMLR:v21:18-764}, under the assumption that $f$ satisfies Assumption~\ref{assum: L_smooth} and $\rho$-quasar-convexity under Assumption~\ref{assum: quasar-convexity}. Let $Q$ be the constant defined as 
    $$Q = \max \left\{ 9^{2/3} \|\nabla f(x^0) - m^0\|^2, \; (16\sigma^2 + 2L^2 D^2)/\rho^2 \right\},$$ 
    where $\sigma^2$ is the constant upper bound of variance of unbiased stochastic gradients, as given by Assumption~\ref{assump: heavy_ball constant bound}. Suppose 
    $$\forall t: \quad \eta_t = \frac{2}{\rho(t + 9)}, \quad \tilde{\rho_t} = \frac{4}{(t+8)^\frac{2}{3}}.$$
    Then we have the following result:
    \begin{equation*}
        \forall t: \quad \mathbb{E}[\|\Delta^t\|^2] \leq \frac{Q}{(t + 9)^{\frac{2}{3}}}.
    \end{equation*}
\end{lemma}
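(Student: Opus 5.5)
We prove $\mathbb{E}[\|\Delta^t\|^2]\le Q/(t+9)^{2/3}$ by induction on $t$, starting from the one-step recursion of Lemma~\ref{lem: heavy ball error recursion}. Taking full expectation in that lemma gives, for $t\ge1$,
\begin{equation*}
\mathbb{E}[\|\Delta^t\|^2]\le\Bigl(1-\tfrac{\tilde\rho_t}{2}\Bigr)\mathbb{E}[\|\Delta^{t-1}\|^2]+\tilde\rho_t^2\sigma^2+\tfrac{2L^2D^2\eta_{t-1}^2}{\tilde\rho_t}.
\end{equation*}
The lemma relies only on Lemma~\ref{lem: BSFW_iter_bound}, which holds for any step decay, so it is valid for $\eta_t=\tfrac{2}{\rho(t+9)}$. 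Substituting $\tilde\rho_t=4/(t+8)^{2/3}$ and $\eta_{t-1}=2/(\rho(t+8))$ bounds the noise term by $16\sigma^2/(t+8)^{4/3}$. The drift term becomes
\begin{equation*}
\frac{2L^2D^2\cdot 4/(\rho^2(t+8)^2)}{4/(t+8)^{2/3}}=\frac{2L^2D^2}{\rho^2(t+8)^{4/3}}.
\end{equation*}
Since $\rho\le1$, both additive terms together are at most $\tfrac{16\sigma^2+2L^2D^2}{\rho^2}(t+8)^{-4/3}\le Q(t+8)^{-4/3}$. This explains the second entry in the definition of $Q$.

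\emph{Base case.} At $t=0$ we need $\|\Delta^0\|^2\le Q/9^{2/3}$. This is exactly the first entry $9^{2/3}\|\nabla f(x^0)-m^0\|^2\le Q$ (note $m^0$ is deterministic).

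\emph{Inductive step.} Assume $\mathbb{E}[\|\Delta^{t-1}\|^2]\le Q/(t+8)^{2/3}$. The coefficient $1-\tfrac{2}{(t+8)^{2/3}}$ is nonnegative for $t\ge1$, so the hypothesis can be plugged into the recursion. This yields
\begin{equation*}
\mathbb{E}[\|\Delta^t\|^2]\le Q\Bigl[\tfrac{1}{(t+8)^{2/3}}-\tfrac{2}{(t+8)^{4/3}}+\tfrac{1}{(t+8)^{4/3}}\Bigr]=Q\Bigl[\tfrac{1}{(t+8)^{2/3}}-\tfrac{1}{(t+8)^{4/3}}\Bigr].
\end{equation*}
It therefore suffices to show the elementary inequality $(t+8)^{-2/3}-(t+8)^{-4/3}\le(t+9)^{-2/3}$.

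This inequality is the only genuinely delicate step. Set $u=t+8\ge 9$. Multiplying by $u^{2/3}$, the claim becomes $1-u^{-2/3}\le(u/(u+1))^{2/3}=(1+1/u)^{-2/3}$. By Bernoulli (convexity of $x\mapsto(1+x)^{-2/3}$), $(1+1/u)^{-2/3}\ge 1-\tfrac{2}{3u}$. Hence it is enough that $\tfrac{2}{3u}\le u^{-2/3}$, i.e.\ $u^{1/3}\ge 2/3$, which holds trivially for $u\ge9$. This closes the induction.

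The expected obstacle is only bookkeeping. One must check the index shift between $\eta_{t-1}$ and the $(t+8)$ denominators; using $(t+8)$ rather than $(t+9)$ is what makes the factor $2/\tilde\rho_t\cdot\eta_{t-1}^2$ collapse cleanly. One must also confirm that the contraction factor $1-\tilde\rho_t/2$ stays in $[0,1]$. At $t=1$ it equals $1-2/9^{2/3}\approx 0.54$, so it is fine.
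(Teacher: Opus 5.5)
Your proof is correct and follows the paper's route. You specialize the Heavy Ball error recursion to $\eta_{t-1}=2/(\rho(t+8))$ and $\tilde\rho_t=4/(t+8)^{2/3}$, bound the additive terms by $\tfrac{16\sigma^2+2L^2D^2}{\rho^2}(t+8)^{-4/3}$, and close by induction. The only difference is that the paper outsources the induction to Lemma 19 of \cite{JMLR:v21:18-764} (with $\alpha=2/3$, $c=2$, $t_0=8$, $b=(16\sigma^2+2L^2D^2)/\rho^2$), whereas you carry it out explicitly via the Bernoulli bound $(1+1/u)^{-2/3}\ge 1-\tfrac{2}{3u}$, which makes the argument self-contained.
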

\begin{proof}
    The proof follows from \cite{JMLR:v21:18-764}. From Lemma~\ref{lem: heavy ball error recursion} we have
    \begin{align*}
        \mathbb{E}_{t-1}[\|\Delta^t\|^2] &\leq \left(1 - \frac{\tilde{\rho_t}}{2} \right)\|\Delta^{t-1}\|^2 +  \tilde{\rho_t}^2 \sigma^2 + \frac{2\eta_{t-1}^2L^2D^2}{\tilde{\rho_t}} \\
        &\leq \left(1 - \frac{2}{(t+8)^\frac{2}{3}} \right)\|\Delta^{t-1}\|^2 + \frac{16\sigma^2 + 2L^2 D^2}{\rho^2 (t+8)^{\frac{4}{3}}}.
    \end{align*}
    Taking expectation on both sides, we have,
    \begin{align*}
        \mathbb{E}[\|\Delta^t\|^2] \leq \left(1 - \frac{2}{(t+8)^\frac{2}{3}} \right)\mathbb{E}[\|\Delta^{t-1}\|^2] + \frac{16\sigma^2 + 2L^2 D^2}{\rho^2 (t+8)^{\frac{4}{3}}}.
    \end{align*}
    Using the following parameters in Lemma 19 of \cite{JMLR:v21:18-764},
    \begin{align*}
        \phi_t = \mathbb{E}[\| \Delta^t \|^2], 
        \quad \alpha = \tfrac{2}{3}, 
        \quad b = (16\sigma^2 + 2L^2 D^2)/\rho^2, 
        \quad c = 2, 
        \quad t_0 = 8.
    \end{align*}
    We have the following result:
    \begin{align*}
        \mathbb{E}[\|\Delta^t\|^2] \leq \frac{Q}{(t + 9)^{\frac{2}{3}}},
    \end{align*}
    where $Q = \max \left\{ 9^{2/3} \|\nabla f(x^0) - m^0\|^2, \; (16\sigma^2 + 2L^2 D^2)/\rho^2 \right\}$.
\end{proof}

\begin{theorem} [Alternative Recursion for Quasar-Convex Functions]
    \label{thm: alt recursion quasar convex}
    Let $\{x^t\}_{t=0}^T$ be a sequence generated by Algorithm~\ref{alg:BSFW}, where the function $f$ satisfies Assumptions~\ref{assum: L_smooth} and~\ref{assum: quasar-convexity}. Then we have
    \begin{equation*}
        F_{t+1} \leq \left(1 - \rho \eta_t\right) F_t + 2\eta_t \| \Delta^t \|D + \frac{L}{2} \eta^2_t D^2.
    \end{equation*}    
\end{theorem}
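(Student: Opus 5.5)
We follow the same template as Lemma~\ref{lem: stoch_quasar_convex_descent}, with one change. Instead of applying Young's inequality to the cross terms involving $\Delta^t$, we bound them directly by Cauchy--Schwarz. This replaces the squared error by the linear term $\eta_t\|\Delta^t\|D$. Everything is pathwise (no expectations), and we split into the two cases $\gamma_t<1$ and $\gamma_t=1$ of Algorithm~\ref{alg:BSFW}.

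\textbf{Case $\gamma_t<1$.} Here $x^{t+1}=x^t+\gamma_t d^t$ with $\gamma_t=\eta_t\|s^t-x^t\|/\|d^t\|$. The steps are as follows.
\begin{enumerate}
\item By $L$-smoothness,
\begin{equation*}
f(x^{t+1})\le f(x^t)+\gamma_t\langle\nabla f(x^t)-m^t,d^t\rangle+\gamma_t\langle m^t,d^t\rangle+\tfrac L2\gamma_t^2\|d^t\|^2 .
\end{equation*}
\item Lemma~\ref{lem:alignment} gives $\gamma_t\langle m^t,d^t\rangle\le\eta_t\langle m^t,s^t-x^t\rangle$. Since $\gamma_t\|d^t\|=\eta_t\|s^t-x^t\|\le\eta_t D$, the quadratic term is at most $\tfrac L2\eta_t^2D^2$.
\item Cauchy--Schwarz together with the same identity gives $\gamma_t\langle\nabla f(x^t)-m^t,d^t\rangle\le\gamma_t\|\Delta^t\|\|d^t\|\le\eta_t\|\Delta^t\|D$.
\item Because $s^t\in\lmo(m^t)$, we have $\langle m^t,s^t-x^t\rangle\le\langle m^t,x^\star-x^t\rangle$. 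Splitting $m^t=\nabla f(x^t)+\Delta^t$ and applying Cauchy--Schwarz with $\|x^\star-x^t\|\le D$ yields
\begin{equation*}
\eta_t\langle m^t,s^t-x^t\rangle\le\eta_t\langle\nabla f(x^t),x^\star-x^t\rangle+\eta_t\|\Delta^t\|D .
\end{equation*}
\item Quasar-convexity (Assumption~\ref{assum: quasar-convexity}) gives $\langle\nabla f(x^t),x^\star-x^t\rangle\le-\rho F_t$.
\item Subtracting $f^\star$ from both sides yields exactly $F_{t+1}\le(1-\rho\eta_t)F_t+2\eta_t\|\Delta^t\|D+\tfrac L2\eta_t^2D^2$.
\end{enumerate}

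\textbf{Case $\gamma_t=1$.} Here $x^{t+1}=x^t+\eta_t(s^t-x^t)$. Smoothness gives $f(x^{t+1})\le f(x^t)+\eta_t\langle\nabla f(x^t),s^t-x^t\rangle+\tfrac L2\eta_t^2D^2$. We rewrite the inner product as $\langle\nabla f(x^t)-m^t,s^t-x^t\rangle+\langle m^t,s^t-x^t\rangle$. The first term is at most $\|\Delta^t\|D$ by Cauchy--Schwarz. The second term is handled exactly as in steps 4 and 5 of the first case, producing another $\|\Delta^t\|D$ plus $-\rho F_t$. This again gives the claimed recursion with the factor $2$.

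The only point that needs care is the first case. There the magnitude of $d^t$ is controlled only through the normalization $\gamma_t\|d^t\|=\eta_t\|s^t-x^t\|$, rather than through $\|d^t\|\le D$, which need not hold for the boosted direction. That identity is what lets both the error term and the quadratic term be bounded in terms of $D$. The rest is routine.
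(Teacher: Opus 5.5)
Your proof is correct and follows the paper's argument essentially step for step. It uses Cauchy--Schwarz in place of Young's inequality, the normalization $\gamma_t\|d^t\| = \eta_t\|s^t-x^t\| \le \eta_t D$ in the boosted case, and the LMO and quasar-convexity substitution. The only cosmetic difference is in the case $\gamma_t=1$: the paper first merges the two error terms into $\eta_t\langle \nabla f(x^t)-m^t, s^t-x^\star\rangle \le \eta_t\|\Delta^t\|D$ and then loosens to the factor $2$, whereas you bound the two terms separately.
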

\begin{proof}
From Assumption~\ref{assum: L_smooth}, we have
    \begin{equation*}
        f(x^{t+1}) \leq f(x^t) + \langle \nabla f(x^t), x^{t+1} - x^t \rangle + \frac{L}{2} \|x^{t+1} - x^t\|^2.
    \end{equation*}
    \textbf{Case I}: suppose $\gamma_t < 1$. Then we have $x^{t+1} = x^t + \gamma_t d^t$. By using Lemma~\ref{lem:alignment},
    {\footnotesize\begin{align*}
        f(x^{t+1}) &\leq f(x^t) + \gamma_t \langle \nabla f(x^t), d^t \rangle + \frac{L}{2} \gamma_t^2 \|d^t\|^2 \\
        &\leq f(x^t) + \gamma_t \langle \nabla f(x^t) - m^t, d^t \rangle + \gamma_t \langle m^t, d^t \rangle + \frac{L}{2} \gamma_t^2 \|d^t\|^2 \\
        &\leq f(x^t) + \gamma_t \langle \nabla f(x^t) - m^t, d^t \rangle + \eta_t \left( \frac{\|s^t - x^t\|}{\|d^t\|} \right) \left( \frac{\|d^t\|}{\|s^t - x^t\|} \right)  \langle m^t, s^t - x^t \rangle + \frac{L}{2} \eta_t^2 \frac{\|s^t - x^t\|^2}{\|d^t\|^2} \|d^t\|^2 \\ 
        &\leq f(x^t) + \gamma_t \langle \nabla f(x^t) - m^t, d^t \rangle + \eta_t \langle m^t, s^t - x^t \rangle + \frac{L}{2} \eta_t^2 \|s^t - x^t\|^2.
    \end{align*}}
    Since $s^t \in \mathrm{lmo}(m^t)$, we get
    {\footnotesize\begin{align*}
        f(x^{t+1}) &\leq f(x^t) + \gamma_t \langle \nabla f(x^t) - m^t, d^t \rangle + \eta_t \langle m^t, x^{\star} - x^t \rangle + \frac{L}{2} \eta_t^2 \|s^t - x^t\|^2 \\
        &\leq f(x^t) + \gamma_t \langle \nabla f(x^t) - m^t, d^t \rangle + \eta_t \langle m^t - \nabla f(x^t), x^{\star} - x^t \rangle + \eta_t \langle \nabla f(x^t), x^{\star} - x^t \rangle + \frac{L}{2} \eta_t^2 \|s^t - x^t\|^2.
    \end{align*}}
    By using Cauchy-Schwarz inequality on both $ \langle \nabla f(x^t) - m^t, d^t \rangle$ and $ \langle m^t - \nabla f(x^t), x^{\star} - x^t \rangle$, we get
    {\footnotesize\begin{align*}
        f(x^{t+1}) &\leq f(x^t) + \gamma_t \|m^t - \nabla f(x^t)\| \|d^t\| + \eta_t \|m^t - \nabla f(x^t) \| \|x^{\star} - x^t \| + \eta_t \langle \nabla f(x^t), x^{\star} - x^t \rangle + \frac{L}{2} \eta_t^2 \|s^t - x^t\|^2 \\
        &\leq f(x^t) + \eta_t \frac{\|s^t - x^t\|}{\|d^t\|} \|\Delta^t\| \|d^t\| + \eta_t \|\Delta^t\| D + \eta_t \langle \nabla f(x^t), x^{\star} - x^t \rangle + \frac{L}{2} \eta_t^2 \|s^t - x^t\|^2 \\
        &\leq f(x^t) + 2 \eta_t \| \Delta^t \|D + \eta_t \langle \nabla f(x^t), x^{\star} - x^t \rangle + \frac{L}{2} \eta_t^2 D^2.
    \end{align*}}
    \textbf{Case II}: suppose $\gamma_t = 1$. Then we have $x^{t+1} = x^{t} + \eta_t (s^t - x^t)$. Hence we get,
    {\footnotesize\begin{align*}
        f(x^{t+1}) &\leq f(x^t) + \eta_t \langle \nabla f(x^t), s^t - x^t \rangle + \frac{L}{2} \eta_t^2 \|s^t - x^t\|^2 \\
        &\leq f(x^t) + \eta_t \langle \nabla f(x^t) - m^t, s^t - x^t \rangle + \eta_t \langle m^t, s^t - x^t \rangle +  \frac{L}{2} \eta_t^2 \|s^t - x^t\|^2 \\
        &\leq f(x^t) + \eta_t \langle \nabla f(x^t) - m^t, s^t - x^t \rangle + \eta_t \langle m^t, x^{\star} - x^t \rangle +  \frac{L}{2} \eta_t^2 \|s^t - x^t\|^2 \\
        &\leq f(x^t) + \eta_t \langle \nabla f(x^t) - m^t, s^t - x^t \rangle + \eta_t \langle m^t - \nabla f(x^t), x^{\star} - x^t \rangle + \eta_t \langle \nabla f(x^t), x^{\star} - x^t \rangle + \frac{L}{2} \eta_t^2 \|s^t - x^t\|^2 \\
        &\leq f(x^t) + \eta_t \langle \nabla f(x^t) - m^t, s^t - x^{\star} \rangle + \eta_t \langle \nabla f(x^t), x^{\star} - x^t \rangle + \frac{L}{2} \eta_t^2 \|s^t - x^t\|^2
    .\end{align*}}
    Using Cauchy-Schwarz inequality on $ \langle \nabla f(x^t) - m^t, s^t - x^{\star} \rangle $, we have
    \begin{align*}
        f(x^{t+1}) &\leq f(x^t) + \eta_t \|\nabla f(x^t) - m^t \| \|s^t - x^{\star}\| + \eta_t \langle \nabla f(x^t), x^{\star} - x^t \rangle + \frac{L}{2} \eta_t^2 \|s^t - x^t\|^2 \\
        &\leq f(x^t) + \eta_t \|\Delta^t\|D +\eta_t \langle \nabla f(x^t), x^{\star} - x^t \rangle + \frac{L}{2} \eta_t^2 D^2 \\
        &\leq f(x^t) + 2 \eta_t \|\Delta^t\|D +\eta_t \langle \nabla f(x^t), x^{\star} - x^t \rangle + \frac{L}{2} \eta_t^2 D^2.
    \end{align*}
    Thus in both cases, we can reach the same expression. Now using $\rho$-quasar-convexity of $f$, 
    \begin{equation*}
        f(x^{t+1}) \leq f(x^t) + 2 \eta_t \|\Delta^t\|D - \rho \eta_t (f(x^t) - f(x^{\star})) + \frac{L}{2} \eta_t^2 D^2.
    \end{equation*}
    Subtracting $f(x^{\star})$ on both sides gives us
    \begin{equation*}
        F_{t+1} \leq (1 - \rho \eta_t ) F_t + 2 \eta_t \|\Delta^t\| D + \frac{L}{2} \eta_t^2 D^2.
    \end{equation*}
\end{proof}

\begin{theorem}
    [Convergence under Heavy Ball Estimator for Quasar-Convex Functions]
    \label{thm: convergence heavy ball quasar convex}
    Let $Q'$ be the constant defined by:
    \begin{equation*}
        Q' = \max \left\{ 9^{\frac{1}{3}}F_0, \; \frac{4D\sqrt{Q}}{\rho} + \frac{LD^2}{2\rho^2} \right\},
    \end{equation*}
    where $Q$ is the constant defined by Lemma~\ref{lem: heavy ball error bounds quasar convex}. Let $\{x^t\}_{t=0}^T$ be a sequence generated by Algorithm~\ref{alg:BSFW} using the Heavy Ball estimator \cite{JMLR:v21:18-764}, where the function $f$ satisfies Assumptions~\ref{assum: L_smooth} and~\ref{assum: quasar-convexity}. Suppose
    \begin{equation*}
        \forall t: \quad \eta_t = \frac{2}{\rho(t + 9)}, \quad \rho_t = \frac{4}{(t + 8)^\frac{2}{3}}.
    \end{equation*}
Then we have
\begin{equation*}
    \mathbb{E}[F_t] \leq \frac{Q'}{(t + 9)^{\frac{1}{3}}}.
\end{equation*}
Hence, this shows a convergence rate of $\mathcal{O}\left( \frac{1}{t^{1/3}} \right)$.
\end{theorem}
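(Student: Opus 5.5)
The plan is to combine the one-step recursion of Theorem~\ref{thm: alt recursion quasar convex} with the error bound of Lemma~\ref{lem: heavy ball error bounds quasar convex}, and then close by induction. First I take full expectations in Theorem~\ref{thm: alt recursion quasar convex}. Jensen's inequality gives $\mathbb{E}\|\Delta^t\| \le \sqrt{\mathbb{E}\|\Delta^t\|^2}$, and Lemma~\ref{lem: heavy ball error bounds quasar convex} then yields $\mathbb{E}\|\Delta^t\|\le \sqrt{Q}/(t+9)^{1/3}$. With $\eta_t = \tfrac{2}{\rho(t+9)}$ this produces
\begin{equation*}
\mathbb{E}[F_{t+1}] \le \Bigl(1-\tfrac{2}{t+9}\Bigr)\mathbb{E}[F_t] + \frac{4D\sqrt{Q}}{\rho (t+9)^{4/3}} + \frac{2LD^2}{\rho^2 (t+9)^2}.
\end{equation*}
Since $(t+9)^{-2}\le (t+9)^{-4/3}$, the last two terms together are bounded by $\tfrac{K}{(t+9)^{4/3}}$, where $K$ is a constant of the form $\tfrac{4D\sqrt{Q}}{\rho} + c\tfrac{LD^2}{\rho^2}$.

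Next I prove $\mathbb{E}[F_t]\le Q'/(t+9)^{1/3}$ by induction on $t$. The base case $t=0$ holds because $Q'\ge 9^{1/3}F_0$.

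For the inductive step I substitute the hypothesis into the recursion. The target is then
\begin{equation*}
\frac{(t+7)Q'}{(t+9)^{4/3}} + \frac{K}{(t+9)^{4/3}} \le \frac{Q'}{(t+10)^{1/3}},
\end{equation*}
which, after multiplying through by $(t+9)^{4/3}$, is equivalent to $(t+7)Q' + K \le Q'(t+9)\bigl(\tfrac{t+9}{t+10}\bigr)^{1/3}$. Bernoulli's inequality gives $\bigl(1-\tfrac{1}{t+10}\bigr)^{1/3}\ge 1-\tfrac{1}{3(t+10)}$. Hence the right-hand side is at least $Q'\bigl(t+9-\tfrac13\bigr)$, and it suffices that $K\le \tfrac{5}{3}Q'$. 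That holds whenever $Q'\ge K$. This is the same argument as Lemma 19 of \cite{JMLR:v21:18-764}, applied with $\alpha=1/3$, $c=2$, $t_0=9$, and I could simply invoke that lemma instead.

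The main obstacle is the constant bookkeeping. Substituting $\eta_t$ literally makes the smoothness term $\tfrac{L}{2}\eta_t^2D^2 = \tfrac{2LD^2}{\rho^2(t+9)^2}$. The stated $Q'$, however, carries $\tfrac{LD^2}{2\rho^2}$, so the coefficient of $LD^2/\rho^2$ in $K$ may not fit under $Q'$ directly. I expect the slack factor $\tfrac{5}{3}$ from the induction step, together with the extra decay $(t+9)^{-2}\le 9^{-2/3}(t+9)^{-4/3}$, to absorb the discrepancy: the smoothness term then contributes at most $\tfrac{2}{9^{2/3}}\tfrac{LD^2}{\rho^2}\approx 0.46\,\tfrac{LD^2}{\rho^2}$ and the error term at most $\tfrac{4D\sqrt{Q}}{\rho}$, so $K\le \tfrac{5}{3}Q'$ holds for the stated $Q'$. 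If that check fails, I will enlarge the constant in $Q'$.

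A secondary point is that Lemma~\ref{lem: heavy ball error bounds quasar convex} must be applicable at every $t$ used. It is, since it holds for all $t$ with the same step decay and momentum schedule.
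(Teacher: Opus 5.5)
Your route is the paper's: take expectations in Theorem~\ref{thm: alt recursion quasar convex}, apply Jensen and Lemma~\ref{lem: heavy ball error bounds quasar convex}, then induct. However, one step is false as written. For an exponent in $(0,1)$, Bernoulli's inequality runs the other way. The map $y\mapsto(1-y)^{1/3}$ is concave, so $\bigl(1-\frac{1}{t+10}\bigr)^{1/3}\le 1-\frac{1}{3(t+10)}$; already at $t=0$ one has $(9/10)^{1/3}\approx 0.9655<0.9667$. Your conclusion $(t+9)\bigl(\frac{t+9}{t+10}\bigr)^{1/3}\ge t+9-\frac13$ is nevertheless true. To justify it, write $\bigl(\frac{t+9}{t+10}\bigr)^{1/3}=\bigl(1+\frac{1}{t+9}\bigr)^{-1/3}\ge 1-\frac{1}{3(t+9)}$, which is Bernoulli with a negative exponent (equivalently, convexity of $z\mapsto(1+z)^{-1/3}$).

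You need neither that slack nor a larger $Q'$. Your own estimate $2\cdot 9^{-2/3}\approx 0.46\le\frac12$ says exactly that $\frac{2LD^2}{\rho^2(t+9)^2}\le\frac{LD^2}{2\rho^2(t+9)^{4/3}}$, i.e.\ $(t+9)^{2/3}\ge 4$. Hence $K\le\frac{4D\sqrt{Q}}{\rho}+\frac{LD^2}{2\rho^2}\le Q'$ with the stated $Q'$; this is precisely the paper's step. The paper then closes the induction with $\frac{(t+7)Q'}{(t+9)^{4/3}}+\frac{Q'}{(t+9)^{4/3}}=\frac{(t+8)Q'}{(t+9)^{4/3}}\le\frac{Q'}{(t+10)^{1/3}}$. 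This is the elementary inequality $(x-1)^3(x+1)\le x^4$ with $x=t+9$, and it only requires $K\le Q'$.

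Finally, drop the remark that Lemma~19 of \cite{JMLR:v21:18-764} could be invoked with $\alpha=1/3$. That lemma treats recursions with contraction $1-c/(t+t_0)^{\alpha}$ and noise $b/(t+t_0)^{2\alpha}$. Your contraction factor $1-\frac{2}{t+9}$ is much weaker than $1-\frac{2}{(t+9)^{1/3}}$, so the lemma's hypothesis is not met, and the direct induction is genuinely needed.
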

\begin{proof}
    The proof follows exactly as in \cite{JMLR:v21:18-764}. From Theorem~\ref{thm: alt recursion quasar convex}, we have
    \begin{align*}
        F_{t+1} &\leq \left(1 - \rho \eta_t\right) F_t + 2\eta_t \|\Delta^t\|D_2 + \frac{L}{2} \eta^2_t D_2^2.
    \end{align*}
    Taking expectation on both sides and using Lemma~\ref{lem: heavy ball error bounds quasar convex} we get,
    \begin{align*}
        \mathbb{E}[F_{t+1}] &\leq \left(1 - \rho \eta_t\right) \mathbb{E}[F_t] + 2\eta_t D_2 \mathbb{E}[\|\Delta^t\|] + \frac{L}{2} \eta^2_t D_2^2 \\
        &\leq \left(1 - \rho \eta_t\right) \mathbb{E}[F_t] + 2\eta_t D_2 \sqrt{\mathbb{E}[\|\Delta^t\|^2]} + \frac{L}{2} \eta^2_t D^2 \\
        &\leq \left(1 - \frac{2}{t+9}\right)\mathbb{E}[F_t] + \left( \frac{4}{\rho(t + 9)} \right)  \frac{D\sqrt{Q}}{(t+9)^{\frac{1}{3}}} + \frac{4LD^2}{2\rho^2(t+9)^2} \\
        &\leq \left(1  -\frac{2}{t+9} \right) \mathbb{E}[F_t] + \left(\frac{4D\sqrt{Q}}{\rho} + \frac{LD^2}{2\rho^2} \right)\frac{1}{(t+9)^{\frac{4}{3}}}.
    \end{align*}
Aim is to prove by PMI that
\begin{equation*}
    \forall t: \quad \mathbb{E}[F_t] \leq \frac{Q'}{(t+9)^{\frac{1}{3}}}.
\end{equation*}
\textbf{Base Case}: At $t=0$, by definition of $Q'$,
\begin{align*}
    9^{\frac{1}{3}} \mathbb{E}[F_0] \leq Q'\implies\mathbb{E}[F_0] \leq \frac{Q'}{(0 + 9)^{\frac{1}{3}}}.
\end{align*}
\textbf{Induction Step}: Suppose $\exists r$ such that
\begin{align*}
    \mathbb{E}[F_r] \leq \frac{Q'}{(r + 9)^{\frac{1}{3}}}.
\end{align*}
Hence,
\begin{align*}
    \mathbb{E}[F_{r + 1}] &\leq \left(1  -\frac{2}{r+9} \right) \mathbb{E}[F_r] + \left(\frac{4D\sqrt{Q}}{\rho} + \frac{LD^2}{2\rho^2} \right)\frac{1}{(r+9)^{\frac{4}{3}}} \\
    &\leq \left(1  -\frac{2}{r+9} \right)\left( \frac{Q'}{(r + 9)^{\frac{1}{3}}} \right) + \left(\frac{4D\sqrt{Q}}{\rho} + \frac{LD^2}{2\rho^2} \right)\frac{1}{(r+9)^{\frac{4}{3}}} \\
    &\leq Q' \left( \frac{r + 8}{(r + 9)^{\frac{2}{3}}} \right) \\
    &\leq \frac{Q'}{(r + 10)^{\frac{1}{3}}}.
\end{align*}
Hence proven by PMI.
\end{proof}

\begin{lemma} 
    (Heavy Ball Error Bounds for Nonconvex Functions) 
    \label{lem: heavy ball error bounds non convex}
    \cite{JMLR:v21:18-764}
    Let $\{x^t\}_{t=0}^T$ be a sequence generated by Algorithm~\ref{alg:BSFW} using the Heavy Ball estimator \cite{JMLR:v21:18-764}, under the assumption that $f$ satisfies Assumption~\ref{assum: L_smooth}. Let $M_h$ be the constant defined as 
    $$M_h = \max \left\{ \|\Delta^0\|^2, ~  24(\sigma^2 + 2L^2D^2)\right\},$$ 
    where $\sigma^2$ is the constant upper bound of variance of unbiased stochastic gradients, as given by Assumption~\ref{assump: heavy_ball constant bound}. Suppose $$\forall t: \quad \eta_t = \frac{1}{(t + 2)^{\frac{3}{4}}}, \quad \tilde{\rho_t} = \frac{1}{\sqrt{t + 1}}.$$
    Then we have the following result:
    \begin{equation*}
        \forall t: \quad \mathbb{E}[\|\Delta^t\|^2] \leq \frac{M_h}{\sqrt{t + 1}}.
    \end{equation*}
\end{lemma}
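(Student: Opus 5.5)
The plan is to reduce the statement to a scalar recursion using Lemma~\ref{lem: heavy ball error recursion}, and then close it by induction on $t$, in the same spirit as Lemma~\ref{lem: heavy ball error bounds quasar convex}.

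\textbf{Step 1: the scalar recursion.} Write $\phi_t := \mathbb{E}[\|\Delta^t\|^2]$ and $c := \sigma^2 + 2L^2D^2$. Taking full expectations in Lemma~\ref{lem: heavy ball error recursion}, which is valid for every $t\ge 1$ since $\tilde\rho_t\le 1$, gives
\begin{equation*}
\phi_t \le \Big(1-\tfrac{\tilde\rho_t}{2}\Big)\phi_{t-1} + \tilde\rho_t^2\sigma^2 + \tfrac{2L^2D^2\eta_{t-1}^2}{\tilde\rho_t}.
\end{equation*}
With the prescribed schedules we have $\tilde\rho_t^2 = \tfrac{1}{t+1}$. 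We also have $\eta_{t-1}^2/\tilde\rho_t = (t+1)^{-3/2}(t+1)^{1/2} = \tfrac{1}{t+1}$. This is exactly why the exponents $3/4$ and $1/2$ are paired. The recursion therefore becomes
\begin{equation*}
\phi_t \le \Big(1-\tfrac{1}{2\sqrt{t+1}}\Big)\phi_{t-1} + \tfrac{c}{t+1}, \qquad t\ge 1.
\end{equation*}

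\textbf{Step 2: base case.} We prove $\phi_t \le M_h/\sqrt{t+1}$ by induction. The case $t=0$ holds because $M_h \ge \|\Delta^0\|^2$.

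\textbf{Step 3: inductive step for large $t$.} Assume $\phi_{t-1}\le M_h/\sqrt t$ and multiply the target inequality by $\sqrt{t+1}$. Using $\sqrt{(t+1)/t}\le 1+\tfrac{1}{2t}$, it suffices to show
\begin{equation*}
M_h + \tfrac{M_h}{2t} - \tfrac{M_h}{2\sqrt t} + \tfrac{c}{\sqrt{t+1}} \le M_h,
\end{equation*}
that is,
\begin{equation*}
\tfrac{M_h}{2\sqrt t}\Big(1-\tfrac{1}{\sqrt t}\Big) \ge \tfrac{c}{\sqrt{t+1}}.
\end{equation*}
For $t\ge 4$ we have $1-1/\sqrt t\ge 1/2$. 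The condition then reduces to $M_h/(4\sqrt t)\ge c/\sqrt{t+1}$, which holds because $M_h\ge 24c\ge 4c$ and $\sqrt{t+1}\ge\sqrt t$.

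\textbf{Step 4: the small cases $t=1,2,3$ (main obstacle).} These are the delicate steps, since the factor $1-1/\sqrt t$ vanishes at $t=1$ and the generic estimate is useless there. I would verify these cases directly from the recursion without the $\sqrt{(t+1)/t}$ approximation. The requirement is
\begin{equation*}
\Big(1-\tfrac{1}{2\sqrt{t+1}}\Big)\tfrac{1}{\sqrt t} - \tfrac{1}{\sqrt{t+1}} \le -\tfrac{c}{M_h(t+1)}.
\end{equation*}
At $t=1$ the left side is $(1-0.354)-0.707\approx -0.061$, and the right side is at least $-c/(2\cdot 24c)\approx -0.021$, so the inequality holds. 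Similar numerical checks should go through at $t=2$ and $t=3$, where the margins are larger. The constant $24$ in $M_h$ is what absorbs these early iterations. If one of these checks turned out to fail, I would simply enlarge the constant in $M_h$. This would not change the rate $\mathcal{O}(1/\sqrt t)$.
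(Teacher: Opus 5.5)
Your proposal is correct and follows the paper's proof: it uses the same scalar recursion $\phi_t \le (1-\frac{1}{2\sqrt{t+1}})\phi_{t-1} + \frac{\sigma^2+2L^2D^2}{t+1}$, closed by induction from $\|\Delta^0\|^2\le M_h$. The only difference is that the paper treats every $t\ge1$ at once via $(1-\frac{1}{2\sqrt{t+1}})(1+\frac{1}{2t})\le 1-\frac{1}{24\sqrt{t+1}}$, which is nearly tight at $t=1$ and is where the $24$ comes from, whereas you split off $t\in\{1,2,3\}$. Your direct checks there do hold: the left-hand sides are about $-0.061,-0.074,-0.067$, against lower bounds $-1/48,-1/72,-1/96$ for the right-hand sides, so no enlargement of $M_h$ is needed.
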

\begin{proof}
    Taking expectation on both sides of Lemma~\ref{lem: heavy ball error recursion}, we have
    \begin{equation*}
        \mathbb{E}\left[ \|\Delta^t\|^2\right] \leq \left(1 - \frac{\tilde{\rho_t}}{2} \right) \mathbb{E}\left[\|\Delta^{t-1}\|^2\right] + \tilde{\rho_t}^2 \sigma^2 + \frac{2L^2D^2\eta_{t-1}^2}{\tilde{\rho_t}}.
    \end{equation*}
    Substituting the values of $\tilde{\rho_t}$ and $\eta_{t-1}$ we get,
    \begin{align*}
         \mathbb{E}\left[ \|\Delta^t\|^2\right] &\leq \left(1 - \frac{1}{2\sqrt{t + 1}} \right) \mathbb{E}\left[\|\Delta^{t-1}\|^2\right] + \frac{\sigma^2}{(t + 1)} + \frac{2L^2D^2}{(t + 1)} \\
         &\leq \left(1 - \frac{1}{2\sqrt{t + 1}} \right) \mathbb{E}\left[\|\Delta^{t-1}\|^2\right] + \frac{\sigma^2 + 2L^2D^2}{(t + 1)}.
    \end{align*}
    The proof follows from Lemma D.10 in \cite{pethick2025training}. Using PMI, \\ \\
    \textbf{Base Case}: At $t = 0$, by definition of $M_h$,
    \begin{equation*}
        \|\Delta^0\|^2 \leq M_h.
    \end{equation*}
    \textbf{Induction Step}: Let $t > 0$ such that 
    \begin{equation*}
        \mathbb{E}[\|\Delta^{t-1}\|^2] \leq \frac{M_h}{\sqrt{t}}.
    \end{equation*}
    We thus have
    \begin{align*}
        \mathbb{E}[\|\Delta^t\|^2] &\leq \left( 1 - \frac{1}{2\sqrt{t + 1}}\right) \mathbb{E}[\|\Delta^{t-1}\|^2] + \frac{\sigma^2 + 2L^2D^2}{(t + 1)} \\
        &\leq \left( 1 - \frac{1}{2\sqrt{t + 1}}\right) \frac{M_h}{\sqrt{t}} + \frac{\sigma^2 + 2L^2D^2}{(t + 1)}.
    \end{align*}
    We can write
    \begin{equation*}
        \frac{1}{\sqrt{t}} = \frac{1}{\sqrt{t + 1}}\sqrt{\frac{t + 1}{t}} = \frac{1}{\sqrt{t + 1}}\sqrt{1 + \frac{1}{t}} \leq \frac{1}{\sqrt{t + 1}}\left( 1 + \frac{1}{2t} \right)
    .\end{equation*}
    Also, we have the fact that $\forall t \geq 1$, 
    \begin{equation*}
        \left(1 - \frac{1}{2\sqrt{t + 1}} \right)\left(1 + \frac{1}{2t} \right) \leq \left(1 - \frac{1}{24\sqrt{t + 1}} \right).
    \end{equation*}
    Hence we have
    \begin{align*}
        \mathbb{E}[\|\Delta^t\|^2] &\leq \left( 1 - \frac{1}{2\sqrt{t + 1}}\right)\frac{M_h}{\sqrt{t}} + \frac{\sigma^2 + 2L^2D^2}{(t + 1)} \\
        &\leq \left( 1 - \frac{1}{2\sqrt{t + 1}}\right)\left(1 + \frac{1}{2t} \right) \frac{M_h}{\sqrt{t + 1}} + \frac{\sigma^2 + 2L^2D^2}{(t + 1)} \\
        &\leq \left( 1 - \frac{1}{24\sqrt{t + 1}}\right) \frac{M_h}{\sqrt{t + 1}} + \frac{\sigma^2 + 2L^2D^2}{(t + 1)} \\
        &\leq \frac{M_h}{\sqrt{t + 1}} - \frac{M_h}{24(t + 1)} + \frac{\sigma^2 + 2L^2D^2}{(t + 1)} \\
        &\leq \frac{M_h}{\sqrt{t + 1}} + \frac{1}{(t + 1)}\left(\sigma^2 + 2L^2D^2 - \frac{M_h}{24} \right) \\
        &\leq \frac{M_h}{\sqrt{t + 1}}.
    \end{align*}
\end{proof}

\begin{theorem}
    \label{thm: convergence heavy ball non convex}
    (Convergence under Heavy Ball Estimator for Nonconvex Functions). Let $\{x^t\}_{t=0}^{T}$ be a sequence generated by Algorithm~\ref{alg:BSFW} with the Heavy Ball estimator \cite{JMLR:v21:18-764}, where the objective function $f$ satisfies Assumption~\ref{assum: L_smooth}. Suppose for all $t$, $\eta_t = \frac{1}{(t + 2)^{3/4}}$ and $\rho_t = \frac{1}{\sqrt{t + 1}}$. We thus have the following bound,
    \begin{equation*}
    \mathbb{E}\left[ \min_{0 \leq t \leq T - 1} \mathrm{Gap}(x^t) \right] \leq \frac{F_0 + 2D\sqrt{M_h} (1 + \ln(T))  +  LD^2 }{4\left(\left(T+2\right)^{\frac{1}{4}}-2^{\frac{1}{4}}\right)}.
\end{equation*}
where $M_h$ is the constant defined by Lemma~\ref{lem: heavy ball error bounds non convex}, Hence, this shows a convergence rate of $\mathcal{O}\left(\frac{\ln t}{t^{1/4}}\right)$.
\end{theorem}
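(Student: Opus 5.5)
We need the descent inequality with the error entering linearly rather than through a Young split, because the Heavy Ball error bound of Lemma~\ref{lem: heavy ball error bounds non convex} only gives $\mathbb{E}[\|\Delta^t\|^2]\le M_h/\sqrt{t+1}$. Repeating the two-case argument from the proof of Theorem~\ref{thm: alt recursion quasar convex}, but replacing $x^\star$ by an arbitrary $u\in\mathcal{C}$ and not invoking quasar-convexity, gives in both cases (the boosted step with $\gamma_t<1$ via Lemma~\ref{lem:alignment}, and the reverted FW step)
\begin{equation*}
f(x^{t+1}) \le f(x^t) - \eta_t \langle \nabla f(x^t), x^t-u\rangle + 2\eta_t D\|\Delta^t\| + \tfrac{L}{2}\eta_t^2 D^2 .
\end{equation*}
This uses Cauchy--Schwarz on $\gamma_t\langle \nabla f(x^t)-m^t,d^t\rangle\le \eta_t\|s^t-x^t\|\|\Delta^t\|$ and on $\eta_t\langle m^t-\nabla f(x^t),u-x^t\rangle$. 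Maximizing over $u$ yields $\eta_t\,\gap(x^t)$ on the left after rearrangement.

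Next I take expectations and use Jensen's inequality, $\mathbb{E}\|\Delta^t\|\le\sqrt{\mathbb{E}\|\Delta^t\|^2}\le \sqrt{M_h}(t+1)^{-1/4}$. Summing from $t=0$ to $T-1$ and telescoping $f$ with $f(x^T)\ge f^\star$ gives
\begin{equation*}
\sum_{t=0}^{T-1}\eta_t\,\mathbb{E}[\gap(x^t)] \le F_0 + 2D\sqrt{M_h}\sum_{t=0}^{T-1}\frac{\eta_t}{(t+1)^{1/4}} + \frac{LD^2}{2}\sum_{t=0}^{T-1}\eta_t^2 .
\end{equation*}
Bounding the left side below by $\mathbb{E}[\min_t\gap(x^t)]\sum_t\eta_t$ then reduces everything to elementary sums with $\eta_t=(t+2)^{-3/4}$:
\begin{itemize}
\item $\eta_t(t+1)^{-1/4}\le (t+1)^{-1}$, so the middle sum is at most $1+\ln T$;
\item $\sum_t\eta_t^2=\sum_t(t+2)^{-3/2}\le 2$ by the integral test, giving at most $LD^2$;
\item $\sum_{t=0}^{T-1}(t+2)^{-3/4}\ge\int_0^{T}(x+2)^{-3/4}dx=4\big((T+2)^{1/4}-2^{1/4}\big)$.
\end{itemize}
Dividing yields the claimed bound and the $\mathcal{O}(\ln t/t^{1/4})$ rate.

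The main obstacle is the first step, because the boosted direction $d^t$ is not $s^t-x^t$. The bias term $\gamma_t\langle \nabla f(x^t)-m^t,d^t\rangle$ must be controlled using the exact step-size identity $\gamma_t\|d^t\|=\eta_t\|s^t-x^t\|\le\eta_t D$, the same identity used in Lemma~\ref{lem: BSFW_iter_bound}. Without it, the error would scale with $\|\tilde d^t\|$, which is unbounded. The remaining computations should be routine; only the sum constants need checking to match the stated numerator.
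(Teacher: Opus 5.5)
Your proposal is correct and follows the paper's proof essentially step for step: the same two-case descent inequality for an arbitrary $u\in\mathcal{C}$, using Cauchy--Schwarz and the identity $\gamma_t\|d^t\| = \eta_t\|s^t - x^t\|$; then Jensen combined with the Heavy Ball error lemma; then telescoping; and finally the same three integral-test bounds. Your choice to maximize over $u$ pointwise before taking expectations is slightly cleaner than the paper's, which carries a fixed $u$ through $\mathbb{E}[\min_t \cdot]$ and only passes to $\gap$ at the end.
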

\begin{proof}
   From Assumption~\ref{assum: L_smooth}, we have
    \begin{equation*}
        f(x^{t+1}) \leq f(x^t) + \langle \nabla f(x^t), x^{t+1} - x^t \rangle + \frac{L}{2} \|x^{t+1} - x^t\|^2.
    \end{equation*}
    \textbf{Case I}: suppose $\gamma_t < 1$. Then we have $x^{t+1} = x^t + \gamma_t d^t$. By using Lemma~\ref{lem:alignment},
    {\footnotesize\begin{align*}
        f(x^{t+1}) &\leq f(x^t) + \gamma_t \langle \nabla f(x^t), d^t \rangle + \frac{L}{2} \gamma_t^2 \|d^t\|^2 \\
        &\leq f(x^t) + \gamma_t \langle \nabla f(x^t) - m^t, d^t \rangle + \gamma_t \langle m^t, d^t \rangle + \frac{L}{2} \gamma_t^2 \|d^t\|^2 \\
        &\leq f(x^t) + \gamma_t \langle \nabla f(x^t) - m^t, d^t \rangle + \eta_t \left( \frac{\|s^t - x^t\|}{\|d^t\|} \right) \left( \frac{\|d^t\|}{\|s^t - x^t\|} \right)  \langle m^t, s^t - x^t \rangle + \frac{L}{2} \eta_t^2 \frac{\|s^t - x^t\|^2}{\|d^t\|^2} \|d^t\|^2 \\ 
        &\leq f(x^t) + \gamma_t \langle \nabla f(x^t) - m^t, d^t \rangle + \eta_t \langle m^t, s^t - x^t \rangle + \frac{L}{2} \eta_t^2 \|s^t - x^t\|^2.
    \end{align*}}
    Since $s^t \in \mathrm{lmo}(m^t)$, for all $u \in \mathcal{C}$, we get
    {\footnotesize\begin{align*}
        f(x^{t+1}) &\leq f(x^t) + \gamma_t \langle \nabla f(x^t) - m^t, d^t \rangle + \eta_t \langle m^t, u - x^t \rangle + \frac{L}{2} \eta_t^2 \|s^t - x^t\|^2 \\
        &\leq f(x^t) + \gamma_t \langle \nabla f(x^t) - m^t, d^t \rangle + \eta_t \langle m^t - \nabla f(x^t), u - x^t \rangle + \eta_t \langle \nabla f(x^t), u - x^t \rangle + \frac{L}{2} \eta_t^2 \|s^t - x^t\|^2.
    \end{align*}}
    By using Cauchy-Schwarz inequality on the both $ \langle \nabla f(x^t) - m^t, d^t \rangle$ and $ \langle m^t - \nabla f(x^t), u - x^t \rangle$, we get
    {\footnotesize\begin{align*}
        f(x^{t+1}) &\leq f(x^t) + \gamma_t \|m^t - \nabla f(x^t)\| \|d^t\| + \eta_t \|m^t - \nabla f(x^t) \| \|u - x^t \| + \eta_t \langle \nabla f(x^t), u - x^t \rangle + \frac{L}{2} \eta_t^2 \|s^t - x^t\|^2 \\
        &\leq f(x^t) + \eta_t \frac{\|s^t - x^t\|}{\|d^t\|} \|\Delta^t\| \|d^t\| + \eta_t \|\Delta^t\| D + \eta_t \langle \nabla f(x^t), u - x^t \rangle + \frac{L}{2} \eta_t^2 \|s^t - x^t\|^2 \\
        &\leq f(x^t) + 2 \eta_t \| \Delta^t \|D + \eta_t \langle \nabla f(x^t), u - x^t \rangle + \frac{L}{2} \eta_t^2 D^2.
    \end{align*}}
    \textbf{Case II}: suppose $\gamma_t = 1$. Then we have $x^{t+1} = x^{t} + \eta_t (s^t - x^t)$. Hence we get for all $u \in \mathcal{C}$,
    {\footnotesize\begin{align*}
        f(x^{t+1}) &\leq f(x^t) + \eta_t \langle \nabla f(x^t), s^t - x^t \rangle + \frac{L}{2} \eta_t^2 \|s^t - x^t\|^2 \\
        &\leq f(x^t) + \eta_t \langle \nabla f(x^t) - m^t, s^t - x^t \rangle + \eta_t \langle m^t, s^t - x^t \rangle +  \frac{L}{2} \eta_t^2 \|s^t - x^t\|^2 \\
        &\leq f(x^t) + \eta_t \langle \nabla f(x^t) - m^t, s^t - x^t \rangle + \eta_t \langle m^t, u - x^t \rangle +  \frac{L}{2} \eta_t^2 \|s^t - x^t\|^2 \\
        &\leq f(x^t) + \eta_t \langle \nabla f(x^t) - m^t, s^t - x^t \rangle + \eta_t \langle m^t - \nabla f(x^t), u - x^t \rangle + \eta_t \langle \nabla f(x^t), u - x^t \rangle + \frac{L}{2} \eta_t^2 \|s^t - x^t\|^2 \\
        &\leq f(x^t) + \eta_t \langle \nabla f(x^t) - m^t, s^t - u \rangle + \eta_t \langle \nabla f(x^t), u - x^t \rangle + \frac{L}{2} \eta_t^2 \|s^t - x^t\|^2.
    \end{align*}}
    Using Cauchy-Schwarz inequality on $ \langle \nabla f(x^t) - m^t, s^t - u \rangle $, we have
    \begin{align*}
        f(x^{t+1}) &\leq f(x^t) + \eta_t \|\nabla f(x^t) - m^t \| \|s^t - u\| + \eta_t \langle \nabla f(x^t), u - x^t \rangle + \frac{L}{2} \eta_t^2 \|s^t - x^t\|^2 \\
        &\leq f(x^t) + \eta_t \|\Delta^t\|D +\eta_t \langle \nabla f(x^t), u - x^t \rangle + \frac{L}{2} \eta_t^2 D^2 \\
        &\leq f(x^t) + 2 \eta_t \|\Delta^t\|D +\eta_t \langle \nabla f(x^t), u - x^t \rangle + \frac{L}{2} \eta_t^2 D^2.
    \end{align*}
    Thus in both cases, we can reach the same expression. Subtracting $f(x^{\star})$ on both sides gives us
    \begin{align*}
        F_{t+1} &\leq F_t + 2 \eta_t \|\Delta^t\| D +\eta_t \langle \nabla f(x^t), u - x^t \rangle + \frac{L}{2} \eta_t^2 D^2 ,\\
        \eta_t \langle \nabla f(x^t),  x^t - u \rangle &\leq F_t - F_{t+1} + 2 \eta_t \|\Delta^t\| D + \frac{L}{2} \eta_t^2 D^2,
    \end{align*}
Taking expectation on both sides, and summing from $t=0$ to $t = T-1$,
\begin{align*}
    &\mathbb{E}\left[ \eta_t \langle \nabla f(x^t), x^t - u  \rangle \right] \leq \mathbb{E}[F_t] - \mathbb{E}[F_{t+1}] + 2 \eta_t \mathbb{E}[\|\Delta^t\|]D + \eta_t^2 \frac{LD^2}{2}, \\
    \implies&\sum_{t=0}^{T-1} \mathbb{E}\left[ \eta_t \langle \nabla f(x^t), x^t - u \rangle \right] \leq F_0 - \mathbb{E}[F_T] + \sum_{t=0}^{T-1} \eta_t 2 \mathbb{E}[\|\Delta^t\|] D +  \sum_{t=0}^{T-1} \eta_t^2 \frac{LD^2}{2}, \\
    \implies&\sum_{t=0}^{T-1} \mathbb{E}\left[ \eta_t \langle \nabla f(x^t), x^t - u \rangle \right] \leq F_0 + \sum_{t=0}^{T-1} 2 \eta_t \mathbb{E}[\|\Delta^t\|] D +  \sum_{t=0}^{T-1} \eta_t^2 \frac{LD^2}{2}, \\
    \implies&\mathbb{E}\left[ \min_{0 \leq t \leq T - 1} \langle \nabla f(x^t), x^t - u \rangle \right]\sum_{t=0}^{T-1} \eta_t  \leq F_0 + \sum_{t=0}^{T-1} \eta_t 2 \mathbb{E}[\|\Delta^t\|] D +  \sum_{t=0}^{T-1} \eta_t^2 \frac{LD^2}{2}, \\
    \implies&\mathbb{E}\left[ \min_{0 \leq t \leq T - 1} \langle \nabla f(x^t), x^t - u \rangle \right] \leq \frac{F_0 + D \sum_{t=0}^{T-1} 2 \eta_t \mathbb{E}[\|\Delta^t\|]  +  \frac{LD^2}{2} \sum_{t=0}^{T-1} \eta_t^2 }{\sum_{t=0}^{T-1} \eta_t}, \\
    &~~~~~~~~~~~~~~~~~~~~~~~~~~~~~~~~~~~~~~~~~~~~~~~~~~~~\leq \frac{F_0 + D \sum_{t=0}^{T-1} 2 \eta_t \sqrt{\mathbb{E}[\|\Delta^t\|^2]}  +  \frac{LD^2}{2} \sum_{t=0}^{T-1} \eta_t^2 }{\sum_{t=0}^{T-1} \eta_t}.
\end{align*}
From Lemma~\ref{lem: heavy ball error bounds non convex}, we know that for all $t$, $\mathbb{E}[\|\Delta^t\|^2] \leq \frac{M_h}{\sqrt{t + 1}}$. Thus,
\begin{align*}
    \mathbb{E}\left[ \min_{0 \leq t \leq T - 1} \langle \nabla f(x^t), x^t - u \rangle \right] &\leq \frac{F_0 + 2D\sqrt{M_h} \sum_{t=0}^{T-1} \frac{1}{(t + 2)^{3/4}} \frac{1}{(t + 1)^{1/4}}  +  \frac{LD^2}{2} \sum_{t=0}^{T-1} \frac{1}{(t + 2)^{3/2}} }{\sum_{t=0}^{T-1} \frac{1}{(t + 2)^{3/4}}} \\
    &\leq \frac{F_0 + 2D\sqrt{M_h} \sum_{t=0}^{T-1} \frac{1}{(t + 1)}  +  \frac{LD^2}{2} \sum_{t=0}^{T-1} \frac{1}{(t + 2)^{3/2}} }{\sum_{t=0}^{T-1} \frac{1}{(t + 2)^{3/4}}}.
\end{align*}
By using the integral test, we have the following bounds
\begin{equation*}
    \sum_{t=0}^{T-1} \frac{1}{(t + 1)} \leq 1 + \ln(T), \quad \sum_{t=0}^{T-1} \frac{1}{(t + 2)^{3/2}} \leq 2, \quad \sum_{t = 0}^{T-1} \frac{1}{(t + 2)^{3/4}} \geq 4\left(\left(T+2\right)^{\frac{1}{4}}-2^{\frac{1}{4}}\right).
\end{equation*}
Hence, we get the expression, that $\forall u \in \mathcal{C}$, 
\begin{equation*}
    \mathbb{E}\left[ \min_{0 \leq t \leq T - 1} \langle \nabla f(x^t), x^t - u \rangle \right] \leq \frac{F_0 + 2D\sqrt{M_h} (1 + \ln(T))  +  LD^2 }{4\left(\left(T+2\right)^{\frac{1}{4}}-2^{\frac{1}{4}}\right)}.
\end{equation*}
Which implies that since $\mathrm{Gap}(x^t) = \max_{u \in \mathcal{C}} \langle \nabla f(x^t),x^t - u \rangle$, we can write
\begin{equation*}
    \mathbb{E}\left[ \min_{0 \leq t \leq T - 1} \mathrm{Gap}(x^t) \right] \leq \frac{F_0 + 2D\sqrt{M_h} (1 + \ln(T))  +  LD^2 }{4\left(\left(T+2\right)^{\frac{1}{4}}-2^{\frac{1}{4}}\right)}.
\end{equation*}
\end{proof}

\subsubsection{SAG \cite{pmlr-v119-negiar20a}}
\label{subsec: SAG}
\paragraph{Description }For the usage of the SAG estimator \cite{pmlr-v119-negiar20a}, we need to assume that the objective function $f$ in \eqref{eq:P} is $f:x \mapsto \tilde{f}(\tilde{A}x)$, where $\tilde{A}$ is a matrix $m \times n$ of samples. This is to ensure a finite sum structure as required by \cite{pmlr-v119-negiar20a}.  The estimator uses an additional dual variable $\alpha^t$, which is initialized by setting $\alpha^0 \in \mathbb{R}^m$. For every iteration $t > 0$, we sample a batch $S_t \subset \{1, 2, \cdots, m\}$ of size $b_s$ uniformly at random. $b_s$ is a pre-defined per-iteration sample size parameter. For every $i \in \{1, 2, \cdots, m \}$, we update the $i^{\mathrm{th}}$ index of $\alpha^t$ by 
\begin{equation*}
    \alpha^{(i),t} = \begin{cases}
        \frac{1}{m}  \tilde{f}'_i(\langle \tilde{a}_i, x^t \rangle), & i \in S_t \\
        \alpha^{(i),t - 1}. & i \notin S_t 
        \end{cases} \\
\end{equation*}
For this algorithm to work, we require that $\forall_{x,y \in \mathcal{C}} \|\tilde{A}(x - y) \| \leq D$, and that the step size $\gamma_t$ be
\begin{equation}
    \label{eq: SAG gamma}
    \gamma_{t} = \min \left\{ \eta_t \frac{\|\tilde{A}(s^{t} - x^t)\|}{\|\tilde{A}d^{t}\|}, 1 \right\},
\end{equation}
where $s^t = \lmo(\tilde{A}^{\top} \alpha^t)$ and $d^t$ is the output of the boosting procedure by passing in $m^t = \tilde{A}^{\top} \alpha^t$. This implies that the Algorithm update is given by $\tilde{A}x^{t+1} = \tilde{A}x^{t} + \gamma_t \tilde{A}d^t$ when $\gamma_t < 1$ and $\tilde{A}x^{t+1} = \tilde{A}x^{t} + \eta_t \tilde{A}(s^t - x^t)$ when $\gamma_t = 1$. 
Since $As^t \in \lmo(\alpha^t)$, it serves to set $y^{t} = Ax^t$. As explained in Appendix section~\ref{appendix: boosted_stoch_fw}, we use the parameters provided by Assumption~\ref{assum: est} to prove convergence. To consider this objective function $\tilde{f}$, in this case the noise bound needs to be specified as
\begin{equation}
    \label{eq: SAG noise}
    \Delta^t = \alpha^t - \nabla \tilde{f}(\tilde{A}x^t).
\end{equation}

Using \eqref{eq: SAG noise}, the parameters satisfying Assumption~\ref{assum: est} are given by Lemma~\ref{params:SAG}.

\paragraph{Verification of Assumption~\ref{assum: est}}
\begin{lemma}
    \label{params:SAG}
    (Parameters for SAG) Assume that the objective function $f$ in \eqref{eq:P} can be written as $f:x \mapsto \tilde{f}(\tilde{A}x)$, where $\tilde{A}$ is a matrix $m \times n$ of samples. Let $\{x^t\}_{t=0}^T$ be a sequence generated by Algorithm~\ref{alg:BSFW} using a step decay $\{\eta_t\}_{t=0}^{T-1}$, where the gradient estimator $\{\Phi_t\}_{t=0}^{T-1}$ is SAG defined by \cite{pmlr-v119-negiar20a}. Then we have the following parameters used in Assumption~\ref{assum: est}.
    \begin{equation*}
        \rho_1 = \frac{b_s}{2m}, \quad \rho_2 = 1, \quad A = 0, \quad B = \left(1 - \frac{b_s}{m} \right)\left(1 + \frac{2m}{b_s}\right)L^2, \quad C = 0, \quad E = 0, \quad\sigma^2_t = 0,
    \end{equation*}
    where $b_s$ is the stochastic batch size sampled per-iteration.
\end{lemma}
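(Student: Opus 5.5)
We mirror the argument used for JAGUAR in Lemma~\ref{params:JAGUAR}, now on the dual (sample) space. Throughout we work with $y^t := \tilde{A}x^t \in \mathbb{R}^m$ and the error $\Delta^t = \alpha^t - \nabla \tilde{f}(y^t)$ from \eqref{eq: SAG noise}. Here $\nabla\tilde f$ has components $\tfrac1m \tilde f_i'(y_i)$, and $\nabla \tilde f$ is $L$-Lipschitz in the Euclidean norm on $\tilde{A}\mathcal{C}$ (this is the smoothness constant implicitly used in the table).

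The first step is to write the update coordinatewise. Let $P_t := \sum_{i\in S_t} e_ie_i^\top$ be the random diagonal selector. The SAG rule gives
\[
\alpha^t = (I-P_t)\alpha^{t-1} + P_t\nabla\tilde f(y^t),
\]
so that
\[
\Delta^t = (I-P_t)\bigl(\alpha^{t-1}-\nabla\tilde f(y^t)\bigr) = (I-P_t)\Delta^{t-1} + (I-P_t)\bigl(\nabla\tilde f(y^{t-1})-\nabla\tilde f(y^t)\bigr).
\]

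Next we apply Young's inequality with parameter $\beta>0$:
\[
\|\Delta^t\|^2 \le (1+\beta)\|(I-P_t)\Delta^{t-1}\|^2 + (1+\tfrac1\beta)\|\nabla\tilde f(y^{t-1})-\nabla\tilde f(y^t)\|^2,
\]
using $\|I-P_t\|\le 1$ on the second term. We then take $\mathbb{E}_{t-1}$. Since $S_t$ is a uniform batch of size $b_s$, each index is kept with probability $1-\tfrac{b_s}{m}$. Because $\Delta^{t-1}$ is $\mathcal{F}_{t-1}$-measurable and $S_t$ is independent of it, this gives $\mathbb{E}_{t-1}\|(I-P_t)\Delta^{t-1}\|^2 = (1-\tfrac{b_s}{m})\|\Delta^{t-1}\|^2$.

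The gradient-difference term is the one real obstacle. $S_t$ is drawn independently of $y^t$, and the selector also multiplies this term, so the factor $(1-\tfrac{b_s}m)$ can be kept on it too; this matches the stated $B$. By $L$-smoothness the term is bounded by $L^2\|\tilde{A}(x^t-x^{t-1})\|^2$. We need the analogue of Lemma~\ref{lem: BSFW_iter_bound} in the $\tilde A$-geometry, namely $\|\tilde{A}(x^t-x^{t-1})\|\le\eta_{t-1}D$. This is exactly why the SAG variant uses the modified step \eqref{eq: SAG gamma}. In the case $\gamma_{t-1}<1$, the step gives $\gamma_{t-1}\|\tilde A d^{t-1}\| = \eta_{t-1}\|\tilde A(s^{t-1}-x^{t-1})\|\le \eta_{t-1}D$. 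In the case $\gamma_{t-1}=1$, we directly have $\eta_{t-1}\|\tilde A(s^{t-1}-x^{t-1})\|\le\eta_{t-1}D$. Both use the hypothesis $\|\tilde A(x-y)\|\le D$ on $\mathcal{C}$.

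Finally we choose $\beta = \tfrac{b_s}{2m}$. Then $(1-\tfrac{b_s}{m})(1+\tfrac{b_s}{2m})\le 1-\tfrac{b_s}{2m}$, and the second coefficient becomes $(1-\tfrac{b_s}{m})(1+\tfrac{2m}{b_s})L^2$. This yields
\[
\mathbb{E}_{t-1}\|\Delta^t\|^2\le(1-\tfrac{b_s}{2m})\|\Delta^{t-1}\|^2 + (1-\tfrac{b_s}m)(1+\tfrac{2m}{b_s})L^2\eta_{t-1}^2D^2,
\]
which is Assumption~\ref{assum: est} with $\rho_1=\tfrac{b_s}{2m}$, $A=C=0$, and $B$ as claimed. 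The second recursion holds trivially with $\sigma_t\equiv0$, $\rho_2=1$, and $E=0$.
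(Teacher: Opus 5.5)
Your proof is correct and is essentially the paper's argument. The paper first factors the selector out of the whole vector, $\mathbb{E}_{t-1}\|\Delta^t\|^2=(1-\tfrac{b_s}{m})\|\alpha^{t-1}-\nabla\tilde f(\tilde A x^t)\|^2$. It then expands the square, bounds the gradient difference via the $\tilde A$-scaled step, and applies Young with $\beta=\tfrac{b_s}{2m}$ to the cross term. You do the same computation with the Young step and the expectation in the opposite order.

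One inconsistency needs fixing. Your Young display discards $(I-P_t)$ on the gradient-difference term using $\|I-P_t\|\le 1$, and from that line you can only get $B=(1+\tfrac{2m}{b_s})L^2$. To obtain the stated $B$, keep $(1+\tfrac1\beta)\|(I-P_t)w\|^2$ with $w=\nabla\tilde f(y^{t-1})-\nabla\tilde f(y^t)$. Then use $\mathbb{E}_{t-1}\|(I-P_t)w\|^2=(1-\tfrac{b_s}{m})\|w\|^2$, which is valid because $y^t=\tilde A x^t$ is fixed before $S_t$ is drawn. This is exactly what your next paragraph asserts, so simply remove the $\|I-P_t\|\le1$ remark.
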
 
\begin{proof}
    For any $t$ such that $0 \leq t \leq T-1$, by definition of $\Delta^t$ in \eqref{eq: SAG noise},
    \begin{equation*}
        \|\Delta^t\| = \|\alpha^t - \nabla \tilde{f}(\tilde{A}x^t) \|.
    \end{equation*}
   At each iteration $t$, since there is a probability $\frac{b_s}{m}$ of any index $j \in \{1, 2 \cdots m\}$ being sampled, meaning $\alpha^{(j),t} = \alpha^{(j),t-1}$ with a probability of $\left(1 - \frac{b_s}{m} \right)$.
    We thus have the following conditional expectation equation
    \begin{equation*}
        \mathbb{E}_{t-1}[(\Delta^{(j),t})^2] = \left(1 - \frac{b_s}{m} \right) \left(\alpha^{(j), t-1} - \nabla \tilde{f}^{(j)} (\tilde{A}x^t)\right)^2,
    \end{equation*}
    where $\tilde{a_j}$ refers to the row $j$ of $\tilde{A}$. Summing over all indices from $j = 1$ to $j = m$, we have,
    \begin{align*}
        \mathbb{E}_{t-1}[\|\Delta^t\|^2] &= \sum_{j=1}^m \mathbb{E}_{t-1}[(\Delta^{(j),t})^2] \\
        &= \left(1 - \frac{b_s}{m} \right) \| \alpha^{t-1} - \nabla \tilde{f}(\tilde{A}x^t) \|^2 \\
        &= \left(1 - \frac{b_s}{m} \right) \| \alpha^{t-1} - \nabla \tilde{f}(\tilde{A}x^{t-1}) + \nabla \tilde{f}(\tilde{A}x^{t-1}) - \nabla \tilde{f}(\tilde{A}x^t) \|^2 \\
        &= \left(1 - \frac{b_s}{m} \right) ( \| \alpha^{t-1} - \nabla \tilde{f}(\tilde{A}x^{t-1}) \|^2 +  \| \nabla \tilde{f}(\tilde{A}x^{t-1}) - \nabla \tilde{f}(\tilde{A}x^t) \|^2  \\ 
        &+ 2 \langle \alpha^{t-1} - \nabla \tilde{f}(\tilde{A}x^{t-1}), \nabla \tilde{f}(\tilde{A}x^{t-1}) - \nabla \tilde{f}(\tilde{A}x^t) \rangle.
    \end{align*}
    
   For the term $\| \nabla \tilde{f}(\tilde{A}x^{t-1}) - \nabla \tilde{f}(\tilde{A}x^t) \|$, By using the L-smoothness property of $\tilde{f}$ and due to the definition of $\gamma_t$ in \eqref{eq: SAG gamma} in Lemma~\ref{lem: BSFW_iter_bound}, we have the following,
    \begin{equation*}
        \| \nabla \tilde{f}(\tilde{A}x^{t-1}) - \nabla \tilde{f}(\tilde{A}x^t) \| \leq L \|Ax^{t-1} - Ax^t \| \leq LD \eta_{t-1}.
    \end{equation*}
    Using young's inequality with a parameter $\beta > 0$, we have
    {\small\begin{align*}
        2 \langle \alpha^{t-1} - \nabla \tilde{f}(\tilde{A}x^{t-1}), \nabla \tilde{f}(\tilde{A}x^{t-1}) - \nabla \tilde{f}(\tilde{A}x^{t}) \rangle &\leq \beta \|\alpha^{t-1} - \nabla \tilde{f}(\tilde{A}x^{t-1})\|^2+ \frac{1}{\beta} \| \nabla \tilde{f}(\tilde{A}x^{t-1}) - \nabla \tilde{f}(\tilde{A}x^{t}) \|^2 \\
        &\leq \beta \| \Delta^{t-1} \|^2 + \frac{1}{\beta} \| \nabla \tilde{f}(\tilde{A}x^{t-1}) - \nabla \tilde{f}(\tilde{A}x^{t}) \|^2 \\
        &\leq \beta \| \Delta^{t-1} \|^2 + \frac{1}{\beta} L^2 D^2 \eta_{t-1}^2.
    \end{align*}}
    Using these inequalities in the expression for $\mathbb{E}_{t-1}[\|\Delta^t\|^2]$, we have the following,
    \begin{align*}
        \mathbb{E}_{t-1}[\|\Delta^t\|^2] &\leq \left(1 - \frac{b_s}{m} \right) \left(\| \Delta^{t-1} \|^2 + L^2D^2\eta_{t-1}^2 +  \beta \| \Delta^{t-1} \|^2 + \frac{1}{\beta} L^2 D^2 \eta_{t-1}^2\right) \\
        &\leq \left(1 - \frac{b_s}{m} \right)\left(1 + \beta\right)\| \Delta^{t-1} \|^2 + \left(1 - \frac{b_s}{m} \right) \left(1 + \frac{1}{\beta} \right)L^2D^2 \eta_{t-1}^2.
    \end{align*}
    By setting $\beta = \frac{b_s}{2m}$, we have the following
    \begin{align*}
        \left(1 - \frac{b_s}{m} \right)\left(1 + \beta\right) &= \left(1 - \frac{b_s}{m} \right)\left(1 + \frac{b_s}{2m}\right) \\
        &= 1 - \frac{b_s}{m} + \frac{b_s}{2m} - \frac{b_s}{2m^2} \\
        &= 1 - \frac{b_s}{2m} - \frac{b_s}{2m^2} \\
        &\leq \left(1 - \frac{b_s}{2m} \right).
    \end{align*}
    Hence, we have
    \begin{align*}
        \mathbb{E}_{t-1}[\| \Delta^t \|^2] \leq \left(1 - \frac{b_s}{2m} \right)\| \Delta^{t-1} \|^2 + \left(1 - \frac{b_s}{m} \right)\left(1 + \frac{2m}{b_s}\right)L^2D^2 \eta_{t-1}^2.
    \end{align*}
    Taking expectations on both sides gives us
    \begin{equation*}
        \mathbb{E}[\| \Delta^t \|^2] \leq \left(1 - \frac{b_s}{2m} \right) \mathbb{E}[\|\Delta^{t-1} \|^2] + \left(1 - \frac{b_s}{m} \right)\left(1 + \frac{2m}{b_s}\right)L^2D^2 \eta_{t-1}^2.
    \end{equation*}
    Hence fitting the necessary parameters in Assumption~\ref{assum: est}.
\end{proof}


\bibliography{Ref.bib}
\bibliographystyle{abbrv} 
\end{document}